\documentclass[10pt, reqno]{amsart}
\usepackage[utf8]{inputenc}

\usepackage{new-style}
\title{List Coloring Uncrowded Hypergraphs at the\\
Shattering Threshold}

\author{Abhishek Dhawan$^{\star}$}
\address{$^{\star}$Department of Mathematics, University of Illinois Urbana--Champaign}
\email{adhawan2@illinois.edu}
\thanks{Abhishek Dhawan is partially supported by the NSF RTG grant DMS-1937241.}

\author{Abhishek Methuku$^{\star}$}
\email{methuku@illinois.edu}
\thanks{Abhishek Methuku is supported by the UIUC Campus Research Board Award RB25050 and a Simons award SFI-MPS-TSM-00025583.}

\author{Minh-Quan Vo$^{\star}$}
\email{vo15@illinois.edu}

\date{}

\begin{document}

\begin{abstract}
    A foundational theorem of Ajtai, Koml\'os, Pintz, Spencer, and Szemer\'edi asserts that every $n$-vertex $k$-uniform uncrowded hypergraph (i.e., with girth at least $5$) of maximum degree at most $\Delta$ contains an independent set of size $c_k n {\left(\frac{\log \Delta}{\Delta}\right)^{\frac{1}{k-1}}}$ for some constant $c_k > 0$. Improving an earlier bound of Frieze and Mubayi, Iliopoulos showed that any $k$-uniform uncrowded hypergraph of maximum degree $\Delta$ has list chromatic number at most $(1+o(1))(k-1){\left(\frac{\Delta}{\log \Delta}\right)^{\frac{1}{k-1}}}$. Determining the optimal leading constant in this bound, however, remains a major open problem. A natural target for the constant suggested by the coupon-collector heuristic is $(k-1)^{\frac{1}{k-1}}$, a value that independently arises as the shattering threshold for coloring and finding large independent sets in sparse random hypergraphs.
    
    Our main result shows that uncrowded hypergraphs actually attain this threshold. Specifically, we prove that for $k \ge 2$, every $k$-uniform uncrowded hypergraph of maximum degree at most $\Delta$ has list chromatic number at most $(1+o(1)){\left((k-1)\frac{\Delta}{\log \Delta}\right)^{\frac{1}{k-1}}}$.
    This bound matches the coupon-collector heuristic, and establishes a conjecture of Molloy from 2019 for uncrowded hypergraphs. Moreover, as a consequence of this, we show that every $k$-uniform linear hypergraph of maximum degree at most $\Delta$ has chromatic number at most $c_k{\left(\frac{\Delta}{\log \Delta}\right)^{\frac{1}{k-1}}}$ with $c_k \to 1$ as $k \to \infty$, resolving, in a stronger form, a recent conjecture of Verstra\"ete and Wilson on the independence number of linear hypergraphs.
    
    Finally, our techniques yield the first palette sparsification theorem for hypergraph coloring using $o\Big(\Delta^{\frac{1}{k-1}}\Big)$ colors per vertex. In particular, we show that for a $k$-uniform uncrowded hypergraph $\mathcal{H}$ and $q = \Theta{\left({\left(\frac{\Delta}{\gamma\log\Delta}\right)^{\frac{1}{k-1}}}\right)}$, sampling $\Theta{\left(\Delta^{\frac{\gamma}{k-1}}\right)}$ colors per vertex suffices to obtain a proper $q$-coloring of $\mathcal{H}$ with high probability. As an algorithmic consequence, we obtain a single-pass streaming algorithm that, with high probability, computes a proper $q$-coloring of an uncrowded $k$-uniform hypergraph in $O{\left(n^{1 + o(1)}\right)}$ space. Via hypergraph partitioning, we further establish an analogous single-pass streaming algorithm with the same space complexity for linear hypergraphs.
\end{abstract}

\maketitle

\sloppy

\pagenumbering{gobble}
\newpage
\tableofcontents
\newpage
\pagenumbering{arabic}

\section{Introduction}\label{sec: intro}

\subsection{Background}\label{subsection: background and results}

A \emph{hypergraph} is an ordered pair $H=(V,E)$, where $V$ is a finite set of \emph{vertices} and $E\subseteq 2^V$ is a family of \emph{edges}. We say that $H$ is \emph{$k$-uniform} (or a \emph{$k$-graph}) if every edge has exactly $k$ elements; thus, a graph is a $2$-uniform hypergraph. A set $I\subseteq V$ is \emph{independent} if it contains no edge of $H$, and the \emph{independence number} $\alpha(H)$ is the maximum size of an independent set. A \emph{proper coloring} of $H$ is an assignment of colors to the vertices of $H$ such that no edge is monochromatic, and the \emph{chromatic number} $\chi(H)$ is the minimum number of colors in a proper coloring.
 
It is well known that every $n$-vertex graph $G$ of maximum degree $\Delta$ and average degree $d$ satisfies
\begin{align}\label{eq:greedy}
\alpha(G)\geq \frac{n}{d+1} \qquad\text{and}\qquad \chi(G)\leq \Delta+1.
\end{align}
These are commonly referred to as the \emph{greedy bounds}. A central theme in extremal and probabilistic combinatorics is to determine which structural assumptions allow one to improve on these bounds. In this direction, a fundamental result of Ajtai, Koml\'os, and Szemer\'edi~\cite{AKS1980, ajtai1981dense} shows that every $n$-vertex triangle-free graph $G$ of average degree 
$d$ satisfies
\begin{align}\label{eq:AKS}
\alpha(G) \geq c\, n\, \frac{\log d}{d}
\end{align}
for some absolute constant $c>0$. This influential result was originally used to establish the celebrated Ramsey-number bound $R(3,k)=O(k^2/\log k)$, and has since inspired extensive research over the last four decades with striking applications to number theory and discrete geometry, among other areas. Shearer~\cite{Sh83, Sh91} showed that the leading constant $c$ in~\eqref{eq:AKS} can be improved to $1-o_d(1)$. Determining whether this constant can be improved further remains a major open problem; indeed, Shearer's result implies the best known upper bound $R(3,k) \leq (1+o(1)) \, k^2/\log k$, and thus any improvement to the leading constant in~\eqref{eq:AKS} would also improve this bound on Ramsey numbers. See also the recent breakthroughs~\cite{campos2025new, hefty2025improving} establishing the lower bound $R(3,k) \geq (1/2+o(1))\, k^2/\log k$.

Introduced independently by Vizing~\cite{vizing1976coloring} and Erd\H{o}s, Rubin, and Taylor~\cite{erdos1979choosability}, 
the \emph{list chromatic number} of a hypergraph $H$, denoted $\chi_\ell(H)$, is the minimum integer $q$ such that, if every vertex $v \in V(H)$ is assigned a list of colors $L(v) \subseteq \N$ of at least $q$ colors, then $H$ admits a proper coloring in which every vertex $v$ receives a color from $L(v)$. It is not difficult to see that $\chi(H) \le \chi_\ell(H)$. 
In 1995, Kim~\cite{Kim95} proved that every graph of girth at least $5$ and maximum degree at most $\Delta$ satisfies
\begin{align}\label{eq:kim}
\chi_\ell(G) \leq (1+o(1)) \,  \frac{\Delta}{\log \Delta}.
\end{align}
Independently, Johansson~\cite{Joh_triangle} showed that every triangle-free graph of maximum degree at most $\Delta$ satisfies $\chi(G) = O(\Delta/\log\Delta)$. 
In 2019, Molloy~\cite{Molloy} unified and strengthened these two results by improving the leading constant in Johansson's bound to match that in~\eqref{eq:kim};
see~\cite{bernshteyn2019johansson, hurley2021first, DKPS, bonamy2022bounding, martinsson2021simplified} for alternate proofs. More generally, the setting of \emph{locally sparse} graphs—where the neighborhood of every vertex spans few edges—has been extensively studied~\cite{AKSConjecture, vu2002general, DKPS, hurley2021first, anderson2024coloring, anderson2025coloring, bradshaw2025toward}; see also the survey~\cite{KangKelly2023nibble} and the references therein.
 
For $k$-uniform hypergraphs, the natural analogue of the greedy bound for the independence number in~\eqref{eq:greedy} is the following result of Spencer~\cite{spencer1972turan}, which extends Tur\'an's theorem~\cite{Turan1941} to higher uniformities: every $n$-vertex $k$-uniform hypergraph $H$ of average degree $d$ satisfies
\begin{align}\label{eq:greedy-hyper}
\alpha(H)\geq \left(1-\frac{1}{k}\right) \frac{n}{d^{\frac{1}{k-1}}}.
\end{align}
It is natural to ask whether, as in the graph case, this bound can be improved under suitable local sparsity assumptions defined as follows. For $r\geq 2$, an \emph{$r$-cycle} is a hypergraph with edge set $\{e_0,e_1,\dots,e_{r-1}\}$ for which there exist distinct vertices $v_1,\dots,v_r$ such that $v_i\in e_i\cap e_{i+1}$ for every $1 \le i \le r$, with subscripts taken modulo~$r$. The \emph{girth} of a hypergraph is the length of its shortest cycle. A hypergraph is \emph{linear} if it contains no $2$-cycle, and, following~\cite{AKPSS1982}, it is \emph{uncrowded} if it has girth at least~$5$, that is, if it contains no $r$-cycle for $2\leq r\leq 4$.
 
Koml\'os, Pintz, and Szemer\'edi~\cite{komlos1982lower} initiated the study of independence numbers in uncrowded hypergraphs through their work on Heilbronn's celebrated conjecture, which asks how large one can make the minimum area of a triangle determined by three points among $n$ points in the unit disk. Using their result on the independence number of uncrowded $3$-graphs, they disproved Heilbronn's conjecture. Ajtai, Koml\'os, Pintz, Spencer, and Szemer\'edi~\cite{AKPSS1982} later extended this work to all uniformities, proving the following foundational theorem.
 
\begin{theorem}[Ajtai--Koml\'os--Pintz--Spencer--Szemer\'edi~\cite{AKPSS1982}]\label{theorem: AKPSS}
Let $k\geq 2$ be an integer. There exists $\Delta_0\in\N$ such that, for every $\Delta\geq \Delta_0$ and $n\in\N$, every $n$-vertex $k$-uniform uncrowded hypergraph $\cH$ of maximum degree at most $\Delta$ satisfies 
\[ 
\alpha(\cH)\geq (c_k -o_\Delta(1)) \,n \, \left(\frac{\log \Delta}{\Delta}\right)^{\frac{1}{k-1}}, 
\] where $c_k=\frac{0.98}{e}{\left(\frac{1}{10^5(k-1)}\right)^{\frac{1}{k-1}}}$.
\end{theorem}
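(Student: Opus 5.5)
This is a cited result, so I would prove it by a semi-random ("nibble") construction of an independent set, with the constant $c_k$ coming out of a careful but crude accounting. Set $t = \left(\frac{\log \Delta}{\Delta}\right)^{\frac{1}{k-1}}$. The classical AKPSS argument first passes to a sparser sub-hypergraph by random sampling, so that the problem reduces to an almost-regular setting. I would then run a sequence of rounds, each of which activates a small random fraction of the surviving vertices.

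In each round I would do the following:
\begin{enumerate}
\item Include each vertex independently with probability $p$, small (say $p \approx \epsilon\, t$).
\item Delete a selected vertex if it lies in an edge whose other $k-1$ vertices are all selected or already in $I$.
\item Add the surviving selected vertices to the independent set $I$.
\item Update the hypergraph. Every edge meeting $I$ is shrunk to the vertices not in $I$, so it becomes a $(k-j)$-edge for some $j$. A vertex that would now complete an edge of size $1$ is removed.
\end{enumerate}
Tracking the process therefore means tracking, for each $j$ from $2$ to $k$, the $j$-degrees $d_j(v)$ of the remaining vertices.

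The key steps run in order.

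\textbf{Expected change in one round.} Using the uncrowded hypothesis (girth at least $5$), I would show that the events relevant to a fixed vertex $v$ — whether the other vertices of edges through $v$ get selected — are nearly independent. The reason is that distinct edges through $v$ meet only in $v$, and the edges through their vertices are also essentially disjoint. This gives an expected one-round change of roughly $\Delta d_j \approx p\,(\text{contributions from } d_{j+1}) - p\,(\text{losses})$. Vertex survival is governed by $\exp(-\text{const}\cdot p^{k-1} d_k \cdots)$.

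\textbf{Concentration.} I would prove concentration via Talagrand's inequality or the Kim--Vu polynomial concentration. Then I would discard the small set of bad vertices, which has negligible effect on counts.

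\textbf{Iteration.} I would iterate for about $\frac{1}{\epsilon}\log$ many rounds, until the total selection density reaches order $t$. At that point the remaining degrees are so small that Spencer's bound \eqref{eq:greedy-hyper}, or even a greedy choice, finishes the construction.

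Summing the rounds gives $|I| \ge (c - o(1))\, n t$. The constant $c_k$ arises from the deliberately loose choice of the per-round step size together with an $e^{-1}$ loss from the deleted vertices. The factors $10^{-5}$ and $0.98$ are slack from bounding error terms, so I would not try to optimize them.

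The main obstacle is controlling the recursive system of degree trajectories $d_2, \dots, d_k$ simultaneously. The accumulated errors must stay lower order over roughly $\log \Delta$ rounds, and the lower-uniformity edges created by shrinking must not blow up. I would handle this by proving a one-step lemma under inductive hypotheses of the form $d_j(v) \le D_j(i)\,(1 + \text{err}_i)$, with explicit recursions for $D_j$ and for the error. Here uncrowdedness is what makes the variance computations work: it guarantees that two shrunk edges through $v$ share no other vertex, and it rules out short cycles that would create correlated deletions. As a fallback, one could instead analyze the random greedy independent set process with the differential equations method, which yields the same order of magnitude.
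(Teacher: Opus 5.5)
There is a genuine gap. Your sketch never supplies the mechanism that produces the $(\log\Delta)^{1/(k-1)}$ gain over Spencer's bound~\eqref{eq:greedy-hyper}, and the parameters you do specify provably fail to produce it.

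\emph{The per-round rate is too large.} Take $p\approx\epsilon t$ and a vertex $v$ of degree $\Delta$. The edges through $v$ are disjoint outside $v$, so the number whose other $k-1$ vertices are all selected in the same round is binomial with mean $\Delta p^{k-1}=\epsilon^{k-1}\log\Delta\to\infty$. A selected vertex therefore survives your step~2 with probability only about $\Delta^{-\epsilon^{k-1}}$, already in the first round. Later rounds only add shrunk $j$-edges, which conflict with probability $p^{j-1}\ge p^{k-1}$, so this loss is never recovered. The rate must satisfy $\sum_j d_j(v)p^{j-1}\ll 1$, so initially $p\ll\Delta^{-1/(k-1)}$, at least a $(\log\Delta)^{1/(k-1)}$ factor below $t$.

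\emph{The stopping rule is also wrong.} Let $\tau$ be the cumulative number of attempts per vertex and $y$ the density of $I$. The locally tree-like heuristic gives $dy/d\tau\approx\exp(-\Delta y^{k-1})$, hence $\Delta y^{k-1}\approx\log\bigl(\tau\Delta^{1/(k-1)}\bigr)+O(\log\log\Delta)$. Stopping when the selection density is $t$ times a polylogarithm yields only $y=O\bigl((\log\log\Delta/\Delta)^{1/(k-1)}\bigr)$. Reaching $y=ct$ needs $\tau\approx\Delta^{c^{k-1}-1/(k-1)}$ up to polylogarithmic factors, which is polynomially more than $t$. If instead ``selection density'' means $|I|/n$, the stopping rule simply assumes the conclusion.

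So the rate must increase as the effective degrees fall; a fixed small rate would need polynomially many rounds. You then need a tracked quantity that keeps the per-round kill probability under control throughout, including the contribution of the shrunk degrees $d_2,\dots,d_{k-1}$, which eventually dominate. Your sketch defers exactly this to ``explicit recursions for $D_j$'' and then asserts that summing the rounds gives $(c-o(1))nt$. The constant $c_k$ is read off rather than derived.

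For comparison, the paper does not reprove this cited theorem. It follows, with the larger constant $(k-1)^{-1/(k-1)}$ in place of $c_k$, from Theorem~\ref{theorem: list chromatic} (the case $d=\Delta$ of Theorem~\ref{theorem: main}) by taking a largest color class: $\alpha(\cH)\ge n/\chi(\cH)$. The nibble behind Theorem~\ref{theorem: main} contains exactly the ingredients your sketch lacks:
\begin{itemize}
\item The activation probability $\alpha=\eta/\log d$ is fixed and colors are drawn from lists of size $q_i$. The per-color rate $\alpha/q_i$ therefore starts at order $t/\log d$ and rises as the lists shrink.
\item Equalizing coin flips make every vertex--color pair survive with the same probability.
\item The ratio $t_i/q_i^{k-1}$ stays between $\frac{\log d}{(1+\eps)^{k-1}(k-1)}$ and $\frac{\log d}{(1+5\eta)^{k-1}(k-1)}$.
\item The factored bound $\deg(F_c,v,\ell)\le\binom{k-1}{\ell-1}(\alpha\gamma_i/q_i)^{k-\ell}\beta_i^{\ell-1}t_i$ on all shrunk-edge degrees makes the discard probability telescope to $\frac{\alpha^{k-1}t_i}{q_i^{k-1}}\bigl(\gamma_{i+1}^{k-1}-\gamma_i^{k-1}\bigr)$.
\end{itemize}
As a result, over $s=O(\log d\log\log d)$ rounds the lists retain at least a $d^{-1/((k-1)(1+3\eta/2))}$ fraction of their size. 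That is what allows starting from lists of size $\Theta\bigl((d/\log d)^{1/(k-1)}\bigr)$ without running out. A direct independent-set nibble would need an analogue of this bookkeeping, for instance a virtual list size governing the selection rate. Without it, your proposal is a template rather than a proof.
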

 
While the bound in Theorem~\ref{theorem: AKPSS} is optimal up to the value of $c_k$ (see, e.g.,~\cite{bennett2024chromatic}), determining the optimal leading constant has been a major open problem. 
In the graph case ($k=2$), the constant $c_2=1$ is a natural benchmark: it is attained for several classes of graphs capturing the typical local structure of the sparse Erd\H{o}s--R\'enyi random graph $G(n,d/n)$\footnote{That is, such a graph can be obtained from $G(n,d/n)$, with high probability, by removing a $o_d(1)$-fraction of its vertices.}~\cite{Kttt, Molloy, DKPS, bonamy2022bounding, anderson2025coloring, AndersonBernshteynDhawan}
and coincides with the well-known \emph{shattering threshold} for independent sets and colorings in sparse random graphs~\cite{coja2015independent, Achlioptas, Zdeborova, molloy2018freezing}. 
For $k\geq 3$, the analogous shattering threshold is $\left(\frac{1}{k-1}\right)^{\frac{1}{k-1}}$; see~\cite[\S2]{DMV26ind}, the first arXiv version of this paper, for an informal justification of this barrier.
This constant arises in several seemingly distinct settings: the geometry of the solution space for colorings of Erd\H{o}s--R\'enyi hypergraphs~\cite{Achlioptas, gabrie2017phase, dyer2015chromatic, ayre2019hypergraph}, structural properties of multipartite hypergraphs~\cite{dhawan2025list, dhawan2025balanced}, statistical physics~\cite{michelen2022strong, cannon2026pirogov}, and average-case complexity theory~\cite{dhawan2026low, dhawan2026algorithmic}, among others. 
Iliopoulos~\cite{iliopoulos2021improved} improved the leading constant $c_k$ in Theorem~\ref{theorem: AKPSS} (for all $k \le 11$) to $\frac{1}{k-1}$ as a consequence of a bound on the list chromatic number of uncrowded hypergraphs (Theorem~\ref{theorem: fotis} below).
Recently, the authors closed this gap by showing that uncrowded hypergraphs attain the shattering threshold.
 
\begin{theorem}[\cite{DMV26ind}]\label{theorem: independence main}
Let $k \geq 2$ be an integer, and let $\eps > 0$ be arbitrary.
There exists $\Delta_0\in\N$ such that, for every
$\Delta\geq \Delta_0$ and $n\in\N$, every $n$-vertex $k$-uniform
uncrowded hypergraph $\cH$ of maximum degree at most~$\Delta$ satisfies
\begin{align}\label{eq:ind-shattering}
\alpha(\cH)\geq (1-\eps)\,n \,
\left(\frac{1}{k-1}\,
\frac{\log \Delta}{\Delta}\right)^{\frac{1}{k-1}}.
\end{align}
\end{theorem}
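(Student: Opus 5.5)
The plan is to derive Theorem~\ref{theorem: independence main} from a sharp coloring statement: the list chromatic number bound $\chi_\ell(\cH)\le (1+\eps/2)\bigl((k-1)\Delta/\log\Delta\bigr)^{\frac{1}{k-1}}$ announced in the abstract. Any proper $q$-coloring has a color class of size at least $n/q$, and every color class is independent, so $\alpha(\cH)\ge n/q \ge (1-\eps)\,n\bigl(\frac{1}{k-1}\frac{\log\Delta}{\Delta}\bigr)^{\frac{1}{k-1}}$. Thus all of the work goes into the coloring bound, and I would prove that directly.

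\textbf{Nibble setup.} I would run a semi-random (R\"odl nibble / wasteful coloring) procedure in the style of Kim and Molloy, adapted to hypergraphs as in Iliopoulos. In each round every uncolored vertex $v$ activates with small probability $\eta$ and picks a uniformly random color $c$ from its current list $L(v)$. A color $c$ is removed from $L(v)$ when some edge $e\ni v$ has all of $e\setminus\{v\}$ already colored $c$. I would track, for each vertex $v$ and color $c\in L(v)$, the \emph{$c$-degree profile} $d_{j}(v,c)$, meaning the number of edges through $v$ whose other members include exactly $j$ uncolored vertices with $c$ in their lists and whose remaining $k-1-j$ vertices are already colored $c$. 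Edges containing a vertex colored differently from $c$ are discarded, since they can no longer become monochromatic in $c$.

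\textbf{Heuristic and invariants.} The target constant comes from the coupon-collector calculation. Near the end, if lists have size $\ell$ and there are about $\Delta(\ell/q)^{k-1}$ dangerous edges per vertex, each of which kills a color with probability about $\ell^{-(k-1)}$, then a list of size $q$ survives with many colors exactly when $q^{k-1}\gtrsim (k-1)\Delta/\log\Delta$. To make this rigorous I would write differential-equation-type recurrences for $\ell_t=|L(v)|$ and for the profiles $d_{j,t}$, uniformly over $v$ and $c$. I would then show by concentration that these quantities stay within small error of their deterministic trajectories. Concentration would come from Talagrand's inequality or an exceptional-outcomes variant; uncrowdedness (girth at least $5$) ensures that the events for distinct edges through $v$ depend on nearly disjoint vertex sets. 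Equalizing coin flips would keep each color's retention probability identical across colors, so that $\ell_t$ is tracked as a single number.

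\textbf{Finishing.} The rounds continue until the dangerous degrees satisfy $\sum_j d_{j}(v,c)\,\ell^{-j}\ll 1$ with lists still of size polylogarithmic. At that point a Lov\'asz Local Lemma argument, or a final greedy/LLL coloring step, completes the coloring. Each remaining bad event has tiny probability and depends on boundedly many others relative to $\ell$.

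\textbf{Main obstacle.} The main obstacle is choosing the invariants precisely enough that the solution of the recurrences reaches the terminal condition exactly at $q=(1+\eps)\bigl((k-1)\Delta/\log\Delta\bigr)^{1/(k-1)}$, rather than at Iliopoulos's weaker constant. The difficulty is that the profiles $d_j$ for $j<k-1$ grow over time as edges become partially colored, and this growth feeds back into the rate at which lists shrink. I would first try the potential $\sum_j d_j(v,c)\prod(\text{list-size ratios})$, which should behave like a martingale-type quantity. The aim would be to show it stays at most $(1+o(1))\Delta(\ell_t/q)^{k-1}$-scaled, and this is where the constant $k-1$ is won or lost.
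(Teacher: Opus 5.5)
Your first paragraph is exactly how the paper obtains this statement. It is read off from Theorem~\ref{theorem: list chromatic} by taking a largest color class, and your arithmetic $1/(1+\eps/2)\ge 1-\eps$ is fine, so citing that theorem already finishes the proof.

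The problem is that you instead commit to proving the coloring bound yourself, and there the proposal has a genuine gap. Your scaffolding is the same as in Iliopoulos's argument: wasteful nibble, shrinking edges into a $c$-degree profile, equalizing coin flips, Talagrand, and a final Local Lemma. That argument yields only the constant $k-1$. What decides the constant is precisely the step you call the main obstacle and leave open, so nothing in the proposal shows that your recurrences reach the terminal condition at $(k-1)^{1/(k-1)}$. Your tentative potential does not settle this on its own. Suppose its weights are the current discard weights $(\alpha/q_i)^{\ell-1}$. Then one round feeds a $j$-edge into the next round's discard rate with the $j$-dependent factor $(\uncolor_i+\keep_i)^{j-1}-\keep_i^{j-1}$. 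So no such single number evolves autonomously, and you must control the whole profile.

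The missing idea, as the paper implements it, is a closed-form invariant for the entire profile. Set $\beta_i=\prod_{j<i}\uncolor_j$, $\gamma_i=\sum_{j<i}\beta_j$, $t_{i+1}\approx\keep_i^{k-1}t_i$ and $q_{i+1}\approx\keep_i q_i$, so that $t_i/q_i^{k-1}$ stays near $\log d/((k-1)(1+\eps)^{k-1})$. One then maintains condition~\ref{Q3}:
\[
\deg(F_c,v,\ell)\le\binom{k-1}{\ell-1}\Bigl(\frac{\alpha\gamma_i}{q_i}\Bigr)^{k-\ell}\beta_i^{\ell-1}t_i\qquad(2\le\ell\le k).
\]
This binomial shape is reproduced by a round because $\gamma_{i+1}=\gamma_i+\beta_i$ and $\beta_{i+1}=\uncolor_i\beta_i$. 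The union bound for discarding then telescopes to $\frac{\alpha^{k-1}t_i}{q_i^{k-1}}(\gamma_{i+1}^{k-1}-\gamma_i^{k-1})$. One defines $\keep_i$ as one minus this quantity, and the equalizing flips make every color's retention probability exactly $\keep_i$.

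Summing over rounds, the total list loss has exponent about $\frac{t_1}{q_1^{k-1}}(\alpha\gamma_s)^{k-1}$. The identity $\sum_{j<s}\alpha\keep_j\beta_j=1-\beta_s$ gives $\alpha\gamma_s\le 1/(1-\eta)$. Hence lists keep size $d^{\Omega(\eps)/(k-1)}$, and this holds exactly because the leading constant exceeds $(k-1)^{1/(k-1)}$. Meanwhile $\sum_\ell\deg(F_c,v,\ell)/q_s^{\ell-1}=O\bigl((\log d)\beta_s\bigr)\to0$ after $s\approx\log d\log\log d/\eta$ rounds.

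The same computation fixes two choices your sketch leaves loose.
\begin{enumerate}
\item The activation probability must be $\eta/\log\Delta$, not a constant. With constant activation, the expected number of edges killing a fixed color at $v$ is already $\Theta(\log\Delta)$ in round one. In general the identity only yields $\alpha\gamma_s\le 1/\min_i\keep_i$, so wasted activations cost the constant.
\item With $\Theta(\log\Delta\log\log\Delta)$ rounds, the per-round relative error must be $o(1/s)$. The paper achieves error $d^{-\Omega(1)}$ only after replacing $\deg(F'_c,v,\ell)$ by the upper bound $\sum_j(\mathbf{Y}^{(1)}_{\ell,j}+\mathbf{Y}^{(2)}_{\ell,j}-\mathbf{Z}^{(1)}_{\ell,j})$. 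These auxiliary variables ignore edges through $v$. The replacement is needed because the degree itself has large certificates, and the natural substitutes have a large Lipschitz constant at $v$.
\end{enumerate}
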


Nearly all of the results discussed above on the independence number extend naturally to the chromatic number setting.
In 2013, Frieze and Mubayi~\cite{frieze2013coloring} generalized Johansson's bound on triangle-free graphs to the hypergraph setting,
proving that every $k$-uniform hypergraph with maximum degree at most $\Delta$ and girth at least $4$ satisfies
\begin{align}\label{eq:FM}
\chi(\cH) \leq (c + o(1)) {\left(\frac{\Delta}{\log \Delta}\right)^{\frac{1}{k-1}}},
\end{align}
for some constant $c > 0$ depending only on $k$.
Analogous bounds in the broader setting of triangle-free hypergraphs have also been obtained by Cooper and Mubayi~\cite{cooper2015list} for $3$-graphs and by Li and Postle~\cite{lipostle2022} for all uniformities $k\geq 3$, where a triangle is a collection of three distinct edges $e,f,g$ and three distinct vertices $u,v,w$ such that $u\in e\cap f$, $v\in f\cap g$, $w\in e\cap g$, and $\{u, v, w\}\cap e \cap f \cap g = \varnothing$.
 
While the bound in~\eqref{eq:FM} is optimal in order of magnitude (see, e.g.,~\cite{bennett2024chromatic}), determining the optimal leading constant has remained a major open problem even in the uncrowded setting.
In 2021, Iliopoulos~\cite{iliopoulos2021improved} established the following result, which extends the bound in ~\eqref{eq:kim} to all uniformities with an explicit leading constant.

\begin{theorem}[\cite{iliopoulos2021improved}]\label{theorem: fotis}
Let $k \geq 3$ be an integer, and let $\eps > 0$ be arbitrary. There exists $\Delta_0\in\N$ such that every $k$-uniform uncrowded hypergraph $\cH$ of maximum degree at most $\Delta \ge \Delta_0$ satisfies
\[
  \chi_\ell(\cH) \le (1+\eps)\, (k-1) {
  \left(\frac{\Delta}{\log\Delta}\right)^{\frac{1}{k-1}}}.
\]
\end{theorem}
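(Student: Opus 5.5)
We prove Theorem~\ref{theorem: fotis} with a semi-random (``nibble'') coloring procedure in the spirit of Kim and Molloy--Reed, adapted to hypergraphs. Set $q = (1+\eps)(k-1)(\Delta/\log\Delta)^{1/(k-1)}$ and fix lists $L(v)$ with $|L(v)| = q$.

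\textbf{Tracked parameters.} The procedure runs in rounds. In each round, every uncolored vertex activates with a small probability $\eta$ and picks a uniformly random color $c$ from its current list $L(v)$. A vertex $v$ keeps $c$ unless $c$ would complete a monochromatic edge. Concretely, for each edge $e \ni v$, the obstruction is that the other $k-1$ vertices of $e$ already have $c$ or receive $c$ in this round. For each uncolored vertex $v$, color $c \in L(v)$, and $0 \le j \le k-1$, we track two quantities:
\begin{itemize}
\item $\ell(v) = |L(v)|$;
\item $d_j(v,c)$, the number of edges $e \ni v$ in which exactly $j$ of the other vertices are colored $c$, all remaining vertices of $e$ are uncolored, and $c$ lies in their lists.
\end{itemize}
A color $c$ is removed from $L(v)$ as soon as some edge containing $v$ has all $k-1$ other vertices colored $c$.

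We use equalizing coin flips, as in Kim/Molloy--Reed, so that each color survives in $L(v)$ with exactly the same probability. The expected one-round changes give a system of recurrences. Roughly, $d_{k-2}(v,c)$ edges each kill $c$ at rate about $\eta/\ell$ times the keep probability. The uncrowded hypothesis makes the events relevant to distinct edges at $v$ nearly independent, and it lets Talagrand's inequality or the Lovász Local Lemma be applied with dependencies of bounded range.

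\textbf{Comparing with the ODE.} Next we compare the random process to its deterministic trajectory. Let $t$ denote the time elapsed. Heuristically, the proportion of uncolored vertices decays like $e^{-t}$, while $\ell$ decays like $\exp\!\big(-\Theta\big(\Delta (t/q)^{k-1}\big)\big)$. This comes from the coupon-collector count: the number of $(k-1)$-tuples of neighbors all colored $c$ is about $\Delta (t/q)^{k-1}$. The degrees $d_j$ shrink correspondingly. We must verify that the process can be run until the ratio $d_{k-1}$-type degree$/\ell$ becomes small, specifically until
\[
\sum_j d_j(v,c)\,\ell^{-(k-1-j)} \le \frac{1}{10k}.
\]
At that point, a final step using the Local Lemma colors the remainder. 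Each uncolored vertex picks a uniform color from its list, and the bad event is a monochromatic edge containing colored and uncolored vertices. Solving the recurrences shows this threshold is reached while $\ell$ is still at least $\Delta^{\Omega(\eps)}$, provided $q \ge (1+\eps)(k-1)(\Delta/\log\Delta)^{1/(k-1)}$. The factor $(k-1)$ arises because the calculation is lossy: we bound the kill probability by treating $(k-1)$-sets through a union-type estimate rather than computing it exactly.

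\textbf{Concentration.} Finally, we prove concentration of every tracked quantity in each round, with error terms small enough to survive about $\eta^{-1}\log\Delta$ rounds. We expect this to be the main obstacle. The variables $d_j(v,c)$ depend on colorings at distance two from $v$, so an exceptional-outcome version of Talagrand's inequality is needed. Girth at least $5$ is exactly what makes the edges through $v$ and their other vertices' neighborhoods essentially disjoint, which bounds the certificate sizes. We would first try the Bruhn--Joos style ``Talagrand with exceptional outcomes'' framework, then apply the Local Lemma over all vertices in each round to ensure every quantity stays within $(1\pm\Delta^{-\delta})$ of its trajectory.
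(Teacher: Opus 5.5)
Your architecture is the right one. It is Iliopoulos's, and it is the nibble the paper runs to prove the stronger Theorem~\ref{theorem: list chromatic}; that theorem implies this statement because $(k-1)^{1/(k-1)}\le k-1$. Your summed finishing condition $\sum_j d_j(v,c)\,\ell^{-(k-1-j)}\le 1/(10k)$ is exactly the quantity bounded in Lemma~\ref{lemma: finishing}.

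\textbf{The central step is asserted, and your heuristic for it is wrong.} The step carrying the content is that this condition is reached while lists are still polynomially large. The finishing sum is about $(k-1)\frac{\Delta}{q^{k-1}}e^{-t}$ with $\frac{\Delta}{q^{k-1}}=\Theta_k(\log\Delta)$, so the process must run until $t=\log\log\Delta-O_k(1)$. At that time your estimate $\exp(-\Theta(\Delta(t/q)^{k-1}))$ says lists have shrunk by $\exp(-\Theta_k(\log\Delta\,(\log\log\Delta)^{k-1}))$, i.e.\ the process is dead.

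What saves it is saturation. A neighbor $u$ is assigned $c$ in round $i$ with probability about $\alpha\beta_i\keep_i\cdot\frac{1}{q_i}\cdot\frac{q_i}{q_1}$: the shrinking list of $u$ and the survival probability of $c$ in it cancel. Summed over the run this is about $(1-\beta_s)/q_1\le 1/q_1$, so lists lose only a factor of roughly $\exp(-\Delta/q_1^{k-1})$ in total.

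To make this rigorous you need an inductive bound on the color-degrees that is closed under one round, and your proposal never states one. The paper's is condition~\ref{Q3}: $\deg(F_c,v,\ell)\le\binom{k-1}{\ell-1}(\alpha\gamma_i/q_i)^{k-\ell}\beta_i^{\ell-1}t_i$, with $t_{i+1}\approx\keep_i^{k-1}t_i$ and $q_{i+1}\approx\keep_i q_i$. Its binomial form makes the union-bound kill probability telescope to $\frac{\alpha^{k-1}t_i}{q_i^{k-1}}(\gamma_{i+1}^{k-1}-\gamma_i^{k-1})$ (Lemma~\ref{lemma: eq well defined}). It also keeps $t_i/q_i^{k-1}$ essentially constant, after which the lower bound on $q_s$ and the finishing estimate take a few lines.

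Two related points:
\begin{itemize}
\item Your account of the factor $k-1$ is not right. The paper uses exactly such a union bound and still gets $(k-1)^{1/(k-1)}$; at constant $k-1$ there is simply slack.
\item If your activation probability $\eta$ is a constant, the process fails at once. The one-round kill probability is of order $\alpha^{k-1}\log\Delta$ at the start and $\alpha\log\Delta$ mid-run. This forces $\alpha=O(\eta/\log\Delta)$ and hence $\Theta(\log\Delta\log\log\Delta)$ rounds.
\end{itemize}

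\textbf{The concentration step is left open, and the plan as stated cannot work.} With arbitrary lists, the initial color-degree of $(v,c)$ can be anything in $[0,\Delta]$. So there is no common trajectory, and two-sided tracking within $(1\pm\Delta^{-\delta})$ is impossible. You must maintain upper bounds only (list sizes can be equalized by trimming) and measure deviations against the target bound rather than the true mean, as the paper does with $\tau_j$.

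The obstacle you identify is real. The count involves the events $c\in L'(x)$ for $x\in e-v$; these assert the absence of a killing edge and are not certifiable. Also, the trials at $v$ itself can change many terms at once. Talagrand with exceptional outcomes might handle this, but you do not carry it out.

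The paper avoids exceptional outcomes as follows:
\begin{itemize}
\item It writes the count as $\mathbf{Y}-\mathbf{Z}$, with both pieces $O(k^2)$-certifiable.
\item It replaces $D(u)$, $L'(u)$, $\tilde U$ by versions $D^*(u)$, $L^*(u)$, $U^*$ that ignore edges through $v$. By girth at least $5$, every trial then affects only $O_k(1)$ terms.
\item It absorbs the discrepancy into an error term $\mathbf{Y}^{(2)}$, which is negligible by a Chernoff bound after conditioning on $c_v$.
\item It computes the auxiliary expectations precisely (Lemma~\ref{lemma: Y 1 Y 2 Z 1 expectations}), so one-round errors are polynomially small.
\end{itemize}

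Until the one-round recurrence and this concentration argument are supplied, the proposal is a plan of the right shape rather than a proof.
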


We note that the bound in Theorem~\ref{theorem: fotis} is far from the scale suggested by the shattering threshold. In fact, the leading constant $k-1$ is substantially larger than the corresponding shattering-threshold constant $(k-1)^{\frac{1}{k-1}}$, which tends to $1$ as $k \to \infty$.
In the coloring setting, this prediction can also be understood through the following ``coupon-collector'' heuristic; see~\cite[Chapter~12]{MolloyReed} for the graph case. Suppose that $\cH$ is a $k$-uniform uncrowded hypergraph of maximum degree $\Delta$, and that each vertex is assigned a color independently and uniformly at random from a set of $q$ colors. 
A color $c$ is ``unavailable'' at a vertex $v$ if there is an edge $e$ containing $v$ such that every vertex in $e - v$ is assigned the same color $c$; for a fixed edge, this occurs with probability $q^{-(k-1)}$. We say that $v$ ``sees'' the color $c$ in its neighborhood if such an edge $e$ exists. 
The coupon-collector heuristic suggests that if $q^{k-1} \log q \lesssim \Delta$, then one should expect some vertex $v$ to see all $q$ colors in its neighborhood,
so that assigning any color to $v$ would create a monochromatic edge, thereby preventing $v$ from being properly colored.
Rearranging $q^{k-1}\log q \gtrsim \Delta$ suggests the threshold
\[
q \gtrsim \left((k-1)\,\frac{\Delta}{\log \Delta}\right)^{\frac{1}{k-1}}.
\]
Indeed, Molloy explicitly conjectured the corresponding bound for $3$-uniform hypergraphs of girth at least $4$ during his plenary lecture at CanaDAM 2019.

\begin{conjecture}[Molloy~\cite{MolloyCanaDAM}]\label{conj: Molloy}
For every $\eps>0$, there exists $\Delta_0 \in \N$ such that every $3$-uniform hypergraph $\cH$ of girth at least $4$ and maximum degree at most $\Delta \ge \Delta_0$ satisfies \[
\chi(\cH) \leq (1+\eps)\sqrt{\dfrac{2\Delta}{\log \Delta}}.
\]
\end{conjecture}

In this paper, we prove Molloy's conjecture for uncrowded hypergraphs, establish improved upper bounds on the chromatic number of uncrowded and linear $k$-uniform hypergraphs, together with several algorithmic applications. Before stating these results formally in the subsequent subsections, we highlight our core contributions below.

\smallskip
\noindent 
\textit{Our main contributions.}\quad
Improving upon the bound of Iliopoulos~\cite{iliopoulos2021improved} (Theorem~\ref{theorem: fotis}), our first main result establishes Conjecture~\ref{conj: Molloy} for uncrowded $3$-graphs and, more generally, proves the corresponding shattering-threshold bound for uncrowded hypergraphs of every uniformity $k \geq 3$.
Specifically, we show that every $k$-uniform uncrowded hypergraph $\cH$ of maximum degree $\Delta$ satisfies
\[
\chi(\cH) \le q \coloneqq (1+o(1))
{\left((k-1)\,\frac{\Delta}{\log\Delta}\right)^{\frac{1}{k-1}}}.
\]
See Theorem~\ref{theorem: list chromatic} for the stronger list-coloring statement.
More generally, our R\"odl Nibble argument replaces the maximum-degree hypothesis with a weaker color-degree condition; see Theorem~\ref{theorem: main}.

Color-degree bounds are intimately tied to palette sparsification and sublinear coloring algorithms (see Section~\ref{subsection: palette} for further discussion). Leveraging this connection, we establish the first palette sparsification theorem for hypergraphs using $o{\left(\Delta^{\frac{1}{k-1}}\right)}$ colors per vertex. This in turn yields a single-pass streaming algorithm that computes a proper $q$-coloring of any uncrowded hypergraph with high probability in $O{\left(n^{1 + o(1)}\right)}$ space.

Finally, via a refinement of a random partitioning procedure of Frieze and Mubayi~\cite{frieze2013coloring}, we show that every $k$-uniform linear hypergraph of maximum degree $\Delta$ satisfies
\[
\chi(\cH) \le (1+o_\Delta(1))(1+o_k(1)) {\left(\frac{\Delta}{\log\Delta}\right)^{\frac{1}{k-1}}},
\]
resolving a conjecture of Verstra\"ete and Wilson~\cite[Conjecture~1]{verstraete2026independent} in a stronger form. While the random partitioning does not directly yield palette sparsification for linear hypergraphs, we show that a density-based case analysis achieves an analogous algorithmic bound, yielding a single-pass streaming algorithm using $O{\left(n^{1 + o(1)}\right)}$ space for linear hypergraphs.

The remainder of this introduction is organized as follows: Section~\ref{subsection: main results coloring} formally states our main results on the chromatic numbers of uncrowded and linear hypergraphs; Section~\ref{subsection: color-degree} discusses the color-degree setting and presents a more general theorem that implies our upper bound on $\chi(H)$ for uncrowded hypergraphs; Section~\ref{subsection: palette} details the algorithmic consequences regarding palette sparsification and streaming algorithms; Section~\ref{subsection: summary} outlines several directions for future work; and Section~\ref{subsection: notation} collects the standard notation and terminology used throughout the paper.

\subsection{Coloring uncrowded and linear hypergraphs}\label{subsection: main results coloring}

In this subsection, we formally state our main results on the chromatic numbers of uncrowded and linear hypergraphs.
We begin with our bound for uncrowded hypergraphs, which sharpens Theorem~\ref{theorem: fotis} to match the threshold predicted by the coupon-collector heuristic and, when $k=3$, establishes Conjecture~\ref{conj: Molloy} for uncrowded hypergraphs.
 
\begin{tcolorbox}[enhanced, breakable]
\begin{theorem}\label{theorem: list chromatic}
Let $k \geq 2$ be an integer, and let $\eps > 0$ be arbitrary. There exists $\Delta_0\in\N$ such that every $n$-vertex $k$-uniform uncrowded hypergraph $\cH$ of maximum degree at most $\Delta \ge \Delta_0$ satisfies
\[
  \chi_\ell(\cH) \le (1+\eps){
  \left((k-1)\,\frac{\Delta}{\log\Delta}\right)^{\frac{1}{k-1}}}.
\]
\end{theorem}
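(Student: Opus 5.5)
We plan to prove Theorem~\ref{theorem: list chromatic} with a Rödl-nibble / semi-random coloring procedure in the style of Kim, Molloy--Reed and Iliopoulos. The key change is in the bookkeeping: we track \emph{colour-degrees} rather than vertex degrees. This is what lets the leading constant reach the coupon-collector value.

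\textbf{Parameters and tracked quantities.} Fix lists $L(v)$ of size $q=(1+\eps)\bigl((k-1)\Delta/\log\Delta\bigr)^{1/k-1}$. For a partial colouring, an uncoloured vertex $v$ and a colour $c\in L(v)$, we track two kinds of quantity.
\begin{itemize}
\item The list size $\ell(v)=|L(v)|$ of currently available colours.
\item For each $1\le j\le k-1$, the colour-degree $d_j(v,c)$: the number of edges $e\ni v$ in which exactly $j$ vertices other than $v$ are still uncoloured, the remaining $k-1-j$ are already coloured $c$, and $c$ is still available at each of the $j$ uncoloured vertices.
\end{itemize}
Only the $(k-1)$-type edges matter initially. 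The invariant driving the argument is that the ``danger'' $\sum_j d_j(v,c)\,\ell^{-j}$ stays small compared with $\log$ of the list size.

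\textbf{One nibble step.} In each round:
\begin{itemize}
\item every uncoloured vertex activates with a small probability $\eta$ and picks a uniform colour from its list;
\item a vertex keeps its colour unless keeping it would complete an edge whose other vertices all hold $c$ (either already coloured $c$ or picked $c$ this round);
\item we apply Molloy--Reed style equalizing coin flips, so that each colour is removed from $L(v)$ with exactly the same probability for all colours.
\end{itemize}
Under this step, the expected evolution is:
\begin{itemize}
\item lists shrink by a factor of about $\exp(-\eta\,\cdot\,\text{danger})$;
\item uncoloured neighbourhoods shrink by a factor of about $(1-\eta)$;
\item $d_j$ flows into $d_{j-1}$ as vertices receive colour $c$.
\end{itemize}
Uncrowdedness (girth at least~$5$) is what makes these quantities concentrate. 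It guarantees that distinct edges through $v$ interact only through $v$, so the relevant events are nearly independent. Concentration then follows from Talagrand's inequality or exceptional-outcome versions of it, together with the Lovász Local Lemma (LLL) at each round.

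\textbf{Analysing the trajectory.} Solving the resulting differential-equation system gives the following picture. As time runs, $\ell$ decays from $q$ down to roughly $q^{\eps'}$, while the effective degree ratio $d/\ell^{k-1}$ decreases. The choice $q^{k-1}\log q\ge(1+\eps)\Delta/(k-1)\cdot(k-1)$, i.e. the displayed value of $q$, is exactly what keeps $\ell$ above $\Delta^{\Omega(\eps)}$ at the moment the ratio $d_{k-1}/\ell^{k-1}$ becomes $\le 1/(10k\,\ell^{\,\cdot})$, i.e. of order $\ell^{-(k-1)}\cdot\ell^{k-1}/(e k)$. The heuristic behind this: a colour survives at $v$ with probability about $\exp(-\Delta/q^{k-1})$, so we need $\Delta/q^{k-1}<\log q\,(1-\delta)$. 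Here the factor $k-1$ comes from $\log q=\tfrac{1}{k-1}\log\Delta$.

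\textbf{Finishing.} Once the colour-degree condition $\sum_j d_j(v,c)\le \ell/(ek)$ holds with $\ell$ large, we complete the colouring by the LLL: each vertex picks a random colour from its list, and the bad events are monochromatic edges. Equivalently, we invoke the colour-degree result Theorem~\ref{theorem: main}, if it is permitted as a black box.

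\textbf{Main obstacle.} The hard step is the sharp analysis of the partially coloured edges, i.e. the $d_j$ with $j<k-1$. In Iliopoulos' bound these are handled crudely, which costs the factor $(k-1)$ versus $(k-1)^{1/(k-1)}$. To do better we must show that the expected loss of colour $c$ at $v$ is governed essentially by $\sum_j d_j\prod(\text{prob. others pick }c)$, with these lower-order terms staying negligible, or at least evolving consistently with the main term. Concretely, I would first try to prove that the weighted sum $D(v,c)=\sum_j d_j(v,c)\,\ell^{-j}$ is a supermartingale-like quantity along the nibble, with concentration. Bounding its upper tail uses girth~$5$: each partially coloured edge was produced by disjoint earlier random choices.
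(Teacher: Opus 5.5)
Your outline has the right architecture: a wasteful nibble tracking list sizes and the colour-degrees $d_j(v,c)$ split by the number of uncoloured vertices, equalizing coin flips, Talagrand and the Local Lemma in each round, and an asymmetric Local Lemma at the end. However, the step that actually produces the constant $(k-1)^{1/(k-1)}$ is missing, and the tool you propose for it cannot supply it.

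In a round, $c$ is discarded at $v$ with probability about $\sum_j d_j(v,c)(\eta/\ell)^j$. You say this yourself under ``Main obstacle''; your nibble-step claim of a shrink factor $\exp(-\eta\cdot\text{danger})$ with $D=\sum_j d_j\ell^{-j}$ contradicts it. So the list loss depends on the whole profile $(d_j)_j$, not on $D$ alone. Knowing only $D$, the best loss bound is $\eta D$ per round, and this costs you in two ways:
\begin{itemize}
\item Even along the exact trajectory of $D$, summing $\eta D$ over the run overestimates the true loss exponent, to leading order, by the factor $H_{k-1}=1+\frac12+\dots+\frac1{k-1}$. That would only give the constant $((k-1)H_{k-1})^{1/(k-1)}$.
\item With only a supermartingale bound $D_i\le D_1$, you lose a factor $\log\log\Delta$, which is fatal.
\end{itemize}

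The paper instead propagates an explicit binomial profile. Its ingredients are the activation probability $\alpha$, the list size $q_i$, a degree parameter $t_i$ (with $t_1=\Delta$ and $t_i/q_i^{k-1}$ essentially invariant), the recursively defined probability $\beta_i$ of still being uncoloured at round $i$, and $\gamma_i=\sum_{j<i}\beta_j$. It maintains
\[
d_{r-1}(v,c)\le\tbinom{k-1}{r-1}(\alpha\gamma_i/q_i)^{k-r}\beta_i^{r-1}t_i \qquad (2\le r\le k),
\]
reflecting that each other vertex of the edge is either still uncoloured or was coloured $c$ in some earlier round. The discard probability in round $i$ is then at most $\frac{\alpha^{k-1}t_i}{q_i^{k-1}}(\gamma_{i+1}^{k-1}-\gamma_i^{k-1})$, which telescopes. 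Together with $\alpha\gamma_s\le 1/(1-\eps/6)$, this bounds the total loss exponent by $\frac{\log\Delta}{(k-1)(1+\eps/4)}$, so $q_s\gg\Delta^{\eps/(6(k-1))}$.

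The real work is showing that this profile is reproduced after every round, with relative error small enough to survive $\Theta(\eps^{-1}\log\Delta\log\log\Delta)$ rounds. The paper does this via auxiliary variables that ignore discards through edges containing $v$. Your ``solving the differential-equation system'' asserts this rather than proving it.

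Two further points need repair.

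\textbf{Finishing condition.} Your condition $\sum_j d_j(v,c)\le\ell/(ek)$ is the graph condition. For $k\ge3$, at the natural stopping time $d_{k-1}$ can still be of order $\beta_s^{k-1}q_s^{k-1}\log\Delta\gg q_s$. The final Local Lemma needs only the weighted quantity. The dependency sum for a monochromatic-edge event is at most $k\max_{v,c}\sum_j d_j(v,c)q_s^{-j}$. The profile bounds this by $\frac{k\,t_s}{q_s^{k-1}}(k-1)(\alpha\gamma_s+\beta_s)^{k-2}\beta_s=O\bigl((\log\Delta)^{-\eps/12}\bigr)$ once $\beta_s\le(\log\Delta)^{-1-\eps/12}$.

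Invoking Theorem~\ref{theorem: main} at that point is not a finishing step at all. With $d=\Delta$ it is the statement you are proving, so using it makes the nibble superfluous.

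\textbf{Activation probability.} Your activation probability cannot be a fixed constant. Once a constant fraction of the vertices is coloured, $d_1(v,c)/\ell$ can be $\Theta(\log\Delta)$, so a round discards $c$ with probability about $\eta\, d_1/\ell$. You need $\eta=\Theta(\eps/\log\Delta)$; the paper takes $\alpha=\eps/(6\log\Delta)$. This is what keeps the keep probabilities above $1-\eps/6$, and what makes uncoloured neighbourhoods shrink by a factor of about $1-\eta$ per round, as you assume.
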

\end{tcolorbox}
 
Theorem~\ref{theorem: list chromatic} immediately implies Theorem~\ref{theorem: independence main}, since every color class in a proper coloring is an independent set. 
The leading constant $(k-1)^{\frac{1}{k-1}}$ is precisely the value for which the implied independence number bound matches the shattering-threshold constant. 
Additionally, for $k=3$, this gives the leading constant $\sqrt{2}$, thereby establishing Conjecture~\ref{conj: Molloy} for uncrowded hypergraphs.
We note that Theorem~\ref{theorem: list chromatic} follows as a special case of a more general result, stated as Theorem~\ref{theorem: main} in Section~\ref{subsection: color-degree}, which replaces the maximum-degree hypothesis with a weaker color-degree condition.

Well-known results on random regular $k$-graphs (e.g.,~\cite{bennett2024chromatic}) yield the existence of a $k$-uniform uncrowded hypergraph of maximum degree at most $\Delta$ with 
\[
\chi(\cH) \ge (1-o(1)){\left(\frac{k-1}{k} \, \frac{\Delta}{\log\Delta}\right)^{\frac{1}{k-1}}};
\]
thus the leading constant in Theorem~\ref{theorem: list chromatic} cannot, in general, be improved beyond $\left(\frac{k-1}{k}\right)^{\frac{1}{k-1}}$.
Since both constants tend to~$1$ as $k\to\infty$, the bound in Theorem~\ref{theorem: list chromatic} is asymptotically tight as $k\to\infty$.
As remarked in \S\ref{subsection: background and results}, the constant $(k-1)^{\frac{1}{k-1}}$ in Theorem~\ref{theorem: list chromatic} coincides with the shattering threshold for independent sets and colorings in sparse random $k$-graphs, and so the remaining multiplicative gap of $k^{\frac{1}{k-1}}$ reflects a fundamental computational barrier~\cite{RV, wein2020optimal, gamarnik2021overlap, gamarnik2025optimal, dhawan2026low}, suggesting that closing this gap would likely require substantially new, non-constructive techniques. See also~\cite[\S2]{DMV26ind}, the first arXiv version of this paper, for an informal justification of this barrier for independent sets.

To place Theorem~\ref{theorem: list chromatic} in context, we briefly compare its proof with the methods underlying the preceding results.
Frieze and Mubayi~\cite{frieze2013coloring} generalized Johansson's entropy-based nibble approach~\cite{Joh_triangle} from graphs to hypergraphs, whereas Iliopoulos~\cite{iliopoulos2021improved} extended Kim's argument~\cite{Kim95}.
Although Johansson's method applies to triangle-free graphs (unlike Kim's, which requires girth at least $5$), it does not achieve the shattering-threshold constant $1$ as in~\eqref{eq:kim}; the approach of Frieze and Mubayi inherits this limitation.
Molloy~\cite{Molloy} later achieved the constant $1$ for triangle-free graphs using entropy compression, but that approach does not appear to extend directly to hypergraphs.

Our proof instead follows the Kim--Iliopoulos line of argument.
At a high level, Kim's nibble tracks the number of available colors at each vertex alongside the number of incident edges capable of ruling out each vertex-color pair.
In hypergraphs, partially colored edges yield constraints spanning varying numbers of uncolored vertices, which can be encoded as edges of different sizes.
Whereas Iliopoulos controls these constraints by tracking a separate color-degree parameter for each edge size, we introduce a single average color degree that bounds all relevant sizes simultaneously.
Unlike our earlier work on independent sets presented in the first arXiv version~\cite{DMV26ind} of this paper (Theorem~\ref{theorem: independence main})---where the core novelty was a carefully chosen damping factor in the underlying nibble process---the present paper introduces a fundamentally different framework, in which the nibble dynamically tracks list sizes and color-degrees to obtain a list-coloring bound matching the shattering threshold (see Section~\ref{subsection: nibble overview} for a detailed overview).

We next turn to the linear setting.
Duke, Lefmann, and R\"odl~\cite{duke1995uncrowded} showed that Theorem~\ref{theorem: AKPSS} can be extended to linear hypergraphs via a random sampling technique inspired by Ajtai, Koml\'os, and Szemer\'edi's~\cite{AKS1980, ajtai1981dense} reduction of locally sparse graphs to the triangle-free setting.
In the graph case, Alon, Krivelevich, and Sudakov~\cite{AKSConjecture} generalized Shearer's argument by designing a random partitioning technique that reduces the problem of coloring locally sparse graphs to that of coloring triangle-free graphs; i.e., rather than showing that there exists a ``large'' triangle-free induced subgraph of $G$ as Ajtai, Koml\'os, and Szemer\'edi did, they partitioned $G$ into ``few'' triangle-free subgraphs, thereby extending Johansson's result on coloring triangle-free graphs~\cite{Joh_triangle} to this setting.
In a similar vein, Frieze and Mubayi~\cite{frieze2013coloring} used random partitioning to show that the bound in~\eqref{eq:FM} for $k$-graphs of girth at least $4$ extends to all linear $k$-graphs.

\begin{theorem}[\cite{frieze2013coloring}]\label{theo: FM coloring}
    Let $k \geq 3$ be an integer. There exists $\Delta_0\in\N$ such that every $k$-uniform linear hypergraph $\cH$ of maximum degree at most $\Delta \geq \Delta_0$ satisfies
    \[
      \chi(\cH) \le (c_k + o(1)) {\left(\frac{\Delta}{\log\Delta}\right)^{\frac{1}{k-1}}},
    \]
    for some constant $c_k > 0$ depending only on $k$.
\end{theorem}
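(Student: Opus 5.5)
The statement is the Frieze--Mubayi theorem (Theorem~\ref{theo: FM coloring}), which only claims some constant $c_k$. The plan is to reduce from linear hypergraphs to hypergraphs of girth at least $5$ by a random partition, and then color each part using Theorem~\ref{theorem: list chromatic}. Using Theorem~\ref{theorem: fotis} or Theorem~\ref{theorem: list chromatic} on each part is enough.

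\textbf{Step 1: the random partition.} Let $\cH$ be linear with maximum degree $\Delta$. Set $s = \lceil \Delta^{\delta} \rceil$ for a small $\delta = \delta(k) > 0$, and color the vertices uniformly at random with colors from $[s]$. Let $\cH_i$ be the subhypergraph induced on color class $i$. The expected degree of a vertex in $\cH_i$ is $\Delta/s^{k-1}$, and the degree is a sum of indicators over edges at $v$ that are pairwise disjoint outside $v$ by linearity. Hence a Chernoff bound gives degree at most $(1+o(1))\Delta/s^{k-1}$ except with probability $\exp(-\Delta^{\Omega(1)})$.

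\textbf{Step 2: cycles of length $3$ and $4$.} In a linear hypergraph, the number of $3$- and $4$-cycles through a vertex is at most $O(\Delta^{3}k^{O(1)})$. Such a cycle spans at least $3(k-1)$ vertices besides $v$ (for a $3$-cycle, roughly $3k-3$ vertices in total), so it survives in one class with probability about $s^{-(3k-4)}$. Thus the expected number of short cycles at $v$ in $\cH_i$ is $O(\Delta^3 s^{-(3k-4)}) \ll 1$, which for $k \ge 3$ and suitable $\delta$ is much smaller than $\Delta/s^{k-1}$. I would then delete one vertex from each surviving short cycle, or better, move it into an extra ``garbage'' class handled separately. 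The Lovász Local Lemma over vertices applies, since the dependencies have polynomial degree in $\Delta$.

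\textbf{Step 3: color the parts.} The deleted or garbage vertices induce a hypergraph of small maximum degree. They can be colored greedily, or recursively with $O((\Delta')^{1/(k-1)})$ extra colors, which is a lower-order term. Each $\cH_i$ is uncrowded with maximum degree $\Delta_i \approx \Delta/s^{k-1}$, so Theorem~\ref{theorem: list chromatic} colors it with $O((\Delta_i/\log\Delta_i)^{1/(k-1)})$ colors, using disjoint palettes for different $i$. The total is
\[
s\cdot O\!\left(\frac{\Delta^{1/(k-1)}}{s(\log \Delta)^{1/(k-1)}}\right)=O\!\left(\left(\frac{\Delta}{\log\Delta}\right)^{\frac{1}{k-1}}\right),
\]
since $\log\Delta_i = (1-\delta(k-1))\log\Delta$.

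\textbf{Main obstacle.} The hard part is the short-cycle cleanup. A naive deletion can leave deleted sets of large degree, so one must control the maximum degree into the deleted set via the Local Lemma. The key count is the number of $3$-cycles through a vertex $v$ in a linear hypergraph. In such a cycle $e_0,e_1,e_2$ with $v \in e_0\cap e_1$, the edge $e_2$ meets both $e_0$ and $e_1$ at other vertices. This gives at most $\Delta^2 k^2$ choices rather than $\Delta^3$, which makes the probability calculation comfortable.
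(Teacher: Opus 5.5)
Your skeleton matches the paper's route to Corollary~\ref{corollary: linear}, which implies this statement: randomly partition the vertices into parts that are uncrowded with maximum degree about $\Delta/s^{k-1}$, then apply Theorem~\ref{theorem: list chromatic} to each part with disjoint palettes. However, the short-cycle cleanup, which you rightly call the crux, is never carried out, and the mechanism you propose for it fails.

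First, $\delta$ cannot be small. Through $v$ there are $O(\Delta^2)$ loose $3$-cycles, each needing $3k-4$ further vertices in $v$'s class, and $O(\Delta^3)$ loose $4$-cycles, each needing $4k-5$ further vertices. So the expected number of surviving short cycles at $v$ is $O(\Delta^{2}s^{-(3k-4)}+\Delta^{3}s^{-(4k-5)})$. This is $o(1)$ only when $\delta>3/(4k-5)$, while you also need $\delta<1/(k-1)$ so that $\Delta/s^{k-1}\to\infty$. Your cruder bound $O(\Delta^3 s^{-(3k-4)})$ would force $\delta>3/(3k-4)>1/(k-1)$, leaving no admissible $\delta$ at all. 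Second, and more seriously, even inside the window $3/(4k-5)<\delta<1/(k-1)$ this expectation is only $\Delta^{-c}$ with $c<(k-2)/(k-1)<1$.

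The Local Lemma cannot be run on such first-moment bounds. Consider the event that $v$ lies in a surviving short cycle, or that $v$ has large degree into your garbage set. It depends on the colors within distance $3$ or $4$ of $v$, so it shares dependencies with $\Delta^{\Theta(1)}$ other events (e.g.\ $O(\Delta^6)$). Markov, by contrast, gives a failure probability of only $\Delta^{-c}$, or $\Delta^{-c}\log\Delta$ for a garbage-degree threshold of $\Delta/\log\Delta$. ``Polynomial dependency degree'' is exactly what rules out using probabilities that are polynomially small with a small exponent. The number of surviving short cycles is a polynomial of degree $3k-4$ or $4k-5$ in the part indicators, not a sum of independent variables, so Chernoff does not apply. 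What is missing is a genuine concentration bound for this count.

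The paper supplies it in Lemma~\ref{lemma: partition 1} via the Kim--Vu inequality (Theorem~\ref{theorem: Kim Vu concentration}). With $m_1=\Delta^{3/(4k-5)+\gamma}$ parts, every vertex lies in at most $2C(\log\Delta)^{4k-5}$ short cycles inside its part, except with probability $\Delta^{-10}$. The paper then removes the remaining cycles not with a garbage class but with a second independent partition of each part into $(\log\Delta)^5$ pieces (Lemma~\ref{lemma: partition for coloring}). At that stage, the probability that a short cycle through $v$ survives is $O((\log\Delta)^{-(11k-15)})$, and each degree event interacts with only $O(K\Delta_1)$ cycle events. Hence the general Local Lemma (Theorem~\ref{theorem: general LLL}) applies. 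Crucially, the degree drops by exactly $m_2^{k-1}$, so the total color count stays $O((\Delta/\log\Delta)^{1/(k-1)})$.

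A garbage class might be made to work, but only after proving a tail bound on the garbage degree of comparable strength, via Kim--Vu or a sunflower/moment argument. Your proposal does not provide one.
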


Their argument yields $c_k \to \infty$ as $k \to \infty$.
Informally, their reduction proceeds in the following three steps:
\begin{enumerate}[label=(FM\arabic*)]
    \item\label{FM1} For a well-chosen $m_1$, construct a partition $U_1 \cup \dots \cup U_{m_1}$ of $V(\cH)$ via random partitioning.
    An application of the Kim--Vu Concentration Inequality together with the Lov\'asz Local Lemma shows that each vertex $u \in U_i$ is contained in at most $\mathrm{polylog}(\Delta)$ $3$-cycles in $\cH[U_i]$.

    \item\label{FM2} For a well-chosen $m_2$, construct a partition $V_{i, 1} \cup \dots \cup V_{i,m_2}$ of each $U_i$ via random partitioning.
    An application of the Lov\'asz Local Lemma shows that each vertex $v \in V_{i, j}$ is contained in at most $O(1)$ $3$-cycles in $\cH[V_{i, j}]$.

    \item\label{FM3} Finally, for $m_3 = O(1)$, form a partition $V_{i, j, 1} \cup \dots \cup V_{i, j, m_3}$ of each $V_{i, j}$ via a greedy argument such that $\cH[V_{i, j, l}]$ has girth at least $4$ for all $i, j, l$.
\end{enumerate}
They show that $\Delta(\cH[V_{i, j, l}]) = O(\Delta / (m_1m_2)^{k-1})$ for each $i, j, l$ and complete the proof by applying~\eqref{eq:FM} to each $\cH[V_{i, j, l}]$ using pairwise disjoint color sets.

In Section~\ref{section: linear reduction}, we provide a similar reduction from the linear setting to the uncrowded setting.
Notably, our argument bypasses the final greedy step~\ref{FM3}, which is the main source of the loss in $c_k$ in Theorem~\ref{theo: FM coloring}.
Instead, a more careful analysis of step~\ref{FM2}, using the general form of the Lov\'asz Local Lemma rather than its symmetric version, allows us to obtain a leading constant $c_k$ tending to $1$ as $k\to\infty$, extending the analogous independence-number bound established in the first arXiv version of this paper~\cite[Corollary~1.6]{DMV26ind}.

\begin{tcolorbox}[enhanced, breakable] 
\begin{corollary}\label{corollary: linear}
Let $k \geq 3$ be an integer, and let $\eps > 0$ be arbitrary. There exists $\Delta_0\in\N$ such that every $k$-uniform linear hypergraph $\cH$ of maximum degree at most $\Delta \geq \Delta_0$ satisfies
\[
  \chi(\cH) \le (1+\eps){
  \left(\frac{(4k-5)(k-1)}{k-2}\,\frac{\Delta}{\log\Delta}\right)^{\frac{1}{k-1}}}.
\]
\end{corollary}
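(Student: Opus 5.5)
Our plan is to reduce Corollary~\ref{corollary: linear} to Theorem~\ref{theorem: list chromatic} by random partitioning, following steps~\ref{FM1} and~\ref{FM2} of Frieze and Mubayi but replacing the greedy step~\ref{FM3} by a deletion argument inside step~\ref{FM2}. We choose $m = m_1 m_2$ parts in total. Each part $V_{i,j}$ should induce a hypergraph of maximum degree about $\Delta/m^{k-1}$. Then we delete or recolor a sparse set of vertices so that the remainder is uncrowded, and color each part with its own palette. This gives roughly $m\bigl((k-1)\Delta'/\log\Delta'\bigr)^{1/(k-1)}$ colors, where $\Delta' \approx \Delta/m^{k-1}$.

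\textbf{Step 1 (coarse partition).} Assign each vertex a uniformly random part among $m_1 = \Delta^{\delta}$ parts, for a small $\delta>0$. For each $v\in U_i$, the degree of $v$ in $\mathcal H[U_i]$ has mean $\Delta/m_1^{k-1}$. We count the $r$-cycles through $v$ for $r\in\{3,4\}$; $2$-cycles are absent by linearity. In a linear hypergraph, a vertex lies in $O(\Delta^{r-1}(k-1)^{r})$ cycles of length $r$, roughly, since the closing edge is determined by two vertices. A cycle survives in $U_i$ with probability about $m_1^{-(r(k-1))}$. Kim--Vu concentration together with the Lov\'asz Local Lemma then gives, for every vertex simultaneously, degree $(1+o(1))\Delta/m_1^{k-1}$ and only $\mathrm{polylog}(\Delta)$ short cycles. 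This step is standard.

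\textbf{Step 2 (fine partition with the general LLL).} Split each $U_i$ into $m_2$ parts. Now a vertex lies in few short cycles inside its part, and these cycles should be destroyed by marking vertices rather than by the greedy $O(1)$-recoloring of step~\ref{FM3}. Concretely, for every surviving $3$- or $4$-cycle we mark one of its vertices, then color the marked set $M$ separately with a small number of extra colors. We aim to show that $\mathcal H[M]$ has tiny maximum degree, so that Spencer's greedy bound or a trivial bound suffices. The general (asymmetric) LLL is needed because the bad events, such as ``$v$ has too large degree'' and ``$v$ lies in a cycle'', have very different probabilities and dependency counts.

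\textbf{Where the constant comes from.} The constant $\frac{(4k-5)(k-1)}{k-2}$ suggests that $m_2$ cannot be taken arbitrarily large. It is limited by the requirement that the expected number of short cycles through a vertex in a fine part stays below $1$ after the weighting of the general LLL. This yields an extra factor of $\frac{4k-5}{k-2}$ on the effective degree. The $4k-5$ presumably counts the vertices of a $3$-cycle, namely $3k-3$, or of a $4$-cycle, namely $4k-4$, minus shared vertices. The intended final bound is then $\chi \le (1+\varepsilon)\bigl(\frac{(4k-5)(k-1)}{k-2}\Delta/\log\Delta\bigr)^{1/(k-1)}$.

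\textbf{Main obstacle.} The main difficulty is this balancing in Step~2. The total degree loss must be only a constant factor, not a polynomial one, while short cycles are still eliminated using a bounded number of extra colors. We would first try to optimize the marking probability $p$ against $m_2$ in the LLL weights, and read off the exponent condition that produces $(4k-5)/(k-2)$.
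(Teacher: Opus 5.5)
There is a genuine gap, and it sits exactly where the constant comes from.

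\textbf{Step~1 fails for small $\delta$.} With $m_1=\Delta^{\delta}$, you cannot get $\mathrm{polylog}(\Delta)$ short cycles per vertex. In a linear hypergraph a vertex can lie in $\Theta(\Delta^{3})$ loose $4$-cycles (e.g.\ in Steiner systems). Each such cycle has $4(k-1)-1=4k-5$ vertices besides $v$, so the expected number surviving in the part of $v$ is of order $\Delta^{3}/m_1^{4k-5}=\Delta^{3-(4k-5)\delta}$. This is polynomially large, and Kim--Vu only concentrates around that mean. Loose $3$-cycles impose the weaker requirement $m_1\ge\Delta^{2/(3k-4)}$. The paper therefore takes $m_1=\lceil\Delta^{3/(4k-5)+\gamma}\rceil$. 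Then the expected number of surviving loose $4$-cycles through $v$ is $O(\Delta^{-(4k-5)\gamma})=o(1)$, and all partial-derivative expectations in Kim--Vu are $O(1)$. Kim--Vu with $\lambda=\Theta(\log\Delta)$ plus the symmetric LLL then gives at most $O((\log\Delta)^{4k-5})$ loose $3$- and $4$-cycles per vertex per part, with part degree $(1+\eta)\Delta/m_1^{k-1}$.

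\textbf{The constant comes from $m_1$, not from balancing $m_2$.} In your own count $m^{k-1}\Delta'\approx\Delta$, so $m\bigl((k-1)\Delta'/\log\Delta'\bigr)^{1/(k-1)}=\bigl((k-1)\Delta/\log\Delta'\bigr)^{1/(k-1)}$. The only loss is the ratio $\log\Delta/\log\Delta'$, which depends only on the polynomial size of $m$; a polylogarithmic $m_2$ changes $\log\Delta'$ by only $O(\log\log\Delta)$. Being forced to take $m\ge\Delta^{3/(4k-5)}$ gives $\Delta'=\Delta^{(k-2)/(4k-5)-o(1)}$, since $(4k-5)-3(k-1)=k-2$, and hence the factor $(4k-5)/(k-2)$. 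Conversely, if your plan worked with small $\delta$, it would give a constant near $(k-1)^{1/(k-1)}$, essentially the open Conjecture~\ref{conj: linear-shattering}. So your unanalysed marking step would be carrying all the difficulty. Note also that Spencer's bound concerns the independence number, and greedily coloring $\mathcal{H}[M]$ costs a negligible number of colors only if $\Delta(\mathcal{H}[M])=o(\Delta/\log\Delta)$, which you do not justify.

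\textbf{No marking is needed once Step~1 is done correctly.} Refine each part into $m_2=\lceil(\log\Delta)^5\rceil$ pieces. A given short cycle survives with probability at most $m_2^{-(3k-4)}$, so $v$ lies in a surviving one with probability $O((\log\Delta)^{-(11k-15)})$. Degree events have probability $\exp(-\Delta_1/\mathrm{polylog}(\Delta))$. The general LLL, with weights $e^{-(\log\Delta)^2}$ and $4C(\log\Delta)^{-(11k-15)}$ and dependency counts $O(\Delta_1^2)$, $O(K\Delta_1)$, $O(K^2)$, yields fine parts containing no loose $3$- or $4$-cycles at all. By linearity these parts are uncrowded, with maximum degree $(1+\eta)\Delta/(m_1m_2)^{k-1}$. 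Theorem~\ref{theorem: list chromatic} applied to each part with disjoint palettes then finishes the proof.
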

\end{tcolorbox}

The proof of Corollary~\ref{corollary: linear} applies Theorem~\ref{theorem: list chromatic} to the parts of a random partition of $\cH$ and then colors these parts using pairwise disjoint sets of colors.
This argument does not extend to list coloring: the available colors are prescribed in advance and need not be disjoint across the parts.
Thus, extending the bound in Corollary~\ref{corollary: linear} to the list-coloring setting would require new ideas.

\subsection{The color-degree setting}\label{subsection: color-degree}

Let $\cH$ be a hypergraph and let $L \colon V(\cH) \to 2^\N$ be a list assignment for $\cH$.
A useful feature of list coloring is that rather than working with $\cH$ itself, one may instead impose structural constraints on the hypergraphs $H_c \coloneqq \cH[\{v \in V(\cH) : c \in L(v)\}]$.
For instance, since $\Delta(H_c) \le \Delta(\cH)$, requiring an upper bound on $\Delta(H_c)$ for every color $c$ is weaker than imposing the same upper bound on $\Delta(\cH)$. Accordingly, there exist a number of results in the literature in which the number of available colors assigned to each vertex is a function of $\max_c \Delta(H_c)$ as opposed to $\Delta(\cH)$.
This framework, often referred to as the \emph{color-degree setting}, was pioneered by Kahn~\cite{kahn1996asymptotically}, Kim~\cite{Kim95}, Johansson~\cite{Joh_triangle,J96-Kr}, and Reed~\cite{Reed}, among others. For a selection of more recent examples in the graph setting, see~\cite{BohmanHolzman, ReedSud, LohSudakov, alon2020palette, cambie2022independent, KangKelly, GlockSudakov, anderson2025coloring, anderson2024coloring, AndersonBernshteynDhawan, bradshaw2025toward}.

For higher uniformities, this setting has remained largely unexplored.
We note that both Iliopoulos's proof of Theorem~\ref{theorem: fotis} and Li and Postle's recent work on triangle-free hypergraphs~\cite{lipostle2022} extend naturally to this setting, although they elected not to state their results in this fashion.
The main technical result of this paper is a generalization of Theorem~\ref{theorem: list chromatic} in which the maximum degree condition is replaced by the weaker color-degree condition.
Before we state our result, we make a few definitions.
Let $\cH$ be a $k$-uniform hypergraph and let $L$ be a list assignment for $\cH$.
For a color $c \in L(v)$, the \emph{color degree} of a vertex $v$ with respect to $c$ is the number of edges $e \in E(\cH, v)$ such that $c \in \bigcap_{u \in e} L(u)$, where $E(\cH, v)$ denotes the set of edges containing $v$. Note that the color degree of $v$ with respect to any color is at most the degree of $v$ in $\cH$, but it may be substantially smaller when the lists are diverse.

\begin{tcolorbox}[enhanced, breakable]
\begin{theorem}\label{theorem: main}
Let $k \geq 2$ be an integer, and let $\eps > 0$ be arbitrary. There exists $d_0\in\N$ such that the following holds for every $d\geq d_0$.
Let $\cH$ be a $k$-uniform uncrowded hypergraph, and let $\cL\,:\,V(\cH) \to 2^\N$ be a list assignment such that:
\begin{enumerate}[label=\textup{(C\arabic*)}]
\item\label{item: list size in main thm}
for every $v \in V(\cH)$, $|\cL(v)| \ge (1+\eps) {\left((k-1) (d/\log d)\right)^{\frac{1}{k-1}}}$; and
\item\label{item: color degree in main thm} for every $v \in V(\cH)$ and $c \in \cL(v)$, $|\{e \in E(\cH, v)\,:\,c \in \bigcap_{u \in e} \cL(u)\}| \le d$.
\end{enumerate}
Then $\cH$ admits an $\cL$-coloring.
\end{theorem}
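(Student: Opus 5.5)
We will prove Theorem~\ref{theorem: main} by a Kim-style R\"odl nibble in the color-degree setting, in the spirit of Iliopoulos's proof of Theorem~\ref{theorem: fotis}. The main change is that we track a single averaged color-degree parameter instead of one parameter per edge size. At each stage $i$ we maintain a partial $\cL$-coloring, a current list $L_i(v)\subseteq \cL(v)$ for every uncolored vertex $v$, and the family of ``live'' constraints. A live constraint for the pair $(v,c)$ is an edge $e\ni v$ such that every colored vertex of $e$ already received $c$ and every uncolored vertex $u \in e$ has $c\in L_i(u)$. Such an edge acts as a hyperedge of size $s = |e\cap U_i|\in\{2,\dots,k\}$ on the uncolored set $U_i$.

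The invariants we keep are a lower bound $|L_i(v)|\ge \ell_i$ and a weighted color degree
\[
D_i(v,c)\coloneqq \sum_{s=2}^{k} d_{i,s}(v,c)\,\ell_i^{\,s-k}\le t_i ,
\]
where $d_{i,s}(v,c)$ counts the live size-$s$ constraints at $(v,c)$. The weight $\ell_i^{s-k}$ is chosen so that a size-$s$ constraint is exactly as dangerous as $\ell_i^{k-s}$ full-size constraints. Up to constants, it is the probability that the $s-1$ remaining uncolored vertices all pick $c$, normalized against the probability for a full edge. Initially $\ell_0=(1+\eps)((k-1)d/\log d)^{1/(k-1)}$ and $t_0\le d$.

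In each round every uncolored vertex is activated with a small probability $\theta$ and tentatively takes a uniform color from its current list. We use equalizing coin flips (Kim's trick) so that each color is removed from $L(v)$ with the same probability. A tentative color is kept unless it completes a monochromatic edge together with colored and other activated vertices. Then $c$ is deleted from $L(u)$ whenever some live constraint at $(u,c)$ has all of its other uncolored vertices retaining $c$.

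We then compute expectations. Lists shrink by a factor roughly $\exp(-\theta D_i/\ell_i^{k-1})$. Partial-edge counts evolve because vertices get colored with $c$, which turns size-$s$ constraints into size-$(s-1)$ ones, and because constraints die when a vertex gets a different color or loses $c$. The point of the weighting is that the $\ell_i^{s-k}$ factors make these transitions telescope, so that $t_{i+1}/\ell_{i+1}^{k-1}$ shrinks at least as fast as $t_i/\ell_i^{k-1}$ while $\ell_i$ decreases only slowly. Uncrowdedness (no $2$-, $3$-, or $4$-cycles) makes the relevant events for different constraints at a fixed $(v,c)$ nearly independent: two constraints through $v$ share no other vertex, and their neighbourhoods interact only at distance at least $2$. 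This gives concentration through Talagrand's inequality, or Kim--Vu for the polynomial counts, together with the Lov\'asz Local Lemma, whose dependencies have size $\mathrm{poly}(d)$. All of this is used to keep the invariants after each round.

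The main obstacle is the recursion analysis. We must show that the system $(\ell_i,t_i)$ reaches a state with $t_i \le \ell_i^{k-1}/(10k)$ or so while $\ell_i$ is still at least $d^{\Omega(\eps)}$. At that point a final step finishes: by the Local Lemma we color each vertex uniformly from its list, and each bad constraint has probability at most $\ell^{-(s-1)}$. The coupon-collector computation shows this works exactly when the initial ratio satisfies $\ell_0^{k-1}\log \ell_0 \ge (1+\eps')\,d$, which is where the constant $(k-1)$ appears, since $\log \ell_0\approx \frac{1}{k-1}\log d$.

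Concretely, we expect $\ell$ to follow $\ell' \approx -\ell\, t/\ell^{k-1}$ and the normalized degree $r=t/\ell^{k-1}$ to decay like $r'\approx -r$ in nibble time. Solving these, the total list loss is a factor $e^{-r_0}$. The loss stays affordable exactly under condition~\ref{item: list size in main thm}, and this is the computation we would work out first to confirm the weights. We also need an error term of order $\ell_i^{-1/2}\mathrm{polylog}$ per round that accumulates over $O(\theta^{-1}\log d)$ rounds, and it must stay within a factor $1+\eps/2$.
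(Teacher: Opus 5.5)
Your central invariant, a single weighted sum $D_i(v,c)$, cannot carry the induction when $k\ge 3$. First, a sign slip. With the weight $\ell_i^{s-k}$ as written, $D_i\le t_i$ allows $d_{i,s}$ up to $t_i\ell_i^{k-s}$, and the discard bound becomes useless. Your own description calls for the weight $\ell_i^{k-s}$, which I assume below.

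\textbf{The telescoping gives conservation, not decay.} Let $K$ be the common keep probability. Each other vertex of a live size-$s$ constraint at $(v,c)$ does one of two things that keep the constraint alive:
\begin{itemize}
\item it is colored $c$, with probability about $\theta K/\ell$;
\item it stays uncolored with $c$ still in its list, with probability about $(1-\theta K)K$.
\end{itemize}
A survivor of size $s'$ has the new weight $(K\ell)^{k-s'}$. The expected new contribution is therefore
\[
\sum_{s'=2}^{s}\binom{s-1}{s'-1}\bigl((1-\theta K)K\bigr)^{s'-1}\Bigl(\frac{\theta K}{\ell}\Bigr)^{s-s'}(K\ell)^{k-s'}
= K^{k-1}\ell^{k-s}\bigl(1-(\theta K)^{s-1}\bigr).
\]
Since $\ell_{i+1}=K\ell_i$, the quantity $r=t/\ell^{k-1}$ drops only through completed constraints. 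That drop is a fraction about $\theta$ per round for mass on size-$2$ constraints, and only $O(\theta^{s-1})$ for sizes $s\ge 3$.

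\textbf{Why the induction stalls.} Knowing only $D_i\le t_i$, you must allow two opposite profiles at different pairs $(v,c)$:
\begin{itemize}
\item all mass on size-$k$ constraints, as at the start, where $r$ essentially does not decrease;
\item all mass on size-$2$ constraints, which maximizes the discard probability $\sum_s d_{i,s}(\theta/\ell_i)^{s-1}\approx \theta t_i/\ell_i^{k-1}$ and hence fixes the equalized $K$.
\end{itemize}
Together these give only $r_{i+1}\approx r_i$ and $\ell_{i+1}\approx(1-\theta r_i)\ell_i$. The lists are exhausted long before $t\le \ell^{k-1}/(10k)$.

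Your ODE $r'\approx -r$ is also not what the actual process does. Starting from size-$k$ constraints, the normalized danger behaves like $r_0[1-(1-e^{-\tau})^{k-1}]$ in nibble time $\tau$. This curve is flat near $\tau=0$.

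\textbf{The missing ingredient is control of the whole size profile.} The paper carries it as one parameter times a deterministic binomial shape. In your notation, it maintains
\[
d_{i,s}(v,c)\le \binom{k-1}{s-1}\Bigl(\frac{\theta\gamma_i}{\ell_i}\Bigr)^{k-s}\beta_i^{s-1}t_i,\qquad \beta_i=\prod_{j<i}(1-\theta K_j),\qquad \gamma_i=\sum_{j<i}\beta_j .
\]
Your transition computation, done size by size, is what shows this profile reproduces itself under $(\gamma_i,\beta_i)\mapsto(\gamma_{i+1},\beta_{i+1})$, with $t_{i+1}\approx K_i^{k-1}t_i$. Several things then follow.
\begin{itemize}
\item The union bound on the discard probability is exactly $\frac{\theta^{k-1}t_i}{\ell_i^{k-1}}\bigl(\gamma_{i+1}^{k-1}-\gamma_i^{k-1}\bigr)$. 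This is tiny in early rounds, while the mass still sits on size-$k$ constraints.
\item The ratio $t_i/\ell_i^{k-1}$ stays essentially equal to $r_0$.
\item The list losses telescope to about $\exp\bigl(-r_0(\theta\gamma_{\mathrm{end}})^{k-1}\bigr)\approx e^{-r_0}$.
\item The danger sum $\frac{t_i}{\ell_i^{k-1}}\bigl[(\theta\gamma_i+\beta_i)^{k-1}-(\theta\gamma_i)^{k-1}\bigr]\lesssim (k-1)r_0\beta_i$ becomes $o(1)$ once $\theta i$ slightly exceeds $\log\log d$. At that point the finishing Local Lemma applies.
\end{itemize}

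Your final factor $e^{-r_0}$ is correct, but only because the list-loss rate and the danger decay are both governed by this profile. If you charged list loss at rate $r$ against the true danger curve, you would get $e^{-r_0H_{k-1}}$, with $H_{k-1}=\sum_{m=1}^{k-1}1/m$, and hence a worse constant.

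A secondary point: Talagrand does not apply directly to these counts. Certifying $c\in L'(x)$ needs many trials, and $v$'s own trials have a large Lipschitz effect. The paper handles this by bounding the count by auxiliary variables that ignore edges through $v$.
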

\end{tcolorbox}

Setting $d = \Delta$ in the above theorem yields Theorem~\ref{theorem: list chromatic} as an immediate corollary.

\subsection{Palette sparsification and streaming algorithms}\label{subsection: palette}

We prove Theorem~\ref{theorem: main} via iterative applications of the Lov\'asz Local Lemma.
Using the constructive version of the Local Lemma due to Moser and Tardos~\cite{MT}, our analysis also yields efficient randomized algorithms.
For instance, we obtain a randomized algorithm that computes a $(1+o(1)){\left((k-1)(\Delta/\log\Delta)\right)^{\frac{1}{k-1}}}$-coloring of an $n$-vertex uncrowded $k$-uniform hypergraph of maximum degree $\Delta$ in $\mathrm{poly}(\Delta,n)$ time.

These ideas also have direct implications for \emph{sublinear algorithms}, where the computational resources are substantially smaller than their input size. 
Assadi, Chen, and Khanna~\cite{assadi2019sublinear} introduced a general framework for this regime known as \emph{palette sparsification}. The core mechanism operates as follows. 
Suppose we wish to properly color a hypergraph $\cH$ using $q$ colors, but memory constraints prevent us from maintaining the entire hypergraph structure simultaneously. 
Instead, we independently draw a random subset $L(v)$ of size $|L(v)| = \ell \ll q$ for each vertex $v \in V(\cH)$. Define the subset of \textit{active} edges by
\[
    E_\mathrm{active} \coloneqq \left\{e \in E(\cH) \mathbin{:} \bigcap_{u \in e} L(u) \neq \emptyset\right\}.
\]
When every vertex $v$ is assigned a color restricted to its local sample $L(v)$, a hyperedge can only become monochromatic if it lies in $E_\mathrm{active}$. Consequently, the algorithm only needs to store and process the edges in $E_\mathrm{active}$, which is typically orders of magnitude smaller than $E(\cH)$. To guarantee success, one must demonstrate that, with high probability, $L$ admits a valid list coloring of $\cH$. 
Statements of this form—known as \emph{palette sparsification theorems}—have proved instrumental in sublinear graph algorithms; see, e.g.,~\cite{Sparsification1, alon2020palette, Sparsification2, Sparsification3, dhawan2024palette, bradshaw2025toward}.

These computational motivations have driven significant effort toward establishing stronger sparsification bounds. In their founding paper, Assadi, Chen, and Khanna~\cite{assadi2019sublinear} established palette sparsification for $(\Delta + 1)$-colorings, showing that sampling $\Theta(\log n)$ colors per vertex from a palette of size $\Delta + 1$ is sufficient to preserve a proper coloring with high probability in $n$-vertex graphs of maximum degree $\Delta$. 
Kahn and Kenney~\cite{KahnKenney} later sharpened this threshold to the asymptotically tight bound of $(1+o(1))\log n$ colors per vertex. 
This line of research has expanded substantially, propelled both by algorithmic needs and by purely structural interest in list-coloring under random list assignments (particularly for edge-colorings). For algorithmic developments, see~\cite{Sparsification1, alon2016probabilistic, Sparsification2, Sparsification3}; for probabilistic and structural viewpoints, see, e.g.,~\cite{KN1,KN2,C1,C2,C3,CH,C4,rand4,rand3,rand1,rand2}.

In the hypergraph setting, Casselgren and Eriksson~\cite{casselgren2026single} recently initiated the study of palette sparsification, demonstrating that drawing $\Theta{\left((\log n)^{\frac{1}{k-1}}\right)}$ colors per vertex guarantees an $O{\left(\Delta^{\frac{1}{k-1}}\right)}$-coloring for linear hypergraphs of maximum degree $\Delta$.
It is by now well established that list-coloring results in the color-degree setting immediately yield palette sparsification theorems~\cite{bradshaw2025toward, dhawan2024palette, anderson2025coloring, alon2020palette}.
As a corollary to Theorem~\ref{theorem: main}, we obtain the following.

\begin{tcolorbox}[enhanced, breakable] 
\begin{theorem}\label{theorem: palette}
    Let $k \ge 2$ be an integer, and let $\eps > 0$ be arbitrary.
    There exists a constant $C > 0$ such that, for every $\gamma \in (0, 1)$, there exists $\Delta_0 \in \N$ with the following property.
    Let $\Delta\geq \Delta_0$, $n \in \N$, and let $q, \ell \in \N$ with $\ell \leq q$ be such that
    \[
    q \geq (1+\eps) {\left(\frac{k-1}{\gamma}\,\frac{\Delta}{\log \Delta}\right)^{\frac{1}{k-1}}} \qquad \text{and} \qquad \ell \geq \Delta^{\frac{\gamma}{k-1}} + C(\log n)^{\frac{1}{k}}.
    \]
    Let $\cH$ be an $n$-vertex $k$-uniform uncrowded hypergraph of maximum degree at most $\Delta$.
    Suppose that, for each vertex $v \in V(\cH)$, we independently sample a set $L(v) \subseteq [q]$ of size $\ell$ uniformly at random.
    Then, with probability at least $1 - 1/n$, $\cH$ admits an $L$-coloring.
\end{theorem}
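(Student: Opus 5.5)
The plan is to derive Theorem~\ref{theorem: palette} from Theorem~\ref{theorem: main}. We apply Theorem~\ref{theorem: main} to the random list assignment $L$, with $d \coloneqq (1+\delta)\,\Delta(\ell/q)^{k-1}$ (plus a lower-order correction), where $\delta>0$ is small in terms of $\eps$. The expected color degree of a pair $(v,c)$ with $c\in L(v)$ is
\[
\sum_{e\ni v}\Pr\bigl[c\in L(u)\ \forall u\in e-v\bigr] \le \Delta(\ell/q)^{k-1},
\]
since each $L(u)$ contains a fixed $c$ with probability $\ell/q$, independently over $u$. Two things must then be checked. First, condition~\ref{item: list size in main thm}: we need $\ell \ge (1+\eps')((k-1)d/\log d)^{1/(k-1)}$. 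Second, condition~\ref{item: color degree in main thm} must hold simultaneously for all $v$ and all $c\in L(v)$, with probability $1-1/n$.

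\textbf{Step 1 (arithmetic for~\ref{item: list size in main thm}).} Substituting $d \approx \Delta(\ell/q)^{k-1}$, the condition $\ell^{k-1}\log d \gtrsim (k-1)d$ becomes $q^{k-1}\log d \gtrsim (k-1)\Delta$. The hypothesis $\ell\ge \Delta^{\gamma/(k-1)}$ and $q\lesssim \Delta^{1/(k-1)}$ give $d\gtrsim \Delta^{\gamma}$ up to lower-order factors, so $\log d \ge (\gamma-o(1))\log \Delta$. The assumed lower bound $q^{k-1}\ge(1+\eps)^{k-1}(k-1)\Delta/(\gamma\log\Delta)$ then yields~\ref{item: list size in main thm} with slack, provided $\delta$ is small in terms of $\eps$. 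Two further points need care. If $q$ is much larger than the stated bound, we may pass to a smaller effective $q$, or note that $d$ only decreases, since the requirement is monotone in the right direction; I would handle this by monotonicity of $x\mapsto x/\log x$. We also need $d\ge d_0$, which holds because $d\gtrsim\Delta^{\gamma}$ and $\Delta\ge\Delta_0(\gamma)$.

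\textbf{Step 2 (concentration for~\ref{item: color degree in main thm}).} Fix $v$ and $c$. The color degree is $X_{v,c}=\sum_{e\ni v}\prod_{u\in e-v}\mathbf{1}[c\in L(u)]$. Because $\cH$ is linear, the sets $e-v$ over $e\ni v$ are pairwise disjoint, so the summands are independent Bernoulli variables with mean at most $(\ell/q)^{k-1}$ each. A Chernoff bound gives
\[
\Pr\bigl[X_{v,c} > (1+\delta)\mu + t\bigr]\le \exp\bigl(-\Omega(\min(\delta^2\mu,\,t))\bigr), \qquad \mu=\Delta(\ell/q)^{k-1}.
\]
We then union bound over the $n$ vertices and the at most $q$ colors (it suffices to consider $c\in L(v)$, but a bound over all $c\in[q]$ is simpler). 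Since $q\le \mathrm{poly}(\Delta)$ can be assumed, and the regime $\Delta> n$ is trivial, this needs failure probability $n^{-3}$ per pair, i.e.\ $\delta^2\mu \gtrsim \log n$. This is where the term $C(\log n)^{1/k}$ in $\ell$ enters. When $\mu$ is large compared to $\log n$ the bound is immediate. Otherwise we choose $d=(1+\delta)\mu + K\log n$ for a large constant $K$ and verify~\ref{item: list size in main thm} again: it now requires $\ell^{k-1}\gtrsim (k-1)K\log n/\log\log n$. This is implied by $\ell\ge C(\log n)^{1/k}$ only after accounting for $\mu\approx\Delta(\ell/q)^{k-1}$ together with $\ell^{k-1}\log d$. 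More carefully, in the regime $\log n\gg\mu$, the relation $\ell^{k}\ge C^k\log n$ combined with $q\lesssim \ell\,(\Delta/\ldots)$ should balance the exponents. This balancing is exactly why the exponent $1/k$ appears, and I would tune it by a case split on whether $\mu\ge\log n$.

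\textbf{Main obstacle.} The delicate step is the second regime of Step 2: showing that the additive $\log n$ term in $d$ is absorbed by $\ell \ge C(\log n)^{1/k}$ while keeping the sharp leading constant. I would first try to bound $\Pr[X_{v,c}\ge s]\le \binom{\Delta}{s}(\ell/q)^{(k-1)s}$ directly, and choose $s$ so that this is at most $n^{-3}$. Once the constants are fixed via Step 1, the conclusion follows by applying Theorem~\ref{theorem: main}.
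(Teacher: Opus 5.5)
\textbf{Gap in Step 2.} Your Step 2 fails in exactly the regime you flag as the main obstacle. A sharper tail bound or tuned constants will not repair it: when the $C(\log n)^{1/k}$ term dominates, no uniform bound $d$ on \emph{all} color degrees of the raw lists $L$ is compatible with (C1).

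Concretely, take $\Delta=(\log n)^a$ with $(k-1)/k<a<(k-1)/(k\gamma)$, and put $q$ and $\ell$ at their lower bounds, so $\ell=\Theta((\log n)^{1/k})\le q$. Then $\mu=\Delta(\ell/q)^{k-1}=\Theta(\ell^{k-1}\log\Delta)=\Theta((\log n)^{(k-1)/k}\log\log n)$. Condition (C1) forces $d/\log d\lesssim \ell^{k-1}$, i.e.\ $d=O((\log n)^{(k-1)/k}\log\log n)$.

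Your bound $\binom{\Delta}{s}(\ell/q)^{(k-1)s}\approx(e\mu/s)^s$ is tight up to a factor $e^{-O(\mu)}=n^{-o(1)}$. So if the hypergraph is, say, a disjoint union of many copies of a $\Delta$-regular uncrowded hypergraph, then with high probability some pair $(v,c)$ has color degree $\Omega(\log n/\log\log n)$, far above every admissible $d$. A per-pair union bound can only work when $\ell^{k-1}$ is of order $\log n$ up to polylogarithmic factors, i.e.\ $\ell\approx(\log n)^{1/(k-1)}$. No case split on $\mu$ versus $\log n$ reaches the exponent $1/k$ this way.

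\textbf{Missing idea: pruning.} Instead of controlling every color degree, discard the bad colors. Set $d=(1+\eta)\ell^{k-1}(\Delta/q^{k-1}+1)$, and let $L'(v)$ be the set of $c\in L(v)$ whose color degree with respect to $L$ is at most $d$.
\begin{itemize}
\item Since $L'\subseteq L$, condition (C2) holds for $L'$ deterministically. It remains only to show $|L'(v)|\ge(1-\eta)\ell$ for all $v$.
\item If this fails at $v$, then some subset $T\subseteq L(v)$ of size $t=\lceil\eta\ell\rceil$ has $\sum_{c\in T}X_{v,c}>td$.
\item The indicators $\mathbf{1}[c\in L(u)\ \forall u\in e-v]$, over $c\in T$ and $e\ni v$, are negatively correlated. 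This uses linearity together with sampling each $L(u)$ without replacement. So a negative-correlation Chernoff bound applies.
\item The deviation satisfies $td-t\mu\ge t\ell^{k-1}(1+\eta\Delta/q^{k-1})$. AM--GM then shows the exponent is $\Omega(\eta t\ell^{k-1})=\Omega(\eta^2\ell^k)$.
\end{itemize}
Pooling the deviation over $\eta\ell$ colors gains a factor of $\ell$ in the exponent compared with a single pair. A union bound over the $2^\ell$ subsets $T$ and the $n$ vertices then needs only $\ell^k\gtrsim\log n$; this is exactly where $C(\log n)^{1/k}$ comes from.

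\textbf{A smaller point in Step 1.} The $+1$ in $d$ gives $d\ge\ell^{k-1}\ge\Delta^{\gamma}$, hence $\log d\ge\gamma\log\Delta$. This removes the assumption $q\lesssim\Delta^{1/(k-1)}$ that your Step 1 relies on. Condition (C1) for lists of size $(1-\eta)\ell$ then follows from the lower bound on $q$, essentially as in your computation.
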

\end{tcolorbox}

As discussed earlier, palette sparsification results have broad implications for designing efficient algorithms across various models of computation. Here, we focus on \emph{streaming algorithms}.
In the streaming model, the edges of a hypergraph are presented one by one to an algorithm that uses limited memory and may make multiple passes over the input.
The goal is to compute properties of the hypergraph using memory that is sublinear in the input size (i.e., the number of edges).
In particular, we consider \emph{single-pass streaming algorithms}, where the algorithm processes the input stream in a single pass.
As a corollary to Theorem~\ref{theorem: palette}, we obtain the following result.

\begin{tcolorbox}[enhanced, breakable] 
\begin{corollary}\label{corl:main streaming}
    Let $k\ge2$ be an integer and let $\eps>0$. There is a constant $C' > 0$ such that, for every $\gamma \in (0, 1)$, there exists $\Delta_0\in\N$ with the following property. Let $\Delta\ge\Delta_0$, $n \in \N$, and set 
    \[
    q\coloneqq\max\left\{(1+\eps)\left(\frac{k-1}{\gamma}\frac{\Delta}{\log\Delta}\right)^{\frac{1}{k-1}},\,C'(\log n)^{\frac{1}{k}}\right\}.
    \]
    Let $\cH$ be an $n$-vertex $k$-uniform uncrowded hypergraph of maximum degree at most $\Delta$.     
    There is a randomized single-pass streaming algorithm that computes a proper $q$-coloring of $\cH$ with probability at least $1-1/n$ using
    $\tilde{O}{\left(n\Delta^{\frac{\gamma k}{k-1}}\right)}$ space.
\end{corollary}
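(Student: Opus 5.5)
We derive Corollary~\ref{corl:main streaming} from Theorem~\ref{theorem: palette}, treating the palette-sparsification statement as a black box.

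\textbf{Sampling before the stream.} The algorithm first fixes $\ell \coloneqq \lceil \Delta^{\frac{\gamma}{k-1}} + C(\log n)^{\frac1k}\rceil$, where $C$ is the constant from Theorem~\ref{theorem: palette}. We choose $C' \ge C+1$ so that $\ell \le q$ once $\Delta_0$ is large. This inequality needs checking in both regimes:
\begin{itemize}
\item if the first term of the maximum defining $q$ dominates, then $\Delta^{\frac{\gamma}{k-1}} = o\big((\Delta/\log\Delta)^{\frac{1}{k-1}}\big)$ because $\gamma<1$;
\item if the second term dominates, then $q \ge C'(\log n)^{\frac1k}$ absorbs both summands of $\ell$.
\end{itemize}
If $\ell > q$ would occur anyway, we simply take $L(v)=[q]$ and store everything, which is still within budget because $\Delta$ is then small relative to $\log n$. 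Before reading the stream, each vertex $v$ independently samples $L(v)\subseteq[q]$ of size $\ell$ uniformly at random. Storing all lists costs $O(n\ell\log q)$ space.

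\textbf{The single pass.} As each edge $e$ arrives, the algorithm computes $\bigcap_{u\in e}L(u)$ from the stored lists. It keeps $e$ if and only if $e\in E_{\mathrm{active}}$, that is, if this intersection is nonempty.

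\textbf{Correctness.} After the pass, the algorithm finds an $L$-coloring of the stored subhypergraph $(V, E_{\mathrm{active}})$ by exhaustive search; time is unconstrained in the streaming model, and only space matters. Any $L$-coloring of $(V,E_{\mathrm{active}})$ is a proper coloring of $\cH$, since an edge outside $E_{\mathrm{active}}$ has no common list color and cannot be monochromatic. By Theorem~\ref{theorem: palette}, applied with these $q$ and $\ell$, an $L$-coloring of $\cH$ exists with probability at least $1-1/n$. In that event the search succeeds and outputs a proper $q$-coloring.

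\textbf{Space bound (main step).} We bound $\mathbb{E}|E_{\mathrm{active}}|$ by a union bound over colors. For a fixed edge $e$ and color $c$, the events $c \in L(u)$ are independent across $u\in e$, each with probability $\ell/q$. Hence
\[
\Pr[e\in E_{\mathrm{active}}] \le q(\ell/q)^k = \ell^k/q^{k-1}.
\]
There are at most $n\Delta/k$ edges, and $q^{k-1} \ge \Delta/\log\Delta$ up to constants. Therefore
\[
\mathbb{E}|E_{\mathrm{active}}| \lesssim n\,\ell^k\log\Delta.
\]
The two contributions to $\ell^k$ are handled separately:
\begin{itemize}
\item $\big(\Delta^{\frac{\gamma}{k-1}}\big)^k=\Delta^{\frac{\gamma k}{k-1}}$, which is the main term of the claimed bound;
\item $\big((\log n)^{\frac1k}\big)^k=\log n$, which is absorbed into the $\tilde O$.
\end{itemize}
To make this a high-probability bound, we use concentration; Markov's inequality alone only gives constant probability. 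The cleanest route is to have the algorithm abort if it ever stores more than $K\,n\ell^k\log\Delta\log n$ edges, for a suitable constant $K$. Abortion then happens with probability at most $1/n^2$ by a Chernoff-type bound. Such a bound applies because $|E_{\mathrm{active}}|$ is a sum over vertices of terms with bounded dependence: edges through disjoint vertex sets are independent. Alternatively, a bounded-differences inequality works. The failure probability of this step is absorbed by slightly strengthening Theorem~\ref{theorem: palette}'s $1-1/n$, for example by applying it with $2n$ in place of $n$, which only changes $C$.

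Each stored edge costs $O(k\log n)$ bits, so the total space is $\tilde O\big(n\Delta^{\frac{\gamma k}{k-1}}\big)$. I expect the concentration step to be the only mildly delicate point; everything else is bookkeeping.
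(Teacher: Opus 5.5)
Your argument is correct, but it takes a different route from the paper's.

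\textbf{What the paper does.} The paper does not use Theorem~\ref{theorem: palette} as a black box, and it never stores all of $E_{\mathrm{active}}$. Instead it prunes online:
\begin{itemize}
\item it keeps a counter $N(v,c)$ for each sampled pair;
\item it stores an arriving edge only if the \emph{current} lists share a color;
\item it increments $N(v,c)$ for each shared color $c$ and each $v\in e$;
\item it deletes $c$ from $v$'s list once $N(v,c)>d$.
\end{itemize}
Each stored edge increments at least $k$ counters, and no counter exceeds $\lfloor d\rfloor+1$. Hence at most $n\ell(\lfloor d\rfloor+1)/k=\tilde O(n\ell^k)$ edges are stored, deterministically. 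Correctness then follows from Lemma~\ref{lemma:Lprime} and Theorem~\ref{theorem: main}: the final online lists contain the static pruned lists, and the counters bound the color-degrees in the stored hypergraph.

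\textbf{What your route buys and costs.} Your version is more modular: the generic ``store the active edges'' template plus the palette theorem used verbatim. The price is a probabilistic space bound, an abort rule, and padding to $2n$ vertices to keep the total failure probability at most $1/n$. The paper needs none of these.

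\textbf{The concentration step.} The step you flag is easy to make rigorous.
\begin{itemize}
\item Condition on $L(v)$, and let $Y_v$ be the number of active edges at $v$. By linearity the sets $e-v$ are pairwise disjoint, so $Y_v$ is a sum of independent indicators.
\item Each indicator has probability at most $\ell^k/q^{k-1}$, so the conditional mean of $Y_v$ is at most $\ell^k\log\Delta$.
\item Since $\ell^k\ge C^k\log n$, the Chernoff bound gives $Y_v\le 2\ell^k\log\Delta$ with probability at least $1-n^{-3}$.
\item A union bound over $v$ then controls $|E_{\mathrm{active}}|=\frac1k\sum_v Y_v$, without even the extra $\log n$ factor.
\end{itemize}
Your bounded-differences alternative needs more care than you suggest, since changing a single list can move the count by $\deg(v)$.

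\textbf{Finding the coloring.} Exhaustive search is unnecessary. Every edge counted by $\deg(F_c,v)$ is active, so the pruned lists can be computed offline from the stored edges. Applying the constructive Local Lemma to Theorem~\ref{theorem: main} then gives the coloring in polynomial time.
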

\end{tcolorbox}

The input size is $|E(\cH)| = O(n\Delta)$, while the stored sparsified instance has size $\tilde{O}{\left(n\Delta^{\frac{\gamma\,k}{k-1}}\right)}$.
This is a genuine sublinear saving when $\gamma < 1 - 1/k$.

For linear hypergraphs, the reduction to the uncrowded regime in our proof of Corollary~\ref{corollary: linear} relies on repeated applications of the Lov\'asz Local Lemma.
Because constructing and checking the conditions of the Local Lemma instance requires access to the global hypergraph structure, this step cannot be implemented directly implemented in sublinear memory.
Nevertheless, when the hypergraph is sufficiently dense, we can bypass the Local Lemma entirely.
Such partitioning reductions have been studied in the graph setting; see, e.g., \cite[Section 6]{alon2020palette}.
By employing a more refined probabilistic argument whose failure probability is controlled by a standard union bound, we obtain the required reduction while maintaining sublinear memory.

\begin{tcolorbox}[enhanced, breakable] 
\begin{theorem}\label{corl:linear streaming}
    Let $k\ge 3$ be an integer.
    There exists a constant $C > 0$ such that, for every $\gamma \in (0, 1)$, there exists $\Delta_0 \in \N$ with the following property.
    Let $\Delta\geq \Delta_0$, $n \in \N$, and set
    \[q \coloneqq \frac{C}{\gamma^{4k}}{\left(\frac{\Delta}{\log\Delta}\right)^{\frac{1}{k-1}}}.\]
    Let $\cH$ be an $n$-vertex $k$-uniform linear hypergraph of maximum degree at most $\Delta$.
    There exists a randomized single-pass streaming algorithm that computes a proper $q$-coloring of $\cH$ with probability at least $1 - 1/n$ using $O(n^{1+\gamma})$ space.
\end{theorem}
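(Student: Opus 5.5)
The plan is a case split on density. If $\Delta \le n^{\gamma/2}$ (roughly), we simply store the whole hypergraph: there are at most $n\Delta/k$ edges, each of size $k$, so the space is $O(n^{1+\gamma})$. Offline, we then run the reduction behind Corollary~\ref{corollary: linear} together with the algorithmic (Moser--Tardos) version of Theorem~\ref{theorem: list chromatic}, using $O((\Delta/\log\Delta)^{1/(k-1)})$ colors. In the dense regime $\Delta \ge n^{\gamma/2}$, we combine a random partition with palette sparsification. This requires only a union bound, since now $\log n \le (2/\gamma)\log\Delta$, and failure probabilities of the form $\exp(-\Delta^{\Omega(1)})$ beat $n^{-O(1)}$.

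\textbf{Dense case construction.} Fix $m = \Delta^{\beta}$ with $\beta = \beta(\gamma,k)$ small. Assign each vertex a uniform random part in $[m]$ using $O(\log n)$-wise independent hashing, so the assignment is stored implicitly. Give part $i$ a private palette $P_i$ of size $q/m$, and sample $L(v)\subseteq P_{i(v)}$ of size $\ell$. The algorithm stores an edge only if it lies inside a single part and its lists share a common color. In expectation, a vertex has about $\Delta m^{-(k-1)}$ edges inside its part, and each survives the list filter with probability about $\ell^k/(q/m)^{k-1}$. Hence the stored edges number about $n\,\Delta\,\ell^k m^{-(k-1)}(q/m)^{-(k-1)} \approx n\,\ell^k \log\Delta$, which is $O(n^{1+\gamma})$ once $\ell = \Delta^{O(\gamma)}\le n^{O(\gamma)}$. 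This is then made to hold with high probability by Chernoff and union bounds.

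\textbf{Dense case correctness.} We must show that, with probability $1-1/n$, each part is $L$-colorable. Within part $i$, the induced degree is at most $\Delta' = (1+o(1))\Delta m^{-(k-1)}$ for all vertices simultaneously, by Chernoff plus a union bound, since $\Delta' \ge \Delta^{\Omega(1)} \gg \log n$. Linearity then forces us to address short cycles. Instead of the Local Lemma steps \ref{FM1}--\ref{FM2}, we bound directly the number of $2$-, $3$- and $4$-cycles through each vertex inside a part. In a linear $k$-graph, a vertex lies in at most $O(\Delta^{r-1}k^{O(r)})$ cycles of length $r\le 4$. Each cycle survives in one part with probability $m^{-(r(k-1)-r+1)}$ or so, roughly $m^{-r(k-2)-1}$. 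For $k\ge3$ and a suitable $\beta$, the expected number per vertex is $\Delta^{-\Omega(1)}$ relative to $\Delta'$. Kim--Vu concentration, again with a union bound as $\log n = O(\log\Delta)$, shows that each vertex is in at most $\Delta'^{1-\Omega(1)}$ such cycles.

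We then delete one non-stored vertex per short cycle, or more cleanly apply Theorem~\ref{theorem: palette} with the relaxed parameter $\gamma$ to the hypergraph obtained by removing a vertex from each short cycle. The removed vertices are few per neighborhood, so they can be recolored afterwards with a fresh constant-factor palette of size $O(q)$, which is absorbed into the loose constant $C/\gamma^{4k}$. The sampling condition $\ell \ge \Delta'^{\gamma'/(k-1)} + C(\log n)^{1/k}$ holds because $\log n = O_\gamma(\log\Delta)$. Theorem~\ref{theorem: palette} has failure probability $1/|V_i|$, which is not enough to survive the union bound over parts, so we rerun its proof using $n$ in place of $|V_i|$, that is, choosing $\ell$ with $\log n$.

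\textbf{Main obstacle.} The hardest step is handling short cycles within parts using only a union bound while keeping the leading constant polynomial in $1/\gamma$. The removed vertices must be recolorable from stored information alone. We would first try to show that, after partitioning, the residual cycle-vertices induce a hypergraph of tiny maximum degree. Their active edges can then be stored as well, and they can be colored greedily with the extra $O(\Delta'^{(1-\Omega(1))/(k-1)})$ colors.
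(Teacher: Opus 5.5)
Your sparse case is fine and matches the paper's, which stores the whole stream when $\Delta\le n^{\beta}$, $\beta=\min\{\gamma,1/(16k)\}$, and applies Corollary~\ref{corollary: linear} offline. The dense case has a genuine gap, exactly at the step you call the main obstacle.

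First, the cycle estimate is wrong. A loose $r$-cycle through $v$ has $r(k-1)-1$ further vertices, so it stays in $v$'s part with probability $m^{-(r(k-1)-1)}$. With $m=\Delta^{\beta}$, a vertex of, say, a Steiner system lies in expectation in $\Theta(\Delta^{2-\beta(3k-4)})$ in-part loose $3$-cycles, while its in-part degree is only $\Delta'\approx\Delta^{1-\beta(k-1)}$. For $\beta<1/(2k-3)$, in particular for the small $\beta$ you want, the cycle count exceeds $\Delta'$, so your bound $\Delta'^{1-\Omega(1)}$ fails. A bound of that form would not help anyway: if vertices really lie in $\gg 1$ short cycles, there are $\gg n$ of them, and deleting one vertex per cycle can wipe out most of the part.

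Second, and more fundamentally, your algorithm keeps only in-part edges whose sampled lists share a color. So it cannot identify the short cycles of $\cH[V_i]$ you want to break, since most of their edges were never stored. It also cannot recolor the deleted vertices from a fresh palette: that needs all edges among the deleted vertices, but the deleted set is known only after the pass, when those edges are already gone.

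The paper avoids all of this by partitioning much more finely and dropping palette sparsification altogether:
\begin{itemize}
\item It uses $m=\lceil\Delta^{(1-\beta)/(k-1)}\rceil$ parts, so each $\cH[V_i]$ has maximum degree at most $2\Delta^{\beta}$. Then \emph{all} in-part edges can be stored, $O(n\Delta^{\beta})=O(n^{1+\gamma})$ in total, and every short cycle is visible.
\item Lemma~\ref{lemma: single step reduction} shows that every vertex lies in only a \emph{constant} number $K=R\beta^{-4(k-1)}$ of loose $3$- and $4$-cycles inside its part. Its proof uses Chernoff plus an Erd\H{o}s--Rado sunflower count, union-bounded over vertices via $\Delta\ge n^{\beta}$.
\item Offline, Lemma~\ref{lemma: greedy} picks two vertices from each such cycle and greedily colors the resulting auxiliary graph, of maximum degree at most $K$, with $K+1$ colors. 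Each color class then induces an uncrowded hypergraph.
\item Each class is colored via Theorem~\ref{theorem: list chromatic} with disjoint palettes. The factor $K+1$ is why $q$ carries the $\gamma^{-4k}$.
\end{itemize}
So the missing ideas are a partition fine enough that storing every in-part edge is affordable, and reducing short cycles per vertex to $O_\gamma(1)$ so you split into $K+1$ uncrowded classes instead of deleting and recoloring.

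If you want to keep palette sparsification instead, you would at least have to restrict to short cycles among stored edges and destroy them by deleting list colors rather than vertices, which never activates an unstored edge. That route would also need a new concentration bound, and neither step is in the proposal.
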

\end{tcolorbox}

\subsection{Summary and open problems}\label{subsection: summary}

In this paper, we establish improved upper bounds on the chromatic number of both uncrowded and linear hypergraphs.
For uncrowded hypergraphs, our bound matches the coupon-collector heuristic, whereas our bound for linear hypergraphs retains a loss in the leading constant.
This raises the broader question of whether the heuristic bound remains valid under weaker structural assumptions.
We conclude the introduction with two complementary conjectures in this direction, each strengthening Conjecture~\ref{conj: Molloy}.

First, we conjecture that the shattering-threshold bound continues to hold for linear hypergraphs, despite the possible presence of many short cycles.
This relaxes the high-girth hypothesis in Molloy's conjecture (Conjecture~\ref{conj: Molloy}).

\begin{conjecture}\label{conj: linear-shattering}
Let $k \geq 3$ be an integer, and let $\eps > 0$ be arbitrary. There exists $\Delta_0 \in \N$ such that every $k$-uniform linear hypergraph $\cH$ with maximum degree at most $\Delta \ge \Delta_0$ satisfies
\[
\chi(\cH) \le (1+\eps)
{\left((k-1)\,\frac{\Delta}{\log\Delta}\right)^{\frac{1}{k-1}}}.
\]
\end{conjecture}

The special case of Steiner triple systems is particularly interesting because of their central role in design theory. A Steiner triple system of order $N$ is a $3$-uniform linear hypergraph that is $(N-1)/2$-regular~\cite{eustis2013independence}, so Conjecture~\ref{conj: linear-shattering} would give $\chi(S)\leq(1+o(1))\sqrt{N/\log N}$. For comparison, specializing Corollary~\ref{corollary: linear} to $k=3$ gives $\chi(S)\leq(\sqrt{7}+o(1))\sqrt{N/\log N}$, which, to the best of our knowledge, is the strongest explicit asymptotic upper bound currently known for every Steiner triple system. On the other hand, Eustis and Verstra\"ete~\cite{eustis2013independence} constructed, for infinitely many admissible $N$, a Steiner triple system $S$ satisfying $\alpha(S)\leq(\sqrt{3}+o(1))\sqrt{N\log N}$. Since $\chi(S)\geq N/\alpha(S)$, their result provides the lower bound $\chi(S)\geq(1/\sqrt{3}-o(1))\sqrt{N/\log N}$. It would be interesting to determine the asymptotic value of the maximum chromatic number among Steiner triple systems of order $N$, or even to narrow the current gap between the constants $1/\sqrt{3}$ and $\sqrt{7}$.

The arguments of Duke, Lefmann, and R\"odl~\cite{duke1995uncrowded}, Frieze and Mubayi~\cite{frieze2013coloring}, and the present paper reduce the linear setting to the uncrowded setting through vertex partitioning. As discussed in Section~\ref{subsection: linear overview}, coloring polynomially many resulting induced subhypergraphs with disjoint palettes incurs a fixed loss in the leading constant. Thus, proving Conjecture~\ref{conj: linear-shattering} appears to require a method that treats short cycles directly, partitions the vertex set into only $\Delta^{o(1)}$ parts, or reuses colors across the parts.

A second direction is to assume only that the hypergraph is triangle-free.
Here, following Cooper and Mubayi~\cite{cooper2015list} and Li and Postle~\cite{lipostle2022}, a \emph{triangle} consists of three distinct edges $e,f,g$ and three distinct vertices $u,v,w$ such that $\{u,v\}\subseteq e$, $\{v,w\}\subseteq f$, $\{w,u\}\subseteq g$, and $\{u,v,w\}\cap e\cap f\cap g=\varnothing$.
Subsequent to Frieze and Mubayi's proof of the bound in~\eqref{eq:FM} for hypergraphs of girth at least $4$, Cooper and Mubayi~\cite{cooper2015list} proved an analogous bound for triangle-free hypergraphs of rank~$3$; Li and Postle~\cite{lipostle2022} recently extended this result to all ranks, and hence to every uniformity.
The argument of Li and Postle generalizes a nibble-based approach of Pettie and Su~\cite{PS15}, who improved the leading constant in Johansson's bound on the chromatic number of triangle-free graphs~\cite{Joh_triangle} from $8$ to $4$.
Recall that Molloy's entropy-compression argument~\cite{Molloy} achieves the coupon-collector heuristic constant $1$ in the graph case, matching Kim's bound in~\eqref{eq:kim}.
While it remains unclear how to adapt these entropy-compression ideas directly to higher uniformities, combining Molloy's framework with techniques from recent work of Verstra\"ete and Wilson~\cite{verstraete2026independent} may yield a path toward the following conjecture:

\begin{conjecture}\label{conj: triangle-free-shattering}
Let $k \geq 2$ be an integer, and let $\eps > 0$ be arbitrary. There exists $\Delta_0 \in \N$ such that every $k$-uniform triangle-free hypergraph $\cH$ with maximum degree at most $\Delta \ge \Delta_0$ satisfies
\[
\chi(\cH) \le (1+\eps)
{\left((k-1)\,\frac{\Delta}{\log\Delta}\right)^{\frac{1}{k-1}}}.
\]
\end{conjecture}

We conclude this section with an open question pertaining to the algorithmic applications of our main result.
As a corollary to Theorem~\ref{theorem: main}, we obtain a palette sparsification result (Theorem~\ref{theorem: palette}), which states that sampling $\ell \ll q$ colors per vertex in $V(\cH)$ uniformly at random suffices to compute a proper $q$-coloring of $\cH$ with high probability. This yields a randomized single-pass streaming algorithm for $q$-coloring that uses sublinear space.

Another heavily studied sublinear framework in this context is the \textit{non-adaptive query model}, which has been extensively explored alongside palette sparsification (see, e.g.,~\cite{assadi2019sublinear, alon2020palette}). In this model, the algorithm must submit all queries simultaneously, with the goal of minimizing the total query complexity (i.e., querying as few potential hyperedges as possible). For $n$-vertex graphs of maximum degree $\Delta$, the general query template proceeds as follows:
\begin{itemize}
    \item If $\Delta$ is small, perform \textit{neighbor queries} (i.e., query $N(v)$ for each vertex $v$) to learn all $O(n\Delta)$ edges.
    \item Otherwise, sample $\ell$ colors independently per vertex. For each color $c \in [q]$, let $V_c$ denote the set of vertices that sampled $c$, and query all $\binom{|V_c|}{2}$ candidate edges within $V_c$.
\end{itemize}
This gives a total query complexity of $\tilde{O}{\left(\min\left\{n\Delta, \, q(n\ell/q)^2\right\}\right)}$ with high probability.

For $k$-uniform hypergraphs ($k \geq 3$), however, querying all potential hyperedges in each color class $V_c$ requires $\binom{|V_c|}{k} = \Theta(|V_c|^k)$ queries, driving the non-adaptive query complexity up to $\tilde{O}{\left(\min\left\{n\Delta, \, q(n\ell/q)^k\right\}\right)}$. In the dense linear regime $\Delta = \Theta(n)$, the first term is of order $n^2$, while the second is larger for every $k \geq 3$. Thus, this template does not yield a worst case sub-quadratic bound.

This bottleneck motivates exploring the \textit{adaptive query model}, where queries can be chosen sequentially based on prior answers. Resolving the following question positively would—when combined with our palette sparsification framework—yield efficient sublinear-time algorithms in the adaptive query model for uncrowded hypergraphs.

\begin{question}
    Does there exist an adaptive sublinear-time algorithm that determines the edge set of an $n$-vertex $k$-uniform linear hypergraph using $\tilde{O}(n)$ queries?
\end{question}

\smallskip
\noindent
\textbf{Note added.}
After the first version~\cite{DMV26ind} of this manuscript was announced on arXiv, Yu and Zhang~\cite{yu2026list} independently posted another proof of Theorem~\ref{theorem: list chromatic}. The two proofs use the nibble method in different ways.

Yu and Zhang analyze the original hypergraph directly by tracking potential monochromatic edges using ``active edge--color constraints.'' They bound these constraints across all remaining edge sizes using two shared degree parameters and adjust the activation probability dynamically via a ``difficulty parameter'' that decreases across iterations. Uncrowdedness yields conditional independence between distinct edges, which allows a direct application of Bernstein's inequality. Once the nibble terminates, they complete the coloring using a Rosenfeld-style counting argument.

In contrast, we encode the coloring rule induced by the partial coloring at each stage of the nibble by using a family of auxiliary \emph{restriction hypergraphs}, each corresponding to one color (see Definition~\ref{def: restriction family}), and keep the activation probability fixed throughout the nibble.
We systematically track the color-degree bounds via four recursively defined quantities in a factored form that separates the current list size and color-degree parameters from the cumulative effects of earlier iterations; see Section~\ref{subsection: nibble overview} for more details. 
Moreover, a decomposition of the color-degree random variables into carefully chosen auxiliary variables yields concentration strong enough to maintain simple multiplicative recursions with polynomially small relative error; in the parametrization of~\cite{yu2026list}, the corresponding one-step error contains the polylogarithmic factor $(\log\Delta)^{-4}$. 
To complete the coloring after the nibble, we apply the asymmetric Lov\'asz Local Lemma.

The scope of the two papers also differs. Yu and Zhang state their theorem under a global maximum-degree hypothesis, whereas our main result is directly formulated in the local color-degree setting: for each vertex $v$ and color $c$, we only bound the number of edges incident to $v$ on which $c$ remains available at every vertex. This formulation is essential for palette sparsification, since sampling a small palette at each vertex does not reduce the hypergraph's overall maximum degree but does reduce these individual color-degrees with high probability. Consequently, our approach yields the first palette-sparsification result for uncrowded hypergraphs using $o\bigl(\Delta^{1/(k-1)}\bigr)$ colors per vertex, together with efficient randomized coloring algorithms and single-pass streaming algorithms using sublinear space. A separate two-stage partitioning argument further improves the chromatic number bound for linear hypergraphs—with the leading constant tending to~$1$ as $k\to\infty$—and yields an analogous streaming algorithm. Yu and Zhang~\cite{yu2026list} address only the extremal maximum-degree list-coloring setting and do not obtain palette-sparsification or streaming results.

\subsection{Organization of the paper} The rest of this paper is organized as follows. At the end of this section we introduce notation and definitions that we use throughout the paper.
Section~\ref{section: proof overview} provides a high-level overview of the proofs of Theorem~\ref{theorem: main} and Corollary~\ref{corollary: linear}. 
In Section~\ref{section: prelim}, we collect basic preliminaries and tools.
We prove Theorem~\ref{theorem: main} in Section~\ref{section: proof of main}, subject to a technical lemma established in Section~\ref{section: deg}. 
Section~\ref{section: linear reduction} is dedicated to the proof of Corollary~\ref{corollary: linear}. 
Finally, Section~\ref{section: algorithms} details the algorithmic results stated in Section~\ref{subsection: palette}.

\subsection{Notation and terminology}\label{subsection: notation}

Throughout the rest of the paper we use the following basic notation. For $n \in \N$, we let $[n] \defeq \set{1, \ldots, n}$.
Since a graph is a $2$-uniform hypergraph, all notation defined below for hypergraphs applies to graphs without further comment. For a hypergraph $H$, we denote its vertex and edge sets by $V(H)$ and $E(H)$, respectively. 
For an edge $e \in E(H)$, we let $V(e) \coloneqq \{v \in V(H) : v \in e\}$.
We occasionally write $v(H)$ for the number of vertices in $H$. For a subset $S \subseteq V(H)$, we write $H[S]$ for the subhypergraph of $H$ induced by $S$. Given $s \in \N$ and hypergraphs $H_1, \dots, H_s$ on the same vertex set $V$, the \emph{union} of these hypergraphs, denoted $\bigcup_{i \in [s]} H_i$, is the hypergraph with vertex set $V$ and edge set $\bigcup_{i \in [s]} E(H_i)$.

Fix $k \in \N$. We say that a hypergraph $H$ 
has rank $k$ if $2 \leq |e| \leq k$ for each $e \in E(H)$. 
Fix a rank-$k$ hypergraph $H$ and an integer $\ell \in \{2,\dots, k\}$. We let $E(H, \ell)$ denote the set of $\ell$-edges in $H$.
For a vertex $v \in V(H)$, we let $E(H, \ell, v)$ denote the set of $\ell$-edges in $H$ containing $v$ and let $\deg(H, \ell, v) = |E(H, \ell, v)|$; we call this the \textit{$\ell$-degree} of $v$.
For a vertex $v \in V(H)$, we let $E(H, v)$ denote the set of all edges in $H$ incident to $v$ and let $\deg(H, v) = |E(H, v)|$.

For a vertex $v\in V(H)$, we define the \emph{neighborhood} of $v$, denoted by $N_H(v)$, to be the set of all vertices that share an edge with $v$ in $H$. We also let $N_H[v]\coloneqq N_H(v)\cup\{v\}$ denote the \emph{closed neighborhood} of $v$. 
For an integer $j \ge 1$, we let $N_H^j(v)$ denote the set of vertices other than $v$ at distance at most $j$ from $v$ in $H$. When the underlying hypergraph is clear, we omit the subscript $H$.

Given a finite set $U$, a \emph{list assignment on $U$} is a function $L : U \to 2^\N$; we refer to $U$ as the \emph{support} of $L$, denoted $\supp(L)$. We write $L(U) \coloneqq \bigcup_{v \in U} L(v)$. When $U = V(H)$ for a hypergraph $H$, we simply say $L$ is a list assignment \emph{for $H$}.
Fix a list assignment $L$ for a hypergraph $H$.
An \textit{$L$-coloring} is a proper coloring $\Phi : V(H) \to \N$ such that $\Phi(v) \in L(v)$ for every vertex $v \in V(H)$.
A \emph{partial $L$-coloring} of $H$ is a proper coloring $\Psi : S \to \N$ of $H[S]$, defined on some subset $S \subseteq V(H)$, such that $\Psi(v) \in L(v)$ for every $v \in S$. The set $S$ is called the \emph{support} of $\Psi$, denoted $\supp(\Psi)$. In particular, an $L$-coloring of $H$ is precisely a partial $L$-coloring with support $V(H)$.
A hypergraph $H$ is \emph{list $q$-colorable} if it admits an $L$-coloring whenever $|L(v)| \ge q$ for all $v \in V(H)$. The \emph{list chromatic number} of $H$, denoted $\chi_\ell(H)$, is the minimum $q$ such that $H$ is list $q$-colorable.

We use the following asymptotic notation. If for each $r \in \N$, $f_r, g_r \,:\, \N \to \R^+$ are positive real-valued functions, then we write $f_r = O_r(g_r)$ and $g_r = \Omega_r(f_r)$ if there exists for each $r \in \N$ a constant $C_r$ such that $f_r(x) \le C_r g_r(x)$ for all $x \in \N$, $f_r = o_r(g_r)$ if $\lim_{r \to \infty} C_r = 0$, and $f_r = \Theta_r(g_r)$ if $f_r = O_r(g_r)$ and $f_r = \Omega_r(g_r)$. We omit the subscript $r$ in the notation when the parameter $r$ is clear from context. We also write $f \ll g$ (equivalently $g \gg f$) if $f = o(g)$, i.e., $f(x)/g(x) \to 0$ as $x \to \infty$. We use $\tilde{O}(f_r)$ to suppress logarithmic factors, i.e., $g_r = \tilde{O}(f_r)$ if $g_r = O(f_r \cdot \mathrm{poly}\log f_r)$.
For convenience, we write $x = (1 \pm \beta)y$ if $(1-\beta)y \le x \le (1+\beta)y$.

For simplicity of exposition, we may  sometimes omit floors and ceilings when they do not affect the argument.

\section{Proof overview}\label{section: proof overview}

In this section, we provide an overview of the key ideas behind the proofs of our main results.
It should be understood that the presentation in this section deliberately ignores certain minor technical issues, and so the actual arguments and formal definitions given in the rest of the paper may be slightly different from how they are described here. 
However, the differences do not affect the general conceptual framework underlying our approach.

Throughout this overview, all asymptotic notation is with respect to either $\Delta(G) \rightarrow \infty$ or $\Delta(H) \rightarrow \infty$.
Furthermore, we write $f \approx g$, $f \gtrsim g$, and $f \lesssim g$ if $f = (1 \pm o(1))\,g$, $f \ge \left(1-o(1)\right) g$, and $f \le \left(1+o(1)\right) g$, respectively.

\subsection{The R\"odl Nibble: proof of Theorem~\ref{theorem: main}}\label{subsection: nibble overview}

As mentioned earlier, we prove Theorem~\ref{theorem: list chromatic} using a variant of the R\"odl Nibble method.
We begin by discussing the classical approach due to Kim~\cite{Kim95} for coloring graphs of girth at least $5$; see \cite[Ch. 12]{MolloyReed} for a textbook treatment of the argument. 
We then discuss the extension of Iliopoulos~\cite{iliopoulos2021improved} to higher uniformities, before turning to our approach, which gives a bound matching the shattering threshold.

\subsubsection*{Graphs of girth at least $5$}
Fix $\eps > 0$. Let $G$ be a graph of maximum degree at most $\Delta$ and girth at least~$5$, and let $L$ be a list assignment of $G$ satisfying $|L(v)| \ge (1+\eps)(\Delta/\log\Delta)$ for every vertex $v \in V(G)$. Our goal is to find an $L$-coloring of $G$. To achieve this, we employ a variant of the R\"odl Nibble method for constructing a proper coloring, which typically proceeds in two phases: 

\begin{enumerate}[(1)]
\item Construct a `good' partial $L$-coloring of $G$ by randomly coloring a small portion of $V(G)$, and then iteratively repeating the procedure on the vertices that remain uncolored while updating the lists of available colors at a vertex during each step (a color $c$ is available at $v$ if no neighbor of $v$ is colored $c$).
\item Extend the partial coloring obtained from the first phase to a complete $L$-coloring.
\end{enumerate}
We note that, for this approach to work, it is crucial to design an effective nibble procedure so that, by the end of the first phase, the subgraph induced by the uncolored vertices is sufficiently sparse, while each uncolored vertex still has many available colors in its list.
More formally, let $\phi$ be the partial coloring obtained in the first phase.
For each vertex $v$ that is uncolored under $\phi$, let $L_\phi(v) \subseteq L(v)$ be a subset of available colors at $v$, and for $c \in L_{\phi}(v)$, let $d_{L_\phi}(v, c)$ denote the number of neighbors $u$ of $v$ satisfying $c \in L_\phi(u)$, i.e., $d_{L_\phi}(v, c)$ is the color degree of $c$ with respect to $v$.
At the end of the first phase, it is crucial that
\begin{align}\label{eq: graph termination}
    \frac{\max_{v, c}d_{L_\phi}(v, c)}{\min_v|L_\phi(v)|} \le \frac{1}{8}, \qquad \text{and} \qquad \min_v|L_\phi(v)| \ge 1.
\end{align}
The second phase follows by a simple Lov\'asz Local Lemma argument.

The heart of the first phase is the so-called ``Wasteful Coloring Procedure'', first introduced by Kim \cite{Kim95} (see also~\cite[Ch. 12]{MolloyReed} for a textbook treatment), for constructing a partial $L$-coloring $\psi$ of $G$ with desirable properties.
We proceed iteratively.
At first, all vertices of $G$ are uncolored.
During each iteration, we perform the following procedure:
\begin{enumerate}[label=\textup{(G\arabic*)}, leftmargin=40pt]
\item \label{G1} Each uncolored vertex is \emph{activated} independently with some small probability.
\item \label{G2} For each uncolored vertex $v$, select a color uniformly at random from its list $L(v)$.
\item \label{G3} A color $c$ is \emph{discarded} from $L(v)$ if some neighbor $u$ of $v$ is activated and selects the color $c$.
\item \label{G4} If the selected color of an activated vertex is not discarded, the vertex is permanently \emph{assigned} that color.
\end{enumerate}
Note that the above procedure is wasteful in the sense that we may discard a color $c$ from $L(v)$ when none of its neighbors is permanently assigned $c$. It turns out that this ``wastefulness'' greatly simplifies the analysis of this procedure (see \cite[Ch. 12]{MolloyReed} for a more in-depth discussion of the utility of such wastefulness).
More formally, let $U_i$ and $L_i$ respectively denote the set of uncolored vertices and the list assignment at the start of the $i$-th iteration. Set $G_i \coloneqq G[U_i]$. For a vertex $v \in U_i$ and a color $c \in L_i(v)$, we define the \emph{color degree} of $c$ at $v$, denoted $d_i(v, c)$, to be the number of neighbors of $v$ in $G_i$ whose lists contain the color $c$, that is, $d_i(v, c) \coloneqq |\{u \in N_{G_i}(v) : c \in L_i(u)\}|$.
We note that initially $U_1 = V(G)$, $L_1 = L$, $G_1 = G$, and $d_1(v, c) \le \Delta$ for every vertex $v \in U_1$ and color $c \in L_1(v)$.

The key quantity to track throughout the nibble procedure is the ratio of the maximum color degree $t_i \coloneqq \max_{v, c} d_i(v,c)$ to the minimum list size $q_i \coloneqq \min_{v} |L_i(v)|$. Initially, $t_1/q_1 \le \Delta/q_1 \approx \log\Delta/(1+\eps)$ and, keeping~\eqref{eq: graph termination} in mind, we run the procedure until this ratio drops below $1/8$.\footnote{We note that if we were to start with $(1-\eps)(\Delta/\log\Delta)$ colors, the first phase would fail. In particular, we cannot guarantee that the ratio drops below $1/8$ before running out of colors. This is precisely where the shattering threshold appears in the argument.}
The goal now is to show that, with positive probability, the ratio $t_{i+1}/q_{i+1}$ is noticeably smaller than $t_i/q_i$.
It turns out this holds locally in expectation:
\[
\frac{\E[d_{i+1}(v,c)]}{\E[|L_{i+1}(v)|]} \lesssim \uncolor_i \, \frac{t_i}{q_i},
\]
where $\uncolor_i$ approximates the probability that a vertex remains uncolored at the end of the $i$-th iteration, which is clearly strictly less than $1$. 
Therefore, it suffices to show that the random variables $|L_{i+1}(v)|$ and $d_{i+1}(v,c)$ are concentrated around their expected values, i.e., to show that the ratio drops locally with high probability.
An application of the Lov\'asz Local Lemma then implies that there exists an outcome of the Wasteful Coloring Procedure such that the ratio drops globally.

We note that the concentration arguments crucially rely on the girth condition: since $G$ has girth at least~$5$, any vertex $w \notin N[v]$ shares at most one common neighbor with $v$.
As a result, the random variables $|L_{i+1}(v)|$ and $d_{i+1}(v, c)$ satisfy the relevant Lipschitz constraints in order to apply Azuma-type concentration inequalities.

\subsubsection*{Uncrowded hypergraphs: Iliopoulos's proof of Theorem~\ref{theorem: fotis}}

Let $\cH$ be a $k$-uniform uncrowded hypergraph of maximum degree at most $\Delta$ as in the statement of Theorem~\ref{theorem: fotis}, and let $\cL$ be a list assignment for $\cH$ satisfying $|\cL(v)| \ge (1+\eps)(k-1){\left(\Delta/\log\Delta\right)^{\frac{1}{k-1}}}$ for each vertex $v \in V(\cH)$.
When adapting Kim's argument to higher uniformities, discarding a color $c$ at $v$ if a single neighbor of $v$ is activated and selects $c$ is rather wasteful.
For example, consider an edge $e = \{u, v, w\}$.
If $u$ is assigned $c$ during an iteration of the Wasteful Coloring Procedure, then $c$ need not be discarded at $v$ or $w$; instead, the edge $e$ can simply be replaced by the $2$-edge $\{v, w\}$ while ensuring that $v$ and $w$ do not both get assigned $c$ in future iterations.
Notably, this preserves $c$ at both vertices for later iterations of the nibble while ensuring that the union of the color assignments constructed forms a partial $\cL$-coloring of the original hypergraph.
This is precisely the strategy introduced by Iliopoulos.
Informally, he proceeds as follows: 
\begin{enumerate}[label=\textup{(H\arabic*)}, leftmargin=40pt]
\item\label{item: activate hypergraph} Each uncolored vertex is \emph{activated} independently with some small probability.
\item\label{item: assign hypergraph} For each uncolored vertex $v$, select a color uniformly at random from its list $\cL(v)$.
\item\label{item: discard hypergraph} We \emph{discard} a color $c$ from $\cL(v)$ if there is an edge containing $v$ such that every other vertex in the edge is activated and has selected the color $c$.
\item\label{item: color hypergraph} If the selected color of an activated vertex is not discarded, the vertex is permanently \emph{assigned} that color.
\end{enumerate}
As in the graph case, we let $U_i$ and $L_i$ respectively denote the set of uncolored vertices and the list assignment at the start of the $i$-th iteration. 
As mentioned earlier, in the situation where some vertices in an edge $e$ are permanently assigned $c$ while others remain uncolored, we replace $e$ with the edge $f\subseteq e$ consisting of the uncolored vertices in $e$, and ensure that not all of the vertices in $f$ are assigned $c$ in future iterations.
More generally, we say that a $j$-edge $e$ \emph{becomes} an $\ell$-edge by the end of an iteration if $e$ has exactly $j-\ell$ of its vertices assigned the color $c$, while $c$ also remains in the lists of its remaining $\ell$ uncolored vertices.
We track this dependence via the notion of the \emph{color $\ell$-degree}.
For a vertex $v \in U_i$ and a color $c \in L_i(v)$, the color $\ell$-degree of a color $c$ at a vertex $v$, denoted $d_i(v, c, \ell)$, counts the edges $e \in E(H, v)$ satisfying:
\begin{itemize}
\item exactly $\ell$ vertices $w$ in $e$ (including $v$) are uncolored and satisfy $c\in L_i(w)$, and
\item the remaining $k - \ell$ vertices have been permanently assigned the color $c$ during some earlier iteration of the procedure.
\end{itemize}
Note that initially $U_1 = V(\cH)$, $L_1 = \cL$, $d_1(v, c, k) \le \Delta$, and $d_1(v, c, \ell) = 0$ for $2\le\ell<k$.
Consequently, one must track not just a single maximum color degree, but all the maximum color $\ell$-degrees, given by
\begin{equation}\label{def: overview t i ell}
t_{i,\ell} \coloneqq \max_{v,c} d_i(v,c,\ell),
\end{equation}
for $2 \le \ell \le k$, throughout the nibble procedure.
Similar to the graph case, we set $q_i \coloneqq \min_v |L_i(v)|$ for each $i$.
Analogous to~\eqref{eq: graph termination}, Iliopoulos~\cite{iliopoulos2021improved} noted that it suffices to show that after a sufficient number $s$ of iterations, we have
\begin{align}\label{eq: iliopoulos ratio}
\max_{2 \le \ell \le k} \frac{t_{s,\ell}}{q_s^{\ell-1}} \le \frac{1}{10k^2} \qquad \text{and} \qquad q_s \ge 1.
\end{align}
At this point, one can extend the resulting partial coloring to a complete coloring by using the asymmetric version of the Lov\'asz Local Lemma.
We note that the symmetric version does not apply here as the probabilities of the ``bad events'' occurring vary with $\ell$.

\subsubsection*{Our main contribution}
The key distinction between our approach and Iliopoulos's lies in how we handle, at each iteration $i$ of the nibble, the ratios $t_{i,\ell}/q_i^{\ell-1}$ for $2\le\ell\le k$. Iliopoulos controls these quantities through their maximum; in particular, the bound on $\max_{2\le\ell\le k}t_{s,\ell}/q_s^{\ell-1}$ in~\eqref{eq: iliopoulos ratio} is essential to his final application of the Asymmetric Lov\'asz Local Lemma. We show, however, that it suffices to control their sum directly. More precisely, although the individual ratios may vary with $\ell$, the quantity $\sum_{\ell=2}^k t_{i,\ell}/q_i^{\ell-1}$ can be tracked throughout the nibble. For an appropriate choice of $s$, we show that after $s-1$ iterations,
\begin{align}\label{eq: our termination condition}
\sum_{\ell = 2}^k \frac{t_{s,\ell}}{q_s^{\ell-1}} = o(1) \qquad \text{and} \qquad q_s = \omega(1),
\end{align}
at which point, we may complete the coloring via the asymmetric version of the Local Lemma.

To explain how we control, at each iteration $i$, the sum $\sum_{\ell=2}^k t_{i,\ell}/q_i^{\ell-1}$, we now describe the parameters tracked during the nibble and give an intuitive account of their evolution from one iteration to the next.
We remark that our arguments are considerably more streamlined relative to earlier approaches.
This streamlining simplifies the recursive analysis of parameters, making the computations significantly less cumbersome.
Recall the Wasteful Coloring Procedure~\ref{item: activate hypergraph}--\ref{item: color hypergraph}.
Throughout the nibble, we use the same activation probability $\alpha=\eps/\log\Delta$ in step~\ref{item: activate hypergraph}, where $\eps$ is an arbitrarily small positive constant.
We let $\keep_i$ denote the probability that a color is kept at a vertex during the $i$-th iteration. After step~\ref{item: discard hypergraph}, we use independent equalizing coin flips to ensure that this probability is the same for every vertex--color pair; an explicit formula for $\keep_i$ is given in~\eqref{eq: keep_i overview}.
With this notation, we make the following recursive definitions:
\begin{align*}
    &t_1 \coloneqq \Delta, & & q_1 \coloneqq (1+\eps){\left((k-1) \, \frac{\Delta}{\log\Delta} \right)^{\frac{1}{k-1}}}, \\
    &t_{i+1} = \keep_i^{k-1}t_i, & & q_{i+1} = \keep_iq_i.
\end{align*}
Note that 
\begin{align}\label{eq: ratio_i ratio_1 in overview}
\frac{t_i}{q_i^{k-1}} = \frac{t_1}{q_1^{k-1}} = \frac{\log\Delta}{(1+\eps)^{k-1}(k-1)}.
\end{align}
This fact will be useful for subsequent computations.

Let us provide some intuition for the above parameters.
Since $\keep_i$ is the probability that a given color is kept at a vertex during the $i$-th iteration, the recurrence $q_{i+1}=\keep_iq_i$ describes the corresponding reduction in list size.
Thus, $q_i$ roughly represents the list size at the start of the $i$-th iteration.
The parameter $t_i$ roughly represents the number of edges containing a fixed vertex $v$ such that all other $k-1$ vertices have $c$ in their list at the start of the $i$-th iteration (regardless of whether those vertices are uncolored).
Note that a vertex is colored during step~\ref{item: color hypergraph} if it is activated and its selected color is kept at the vertex.
It follows that the probability a vertex remains uncolored at the end of the iteration is 
\begin{equation}
\label{uncoloriapprox}
  \uncolor_i \approx 1 - \alpha\keep_i  
\end{equation}

We will now express $t_{i, \ell}$ in terms of the parameters introduced thus far.
For an edge $e$ counted in $t_i$ to be included in $t_{i, \ell}$, it must be the case that $\ell-1$ vertices $u \in e - v$ remained uncolored and the remaining $k - \ell$ vertices $w \in e - v$ were assigned $c$ during some iteration $j_w < i$.
We will now estimate the probabilities of these events occurring.
\begin{itemize}
    \item For a vertex $u$ to be uncolored at the start of the $i$-th iteration, it must remain uncolored during each previous iteration.
    This occurs with probability approximately $\prod_{j < i}\uncolor_j$.
    \item For a vertex $w$ to be assigned $c$ during the $j_w$-th iteration, it must be the case that $w$ remained uncolored up until the $j_w$-th iteration during which it was assigned $c$.
    This occurs with probability approximately $\frac{\alpha}{q_{j_w}}\prod_{j < j_w}\uncolor_j$.
\end{itemize}
Letting 
\[
\beta_i \coloneqq \prod_{j < i}\uncolor_j \qquad \text{and} \qquad \gamma_i = \sum_{j < i}\beta_j,
\]
and noting that $(q_i)$ is a decreasing sequence, this suggests the heuristic estimate
\begin{align}\label{eq: ell-degree bound overview}
    t_{i, \ell} \approx t_i \, \binom{k-1}{\ell - 1} \, \beta_i^{\ell-1} {\left[\sum_{j_w = 1}^{i-1}{\left(\frac{\alpha}{q_{j_w}} \beta_{j_w}\right)}\right]}^{k-\ell} \le t_i \, \binom{k-1}{\ell - 1} \, \beta_i^{\ell-1} {\left(\frac{\alpha\gamma_i}{q_i}\right)^{k-\ell}}.
\end{align}
Note that for $i = 1$, the above is $0$ if $\ell < k$ and $t_1$ if $\ell = k$.
A central part of our argument shows that indeed the above bound holds throughout our procedure. We next argue that, for a suitable choice of $s$, the bound in~\eqref{eq: ell-degree bound overview} implies the desired bound in~\eqref{eq: our termination condition} after $s-1$ iterations.
Recall that, by~\eqref{eq: ratio_i ratio_1 in overview}, $t_i/q_i^{k-1} = t_1/q_1^{k-1}$ for every $i\in[s]$.
Applying~\eqref{eq: ell-degree bound overview}, we have
\begin{align*}
    \sum_{\ell = 2}^k\frac{t_{s, \ell}}{q_s^{\ell-1}} \;\overset{\eqref{eq: ell-degree bound overview}}{\le}\; \sum_{\ell = 2}^k\frac{t_s}{q_s^{\ell - 1}} \, \binom{k-1}{\ell - 1} \, \beta_s^{\ell-1} {\left(\frac{\alpha\gamma_s}{q_s}\right)^{k-\ell}}
    &\,=\, \frac{t_s}{q_s^{k-1}}\sum_{\ell = 2}^k\binom{k-1}{\ell - 1} (\alpha\gamma_s)^{k-\ell} \beta_s^{\ell-1}\\
    &\,=\, \frac{t_1}{q_1^{k-1}}\left((\alpha \gamma_{s} + \beta_s)^{k-1} - (\alpha\gamma_s)^{k-1}\right) \\
    &\overset{\eqref{eq: ratio_i ratio_1 in overview}}{\le} 
    (\log \Delta) (\alpha\gamma_s+\beta_s)^{k-2}\beta_s,
\end{align*}
where the last inequality follows from~\eqref{eq: ratio_i ratio_1 in overview} and the convexity of $x^{k-1}$ for $k \ge 2$ and $x \ge 0$.
Note that, by~\eqref{uncoloriapprox}, if $\keep_i$ is at least $1 - \eps$ for every $i < s$, then $\uncolor_i \lesssim 1 - \alpha$ and, consequently, $\beta_i \lesssim (1-\alpha)^{i - 1}$ and $\gamma_i \lesssim (1 - (1-\alpha)^{i-1})/\alpha \le 1/\alpha$ for every $i \le s$. 
With this assumption in hand, the above expression satisfies
\begin{align*}
\sum_{\ell=2}^k\frac{t_{s,\ell}}{q_s^{\ell-1}}
\,\lesssim\, (\log\Delta)\left(1-(1-\alpha)^{s-1}+(1-\alpha)^{s-1}\right)^{k-2}\beta_s 
&\,\lesssim\, (\log\Delta)(1-\alpha)^{s-1}\\
&\,\le\, (\log\Delta)\exp\left(-\alpha(s-1)\right),
\end{align*}
suggesting the choice
\begin{equation}
s \coloneqq 1+\left\lceil\frac{(1+\eps)\log \log \Delta}{\alpha}\right\rceil.
\end{equation}
Indeed, for this choice of $s$, the last expression is $O((\log\Delta)^{-\eps}) \ll 1$. Moreover, a formal argument shows that $\keep_i\ge1-\eps$ for every $i<s$, verifying the assumption above. It follows that the first condition in~\eqref{eq: our termination condition} holds.

Let us now turn our attention to $q_s$.
We begin by computing $\keep_i$.
Recall from step~\ref{item: discard hypergraph} that a color $c$ is discarded at $v$ if there is an edge containing $v$ such that all its other vertices are activated and have selected the color $c$.
By a union bound, this occurs with probability at most
\begin{align*}
    \sum_{\ell = 2}^kt_{i, \ell} \, {\left(\frac{\alpha}{q_i}\right)^{\ell-1}} \;\overset{\eqref{eq: ell-degree bound overview}}{\le}\; \sum_{\ell = 2}^kt_i \, \binom{k-1}{\ell - 1} \, \beta_i^{\ell-1} {\left(\frac{\alpha\gamma_i}{q_i}\right)^{k-\ell}} {\left(\frac{\alpha}{q_i}\right)^{\ell-1}}
    &=\, \frac{\alpha^{k-1}t_i}{q_i^{k-1}}\sum_{\ell = 2}^k\binom{k-1}{\ell - 1} \gamma_i^{k-\ell} \beta_i^{\ell-1}\\
    &=\, \frac{\alpha^{k-1}t_i}{q_i^{k-1}}\,{\left((\gamma_i + \beta_i)^{k-1} - \gamma_i^{k-1}\right)} \\
    &=\, \frac{\alpha^{k-1}t_i}{q_i^{k-1}} \, {\left(\gamma_{i+1}^{k-1} - \gamma_i^{k-1}\right)},
\end{align*}
where we use the upper bound on $t_{i, \ell}$ from~\eqref{eq: ell-degree bound overview} in the first inequality.
Consequently, by employing equalizing coin flips, we can set
\begin{equation}\label{eq: keep_i overview}
    \keep_i = 1 - \frac{\alpha^{k-1}t_i}{q_i^{k-1}} \, {\left(\gamma_{i+1}^{k-1} - \gamma_i^{k-1}\right)}.
\end{equation}
Using~\eqref{eq: keep_i overview} and that $t_i/q_i^{k-1} = t_1/q_1^{k-1}$ for $i < s$, we have
\[
q_s \ge q_1\prod_{i < s}\keep_i \approx q_1\exp\left(-\sum_{i < s}\frac{\alpha^{k-1}t_i}{q_i^{k-1}}\left(\gamma_{i+1}^{k-1} - \gamma_i^{k-1}\right)\right) = q_1\exp\left(-\frac{\alpha^{k-1}t_1\gamma_s^{k-1}}{q_1^{k-1}}\right).
\]
As remarked earlier, $\gamma_s \lesssim 1/\alpha$. Thus, we expect
\begin{align}
q_s
&\gtrsim q_1\exp\left(-\frac{t_1}{q_1^{k-1}}\right)
\overset{\eqref{eq: ratio_i ratio_1 in overview}}{=}
q_1\exp\left(-\frac{\log\Delta}
{(1+\eps)^{k-1}(k-1)}\right)\\
&=\Delta^{\frac{1-(1+\eps)^{-(k-1)}}{k-1}-o(1)}
\gg\Delta^{\frac{\eps}{2(k-1)}}\gg1,
\end{align}
as desired.

If $1+\eps$ were replaced by $1-\eps$ in the definition of $q_1$, the exponent of $\Delta$ in the last display would become
\[\frac{1 - o(1) - (1-\eps)^{-(k-1)}}{k - 1} < 0,\]
so we would run out of colors before the finishing condition is reached.
This calculation explains why a sharp transition occurs at the shattering threshold in our nibble.

We conclude this overview with a discussion regarding our concentration arguments.
The concentration of list sizes is fairly standard across applications of the nibble method.
To bound the maximum color $\ell$-degree $t_{i,\ell}$ given in~\eqref{def: overview t i ell}, however, the traditional approach (see, e.g.,~\cite{iliopoulos2021improved, lipostle2022}) proceeds as follows, for a fixed vertex--color pair $(v,c)$:
\begin{enumerate}
\item Express the color $\ell$-degree $d_{i+1}(v,c,\ell)$ as a sum of $k-\ell+1$ random contributions, one for each $\ell\leq j\leq k$, where the contribution corresponding to $j$ counts the $j$-edges at $(v,c)$ that become $\ell$-edges by the end of the $i$-th iteration.
\item Compute $\E[d_{i+1}(v,c,\ell)]$ by summing the expected values of these contributions.
\item Because these contributions cannot be concentrated directly, upper bound each by a linear combination of $\Theta(1)$ auxiliary random variables, each of which is amenable to concentration via Azuma-type martingale inequalities.
\end{enumerate}
The resulting bounds are then made to hold simultaneously for every vertex--color pair, yielding a bound on $t_{i+1,\ell}$.
In contrast, our approach yields a significantly more streamlined analysis.
This is particularly useful for simplifying the recursive computations, which can become quite involved even in the graph setting ($k=2$).
To this end, we establish substantially tighter concentration bounds for both the list sizes and the color $\ell$-degrees than those appearing in previous work.
While this tighter concentration leads to a far less cumbersome recursive analysis (see Section~\ref{subsection: properties}), the underlying concentration arguments require greater delicacy.
Specifically, computing only $\E[d_{i+1}(v,c,\ell)]$ no longer suffices; instead, we must track the expectations of $\Theta(k-\ell+1)$ auxiliary random variables to ensure that they remain well controlled relative to our refined concentration error terms.

\subsection{Random partitioning: proof of Corollary~\ref{corollary: linear}}\label{subsection: linear overview}

We reduce the linear case to the uncrowded setting (Theorem~\ref{theorem: list chromatic}) via a two-stage random partition.
In the first stage, we partition the vertex set independently at random into
\[
m_1 = \Delta^{\frac{3}{4k-5} + o(1)}
\]
parts.
By a standard application of the Chernoff Bound, the maximum degree in each induced subhypergraph drops to $(1+o(1))\Delta / m_1^{k-1}$.
To control the density of short cycles, we express the number of $3$- and $4$-cycles containing a fixed vertex within its assigned part as a low-degree polynomial in the assignment indicator variables.
We verify that this polynomial representation satisfies the hypotheses required for concentration via the Kim--Vu Inequality.
An application of the symmetric version of the Lov\'asz Local Lemma then guarantees a partition in which every vertex is contained in at most $\mathrm{polylog}(\Delta)$ short cycles.

In the second stage, each part from the first stage is further partitioned independently into $m_2 = (\log\Delta)^5$ parts.
The maximum degree bounds are preserved via Chernoff bounds.
Moreover, the probability that any remaining short cycle at a vertex stays intact within its part is bounded by a reciprocal polynomial in $\log\Delta$ of sufficiently high degree.
By the general form of the Lov\'asz Local Lemma, there exists a joint partition outcome in which every part simultaneously respects the degree bound and contains no short cycles. Consequently, each part induces an uncrowded hypergraph of maximum degree at most $(1+o(1))\Delta / (m_1 m_2)^{k-1}$.

Finally, we color each induced uncrowded subhypergraph independently using pairwise disjoint color palettes via Theorem~\ref{theorem: list chromatic}.
Optimizing the choice of $m_1$ against the resulting maximum degrees yields the leading factor
\[
\left(\frac{(4k-5)(k-1)}{k-2}\right)^{\frac{1}{k-1}},
\]
which approaches $1$ as $k \to \infty$, as desired.

\section{Probabilistic tools}
\label{section: prelim}

Throughout the paper, we use several probabilistic tools, which will be introduced now in this section. We start with the celebrated Lov\'asz Local Lemma.
In its most general form, it can be stated as follows.

\begin{theorem}[{Lov\'asz Local Lemma, general version; \cite[Section 19]{MolloyReed}}]\label{theorem: general LLL}
Let $A_1, \dots, A_n$ be events in a probability space. For each $i$, let $D_i \subseteq [n] \setminus\{i\}$ be a set such that $A_i$ is mutually independent of $\{A_j : j \notin D_i \cup \{i\}\}$. 
If there exist real numbers $x_1,\dots,x_n \in [0, 1)$ such that $\pr(A_i) \le x_i \prod_{j \in D_i} (1-x_j)$ for all $i$, then the probability that none of the events $A_1$, \dots, $A_n$ occur is at least $\prod_{i=1}^n (1-x_i) > 0$.
\end{theorem}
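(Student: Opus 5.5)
The statement immediately above is the general (asymmetric) Lov\'asz Local Lemma, which the paper quotes from \cite[Section 19]{MolloyReed}. I will prove it by the standard conditional-probability induction.

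The central claim is the following. For every $i \in [n]$ and every $S \subseteq [n]\setminus\{i\}$ with $\pr\bigl(\bigcap_{j\in S}\overline{A_j}\bigr)>0$, we have
\[
\pr\Bigl(A_i \Bigm| \bigcap_{j\in S}\overline{A_j}\Bigr)\le x_i .
\]
Given this claim, the conclusion follows from the chain rule:
\[
\pr\Bigl(\bigcap_{i=1}^n\overline{A_i}\Bigr)=\prod_{i=1}^n\Bigl(1-\pr\Bigl(A_i\Bigm|\bigcap_{j<i}\overline{A_j}\Bigr)\Bigr)\ge\prod_{i=1}^n(1-x_i)>0 .
\]
The positivity of each conditioning event used here is itself supplied inductively by the same product bound, applied to the first $i-1$ events.

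The claim is proved by induction on $|S|$. If $S=\varnothing$, the bound reads $\pr(A_i)\le x_i\prod_{j\in D_i}(1-x_j)\le x_i$, which is the hypothesis.

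For $|S|\ge 1$, split $S$ into $S_1=S\cap D_i$ and $S_2=S\setminus D_i$. If $S_1=\varnothing$, mutual independence of $A_i$ from $\{A_j : j\in S_2\}$ gives $\pr\bigl(A_i \mid \bigcap_{S}\overline{A_j}\bigr)=\pr(A_i)\le x_i$. Otherwise, write
\[
\pr\Bigl(A_i\Bigm|\bigcap_{S}\overline{A_j}\Bigr)=\frac{\pr\bigl(A_i\cap\bigcap_{S_1}\overline{A_j}\bigm|\bigcap_{S_2}\overline{A_j}\bigr)}{\pr\bigl(\bigcap_{S_1}\overline{A_j}\bigm|\bigcap_{S_2}\overline{A_j}\bigr)} .
\]
The numerator is at most $\pr\bigl(A_i\mid\bigcap_{S_2}\overline{A_j}\bigr)=\pr(A_i)\le x_i\prod_{j\in D_i}(1-x_j)$, again using independence from the events indexed by $S_2$.

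For the denominator, list $S_1=\{j_1,\dots,j_r\}$ and expand by the chain rule:
\[
\prod_{t=1}^{r}\Bigl(1-\pr\Bigl(A_{j_t}\Bigm|\bigcap_{S_2\cup\{j_1,\dots,j_{t-1}\}}\overline{A_j}\Bigr)\Bigr).
\]
Each conditioning set in this product has size strictly less than $|S|$, since $j_t,\dots,j_r$ are excluded. The induction hypothesis therefore bounds each factor below by $1-x_{j_t}$, so the denominator is at least $\prod_{j\in S_1}(1-x_j)$.

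Because $S_1\subseteq D_i$ and every $x_j<1$, dividing gives
\[
\pr\Bigl(A_i\Bigm|\bigcap_{S}\overline{A_j}\Bigr)\le \frac{x_i\prod_{j\in D_i}(1-x_j)}{\prod_{j\in S_1}(1-x_j)}\le x_i .
\]
This completes the induction.

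There is no substantive obstacle. The only delicate points are bookkeeping: conditional probabilities must be well defined, which holds because positivity of the intersections follows inductively from the same lower bound, and the induction must run over $|S|$ simultaneously for all $i$.
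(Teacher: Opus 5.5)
Your proof is correct: it is the standard Erd\H{o}s--Lov\'asz/Spencer argument, which proves $\pr\bigl(A_i \bigm| \bigcap_{j\in S}\overline{A_j}\bigr)\le x_i$ by induction on $|S|$ simultaneously for all $i$ and then applies the chain rule, and positivity of every conditioning event in the inductive step is inherited from $\pr\bigl(\bigcap_{j\in S}\overline{A_j}\bigr)>0$ because each conditioning set is a subset of $S$. The paper gives no proof of its own here and simply quotes the lemma from Molloy--Reed; your argument is the standard textbook proof of that cited result.
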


For many applications, the following two versions of the Lov\'asz Local Lemma suffice.

\begin{theorem}[{Lov\'asz Local Lemma, symmetric version; \cite[Section 4]{MolloyReed}}]\label{theorem: sym LLL}
    Let $A_1$, $A_2$, \ldots, $A_n$ be events in a probability space. Suppose there exists $p \in [0, 1)$ such that for all $1 \leq i \leq n$ we have $\pr(A_i) \leq p$. Further suppose that each $A_i$ is mutually independent of a set of all but at most $d_{\mathrm{LLL}}$ of other events $A_j$, $j\neq i$, for some $d_{\mathrm{LLL}} \in \N$. If $4pd_{\mathrm{LLL}} \leq 1$, then with positive probability none of the events $A_1$, \ldots, $A_n$ occur.
\end{theorem}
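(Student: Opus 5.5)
The symmetric version follows from the general version (Theorem~\ref{theorem: general LLL}) by choosing all the weights $x_i$ to be the same suitable value. For each $i$, let $D_i$ be the set of at most $d_{\mathrm{LLL}}$ indices $j \ne i$ outside of which $A_i$ is mutually independent, as in the hypothesis.

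First I dispose of the degenerate case $d_{\mathrm{LLL}} = 0$. Then every $D_i$ is empty, so I set $x_i \coloneqq p \in [0,1)$. The hypothesis of Theorem~\ref{theorem: general LLL} reads $\pr(A_i) \le x_i$, which holds. That theorem then gives probability at least $(1-p)^n > 0$ that no event occurs.

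In the main case $d_{\mathrm{LLL}} \ge 1$, I set $x_i \coloneqq 2p$ for every $i$. Since $4pd_{\mathrm{LLL}} \le 1$, we get $x_i \le 1/(2d_{\mathrm{LLL}}) \le 1/2$, so $x_i \in [0,1)$. Using $|D_i| \le d_{\mathrm{LLL}}$ and Bernoulli's inequality $(1-x)^m \ge 1 - mx$ for $x \in [0,1]$, I obtain
\[
x_i \prod_{j \in D_i} (1 - x_j) \;\ge\; 2p\,(1-2p)^{d_{\mathrm{LLL}}} \;\ge\; 2p\,(1 - 2pd_{\mathrm{LLL}}) \;\ge\; 2p\cdot \tfrac{1}{2} \;=\; p \;\ge\; \pr(A_i).
\]
So the hypothesis of Theorem~\ref{theorem: general LLL} is satisfied. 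It follows that with probability at least $(1-2p)^n > 0$ none of $A_1, \dots, A_n$ occur.

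There is no real obstacle in this argument. The only point needing care is the choice of $x_i$ so that the product bound survives with the specific constant $4$. The choice $x = 2p$, combined with the elementary estimate $(1-x)^d \ge 1 - dx \ge 1/2$, is exactly what the hypothesis $4pd_{\mathrm{LLL}} \le 1$ provides.
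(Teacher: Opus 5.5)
Your proof is correct. The choice $x_i=2p$ gives $x_i\le 1/2$ and $x_i\prod_{j\in D_i}(1-x_j)\ge 2p(1-2pd_{\mathrm{LLL}})\ge p\ge\pr(A_i)$, and you rightly treat $d_{\mathrm{LLL}}=0$ separately, since there $2p$ need not lie in $[0,1)$. The paper gives no proof of its own: it cites Molloy--Reed and lists this as one of the versions of the general Local Lemma (Theorem~\ref{theorem: general LLL}) that suffice for applications, and specializing that general statement is exactly what you do.
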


\begin{theorem}[{Lov\'asz Local Lemma, asymmetric version; \cite[Section 19]{MolloyReed}}]\label{theorem: LLL}
Let $A_1, \dots, A_n$ be events in a probability space. For each $i$, let $D_i \subseteq [n] \setminus\{i\}$ be a set such that $A_i$ is mutually independent of $\{A_j : j \notin D_i \cup \{i\}\}$. 
If $\pr(A_i) \le 1/4$ and $\sum_{j \in D_i} \pr(A_j) \le 1/4$ for all $i$, then with positive probability none of the events $A_1$, \dots, $A_n$ occur.
\end{theorem}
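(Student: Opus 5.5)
The plan is to derive this statement directly from the general version of the Lov\'asz Local Lemma (Theorem~\ref{theorem: general LLL}) with the choice
\[
x_i \coloneqq 2\,\pr(A_i) \qquad \text{for each } i \in [n].
\]
Since $\pr(A_i) \le 1/4$, we have $x_i \in [0, 1/2] \subseteq [0,1)$, so these are admissible weights. The dependency sets $D_i$ are the same ones given in the hypothesis, so it remains to verify $\pr(A_i) \le x_i \prod_{j \in D_i}(1-x_j)$ for every $i$. Because $x_i/2 = \pr(A_i)$, it suffices to show $\prod_{j \in D_i}(1-x_j) \ge 1/2$.

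\textbf{Step 1: an elementary inequality.} For $x \in [0,1/2]$ I will use $1 - x \ge 4^{-x}$. The map $x \mapsto 4^{-x}$ is convex, so on $[0,1/2]$ it lies below the chord joining its endpoint values. These values are $1$ at $x=0$ and $1/2$ at $x = 1/2$, and the chord through them is exactly the line $1-x$. This gives the inequality.

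\textbf{Step 2: the product bound.} Applying Step~1 to each factor gives
\[
\prod_{j \in D_i}(1-x_j) \;\ge\; 4^{-\sum_{j\in D_i} x_j} \;=\; 4^{-2\sum_{j \in D_i}\pr(A_j)} \;\ge\; 4^{-1/2} \;=\; \tfrac12,
\]
where the last inequality uses the hypothesis $\sum_{j\in D_i}\pr(A_j) \le 1/4$. Hence $x_i\prod_{j\in D_i}(1-x_j) \ge x_i/2 = \pr(A_i)$. Theorem~\ref{theorem: general LLL} then gives that no $A_i$ occurs with probability at least $\prod_i (1-x_i) \ge \prod_i \tfrac12 > 0$.

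There is no real obstacle in this argument. The only point needing care is the choice of weights and the inequality in Step~1. One must use the chord bound valid up to $x=1/2$, not the cruder $1-x \ge e^{-x}$ in the wrong direction. Degenerate cases with $\pr(A_i)=0$ are handled automatically, since then $x_i = 0$ and the required inequality holds trivially.
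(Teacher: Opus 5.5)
Your argument is correct. With $x_i = 2\pr(A_i) \in [0,1/2]$, the chord bound $1-x \ge 4^{-x}$ on $[0,1/2]$ (valid by convexity of $4^{-x}$) gives
\[
\prod_{j\in D_i}(1-x_j) \ge 4^{-2\sum_{j\in D_i}\pr(A_j)} \ge \tfrac12,
\]
which is exactly the hypothesis Theorem~\ref{theorem: general LLL} requires.

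The paper gives no proof of this statement. It quotes it from Molloy--Reed, alongside the general version, so there is nothing to compare against beyond noting that your reduction is the standard one.

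The other common route is a self-contained induction showing $\pr\bigl(A_i \mid \bigcap_{j\in S}\overline{A_j}\bigr) \le 2\pr(A_i)$ for every $S \subseteq [n]\setminus\{i\}$. It bounds the denominator below by $1 - 2\sum_{j \in D_i}\pr(A_j) \ge 1/2$, which is essentially the same computation done without invoking the general lemma.
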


In addition to the Local Lemma, we will use a few standard concentration inequalities. The first is a generalization of the Chernoff Bound for a sum of independent bounded random variables, stated below (although we only apply it in the binomial case).

\begin{theorem}[{Chernoff Bound; \cite[Theorem 2.3]{mcdiarmid1998concentration}}]\label{theorem: chernoff}
    Let $X$ be a sum of independent random variables, each of which takes values in 
    $[0,1]$.
    % $\{0,1\}$.
    Then for any $t > 0$, we have
    \begin{align*}
        \pr{\left(X - \E[X] \ge t\right)} < \exp\left(-\frac{t^2}{2\E[X]+2t/3}\right). 
    \end{align*}
\end{theorem}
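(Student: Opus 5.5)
The plan is the standard exponential-moment (Bernstein--Chernoff) argument. Write $X=\sum_i X_i$ with $X_i\in[0,1]$ independent, $p_i\coloneqq\E[X_i]$ and $\mu\coloneqq\E[X]=\sum_i p_i$.

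First dispose of the case $\mu=0$. There every $X_i=0$ almost surely, so $\pr(X-\mu\ge t)=0$, which is less than the right-hand side.

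Now assume $\mu>0$ and fix $\lambda>0$.
\begin{itemize}
\item The map $x\mapsto e^{\lambda x}$ is convex, so on $[0,1]$ it lies below its chord: $e^{\lambda x}\le 1+x(e^\lambda-1)$.
\item Taking expectations gives $\E[e^{\lambda X_i}]\le 1+p_i(e^\lambda-1)\le \exp\left(p_i(e^\lambda-1)\right)$.
\item By independence, $\E[e^{\lambda X}]\le \exp\left(\mu(e^\lambda-1)\right)$.
\item Markov's inequality applied to $e^{\lambda X}$ then yields
\[
\pr(X-\mu\ge t)\le \exp\left(\mu(e^\lambda-1)-\lambda(\mu+t)\right).
\]
\end{itemize}
Choose the optimizing value $\lambda=\log(1+t/\mu)$. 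This gives
\[
\pr(X-\mu\ge t)\le \exp\left(-\mu\, h(t/\mu)\right),\qquad h(x)\coloneqq(1+x)\log(1+x)-x .
\]

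It remains to prove the elementary inequality
\[
h(x)> \frac{x^2}{2+2x/3}=\frac{3x^2}{6+2x}\qquad\text{for all } x>0 .
\]
Once this holds, substituting $x=t/\mu$ gives $\mu h(t/\mu)> t^2/(2\mu+2t/3)$, which is exactly the claimed bound, with strict inequality.

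To prove it, set $g(x)\coloneqq h(x)-\frac{3x^2}{6+2x}$. Then $g(0)=0$, and a direct computation gives $g'(0)=0$. Next,
\[
h''(x)=\frac{1}{1+x},\qquad \left(\frac{3x^2}{6+2x}\right)''=\frac{108}{(6+2x)^3}=\frac{27}{2(3+x)^3}.
\]
So $g''(x)>0$ for $x>0$ is equivalent to $2(3+x)^3>27(1+x)$. Expanding the cube, this reads $54+54x+18x^2+2x^3>27+27x$, which clearly holds for $x\ge 0$. Integrating $g''>0$ twice from $0$ gives $g(x)>0$ for $x>0$.

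The only step needing care is this final calculus inequality, in particular making sure it is strict so that the theorem's bound is strict. The rest is routine.
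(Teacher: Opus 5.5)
Your argument is the standard exponential-moment proof. The paper gives no proof of its own; it simply cites McDiarmid, where the result is obtained by the same method. Everything through the bound $\exp(-\mu h(t/\mu))$ is correct, as are the case $\mu=0$ and the bookkeeping for strictness.

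However, the step you singled out as delicate contains a computational error. Since
\[
\frac{3x^2}{6+2x}=\frac32\left(x-3+\frac{9}{3+x}\right),
\]
its second derivative is $\frac{27}{(3+x)^3}=\frac{216}{(6+2x)^3}$, not $\frac{27}{2(3+x)^3}$. A quick check shows your value cannot be right. Both $h(x)$ and $\frac{x^2}{2+2x/3}$ expand as $\frac{x^2}{2}-\frac{x^3}{6}+O(x^4)$, so $g''(0)$ must equal $0$, whereas your formula gives $g''(0)=\frac12$.

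The conclusion survives the correction. With the right second derivative, $g''(x)>0$ is equivalent to $(3+x)^3>27(1+x)$, i.e.\ $x^2(x+9)>0$, which holds for every $x>0$. Integrating twice from $0$ with $g(0)=g'(0)=0$ then gives $g(x)>0$ for $x>0$, and the strict inequality follows as you describe.

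Note, though, that there is essentially no slack near $0$. In fact $g(x)=\frac{x^4}{36}+O(x^5)$, because $2$ and $\frac23$ are exactly the constants matching the first two Taylor coefficients of $h$. So this step genuinely requires the exact computation. Your inequality $54+54x+18x^2+2x^3>27+27x$ was far more comfortable than the true one, $x^3+9x^2>0$, which is tight to second order at $x=0$.
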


We will also need a version of the Chernoff Bound for negatively correlated random variables in our proof.
The proof can be found, for instance, in \cite[Lemma 3]{Molloy}.
We say that $\set{0,1}$-valued random variables $X_1$, \ldots, $X_n$ are \emph{negatively correlated} if for all $I \subseteq [n]$,
\[
\E{\left[\prod_{i \in I} X_i\right]} \le \prod_{i \in I} \E[X_i].
\]

\begin{theorem}[{Negative-correlation Chernoff Bound; \cite[Lemma 3]{Molloy}}]\label{lemma:NegativeChernoff}
Let $X_1, \dots, X_n$ be random variables taking values in $\{0,1\}$. Set $X \defeq \sum_{i=1}^n X_i$. If $X_1$, \ldots, $X_n$ are negatively correlated, then for any $0 < t \leq \E[X]$, we have
\[
\pr(X > \E[X] + t) < \left(-\frac{t^2}{3\E{[X]}}\right).
\]
\end{theorem}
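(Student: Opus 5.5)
The plan is to follow the standard exponential-moment (Chernoff) argument, checking that negative correlation is exactly what is needed to bound the moment generating function by the one for independent variables. (The displayed bound in the statement is missing an $\exp$; the intended conclusion is $\pr(X > \E[X]+t) < \exp\left(-t^2/(3\E[X])\right)$, and that is what I would prove.) Write $p_i \defeq \E[X_i]$, $\mu \defeq \E[X] = \sum_i p_i$, and fix $\lambda > 0$, to be chosen later.

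The key step is to bound $\E[e^{\lambda X}]$. Since each $X_i \in \{0,1\}$, we have $e^{\lambda X_i} = 1 + (e^\lambda - 1)X_i$. Expanding the product gives
\[
\E\left[e^{\lambda X}\right] = \E\left[\prod_{i=1}^n \left(1+(e^\lambda-1)X_i\right)\right] = \sum_{I \subseteq [n]} (e^\lambda - 1)^{|I|}\, \E\left[\prod_{i\in I} X_i\right].
\]
Because $\lambda > 0$, every coefficient $(e^\lambda-1)^{|I|}$ is nonnegative. So negative correlation lets us replace each $\E[\prod_{i\in I}X_i]$ by $\prod_{i\in I}p_i$, and re-summing gives
\[
\E\left[e^{\lambda X}\right] \le \prod_{i=1}^n\left(1+(e^\lambda-1)p_i\right) \le \exp\left((e^\lambda-1)\mu\right),
\]
using $1+x\le e^x$. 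This is the only place the hypothesis is used, and the sign condition $\lambda>0$ is exactly what makes it usable; this is why we only get an upper-tail bound.

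Next, set $\delta \defeq t/\mu \in (0,1]$. Markov's inequality applied to $e^{\lambda X}$ gives
\[
\pr(X > \mu + t) \le \exp\left((e^\lambda - 1)\mu - \lambda(1+\delta)\mu\right).
\]
Choosing $\lambda = \log(1+\delta)$ yields the bound $\left(e^{\delta}/(1+\delta)^{1+\delta}\right)^{\mu}$. We need strict inequality; it follows from the strict inequality $1+x<e^x$ when some $p_i>0$, and the statement is vacuous if $\mu=0$.

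It remains to prove the calculus inequality $\delta - (1+\delta)\log(1+\delta) \le -\delta^2/3$ for $\delta\in[0,1]$. Let $f(\delta) \defeq \delta - (1+\delta)\log(1+\delta) + \delta^2/3$. Then $f(0)=0$, $f'(\delta) = -\log(1+\delta) + 2\delta/3$ with $f'(0)=0$, and $f''(\delta) = -1/(1+\delta) + 2/3$, which is $\le 0$ on $[0,1/2]$ and $\ge 0$ on $[1/2,1]$. So $f'$ first decreases from $0$ and then increases to $f'(1) = 2/3 - \log 2 < 0$. Hence $f' \le 0$ on $[0,1]$, so $f\le 0$ there. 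Combining this with the previous display gives the claim. I expect no real obstacle: the only delicate point is justifying the expansion step via negative correlation, and the rest is the routine inequality just described.
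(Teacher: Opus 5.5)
Your proof is correct. It is the standard exponential-moment argument: expand $\E[e^{\lambda X}]=\sum_{I}(e^\lambda-1)^{|I|}\E[\prod_{i\in I}X_i]$, apply negative correlation termwise using $\lambda>0$, take $\lambda=\log(1+\delta)$, and use $\delta-(1+\delta)\log(1+\delta)\le-\delta^2/3$ on $[0,1]$. The paper gives no proof of its own beyond citing Molloy, and this argument is how that result is established. You are also right that the displayed bound is missing an $\exp$; the intended conclusion is $\pr(X>\E[X]+t)<\exp\bigl(-t^2/(3\E[X])\bigr)$, which is what you prove.
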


We also use Talagrand's Inequality to establish concentration. Before we state the inequality, we need a few definitions. 

\begin{definition}\label{def: lipshcitz and certifiable}
    Let $X$ be a random variable defined as a function of $n$ independent trials $T_1, \ldots, T_n$. We say $X$ is \textit{$\mu$-Lipschitz} if changing the outcome of any single trial changes $X$ by at most $\mu$, i.e., if $(t_1, \ldots, t_n)$ and $(t_1', \ldots, t_n')$ differ in exactly one coordinate, then
    \[|X(t_1, \ldots, t_n) - X(t_1', \ldots, t_n')| \leq \mu.\]
    
    We say $X$ is \textit{$r$-certifiable} if for every integer $s \ge 1$, whenever $X \ge s$, there exists a set of $t \leq rs$ trials $T_{i_1}, \ldots, T_{i_t}$ whose outcomes \textit{certify} $X \ge s$, i.e., changing the outcomes of any of the other trials cannot cause $X$ to be less than $s$.
\end{definition}

One can view $r$-certifiability as follows: in order to ``prove'' to someone that $X \geq s$, it is enough to show them just the outcomes of $T_{i_1}, \ldots, T_{i_t}$ as opposed to all outcomes $T_1, \ldots, T_n$.
We are now ready to state Talagrand's Inequality.

\begin{theorem}[{Talagrand's Inequality; \cite{molloy2014colouring}}]\label{theorem: Talagrand}
    Let $X$ be a random variable taking values in the non-negative integers, not identically $0$, and defined as a function of $n$ independent trials $T_1, \ldots, T_n$. Suppose that $X$ is $\mu$-Lipschitz and $r$-certifiable for some $\mu$, $r > 0$. 
    Then for any $t \geq 0$, we have
    \begin{align*}
        \pr{\left(|X-\E[X]| \geq t + 20\mu\sqrt{r\E[X]} + 64\mu^2r\right)} \leq 4\exp\left(-\frac{t^2}{8\mu^2r(\E[X] + t)}\right).
    \end{align*}
\end{theorem}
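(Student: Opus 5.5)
The statement is Talagrand's inequality in the form of \cite{molloy2014colouring}. I would derive it from Talagrand's convex-distance inequality, used as a black box. Let $\Omega=\prod_i\Omega_i$ be the product space of the trials and let $A\subseteq\Omega$. Define
\[
d_T(x,A)=\sup_{\|w\|_2=1,\,w\ge 0}\ \inf_{y\in A}\sum_{i\,:\,x_i\neq y_i} w_i .
\]
The inequality says that $\pr(A)\,\pr(d_T(x,A)\ge \tau)\le e^{-\tau^2/4}$.

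\textbf{Step 1: two-point bound.} Fix $s\ge 0$ and $u>0$, and let $A=\{X\le s\}$. Take any outcome $y$ with $X(y)\ge s+u$. By $r$-certifiability there is a set $I$ of at most $r(s+u)$ trials whose outcomes certify $X\ge s+u$.

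Let $x\in A$ agree with $y$ on all but $m$ coordinates of $I$. Change $x$ on exactly those $m$ coordinates to match $y$. The resulting outcome agrees with $y$ on all of $I$, so its value is at least $s+u$. Each single change moves $X$ by at most $\mu$, and $X(x)\le s$, so $m\mu\ge u$.

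Now choose the weight $w=\mathbf 1_I/\sqrt{|I|}$. This gives
\[
d_T(y,A)\ge \frac{u/\mu}{\sqrt{r(s+u)}}.
\]
Feeding this into the convex-distance inequality yields
\[
\pr(X\le s)\,\pr(X\ge s+u)\le \exp\!\Big(-\frac{u^2}{4\mu^2 r(s+u)}\Big).
\]

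\textbf{Step 2: concentration about the median.} Let $M$ be a median of $X$.
For the upper tail, take $s=M$ and use $\pr(X\le M)\ge 1/2$. This gives $\pr(X\ge M+u)\le 2\exp(-u^2/(4\mu^2r(M+u)))$.
For the lower tail, take $s=M-u$ and use $\pr(X\ge M)\ge 1/2$. This gives $\pr(X\le M-u)\le 2\exp(-u^2/(4\mu^2 rM))$.
Together, $\pr(|X-M|\ge u)\le 4\exp(-u^2/(8\mu^2 r(M+u)))$. The extra factor of $2$ in the denominator absorbs the later substitution of $\E[X]$ for $M$.

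\textbf{Step 3: from median to mean.} Write $|\E[X]-M|\le \int_0^\infty \pr(|X-M|\ge u)\,du$ and split the integral at $u\approx\sqrt{\mu^2 rM}$.
For $u\lesssim\sqrt{\mu^2rM}$ the tail is Gaussian-like, contributing $O(\mu\sqrt{rM})$.
For $u\gtrsim M$ it is exponential with scale $\mu^2 r$, contributing $O(\mu^2 r)$.
This shows $|\E[X]-M|\le c_1\mu\sqrt{rM}+c_2\mu^2 r$.

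Next I would compare $M$ with $\E[X]$. If $M\le 2\E[X]+C\mu^2 r$, then $\sqrt{M}\le\sqrt{2\E[X]}+\sqrt{C}\mu\sqrt r$. The other case is excluded by the previous bound together with $X\ge 0$. Substituting, the deviation between $\E[X]$ and $M$ is at most $20\mu\sqrt{r\E[X]}+64\mu^2 r$ for suitable constants.

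Finally, replace $M$ by $\E[X]$ inside the tail exponent of Step 2, at the cost of shifting the threshold by this same amount.

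\textbf{Main obstacle.} Step 1 is the conceptual core, but it is a standard argument. The main obstacle is the constant bookkeeping in Step 3, which produces the specific values $20$, $64$ and $8$. I would follow the integration carefully, using $\int_0^\infty e^{-u^2/a}\,du=\sqrt{\pi a}/2$ and $\int_0^\infty e^{-u/b}\,du=b$, and accept slightly worse constants if necessary.
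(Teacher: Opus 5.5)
The paper does not prove this statement; it quotes it from Molloy--Reed. Your route is the standard derivation used there, and it is correct. The route runs: Talagrand's convex-distance inequality, then the two-point bound $\pr(X\le s)\,\pr(X\ge s+u)\le \exp\bigl(-u^2/(4\mu^2 r(s+u))\bigr)$, then concentration about a median $M$, then integrating the tail to bound $|\E[X]-M|$.

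Two small points need tightening.

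\textbf{Integer thresholds.} Certifiability is only available for integer thresholds. So certify $X\ge\lceil s+u\rceil$ instead, and use that $v\mapsto v^2/(s+v)$ is increasing for $v>0$.

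\textbf{Use Markov directly.} Replace the case split ``$M\le 2\E[X]+C\mu^2 r$'' by Markov's inequality. Since $\pr(X\ge M)\ge 1/2$, it gives $M\le 2\E[X]$ outright. This clean bound is what justifies replacing $4\mu^2 r(M+t)$ by $8\mu^2 r(\E[X]+t)$ in the exponent for every $t\ge 0$. An additive $C\mu^2 r$ term would break that substitution for small $t$, unless $C$ happens to be small enough that the target bound is trivial there. The same bound also gives $|\E[X]-M|\le 8\sqrt{\pi}\,\mu\sqrt{r\E[X]}+32\mu^2 r$, comfortably within the stated constants $20$ and $64$.
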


Finally, we will need a powerful concentration inequality for low-degree polynomials of independent random variables, due to Kim and Vu~\cite{kimvu2000concentration}. 
Let us first recall the setting and notation needed for the result. 
Let $X_1,\dots, X_n$ be $\{0,1\}$-valued random variables, and let $f : \R^n \to \R$ be a multilinear polynomial, i.e., a polynomial in which each variable appears with degree at most $1$. We note that it is always possible to represent such a polynomial $f$ using a weighted hypergraph $H$ with vertex set $V(H) = [n]$, edge set $E(H)$, and weights $w_e \in \R$ for each edge $e\in E(H)$ as
\[
f(x_1,\dots,x_n) = \sum_{e \in E(H)} w_e \prod_{i \in e} x_i.
\]
Consider $Y \coloneqq f(X_1,\dots,X_n)$.
As $f$ is a function on $\R^n$, for any subset $A \subseteq [n]$, we let $Y_A$ be the $|A|$-fold partial derivative with respect to all indices from $A$.
Then, we have 
\[
Y_A \coloneqq \frac{\partial^{|A|}f}{\prod_{i \in A} \partial x_i} (X_1,\dots,X_n) = \sum_{\substack{e \in E(H)\\e \supseteq A}} w_e \prod_{i \in e \setminus A} X_i.
\]
We are now ready to state the result. 

\begin{theorem}[{Kim--Vu Polynomial Concentration; \cite{kimvu2000concentration}}]
\label{theorem: Kim Vu concentration}
Let $X_1, \dots, X_n$ be independent random variables taking values in $\{0,1\}$, and let $H$ be a 
weighted hypergraph with vertex set $V(H) = [n]$, edge size at most $k$, and positive weight $w_e$ for each edge $e\in E(H)$. Consider the multilinear polynomial  
\[
f(x_1, \dots, x_n) \coloneqq \sum_{e \in E(H)} w_e \prod_{i \in e} x_i,
\]
and let $Y \coloneqq f(X_1,\dots,X_n)$. For each $A \subseteq [n]$ with $|A| \le k$, let $Y_A$ be the $|A|$-fold partial derivative of $f$ with respect to all indices from $A$, evaluated at $(X_1,\dots,X_n)$.
For $0 \le i \le k$, let $E_i \coloneqq 
\max_{A \in \binom{[n]}{i}} \E[Y_A]$. Set $E \coloneqq \max_{0 \le i \le k} E_i$ and $E' \coloneqq \max_{1 \le i \le k} E_i$. Then, for any $\lambda > 1$, we have
\[
\pr{\left(|Y-\E[Y]| > (8\lambda)^k \sqrt{k!\,E E'} \right)} \le 2e^{-\lambda+2} n^{k-1}.
\]
\end{theorem}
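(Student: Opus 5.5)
The statement is the Kim--Vu inequality~\cite{kimvu2000concentration}. The plan is to prove it by induction on the degree $k$, using an exposure martingale whose increments are controlled by the first partial derivatives $Y_{\{i\}}$. These are multilinear polynomials of degree at most $k-1$, so the induction hypothesis applies to them.

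\emph{Base case.} For $k=1$, $Y=\sum_i w_i X_i$ is a weighted sum of independent indicators with $w_i = \E[Y_{\{i\}}]\le E'$. A Chernoff/Bernstein bound (Theorem~\ref{theorem: chernoff}, rescaled by $E'$) gives deviation $O(\lambda\sqrt{E E'})$ with probability $e^{-\Omega(\lambda)}$. This is well within $8\lambda\sqrt{EE'}$.

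\emph{Martingale for degree $k$.} Write $p_i=\E[X_i]$ and let $\mathcal F_i$ be generated by $X_1,\dots,X_i$. By multilinearity and independence, the Doob martingale $M_i=\E[Y\mid\mathcal F_i]$ has increments
\[
M_i-M_{i-1}=(X_i-p_i)\,C_i,\qquad C_i\coloneqq \E\bigl[Y_{\{i\}}\mid \mathcal F_{i-1}\bigr].
\]
Each $C_i$ is obtained from $Y_{\{i\}}$ by substituting $p_j$ for $X_j$ with $j\ge i$. It is therefore a multilinear polynomial of degree at most $k-1$ in $X_1,\dots,X_{i-1}$ with nonnegative coefficients, and each of its partial derivatives has expectation at most the corresponding $\E[Y_{A}]$ with $|A|\ge1$. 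Hence all "$E$"-parameters of $C_i$ are at most $E'$.

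\emph{Freedman's inequality on a good event.} The increments are bounded by $\max_i C_i$, and the predictable quadratic variation is at most $V\coloneqq\sum_i p_i C_i^2\le (\max_i C_i)\, S$, where $S\coloneqq\sum_i p_iC_i$. Define the good event $\mathcal G$ by
\[
\max_i C_i\le \Lambda \coloneqq E'+(8\lambda)^{k-1}\sqrt{(k-1)!\,E'E'}\lesssim (8\lambda)^{k-1}\sqrt{(k-1)!}\,E'
\]
together with $S\lesssim (8\lambda)^{k-1}\sqrt{(k-1)!}\,k\,E$.
\begin{itemize}
\item For the first condition, apply the induction hypothesis to each $C_i$ and take a union bound over the $n$ indices. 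This costs $n\cdot 2e^{-\lambda+2}n^{k-2}$, which is the source of the factor $n^{k-1}$.
\item For the second condition, note that $S$ is a degree-$(k-1)$ polynomial with $\E[S]=\sum_i p_i\E[Y_{\{i\}}]\le k\,\E[Y]\le kE$, since each monomial of $Y$ is counted at most $k$ times. Its derivatives are likewise bounded by $kE$ and $kE'$, so a single further application of the induction hypothesis controls $S$.
\end{itemize}
On $\mathcal G$, apply Freedman's martingale inequality (equivalently Azuma--Bernstein with a stopping time at the first violation of $\mathcal G$):
\[
\pr\bigl(|Y-\E Y|>a,\ \mathcal G\bigr)\le 2\exp\!\Bigl(-\frac{a^2}{2\Lambda\,\cdot\, O((8\lambda)^{k-1}\sqrt{(k-1)!}\,kE)+\tfrac23 a\Lambda}\Bigr).
\]
Taking $a=(8\lambda)^k\sqrt{k!\,EE'}$, the exponent is at least $\lambda$ once the constant $8$ absorbs the factors from Freedman's bound and the $\sqrt{k}$ growth. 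This uses $E\ge E'$ to handle the linear term $a\Lambda$.

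\emph{Expected obstacle.} The main difficulty is bookkeeping the constants so that the $k$-fold product of losses is exactly absorbed into $(8\lambda)^k\sqrt{k!}$. Two points need care. First, the bounds on $\max_i C_i$ and on $S$ must be used jointly in the variance term, so that the variance scales like $E E'$ rather than $E^2$. Second, the linear term $a\Lambda$ in Freedman's bound must not dominate, which requires $\lambda$ larger than an absolute constant; for smaller $\lambda$ the claimed probability bound $2e^{-\lambda+2}n^{k-1}\ge1$ is trivial. If the constants fail to close, I would instead follow Kim--Vu's original moment-based argument and bound $\E|Y-\E Y|^{2m}$ by induction, which proves the same statement with identical dependence.
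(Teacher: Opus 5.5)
The paper gives no proof of this statement. It is quoted from Kim and Vu and used only as a black box in the proof of Lemma~\ref{lemma: partition 1}, so there is nothing in the paper to compare your argument against.

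Your outline is essentially the original Kim--Vu argument, and its analytic core is sound. The ingredients are:
\begin{itemize}
\item a Doob martingale with increments $(X_i-p_i)C_i$;
\item the predictable variance bound $\sum_i p_iC_i^2\le\max_iC_i\cdot\sum_ip_iC_i$;
\item the induction hypothesis applied to both factors, using $\sum_{i\notin A}p_i\,\E[Y_{A\cup\{i\}}]\le k\,\E[Y_A]$.
\end{itemize}
With $a=(8\lambda)^k\sqrt{k!\,EE'}$, the Freedman exponent is of order $\lambda^2$ from the variance term and $\lambda\sqrt{k}$ from the linear term, so there is ample room. The linear term is never the bottleneck, since $a/\Lambda\gtrsim 8\lambda\sqrt{k}\sqrt{E/E'}$. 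The ``moment-based'' fallback you mention is not how the original proof goes, and it is not needed.

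The step that does not close as written is the probability budget. Applying the induction hypothesis to every $C_i$ at the same $\lambda$ costs $n\cdot 2e^{-\lambda+2}n^{k-2}=2e^{-\lambda+2}n^{k-1}$. That is the entire allowance, so adding $\pr(S\text{ large})$ and the Freedman tail overshoots the claimed bound.

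The slack you need is not that the claim is trivial for $\lambda$ below an absolute constant. It is that the claim is vacuous unless $\lambda>2+\ln 2+(k-1)\ln n\ge 2+k\ln 2$; for $k\ge2$ you may assume $n\ge2$. In that range, apply the induction hypothesis to the $C_i$ and to $S$ with $\lambda+1$ in place of $\lambda$.
\begin{itemize}
\item This costs $2e^{-\lambda+1}(n^{k-1}+n^{k-2})$.
\item It inflates $\Lambda$ and the bound on $S$ by at most $(1+1/\lambda)^{k-1}\le e^{(k-1)/(2+k\ln 2)}<5$, a bounded factor.
\item The Freedman exponent absorbs this factor and remains at least $3\lambda/2$, say.
\end{itemize}
Then $2e^{-\lambda}\bigl(e\,n^{k-1}+e\,n^{k-2}+e^{-\lambda/2}\bigr)\le 2e^{-\lambda+2}n^{k-1}$ for $n\ge 2$, which closes the induction.

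One minor point: Theorem~\ref{theorem: chernoff} as stated is one-sided, so your base case also needs the matching lower-tail bound, which is standard.
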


\section{Coloring uncrowded hypergraphs: proof of Theorem~\ref{theorem: main}}\label{section: proof of main}

In this section, we prove Theorem~\ref{theorem: main}.
For the reader's convenience, we restate the result here.

\begin{theorem*}[Restatement of Theorem~\ref{theorem: main}]
Let $k \ge 2$ be an integer, and let $\eps > 0$ be arbitrary. There exists $d_0\in\N$ such that the following holds for every $d\ge d_0$.
Let $\cH$ be a $k$-uniform uncrowded hypergraph, and let $\cL\,:\,V(\cH) \to 2^\N$ be a list assignment for $\cH$ such that:
\begin{enumerate}[label=(C\arabic*)]
\item for every $v \in V(\cH)$, $|\cL(v)| \ge (1+\eps) {\left((k-1) (d/\log d)\right)^{\frac{1}{k-1}}}$; and
\item for every $v \in V(\cH)$ and $c \in \cL(v)$, $|\{e \in E(\cH, v)\,:\,c \in \bigcap_{u \in e} \cL(u)\}| \le d$.
\end{enumerate}
Then $\cH$ admits an $\cL$-coloring.
\end{theorem*}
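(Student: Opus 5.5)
We will follow the Kim--Iliopoulos nibble framework outlined in Section~\ref{subsection: nibble overview}, phrased in the color-degree setting. At each stage $i$ we maintain a set $U_i$ of uncolored vertices, a list assignment $L_i$ on $U_i$, and, for each color $c$, a \emph{restriction hypergraph} $H_{i,c}$ of rank $k$ on $\{v\in U_i : c\in L_i(v)\}$. Its edges are the sets $f=e\cap U_i$, where $e\in E(\cH)$ has $c$ in the lists of all its vertices, every vertex of $e\setminus f$ has been permanently colored $c$, and $|f|\ge 2$. A partial coloring of $U_i$ that makes no edge of any $H_{i,c}$ monochromatic in $c$, and uses colors from $L_i$, then extends the current partial $\cL$-coloring properly. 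Uncrowdedness of $\cH$ is inherited by every $H_{i,c}$, since each edge of $H_{i,c}$ is a subset of a distinct edge of $\cH$. We fix $\alpha=\eps/\log d$ and $s=1+\lceil (1+\eps)\log\log d/\alpha\rceil$, and define $t_i,q_i,\keep_i,\uncolor_i,\beta_i,\gamma_i$ exactly as in the overview, with $t_1=d$. The inductive hypothesis at stage $i$ is: $|L_i(v)|\ge (1-\delta_i)q_i$ for all $v\in U_i$, and for every $v\in U_i$, $c\in L_i(v)$ and $2\le \ell\le k$,
\[
\deg(H_{i,c},\ell,v)\le (1+\delta_i)\, t_i\binom{k-1}{\ell-1}\beta_i^{\ell-1}\Bigl(\frac{\alpha\gamma_i}{q_i}\Bigr)^{k-\ell},
\]
with error terms $\delta_i$ that grow multiplicatively by $1+d^{-\Omega(1)}$ per step, so that $\delta_s=o(1)$. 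At $i=1$ this is precisely conditions \ref{item: list size in main thm} and \ref{item: color degree in main thm}.

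One iteration is the wasteful procedure \ref{item: activate hypergraph}--\ref{item: color hypergraph}, applied with respect to the restriction hypergraphs. A color $c$ is discarded at $v$ if some edge $f\ni v$ of $H_{i,c}$ has all of $f-v$ activated and selecting $c$. After this, independent equalizing coin flips make the retention probability of each pair $(v,c)$ exactly $\keep_i$, which is possible because the inductive bound and a union bound give discard probability at most $1-\keep_i$. Next we compute expectations. The bound $\E|L_{i+1}(v)|\approx \keep_i|L_i(v)|$ is immediate. For the color degrees, an $\ell$-edge at $(v,c)$ in $H_{i+1,c}$ arises from a $j$-edge ($j\ge\ell$) of $H_{i,c}$ in one of two ways: $j-\ell$ of its other vertices are assigned $c$ in this round, each with probability about $\alpha\keep_i/q_i$, while the remaining $\ell-1$ stay uncolored and keep $c$, each with probability about $\uncolor_i\keep_i$. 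Summing over $j$ and using the binomial identity $\binom{k-1}{j-1}\binom{j-1}{\ell-1}=\binom{k-1}{\ell-1}\binom{k-\ell}{j-\ell}$ reproduces the claimed bound at stage $i+1$, via $\beta_{i+1}=\uncolor_i\beta_i$, $\gamma_{i+1}=\gamma_i+\beta_i$ and $q_{i+1}=\keep_iq_i$. Girth at least $5$ is used here to show that the relevant events at distinct vertices of $f-v$ are approximately independent given the color $c$ at $v$.

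Concentration is the main obstacle. We need relative error $d^{-\Omega(1)}$, rather than $1/\mathrm{polylog}$, so that the errors survive $s=\Theta(\log d\log\log d)$ iterations. For list sizes we apply Talagrand's Inequality (Theorem~\ref{theorem: Talagrand}) to the number of discarded colors. The count is $O(1)$-Lipschitz because, by uncrowdedness, a single vertex lies in few edges through $v$, and certification is by the activated vertices witnessing each discard. For color $\ell$-degrees we decompose $\deg(H_{i+1,c},\ell,v)$ into auxiliary variables indexed by $j$ and by which vertices are colored or retained, and bound each term from above by counts that are monotone and certifiable, for example counting vertices that \emph{fail} to lose $c$ via a complement. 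Each auxiliary variable is concentrated by Talagrand's Inequality. The expectations are of order at least $d^{\Omega(1)}$ because $q_i\ge d^{\eps/(2(k-1))}$, which gives the polynomially small error. Events depend only on trials within distance $O(1)$ of $v$; this neighbourhood has size polynomial in $d$ once we restrict to edges relevant to colors in $L_i(v)$, and the remaining dependence is handled by the usual truncation. The symmetric Local Lemma (Theorem~\ref{theorem: sym LLL}) then yields an outcome satisfying the hypothesis at stage $i+1$.

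Finally, at stage $s$ the computation of the overview gives $\sum_\ell \deg(H_{s,c},\ell,v)/q_s^{\ell-1}\lesssim (\log d)^{-\eps}$ and $q_s\ge d^{\eps/(2(k-1))}$. We shrink each list to exactly $\lceil(1-\delta_s)q_s\rceil$ colors, assign each vertex a uniform color from its list, and for every pair $(f,c)$ with $f\in E(H_{s,c})$ let $A_{f,c}$ be the event that $f$ is monochromatic in $c$; then $\pr(A_{f,c})\le q_s^{-|f|}$. For a fixed event, the dependent events are those sharing a vertex with $f$. Their total probability is at most $k\max_v\sum_{c}\sum_\ell \deg(H_{s,c},\ell,v)\,q_s^{-\ell}$, and summing over the $q_s$ colors in $L_s(v)$ gives $k\cdot o(1)\le 1/4$. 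The asymmetric Local Lemma (Theorem~\ref{theorem: LLL}) then completes the $\cL$-coloring.
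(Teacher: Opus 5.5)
The outline is the paper's argument: restriction hypergraphs, the same recursion, equalizing coin flips, a complement decomposition fed into Talagrand's Inequality, the symmetric Local Lemma each round, and the asymmetric Local Lemma at the end. However, the step you flag as the main obstacle is not actually carried out, and as described it fails.

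\textbf{The trial at $v$ is not Lipschitz.} The only Lipschitz justification you give (``a single vertex lies in few edges through $v$'') breaks down for the trial at $v$ itself, which lies in \emph{every} edge being counted. Two examples:
\begin{itemize}
\item If $v$ is activated with $c_v=c'$, take any $k$-edge $e\in E(F_c,v)\cap E(F_{c'},v)$ whose other vertices are all activated with choice $c'$. Then $c'$ is discarded at every $x\in e-v$ via $e$, so all of $e-v$ stays uncolored.
\item If $c_v=c$, then $c$ is discarded at every $x\in e-v$ for each $e\in E(F_c,v)$ whose other vertices were all activated with choice $c$.
\end{itemize}
So a single trial can change the contribution of up to $\deg(F_c,v)$ edges at once. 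Talagrand's Inequality needs a worst-case Lipschitz constant, so it gives nothing for the natural counts.

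\textbf{The independence claim also fails.} The same edges through $v$ spoil the independence you invoke for the expectations. Whether $x\in e-v$ loses $c_x$ or $c$ via $e$ depends on the other vertices of $e$. Hence ``approximately independent'' has to be quantified to relative error $d^{-\Omega(1)}$ to survive $s$ rounds.

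\textbf{The paper's device.} Define $D^*(u)$, $L^*(u)$, $U^*$ by ignoring discards via edges containing $v$, and bound $\mathbf{X}_{\ell,j}\le\mathbf{Y}^{(1)}_{\ell,j}+\mathbf{Y}^{(2)}_{\ell,j}-\mathbf{Z}^{(1)}_{\ell,j}$. Then:
\begin{itemize}
\item The summands of $\mathbf{Y}^{(1)}_{\ell,j}$ and $\mathbf{Y}^{(1)}_{\ell,j}-\mathbf{Z}^{(1)}_{\ell,j}$ factor exactly. This is because the sets $\{x\}\cup\mathcal{N}^*(x)$, $x\in e-v$, are pairwise disjoint by girth $\ge5$. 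The ignored edge $e$ costs only a factor $\bigl(1-(\alpha/q_i)^{|e|-1}\bigr)^{-1}\le 1+2/q_i$.
\item Talagrand is applied to $\mathbf{Y}^{(1)}_{\ell,j}$ and $\mathbf{Z}^{(1)}_{\ell,j}$ over the trials outside $v$ only. There they are $O(1)$-Lipschitz and $2k^2$-certifiable.
\item The correction $\mathbf{Y}^{(2)}_{\ell,j}$ counts edges whose other vertices all chose $c_v$. It is handled by conditioning on $c_v$, after which it is a sum of independent indicators of small mean, and applying a Chernoff bound.
\end{itemize}
Conditioning on the trial at $v$ throughout and recomputing conditional means is an alternative, but some such device is necessary.

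\textbf{Parameter choice.} Tying $\alpha=\eps/\log d$ to the same $\eps$ as in $q_1$ makes the overview-level estimates you cite consume all the slack. Already for $k=2$ they only give $\keep_i\ge 1/(1+\eps)$. Hence $\beta_s\le(\log d)^{-1}$, the finishing sum is of constant order rather than $o(1)$, and the resulting lower bound on $q_s$ is below $1$. The paper takes $\alpha=\eta/\log d$ with $\eta=\eps/6$; alternatively you must use the finer fact that $1-\keep_i\to0$ as $\beta_i\to0$.

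\textbf{Error terms and trimming.} With separate errors $\delta_i$ and only $|L_i(v)|\ge(1-\delta_i)q_i$, the union bound gives a discard probability of up to $(1+\delta_i)(1-\delta_i)^{-(k-1)}(1-\keep_i)>1-\keep_i$. So you cannot equalize to exactly $\keep_i$. Moreover, without trimming, the degree of $v$ in $\bigcup_c H_{i,c}$ is bounded only by $|L_i(v)|$ times the color-degree bound, which is unbounded. Your ``usual truncation'' must therefore be an actual trimming of every list in every round, so that the Local Lemma dependency degree is $\mathrm{poly}(d)$. The paper trims lists to exactly $q_i$ and absorbs the errors into $t_{i+1}=(1+\delta)^{k-1}t_i\keep_i^{k-1}$ and $q_{i+1}=(1-\delta)q_i\keep_i$.

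\textbf{Deviation scale.} The paper measures deviations against $\tau_j=\delta^2\times(\text{target bound})=\Omega\bigl(d^{\frac{\eta/2}{k-1}}\bigr)$, not against the actual expectations. For a given $(v,c)$ those expectations need not be of order $d^{\Omega(1)}$, contrary to what you assert.
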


Fix $k \ge 2$ and $\eps > 0$. Without loss of generality, we may assume that $\eps<1$. Choose $d_0$ sufficiently large in terms of $k$ and $\eps$, and fix $d \ge d_0$.
Let $\cH$ be a $k$-uniform uncrowded hypergraph $\cH$, and let $\cL$ be a list assignment for $\cH$ satisfying conditions~\ref{item: list size in main thm} and~\ref{item: color degree in main thm} in the statement of Theorem~\ref{theorem: main}.
Throughout the proof, we let
\begin{align}\label{def: eta, alpha, delta, s}
\eta \coloneqq \frac{\eps}{6}, \qquad \alpha = \frac{\eta}{\log d}, \qquad \delta \coloneqq d^{-\frac{\eta/6}{k-1}},  \qquad \text{and} \qquad s \coloneqq \left\lceil \frac{(1+\eta) \log d \log\log d}{\eta(1-\eta)} \right\rceil.
\end{align}
We define the sequences $(t_i)_{i \in [s]}$ and $(q_i)_{i \in [s]}$ by setting 
\begin{align}\label{def: t_1, q_1}
t_1 \coloneqq d \qquad \text{and} \qquad q_1 \coloneqq (1+\eps) {\left((k-1) \, \frac{d}{\log d}\right)^{\frac{1}{k-1}}},
\end{align}
and, for $i \in [s-1]$, 
\begin{align}\label{def: t_i, q_i}
t_{i+1} \coloneqq (1 + \delta)^{k-1} \cdot t_i \cdot  \keep_i^{k-1} \qquad \text{and} \qquad q_{i+1} \coloneqq (1 - \delta) \cdot q_i \cdot \keep_i,
\end{align}
where
\begin{align}\label{def: keep_i}
\keep_i \coloneqq 
1 - \frac{\alpha^{k-1} t_i}{q_i^{k-1}}\,{\left(\gamma_{i+1}^{k-1} - \gamma_i^{k-1}\right)},
\end{align}
for 
\begin{align}\label{def: uncolor_i, beta_i, gamma_i}
\uncolor_i \coloneqq 1 - \alpha \keep_i, \qquad
\beta_i \coloneqq \prod_{j=1}^{i-1} \uncolor_j, \qquad \text{and} \qquad \gamma_i \coloneqq \sum_{j=1}^{i-1} \beta_j.
\end{align}
We note that $\beta_1 = 1$ and $\gamma_1 = 0$.
Furthermore,
these values are all well-defined. Indeed, $\beta_1$ defines $\gamma_2$, which (together with $t_1$ and $q_1$) in turn defines $\keep_1$, and hence $\uncolor_1$, $t_2$, and $q_2$. Since $\uncolor_1$ defines $\beta_2$, which in turn defines $\gamma_3$, it follows that $\keep_2$ is well-defined, and so on.
We refer the reader to the proof overview Section~\ref{section: proof overview} for an informal motivation behind the definitions of these parameters.

The following lemma states certain properties of our parameters which will be useful in the proof of Theorem~\ref{theorem: main}; we defer the proof of Lemma~\ref{lemma: properties of beta_i, gamma_i, keep_i, uncolor_i} to Section~\ref{subsection: properties}.

\begin{lemma}\label{lemma: properties of beta_i, gamma_i, keep_i, uncolor_i}
For $i \in [s-1]$, the following hold:
\begin{enumerate}[label=\textup{(S\arabic*)}]
\item \label{gamma_i bound} $\gamma_1=0$ and $1 \le \gamma_{i+1} \le \dfrac{\log d}{\eta(1-\eta)}$;
\item \label{keep_i bound} $1- \eta \le \keep_i = 1-\Omega{\left((\log d)^{-k}\right)}$; 
\item \label{uncolor_i bound} $1 - \alpha \le \uncolor_i \le 1 - \alpha(1-\eta)$.
\end{enumerate}
Moreover, for each $i \in [s]$, the following hold:
\begin{enumerate}[label=\textup{(S\arabic*)},resume]
\item \label{beta_i bound} $(\log d)^{-2} \le \beta_i \le (1-\alpha(1-\eta))^{i-1}$;
\item \label{ratio bound} $\dfrac{\log d}{(1+\eps)^{k-1}(k-1)} \le \dfrac{t_i}{q_i^{k-1}} \le \dfrac{\log d}{(1+5\eta)^{k-1}(k-1)}$;
\item \label{q_i bound} $q_i \gg d^{\frac{\eta}{k-1}}$.
\end{enumerate}
\end{lemma}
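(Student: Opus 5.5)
We prove all six properties by a single induction on $i$. The hypothesis at stage $i$ is that \ref{gamma_i bound}--\ref{uncolor_i bound} hold for all $j<i$ and \ref{beta_i bound}, \ref{ratio bound} hold for all $j\le i$. Write $R_i \coloneqq t_i/q_i^{k-1}$. Throughout we use $\eps = 6\eta$, $\eta<1/6$, $\alpha = \eta/\log d$, and $\alpha s = (1+o(1))\frac{(1+\eta)\log\log d}{1-\eta}$.

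\emph{The ratio \ref{ratio bound}.} This needs no probabilistic input. By \eqref{def: t_i, q_i}, the factors $\keep_i^{k-1}$ cancel and $R_{i+1} = \bigl(\tfrac{1+\delta}{1-\delta}\bigr)^{k-1} R_i$. Hence
\[
R_i = R_1\Bigl(\tfrac{1+\delta}{1-\delta}\Bigr)^{(k-1)(i-1)} = R_1\bigl(1+O(k\delta s)\bigr) = (1+o(1))R_1 .
\]
The $o(1)$ holds because $\delta = d^{-\Omega(1)}$ while $s = O(\log d\log\log d)$. Also $R_1 = \frac{\log d}{(1+6\eta)^{k-1}(k-1)}$. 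The lower bound in \ref{ratio bound} is then immediate, and the upper bound holds since $(1+6\eta)/(1+5\eta)$ is a constant exceeding $1$, which absorbs the $1+o(1)$ factor.

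\emph{The quantities $\gamma$, $\beta$, $\keep$, $\uncolor$.} The lower bound $\gamma_{i+1}\ge \beta_1 = 1$ is trivial. Given \ref{uncolor_i bound} for $j\le i$, we get $\beta_j \le (1-\alpha(1-\eta))^{j-1}$ (the upper half of \ref{beta_i bound}). Summing the geometric series gives $\gamma_{i+1}\le 1/(\alpha(1-\eta)) = \log d/(\eta(1-\eta))$, which is \ref{gamma_i bound}.

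For \ref{keep_i bound}, we bound the increment by the mean value theorem:
\[
\gamma_{i+1}^{k-1}-\gamma_i^{k-1}\le (k-1)\gamma_{i+1}^{k-2}\beta_i \le (k-1)\gamma_{i+1}^{k-2}.
\]
Combining this with $\alpha\gamma_{i+1}\le 1/(1-\eta)$ and \ref{ratio bound} yields
\[
1-\keep_i \le \alpha(k-1)\frac{R_i}{(1-\eta)^{k-2}} \le \frac{\eta}{(1+5\eta)^{k-1}(1-\eta)^{k-2}}\le \eta .
\]
Conversely, using $\gamma_{i+1}\ge 1$, $\gamma_{i+1}^{k-1}-\gamma_i^{k-1}\ge \beta_i$, and $\beta_i\ge(\log d)^{-2}$, we get
\[
1-\keep_i\ge \alpha^{k-1}R_i(\log d)^{-2} = \Omega\bigl((\log d)^{-k}\bigr).
\]
Property \ref{uncolor_i bound} then follows directly from $\uncolor_i = 1-\alpha\keep_i$.

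For the lower bound in \ref{beta_i bound}, note $\beta_i\ge (1-\alpha)^{s}\ge \exp(-(1+o(1))\alpha s)$. This equals $(\log d)^{-(1+o(1))(1+\eta)/(1-\eta)}$, which is at least $(\log d)^{-2}$ since $(1+\eta)/(1-\eta)<7/5$. The only care needed is the order of the induction: $\keep_i$ uses $\beta_i$, $\gamma_{i+1}$ and $R_i$, all of which are available at stage $i$.

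\emph{The main obstacle: \ref{q_i bound}.} This is the one step where the constant $(1+\eps)$ in $q_1$ genuinely matters. Unrolling \eqref{def: t_i, q_i} gives $q_i = q_1(1-\delta)^{i-1}\prod_{j<i}\keep_j$, and $(1-\delta)^{s}=1-o(1)$. Since $1-\keep_j\le\eta$, we have $\log\keep_j\ge -(1+\eta)(1-\keep_j)$ for small $\eta$. The sum of the $1-\keep_j$ telescopes:
\[
\sum_{j<i}(1-\keep_j)\le \max_j R_j\,\alpha^{k-1}\gamma_i^{k-1}\le \frac{\log d}{(k-1)\bigl((1+5\eta)(1-\eta)\bigr)^{k-1}}.
\]
Therefore
\[
q_i\ge q_1\, d^{-\frac{1+\eta}{(k-1)(1+3\eta)^{k-1}}}(1-o(1)),
\]
using $(1+5\eta)(1-\eta)\ge 1+3\eta$. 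Since $q_1 = d^{\frac1{k-1}-o(1)}$, it remains to check the exponent. We have
\[
1-\frac{1+\eta}{(1+3\eta)^{k-1}}\ge 1-\frac{1+\eta}{1+3\eta}=\frac{2\eta}{1+3\eta}\ge\frac{4\eta}{3}>\eta ,
\]
so $q_i\ge d^{\frac{(4/3)\eta - o(1)}{k-1}}\gg d^{\frac{\eta}{k-1}}$. I would double-check the constant bookkeeping in this last chain, in particular the $(1+\eta)$ loss from $\log\keep_j$ against the margin $4\eta/3$ versus $\eta$, since this is where the slack is thinnest.
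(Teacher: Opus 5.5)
Your proposal is correct and follows essentially the same route as the paper: you telescope the ratio $t_i/q_i^{k-1}$, bound $\gamma_{i+1}$ by a geometric series, use convexity for $\gamma_{i+1}^{k-1}-\gamma_i^{k-1}$ (your use of $\gamma_{i+1}\ge 1$ even handles $i=1$ uniformly), and telescope $\sum_j(1-\keep_j)$ to lower-bound $\prod_j\keep_j$. The bookkeeping you flagged does close: since $-\log(1-x)/x$ is increasing, $-\log(1-x)\le(1+\eta)x$ holds for all $0\le x\le\eta<1/6$, so your margin $4\eta/3>\eta$ suffices (the paper instead uses $-\log\keep_j\le(1-\keep_j)/(1-\eta)$ and gets the margin $3\eta/(2+3\eta)>\eta$).
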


For each $i \in [s-1]$, by~\ref{keep_i bound} and the definition of $\delta$ given in~\eqref{def: eta, alpha, delta, s}, we note that, for $d$ sufficiently large
\[
(1-\delta) \keep_i \le (1+\delta) \keep_i \le 1.
\]
By the recursive definition of $t_i$ and $q_i$ given in~\eqref{def: t_i, q_i}, it follows that the two sequences $(t_i)_{i \in [s]}$ and $(q_i)_{i \in [s]}$ are decreasing, and so for all $i \in [s]$, we have
\begin{align}\label{eq: compare t_i t_1, q_i q_1}
t_i \le t_1 \qquad \text{and} \qquad q_i \le q_1.
\end{align}

With these parameters in hand, we now make several definitions which will assist in the proof of Theorem~\ref{theorem: main}.
We begin with families of restricted hypergraphs.

\begin{definition}[Family of restriction hypergraphs]\label{def: restriction family}
Let $L$ be a list assignment on some finite set $U$. A \emph{family of restriction hypergraphs with respect to $L$} is a collection of hypergraphs $\cF = \{F_c\,:\, c \in L(U)\}$ with vertex set $U$ such that, for every $c \in L(U)$ and every $e \in E(F_c)$, $c \in \cap_{u \in e}L(u)$.
\end{definition}

We will utilize the above definition when considering the subhypergraphs induced by uncolored vertices containing some specific color $c$ in their lists.
This greatly simplifies the exposition surrounding color degrees.

Next, we define a partial color assignment for a set of vertices.

\begin{definition}[Partial color assignment]\label{def: partial assignment}
Let $U$ be a finite set. A \emph{partial color assignment for $U$} is a function $\psi\,:\,S \to \N$, defined on some subset $S \subseteq U$. We refer to $S$ as the \emph{support} of $\psi$, denoted $\supp(\psi)$. If $\psi_1, \dots, \psi_m$ are partial color assignments for $U$ with pairwise disjoint supports, we write $\bigcup_{j=1}^m \psi_j$ for the partial color assignment for $U$ with support $\bigcup_{j=1}^m \supp(\psi_j)$ that agrees with $\psi_j$ on $\supp(\psi_j)$ for each $j \in [m]$.
\end{definition}

We now define the compatibility of a partial coloring and a family of restriction hypergraphs.
In particular, this formalizes the notion of a proper partial $L$-coloring.

\begin{definition}[Compatible coloring]\label{def: compatible}
Let $L$ be a list assignment on some finite set $U$, and let $\cF = \{F_c\}$ be a family of restriction hypergraphs with respect to $L$. A partial color assignment $\psi$ for $U$ is \emph{$(L, \cF)$-compatible} if:
\begin{enumerate}[label=\textup{(\roman*)}]
\item \label{cond: compatible 1} $\psi(v) \in L(v)$ for every $v \in \supp(\psi)$; and
\item \label{cond: compatible 2} for every $c \in L(U)$, there is no edge $e \in E(F_c)$ with $e \subseteq \supp(\psi)$ and $\psi(v) = c$ for every $v \in e$.
\end{enumerate}
\end{definition}

As mentioned in the proof overview Section~\ref{section: proof overview}, we will repeatedly apply our \textit{nibble procedure} (see Algorithm~\ref{algorithm: nibble step}) to construct a partial coloring, while maintaining that the hypergraph and the lists of available colors satisfy certain properties at each step.
We formally state these properties now.

\begin{definition}[Property $P(i)$]\label{def: P(i)}
Let $L$ be a list assignment on some finite set $U$, and let $\cF = \{F_c\}$ be a family of restriction hypergraphs with respect to $L$. We say that $(L, \cF)$ satisfies $P(i)$ for some $i \in [s]$ if:
\begin{enumerate}[label=\textup{(Q\arabic*)}]
\item\label{Q1} the union $\bigcup_{c \in L(U)} F_c$ is rank-$k$ and uncrowded; moreover, for every $c, c' \in L(U)$ with $c \neq c'$, every $e \in E(F_c)$, and every $e' \in E(F_{c'})$, either $e = e'$ with $|e| = |e'| = k$, or $|e \cap e'| \le 1$;
\item\label{Q2} for every $v \in U$, $|L(v)| = q_i$; and
\item\label{Q3} for every $v \in U$ and $c \in L(v)$, $\deg(F_c, v, \ell) \le \binom{k-1}{\ell-1} (\alpha\gamma_i/q_i)^{k-\ell} \beta_i^{\ell-1} t_i$ for all $2 \le \ell \le k$.
\end{enumerate}
\end{definition}

Before we proceed, we provide a brief justification for the rather involved condition~\ref{Q1}.
The hypergraph $F_c$ contains two kinds of edges:
\begin{enumerate}
    \item $k$-edges $e$ such that every vertex in $e$ is not yet colored and contains $c$ in its list, and 
    \item $\ell$-edges $f$ for $\ell < k$ such that $f \subseteq e$ for some $k$-edge $e$ in the original hypergraph satisfying (i) every vertex in $e \setminus f$ is colored $c$, and (ii) every vertex in $f$ is not yet colored and contains $c$ in its list.
\end{enumerate}
With the above intuition in hand, by linearity, for every $c, c' \in L(U)$ with $c \neq c'$, every $e \in E(F_c)$, and every $e' \in E(F_{c'})$, either $e = e'$ with $|e| = |e'| = k$, or $|e \cap e'| \le 1$.

For each $v \in V(\cH)$, define $L_1(v)$ by removing $\max\{|\cL(v)| - q_1,\, 0\}$ colors from $\cL(v)$ arbitrarily. For each $c \in L_1(V(\cH))$, let $F_{1,c}$ be the hypergraph with vertex set $V(\cH)$ and edge set 
\[
\left\{e \in E(\cH)\,:\,c \in \textstyle\bigcap_{v \in e} L_1(v)\right\}.
\]
Let $\cF_1$ denote the family of all hypergraphs $F_{1,c}$. Note that $\cF_1$ is a family of restriction hypergraphs with respect to $L_1$.

\begin{proposition}\label{prop: P(1)}
$(L_1, \cF_1)$ satisfies $P(1)$.
\end{proposition}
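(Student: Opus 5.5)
We check the three conditions of $P(1)$ in turn for $(L_1,\cF_1)$, using that $\beta_1=1$ and $\gamma_1=0$ by \eqref{def: uncolor_i, beta_i, gamma_i}.

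\emph{Condition \ref{Q2}.} By \ref{item: list size in main thm}, every $v\in V(\cH)$ has $|\cL(v)|\ge q_1$. We removed exactly $|\cL(v)|-q_1$ colors. So $|L_1(v)|=q_1$, up to the usual convention of ignoring integrality of $q_1$ (or taking $q_1$ to be an integer by rounding, which does not affect anything).

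\emph{Condition \ref{Q1}.} Every edge of every $F_{1,c}$ is an edge of $\cH$. Hence the union $\bigcup_c F_{1,c}$ is a subhypergraph of $\cH$ on the same vertex set. Since $\cH$ is $k$-uniform, every edge has size $k\ge 2$, so the union has rank $k$. Any $r$-cycle with $2\le r\le 4$ in the union would be an $r$-cycle in $\cH$, since the edges are distinct edges of $\cH$. Thus the union is uncrowded.

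For the second part of \ref{Q1}, take $e\in E(F_{1,c})$ and $e'\in E(F_{1,c'})$. Both are edges of $\cH$.
\begin{itemize}
\item If $e=e'$, then $|e|=|e'|=k$ automatically.
\item If $e\ne e'$ and $|e\cap e'|\ge 2$, pick distinct $v_1,v_2\in e\cap e'$. Then $e,e'$ with $v_1,v_2$ form a $2$-cycle in $\cH$, contradicting girth at least $5$. Hence $|e\cap e'|\le 1$.
\end{itemize}
This argument does not use $c\ne c'$.

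\emph{Condition \ref{Q3}.} Fix $v$ and $c\in L_1(v)$. Since $\gamma_1=0$, the right-hand side equals $0$ for $\ell<k$ (using the convention $0^0=1$ for $\ell=k$). For $\ell=k$ it equals $\binom{k-1}{k-1}\beta_1^{k-1}t_1=d$.

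All edges of $F_{1,c}$ have size $k$, so $\deg(F_{1,c},v,\ell)=0$ for $\ell<k$. For $\ell=k$, the edges of $F_{1,c}$ containing $v$ are edges $e\in E(\cH,v)$ with $c\in\bigcap_{u\in e}L_1(u)\subseteq\bigcap_{u\in e}\cL(u)$. By \ref{item: color degree in main thm}, there are at most $d$ of them.

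There is no real obstacle. The only points needing care are the $0^0=1$ convention in \ref{Q3} and the integrality of $q_1$ in \ref{Q2}.
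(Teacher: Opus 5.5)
Your proof is correct and follows the paper's argument essentially step for step. The union $\bigcup_c F_{1,c}$ is a subhypergraph of $\cH$, which gives rank $k$, uncrowdedness, and the cross-color intersection condition via linearity. Trimming gives $|L_1(v)|=q_1$, and \ref{item: color degree in main thm} bounds the only nonzero color degree, the one with $\ell=k$. You also evaluate the right-hand side of \ref{Q3} explicitly using $\gamma_1=0$, $\beta_1=1$, and the convention $0^0=1$; the paper leaves that step implicit.
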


\begin{proof}
We verify each condition of Definition~\ref{def: P(i)} in turn.

Since $E(F_{1,c}) \subseteq E(\cH)$ for every $c \in L_1(V(\cH))$ by construction, we have $\bigcup_{c \in L_1(V(\cH))} E(F_{1,c}) \subseteq E(\cH)$. Since $\cH$ is rank-$k$ and uncrowded, and deleting edges cannot increase the size of any edge nor create a new cycle, the union $\bigcup_{c \in L_1(V(\cH))} F_{1,c}$ is rank-$k$ and uncrowded. The latter part of~\ref{Q1} follows directly from the fact that $\cH$ is $k$-uniform and linear.

For~\ref{Q2}, by construction, we have $|L_1(v)| = \min\{|\cL(v)|, q_1\} = q_1$ for each $v \in V(\cH)$.

For~\ref{Q3}, fix $v \in V(\cH)$, $c \in L_1(v)$, and $2 \le \ell \le k$. Since $\cH$ is $k$-uniform, $\deg(F_{1,c}, v, \ell) = 0$ unless $\ell = k$, in which case
\[
\deg(F_{1,c}, v, k) = \left|\left\{e \in E(\cH, v)\,:\,c \in \textstyle\bigcap_{u \in e} L_1(u)\right\}\right| \le \left|\left\{e \in E(\cH, v)\,:\,c \in \textstyle\bigcap_{u \in e} \cL(u)\right\}\right| \le d,
\]
since $L_1(v) \subseteq \cL(v)$ and by condition~\ref{item: color degree in main thm} of Theorem~\ref{theorem: main}.
\end{proof}

The key result of this section is the following lemma, which states that given a list assignment $L$ on some finite set $U$, and a family $\cF = \{F_c\}$ of restriction hypergraphs with respect to $L$ such that $(L, \cF)$ satisfies $P(i)$, we can construct an $(L, \cF)$-compatible partial color assignment $\psi$ for $U$, a list assignment $\tilde{L}$ on $\tilde{U} \coloneqq U \setminus \supp(\psi)$, and a family $\tilde{\cF}$ of restriction hypergraphs with respect to $\tilde{L}$ such that $(\tilde{L}, \tilde{\cF})$ satisfies $P(i+1)$ and for every $(\tilde{L}, \tilde{\cF})$-compatible partial color assignment $\tilde{\psi}$ for $\tilde{U}$, $\psi \cup \tilde{\psi}$ is $(L,\cF)$-compatible.
In particular, we will repeatedly apply this lemma until a point where we can complete our coloring via a Local Lemma argument to prove our main result.

\begin{lemma}\label{lemma: nibble}
Let $L$ be a list assignment on some finite set $U$, and let $\cF$ be a family of restriction hypergraphs with respect to $L$. Suppose that $(L, \cF)$ satisfies $P(i)$ for some $i \in [s-1]$. There exists a partial color assignment $\psi$ for $U$, a list assignment $\tilde{L}$ on $\tilde{U} \coloneqq U \setminus \supp(\psi)$, and a family $\tilde{\cF}$ of restriction hypergraphs with respect to $\tilde{L}$ such that:
\begin{enumerate}[label=\textup{(R\arabic*)}]
\item\label{cond: psi compatible} $\psi$ is $(L, \cF)$-compatible,
\item\label{cond: P(i+1)} $(\tilde{L}, \tilde{\cF})$ satisfies $P(i+1)$, and
\item\label{cond: reduction} for every $(\tilde{L}, \tilde{\cF})$-compatible partial color assignment $\tilde{\psi}$ for $\tilde{U}$, $\psi \cup \tilde{\psi}$ is $(L,\cF)$-compatible.
\end{enumerate}
\end{lemma}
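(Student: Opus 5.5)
We will obtain $\psi$, $\tilde L$, $\tilde\cF$ from one round of the Wasteful Coloring Procedure \ref{item: activate hypergraph}--\ref{item: color hypergraph} run on $(L,\cF)$, and then apply the Lov\'asz Local Lemma to show that some outcome works.

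\textbf{The random round.} Each $v\in U$ is activated independently with probability $\alpha$ and selects a uniformly random color $\sigma(v)\in L(v)$. A color $c$ is \emph{discarded} at $v$ if some edge $e\in E(F_c,v)$ has every $u\in e-v$ activated with $\sigma(u)=c$. Undiscarded colors then pass through an independent equalizing coin. Its success probability is chosen so that each pair $(v,c)$ survives with probability exactly $\keep_i$.

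This is possible by the union bound from the overview. Using \ref{Q3}, the discard probability is at most
\[
\sum_\ell \deg(F_c,v,\ell)(\alpha/q_i)^{\ell-1}\le \frac{\alpha^{k-1}t_i}{q_i^{k-1}}\bigl(\gamma_{i+1}^{k-1}-\gamma_i^{k-1}\bigr)=1-\keep_i.
\]

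\textbf{Defining the outputs.}
\begin{itemize}
\item $\psi(v)=\sigma(v)$ for activated $v$ whose selected color survives.
\item $\tilde U=U\setminus\supp(\psi)$, and $\tilde L(v)$ is the set of surviving colors at $v\in\tilde U$, truncated arbitrarily to size exactly $q_{i+1}$.
\item $\tilde F_c$ contains, for each $e\in E(F_c)$ that has no vertex colored by $\psi$ with a color other than $c$, the set $f=e\cap\tilde U$. We keep $f$ only when $|f|\ge2$ and $c\in\tilde L(u)$ for all $u\in f$.
\end{itemize}

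\textbf{Compatibility, \ref{cond: psi compatible} and \ref{cond: reduction}.} These follow deterministically from wastefulness. Suppose $e\in E(F_c)$ became fully $c$-colored. Then each of its vertices had $c$ discarded, which contradicts its color surviving. For \ref{cond: reduction}, let $e\in E(F_c)$ be monochromatic in $c$ under $\psi\cup\tilde\psi$. Then $e\cap\tilde U$ has size at least $2$: if it were $0$ we contradict the previous sentence, and if it were $1$ the remaining vertex $u$ would have $c$ discarded, so $c\notin\tilde L(u)$. Hence $e\cap\tilde U$ is an edge of $\tilde F_c$, so $\tilde\psi$ is not compatible.

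\textbf{Property \ref{Q1}.} The union $\bigcup\tilde F_c$ consists of subsets of edges of $\bigcup F_c$, with at most one subset per original edge and color. Shrinking edges could create short cycles only by merging edges, so uncrowdedness and the intersection condition are inherited from \ref{Q1} for $(L,\cF)$. The one exception is a $k$-edge shared by several $F_c$, which stays a $k$-edge only if none of its vertices is colored.

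\textbf{Property \ref{Q2}.} It suffices to show that $|\{\text{surviving colors at }v\}|\ge(1-\delta)\keep_i q_i$. The expectation is exactly $\keep_iq_i$. Concentration comes from Talagrand's Inequality (Theorem~\ref{theorem: Talagrand}), or from the negative-correlation Chernoff Bound, with error $q_i^{1/2+o(1)}\ll\delta q_i$. This uses $q_i\gg d^{\eta/(k-1)}$ from \ref{q_i bound}, while $\delta=d^{-\eta/(6(k-1))}$.

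\textbf{Property \ref{Q3} (main obstacle).} We must bound $\deg(\tilde F_c,v,\ell)$ by
\[
\binom{k-1}{\ell-1}\Bigl(\frac{\alpha\gamma_{i+1}}{q_{i+1}}\Bigr)^{k-\ell}\beta_{i+1}^{\ell-1}t_{i+1}.
\]
An $\ell$-edge of $\tilde F_c$ arises from a $j$-edge with $j\ge \ell$. Its $j-\ell$ vertices (other than $v$) must be colored $c$, with probability about $\alpha\keep_i/q_i$ each. Its other $\ell-1$ vertices must stay uncolored and keep $c$, with probability about $\uncolor_i\keep_i$ each. Uncrowdedness makes these events nearly independent across the vertices of an edge.

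Summing over $j$ and using the binomial identity
\[
\sum_j\binom{k-1}{j-1}\binom{j-1}{\ell-1}\gamma_i^{k-j}\beta_i^{j-\ell}=\binom{k-1}{\ell-1}(\gamma_i+\beta_i)^{k-\ell}=\binom{k-1}{\ell-1}\gamma_{i+1}^{k-\ell},
\]
together with $\beta_{i+1}=\uncolor_i\beta_i$, $t_{i+1}=(1+\delta)^{k-1}\keep_i^{k-1}t_i$ and $q_{i+1}=(1-\delta)\keep_iq_i$, the expectation comes out a factor $(1+\delta)^{-\Omega(1)}$ below the target. The leftover factors of $\keep_i$ work in our favor.

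The hard part is concentration with relative error $\delta$, which is polynomially small in $d$. The plan is to split $\deg(\tilde F_c,v,\ell)$ by origin size $j$. We then dominate each piece by auxiliary variables of the form: number of $j$-edges at $v$ whose designated $j-\ell$ vertices all select $c$ and are activated, minus those in which some designated vertex loses $c$ or some other vertex gets colored. Each auxiliary variable is handled by Talagrand's Inequality; uncrowdedness gives Lipschitz constant $O(1)$ and certifiability $O(k)$. Kim--Vu (Theorem~\ref{theorem: Kim Vu concentration}) is the fallback where certificates are awkward.

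The expected values are at least $d^{\Omega(1)}$ whenever the bound is nontrivial; otherwise an absolute error $d^{o(1)}$ is absorbed. Each bad event depends only on trials within distance $O(1)$ of $v$. It therefore has probability $\exp(-d^{\Omega(1)})$ against $\mathrm{poly}(d)$ dependencies, and the symmetric Lov\'asz Local Lemma (Theorem~\ref{theorem: sym LLL}) gives an outcome satisfying all bounds at once.
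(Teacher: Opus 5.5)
Your handling of (R1), (R3), (Q1), (Q2) and the final symmetric Local Lemma step matches the paper. The plan for (Q3), which you rightly call the main obstacle, would not go through as written. The auxiliary variables you describe lack the Lipschitz and certificate properties you claim, and repairing this is the technical core of the paper's argument.

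\emph{Certificates.} You subtract the pairs in which ``some other vertex gets colored.'' Certifying that $u$ is colored means certifying that $c_u$ was \emph{not} discarded at $u$. That needs one trial from every edge of $F_{c_u}$ at $u$, which can be $\Theta(d)$ trials. So this variable is not $O(k)$-certifiable, and Talagrand's Inequality does not apply; the version you would use requires certifiability even for the lower tail. Kim--Vu is no fallback either, since non-discard indicators are products over $\Theta(d)$ edges, not low-degree polynomials.

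The fix, as in the paper, is to reverse the roles. Put ``every $u\in f$ is uncolored'' into the positive count; it is certified by $u\notin A$, a witness edge discarding $c_u$, or a failed equalizing coin. Then subtract only the pairs in which some $x\in e-v$ loses $c$. Your description also drops the requirement that the vertices of $f$ keep $c$. Without it, the $j=\ell$ contribution exceeds the target by a factor $\keep_i^{-(\ell-1)}=1+\Omega((\log d)^{-k})$, far more than the available $1+\delta$ slack.

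\emph{Lipschitz constant.} Uncrowdedness bounds the influence of every trial except the one at $v$ itself. The activation and color choice of $v$ enter discard events via \emph{every} edge $e\in E(F_c,v)$: for instance, $c$ discarded at $w\in e-v$ via $e$, or $c_u$ discarded at $u\in f$ via $e$. So changing that single trial can flip $\Theta(\deg(F_c,v))$ terms, and the Lipschitz constant is not $O(1)$.

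The missing idea is the paper's starred construction. $D^*(u)$, $L^*(u)$ and $U^*$ record only discards via edges \emph{not} containing $v$. The resulting $\mathbf{Y}^{(1)}_{\ell,j}$ and $\mathbf{Z}^{(1)}_{\ell,j}$ then have three properties:
\begin{itemize}
\item they do not depend on $v$'s trial at all;
\item they are $O(1)$-Lipschitz and $2k^2$-certifiable;
\item they factor into exactly independent per-vertex indicators, because the sets $N^*$ of distinct vertices of $e-v$ are disjoint by uncrowdedness.
\end{itemize}
This gives the expectation bounds with only a $(1+5/q_i)$ loss per vertex.

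The discrepancy is absorbed via $\mathbf{Y}_{\ell,j}\le\mathbf{Y}^{(1)}_{\ell,j}+\mathbf{Y}^{(2)}_{\ell,j}$. Here $\mathbf{Y}^{(2)}_{\ell,j}$ counts edges at $v$ all of whose other vertices are activated with color $c_v$. Conditioned on $c_v$, it is a constant multiple of a sum of independent Bernoulli variables, with mean polynomially smaller than the allowed deviation, so a Chernoff bound handles it. Conditioning on $v$'s trial throughout would be another way around the Lipschitz problem, but you would then have to recompute every expectation conditionally to relative error $o(\delta)$.
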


Our goal is to apply the above lemma $s-1$ times at which point we may complete our coloring via the following result.

\begin{lemma}[Finishing blow]\label{lemma: finishing}
Suppose $(L_s, \cF_s)$ satisfies Property $P(s)$. Then there exists an $(L_s,\cF_s)$-compatible color assignment $\psi_s$ with $\supp(\psi_s) = \supp(L_s)$.
\end{lemma}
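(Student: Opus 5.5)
Each vertex $v \in U_s \coloneqq \supp(L_s)$ will independently receive a color $\psi_s(v)$ chosen uniformly at random from $L_s(v)$. We then apply the asymmetric Local Lemma (Theorem~\ref{theorem: LLL}) to the bad events $A_{e,c}$, indexed by a color $c$ and an edge $e \in E(F_{s,c})$, where $A_{e,c}$ is the event that every vertex of $e$ receives $c$. Condition~\ref{cond: compatible 1} of Definition~\ref{def: compatible} holds automatically, and avoiding all $A_{e,c}$ is exactly condition~\ref{cond: compatible 2}. By~\ref{Q2}, if $|e| = \ell$ then $\pr(A_{e,c}) = q_s^{-\ell} \le q_s^{-2} \le 1/4$, using~\ref{q_i bound}.

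Dependencies are handled as follows. $A_{e,c}$ is determined by the colors of vertices in $e$, so it is mutually independent of all events whose edges are disjoint from $e$. Take $D_{e,c}$ to be the set of pairs $(e',c')$ with $e' \cap e \ne \emptyset$ and $(e',c')\ne(e,c)$. If $c' \ne c$ and $e' \cap e \ne \emptyset$, then at a shared vertex $u$ we need $c,c' \in L_s(u)$ for the event to be possible at all. It is simplest, however, to bound the sum directly by summing over $u \in e$, over colors $c' \in L_s(u)$, and over edges $e' \ni u$ of $F_{s,c'}$:
\[
\sum_{(e',c') \in D_{e,c}} \pr(A_{e',c'}) \le \sum_{u \in e}\sum_{c' \in L_s(u)} \sum_{\ell=2}^k \deg(F_{s,c'},u,\ell)\, q_s^{-\ell}
\le k\, q_s \sum_{\ell=2}^k \frac{\deg_{\max,\ell}}{q_s^{\ell}} = k\sum_{\ell=2}^k \frac{t_{s,\ell}}{q_s^{\ell-1}},
\]
where $t_{s,\ell}$ denotes the bound from~\ref{Q3}. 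It then suffices to show that $\sum_{\ell} t_{s,\ell}/q_s^{\ell-1} = o(1)$, which makes the dependency sum at most $1/4$.

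This estimate follows the computation in the overview. By~\ref{Q3},
\[
\sum_{\ell=2}^k \frac{t_{s,\ell}}{q_s^{\ell-1}} \le \frac{t_s}{q_s^{k-1}} \sum_{\ell=2}^k \binom{k-1}{\ell-1}(\alpha\gamma_s)^{k-\ell}\beta_s^{\ell-1} \le \frac{t_s}{q_s^{k-1}}\,(\alpha\gamma_s+\beta_s)^{k-2}\,(k-1)\beta_s ,
\]
where the last step uses the mean value theorem for $x^{k-1}$. We bound each factor in turn:
\begin{itemize}
\item By~\ref{ratio bound}, $t_s/q_s^{k-1} \le \log d/(k-1)$.
\item By~\ref{gamma_i bound}, $\alpha\gamma_s \le 1/(1-\eta)$, and by~\ref{beta_i bound}, $\beta_s \le 1$. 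Hence $(\alpha\gamma_s+\beta_s)^{k-2} = O_k(1)$.
\item By~\ref{beta_i bound} and the choice of $s$ in~\eqref{def: eta, alpha, delta, s},
\[
\beta_s \le \exp(-\alpha(1-\eta)(s-1)) \le \exp\bigl(-(1+\eta)\log\log d + o(1)\bigr) \lesssim (\log d)^{-(1+\eta)}.
\]
\end{itemize}
The whole sum is therefore $O_k((\log d)^{-\eta}) = o(1)$ once $d_0$ is large.

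The main point needing care is this dependency sum, specifically that the factor $q_s$ from choosing the color $c'$ at a shared vertex is absorbed exactly by one power of $q_s$ in $\pr(A_{e',c'})$. This leaves the normalized quantity $t_{s,\ell}/q_s^{\ell-1}$, which is what the nibble controls. Counting over colors is the right bookkeeping here, even though the true dependency set is smaller, because colors $c'\notin L_s(u)$ contribute nothing. With both hypotheses of Theorem~\ref{theorem: LLL} verified, some outcome avoids all bad events, and this gives the desired $\psi_s$ with $\supp(\psi_s)=\supp(L_s)$.
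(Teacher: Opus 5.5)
Your proposal is correct and follows the paper's proof essentially step for step. Both use a uniformly random coloring from $L_s$ and the asymmetric Local Lemma, bound the dependency sum by $k\sum_{\ell}\deg(F_{s,c'},u,\ell)/q_s^{\ell-1}$, apply the binomial identity with the mean value (convexity) bound, and use $\beta_s\le(\log d)^{-(1+\eta)+o(1)}$ from the choice of $s$. The only difference is cosmetic: you bound $(\alpha\gamma_s+\beta_s)^{k-2}$ crudely by $O_k(1)$, whereas the paper uses the geometric-series estimate $\alpha\gamma_s+\beta_s\le 1/(1-\eta)$, and either is enough.
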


\begin{proof}
Let $U_s \coloneqq \supp(L_s)$ and $\cF_s = \{F_c\,:\,c \in L_s(U_s)\}$. Color each vertex $v \in U_s$ independently with a color from $L_s(v)$ chosen uniformly at random; write $\psi_s$ for the resulting color assignment on $U_s$.
For each color $c \in L_s(U_s)$ and each edge $e \in E(F_c)$, let $\cA_{e,c}$ denote the event that $\psi_s(v) = c$ for every $v \in e$,
and set $\cA \coloneqq
\{\cA_{e,c} : c \in L_s(U_s), \, e \in E(F_c)\}$. We use the asymmetric version of the \hyperref[theorem: LLL]{Lov\'asz Local Lemma} (Theorem~\ref{theorem: LLL}) to show that with positive probability none of the events in $\cA$ occur.

It remains to verify the conditions of Theorem~\ref{theorem: LLL}. 
By~\ref{Q2} of property $P(s)$, $|L_s(v)| = q_s$ for every $v \in U_s$, and so
by Lemma~\ref{lemma: properties of beta_i, gamma_i, keep_i, uncolor_i}~\ref{q_i bound}, we have
\begin{align}\label{eq: probability A_{e,c}}
\pr(\cA_{e,c}) = \prod_{v \in e} \frac{1}{|L_s(v)|} \overset{\ref{Q2}}{=} \left(\frac{1}{q_s}\right)^{|e|}
\le \frac{1}{q_s^2}
\overset{\ref{q_i bound}}{\ll} \frac{1}{d^{\frac{2\eta}{k-1}}} \ll \frac{1}{4},
\end{align}
given that $d$ is sufficiently large. Let us next consider the dependencies between the events in $\cA$. Observe that $\cA_{e,c}$ is mutually independent of all events in $\cA \setminus (\cD_{e,c} \cup \{\cA_{e,c}\})$, where
\[
\cD_{e,c} \coloneqq \{\cA_{e',c'}\,:\,c' \in L_s(U_s), \, e' \in E(F_{c'}), \, e' \cap e \neq \varnothing\} \setminus \{\cA_{e, c}\}.
\]
We compute
\begin{align}
\sum_{\cA_{e',c'} \in \cD_{e,c}} \pr(\cA_{e',c'})
&\,\;=\;\, \sum_{v \in e} \sum_{c' \in L_s(v)} \sum_{e' \in E(F_{c'}, v)} \pr(\cA_{e',c'}) \\
&\,\;=\;\, \sum_{v \in e} \sum_{c' \in L_s(v)} \sum_{e' \in E(F_{c'}, v)} \prod_{u \in e'}\frac{1}{|L_s(u)|} \\
&\overset{\ref{Q2}}{=} \sum_{v \in e} \sum_{c' \in L_s(v)} \sum_{\ell=2}^k \sum_{e' \in E(F_{c'}, \ell, v)} \left(\frac{1}{q_s}\right)^\ell \\
&\overset{\ref{Q2}}{\le} |e| \cdot \max_{v, c'} {\left( q_s \cdot \sum_{\ell=2}^k \frac{\deg(F_{c'}, \ell, v)}{q_s^{\ell}} \right)} \\
&\,\;\le\;\, k \cdot \max_{v, c'} {\left(\sum_{\ell=2}^k \frac{\deg(F_{c'}, \ell, v)}{q_s^{\ell-1}}\right)} \\
&\overset{\ref{Q3}}{\le} k \cdot \sum_{\ell=2}^k \frac{1}{q_s^{\ell-1}} \binom{k-1}{\ell-1} (\alpha\gamma_s/q_s)^{k-\ell} \beta_s^{\ell-1} t_s \\
&\,\;=\;\, \frac{k t_s}{q_s^{k-1}} \left[(\alpha\gamma_s + \beta_s)^{k-1} - (\alpha\gamma_s)^{k-1}\right].
\end{align}
Since $x^{k-1}$ is convex for $k \ge 2$ and $x \ge 0$, we obtain 
\begin{align}\label{eq: dependency bound 1}
\sum_{\cA_{e',c'} \in \cD_{e,c}} \pr(\cA_{e',c'}) \le \frac{k t_s}{q_s^{k-1}} (k-1) (\alpha \gamma_s + \beta_s)^{k-2} \beta_s.
\end{align}
By Lemma~\ref{lemma: properties of beta_i, gamma_i, keep_i, uncolor_i}~\ref{beta_i bound} and the definition of $s$ given in~\eqref{def: eta, alpha, delta, s}, we have
\begin{align}
\beta_s \overset{\ref{beta_i bound}}{\le} (1-\alpha(1-\eta))^{s-1} 
&\le \exp\left(-\alpha(1-\eta)(s-1)\right) \\
&\overset{\eqref{def: eta, alpha, delta, s}}{\le} \exp\left(-(1+\eta)\log\log d+\frac{\eta(1-\eta)}{\log d}\right) \le (\log d)^{-(1+\eta/2)},\label{beta_s bound}
\end{align}
given that $\log\log d > 2(1-\eta)/\log d$.
On the other hand, since
\begin{align}\label{eq: alpha gamma_s}
\alpha\gamma_s \overset{\eqref{def: uncolor_i, beta_i, gamma_i}}{=} \alpha \sum_{j=1}^{s-1} \beta_j \overset{\ref{beta_i bound}}{\le} \alpha \sum_{j=1}^{s-1} (1-\alpha(1-\eta))^{j-1}
= \frac{1- (1-\alpha(1-\eta))^{s-1}}{1-\eta},
\end{align}
we have
\begin{align}
\alpha\gamma_s + \beta_s \overset{\ref{beta_i bound}}{\le} \frac{1- (1-\alpha(1-\eta))^{s-1}}{1-\eta} + (1-\alpha(1-\eta))^{s-1} 
\le \frac{1}{1-\eta}.\label{eq: alpha gamma_s + beta_s}
\end{align}
Combining~\eqref{eq: dependency bound 1},~\eqref{beta_s bound}, and~\eqref{eq: alpha gamma_s + beta_s} with~\ref{ratio bound}, we get
\begin{align}
\sum_{\cA_{e',c'} \in \cD_{e,c}} \pr(\cA_{e',c'}) 
&\le \frac{k \log d}{(k-1)(1+5\eta)^{k-1}} \cdot (k-1) \cdot \frac{(\log d)^{-(1+\eta/2)}}{(1-\eta)^{k-2}} \\
&\le \frac{k}{(1+3\eta)^{k-1}} \cdot (\log d)^{-\eta/2} < \frac{1}{4}.
\end{align}
As a result, by applying the asymmetric version of the \hyperref[theorem: LLL]{Lov\'asz Local Lemma}, we conclude that, with positive probability, $\psi_s$ is $(L_s, \cF_s)$-compatible, thereby completing the proof.
\end{proof}

Assuming Lemma~\ref{lemma: properties of beta_i, gamma_i, keep_i, uncolor_i}, Proposition~\ref{prop: P(1)}, and Lemmas~\ref{lemma: nibble} and~\ref{lemma: finishing} we now prove Theorem~\ref{theorem: main}.

\begin{proof}[Proof of Theorem~\ref{theorem: main}]

We apply Lemma~\ref{lemma: nibble} iteratively to construct sequences $(U_i)_{i \in [s]}$, $(\psi_i)_{i \in [s-1]}$, $(L_i)_{i \in [s]}$, and $(\cF_i)_{i \in [s]}$, with $U_1 \coloneqq V(\cH)$, as follows. For $i = 1, \dots, s-1$, given that $(L_i, \cF_i)$ satisfies $P(i)$ with $\supp(L_i) = U_i$, Lemma~\ref{lemma: nibble} yields a partial color assignment $\psi_i$ for $U_i$, a list assignment $L_{i+1}$ on $U_{i+1} \coloneqq U_i \setminus \supp(\psi_i)$, and a family $\cF_{i+1}$ of restriction hypergraphs with respect to $L_{i+1}$, such that:
\begin{enumerate}[label=\textup{(T\arabic*)}]
\item\label{S1} $\psi_i$ is $(L_i, \cF_i)$-compatible,
\item\label{S2} $(L_{i+1}, \cF_{i+1})$ satisfies $P(i+1)$, and
\item\label{S3} for every $(L_{i+1}, \cF_{i+1})$-compatible partial color assignment $\psi$ for $U_{i+1}$, $\psi_i \cup \psi$ is $(L_i, \cF_i)$-compatible.
\end{enumerate}
This produces $(L_s, \cF_s)$ satisfying $P(s)$, with $U_s = V(\cH) \setminus \bigcup_{i=1}^{s-1}\supp(\psi_i)$.

We first claim that, for every $1 \le i \le s$ and every $(L_i, \cF_i)$-compatible partial color assignment $\psi$ for $U_i$, the color assignment $\bigcup_{j<i}\psi_j \cup \psi$ is $(L_1, \cF_1)$-compatible. We prove this by induction on $i$. The base case $i=1$ is immediate. For the inductive step, suppose the claim holds for some $i < s$, and let $\psi$ be $(L_{i+1}, \cF_{i+1})$-compatible for $U_{i+1}$. By~\ref{S3}, $\psi_i \cup \psi$ is $(L_i, \cF_i)$-compatible for $U_i$, so by the inductive hypothesis applied to $\psi_i \cup \psi$, the assignment $\bigcup_{j<i}\psi_j \cup (\psi_i \cup \psi) = \bigcup_{j<i+1}\psi_j \cup \psi$ is $(L_1, \cF_1)$-compatible, completing the induction.

In particular, by
the claim with $i = s$, setting $\psi_s$ to be the $(L_s,\cF_s)$-compatible color assignment for $U_s$ given by Lemma~\ref{lemma: finishing}, the assignment $\Psi \coloneqq \bigcup_{j<s}\psi_j \cup \psi_s$ is $(L_1, \cF_1)$-compatible.

It remains to observe that $\Psi$ is an $\cL$-coloring of $\cH$. We note that $\supp(\Psi) = \bigcup_{j<s}(U_j \setminus U_{j+1}) \cup U_s = U_1 = V(\cH)$.
By condition~\ref{cond: compatible 1} of Definition~\ref{def: compatible}, $\Psi(v) \in L_1(v)$ for every $v \in V(\cH)$, and since $L_1(v) \subseteq \cL(v)$ by construction, $\Psi(v) \in \cL(v)$ for every $v \in V(\cH)$. For properness, suppose toward a contradiction that some edge $e \in E(\cH)$ is $c$-monochromatic under $\Psi$ for some color $c$, i.e., $\Psi(v) = c$ for every $v \in e$. By condition~\ref{cond: compatible 1}, $c = \Psi(v) \in L_1(v)$ for every $v \in e$, so $e \in E(F_{1,c})$ by the construction of $F_{1,c}$. This contradicts condition~\ref{cond: compatible 2} of $(L_1,\cF_1)$-compatibility of $\Psi$. Hence, $\Psi$ is a proper coloring of $\cH$ with $\Psi(v) \in \cL(v)$ for every $v \in V(\cH)$ and $\supp(\Psi) = V(\cH)$, i.e., $\Psi$ is an $\cL$-coloring of $\cH$, completing the proof of Theorem~\ref{theorem: main}.
\end{proof}

It remains to prove Lemmas~\ref{lemma: properties of beta_i, gamma_i, keep_i, uncolor_i} and~\ref{lemma: nibble}. We do so in the next two subsections.

\subsection{The recursion: proof of Lemma~\ref{lemma: properties of beta_i, gamma_i, keep_i, uncolor_i}}\label{subsection: properties}
For the reader's convenience, we restate the lemma here.
\begin{lemma*}[Restatement of Lemma~\ref{lemma: properties of beta_i, gamma_i, keep_i, uncolor_i}]
For $i \in [s-1]$, the following hold:
\begin{enumerate}[label=\textup{(S\arabic*)}]
\item $\gamma_1=0$ and $1 \le \gamma_{i+1} \le \dfrac{\log d}{\eta(1-\eta)}$;
\item $1- \eta \le \keep_i = 1-\Omega{\left((\log d)^{-k}\right)}$; 
\item $1 - \alpha \le \uncolor_i \le 1 - \alpha(1-\eta)$.
\end{enumerate}
Moreover, for each $i \in [s]$, the following hold:
\begin{enumerate}[label=\textup{(S\arabic*)},resume]
\item $(\log d)^{-2} \le \beta_i \le (1-\alpha(1-\eta))^{i-1}$;
\item $\dfrac{\log d}{(1+\eps)^{k-1}(k-1)} \le \dfrac{t_i}{q_i^{k-1}} \le \dfrac{\log d}{(1+5\eta)^{k-1}(k-1)}$;
\item $q_i \gg d^{\frac{\eta}{k-1}}$.
\end{enumerate}
\end{lemma*}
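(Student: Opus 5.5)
The plan is a single simultaneous induction on $i$. The induction hypothesis is that \ref{gamma_i bound}--\ref{uncolor_i bound} hold for all indices $j<i$, and \ref{beta_i bound}--\ref{ratio bound} hold for all indices $j\le i$. The recursions make $\beta_{i+1}$, $\gamma_{i+1}$ and $t_{i+1}/q_{i+1}^{k-1}$ depend only on data up to step $i$, so the induction closes; \ref{q_i bound} is handled at the end.

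\textbf{Step 1: the ratio \ref{ratio bound}.} Write $r_i \coloneqq t_i/q_i^{k-1}$. Dividing the two recursions in \eqref{def: t_i, q_i}, the factors $\keep_i^{k-1}$ cancel exactly, so
\[
r_i=r_1\Big(\tfrac{1+\delta}{1-\delta}\Big)^{(k-1)(i-1)} \quad\text{with}\quad r_1=\frac{\log d}{(1+\eps)^{k-1}(k-1)}.
\]
Since $\delta=d^{-\Omega(1)}$ and $s=O(\log d\log\log d)$, the correction factor is $1+o(1)$. This gives the lower bound immediately. For the upper bound I use the slack $1+\eps=1+6\eta>(1+5\eta)(1+o(1))$.

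\textbf{Step 2: $\gamma$, $\keep$, $\uncolor$, $\beta$.} Assume $\keep_j\ge 1-\eta$ for $j<i$. Then $\uncolor_j=1-\alpha\keep_j\le 1-\alpha(1-\eta)$, and $\uncolor_j\ge 1-\alpha$ holds trivially since $\keep_j\le1$; this is \ref{uncolor_i bound}. Taking the product gives the upper bound in \ref{beta_i bound}. Summing the geometric series gives
\[
\gamma_{i+1}\le \frac{1}{\alpha(1-\eta)}=\frac{\log d}{\eta(1-\eta)},
\]
and $\gamma_{i+1}\ge\beta_1=1$; this is \ref{gamma_i bound}. For the lower bound in \ref{beta_i bound}, use
\[
\beta_i\ge(1-\alpha)^{s}\ge \exp\bigl(-(1+o(1))\alpha s\bigr)=(\log d)^{-(1+\eta)/(1-\eta)+o(1)}\ge(\log d)^{-2},
\]
since $\eta<1/6$.

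For \ref{keep_i bound}, use convexity in the form
\[
\gamma_{i+1}^{k-1}-\gamma_i^{k-1}\le (k-1)\gamma_{i+1}^{k-2}\beta_i\le (k-1)\Big(\tfrac{1}{\alpha(1-\eta)}\Big)^{k-2}.
\]
Multiplying by $\alpha^{k-1}r_i$ and applying Step 1 gives
\[
1-\keep_i\le \frac{\eta}{(1+5\eta)^{k-1}(1-\eta)^{k-2}}\le\eta.
\]
For the lower estimate on $1-\keep_i$, use $\gamma_{i+1}^{k-1}-\gamma_i^{k-1}\ge \beta_i\gamma_{i+1}^{k-2}\ge\beta_i\ge(\log d)^{-2}$, and combine it with $\alpha^{k-1}r_i=\Theta\bigl((\log d)^{-(k-2)}\bigr)$. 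This yields $1-\keep_i=\Omega\bigl((\log d)^{-k}\bigr)$.

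\textbf{Step 3: \ref{q_i bound}, the main obstacle.} This is the step that actually uses the shattering threshold. We have
\[
q_i\ge q_1(1-\delta)^{s}\prod_{j<i}\keep_j,\qquad (1-\delta)^s=1-o(1).
\]
The difficulty is that $1-\keep_j$ is only bounded by $\eta$ and is not $o(1)$, so a first-order expansion loses a constant factor. I handle this with the bound $\log(1-x)\ge -(1+\eta)x$ for $x\le\eta$, with $\eta$ small. Combined with the telescoping identity
\[
\sum_{j<i}(1-\keep_j)\le \alpha^{k-1}\max_j r_j\,\gamma_i^{k-1},
\]
the bounds $\gamma_i\le 1/(\alpha(1-\eta))$ and Step 1 give
\[
\prod_{j<i}\keep_j\ge d^{-\frac{1+\eta}{(k-1)((1+5\eta)(1-\eta))^{k-1}}}.
\]
Now $(1+5\eta)(1-\eta)\ge 1+3\eta$ for $\eta\le 1/6$, and $1-\frac{1+\eta}{1+3\eta}\ge\frac{4\eta}{3}$. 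Since $q_1=d^{1/(k-1)}(\log d)^{-O(1)}$, we get $q_i\ge d^{\frac{4\eta/3}{k-1}-o(1)}\gg d^{\frac{\eta}{k-1}}$. The delicate part is bookkeeping the constants: the slack between $1+\eps$ and $1+5\eta$, and the loss $(1-\eta)^{k-1}$ coming from the bound on $\gamma$, must together still beat $1+\eta$.
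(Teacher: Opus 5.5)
Your proposal is correct and takes essentially the same route as the paper. Both arguments use the exact cancellation $t_i/q_i^{k-1}=\bigl(\frac{1+\delta}{1-\delta}\bigr)^{(k-1)(i-1)}t_1/q_1^{k-1}$, geometric-series and convexity bounds for $\gamma_i$, $\beta_i$ and $1-\keep_i$ inside a simultaneous induction, and for (S6) a telescoping of $\sum_j(1-\keep_j)$ against $\gamma_s^{k-1}$. The differences are cosmetic: you absorb the $\delta$-correction as $1+o(1)$ and use $-\log(1-x)\le(1+\eta)x$, whereas the paper tracks the explicit slack $(1-\eta/2)^{k-1}$ and uses $-\log\keep_j\le(1-\keep_j)/(1-\eta)$, which leads to the exponent $\frac{1}{1+3\eta/2}$ in place of your $\frac{1+\eta}{1+3\eta}$.
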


Recall that $d \ge d_0$, where $d_0$ is sufficiently large in terms of $k$ and $\eps$. By the definitions of $\delta$ and $s$ given in~\eqref{def: eta, alpha, delta, s}, we note that 
\begin{align}\label{eq: (1-delta) to 2(s-1)}
(1-\delta)^{2(s-1)} \ge e^{-4(s-1)\delta} \ge \exp\left(-\frac{4(1+\eta) \log d \log \log d}{\eta(1-\eta)} \cdot d^{-\frac{\eta/6}{k-1}}\right) \ge 1-\eta/2 = \Omega(1),
\end{align}
and so for all $i \in [s-1]$,
\begin{align}\label{eq: (1+detla)/(1-delta)}
\left(\frac{1+\delta}{1-\delta}\right)^{(k-1)i} \le \frac{1}{(1-\delta)^{2(k-1)i}} \le \frac{1}{(1-\delta)^{2(k-1)(s-1)}} \overset{\eqref{eq: (1-delta) to 2(s-1)}}{\le} \frac{1}{(1-\eta/2)^{k-1}}.
\end{align}

Now let us verify conditions~\ref{gamma_i bound}--\ref{ratio bound} for $i=1$ as a base case. Since $\beta_1 = 1$, condition~\ref{beta_i bound} holds for $i = 1$ as $\beta_1 = 1 = (1-\alpha(1-\eta))^0 \gg (\log d)^{-k}$.
This gives 
\[
\gamma_2 = \beta_1 = 1 \le \frac{\log d}{\eta(1-\eta)},
\]
for $d \ge d_0$ sufficiently large, giving condition~\ref{gamma_i bound} for $i=1$. 
Next, since $5\eta \le \eps$ by~\eqref{def: eta, alpha, delta, s}, we have 
\begin{align}\label{eq: ratio_1}
\frac{t_1}{q_1^{k-1}} \overset{\eqref{def: t_1, q_1}}{=} \frac{\log d}{(1+\eps)^{k-1}(k-1)} \le \dfrac{\log d}{(1+5\eta)^{k-1}(k-1)},
\end{align}
giving condition~\ref{ratio bound} for $i=1$. By the definitions of $\keep_i$ and $\alpha$ given in~\eqref{def: keep_i} and~\eqref{def: eta, alpha, delta, s}, respectively, together with $\gamma_1 = 0$ and $\gamma_2 = 1$, it follows that
\[
1 - \keep_1 \overset{\eqref{def: keep_i}}{=} \frac{\alpha^{k-1}t_1}{q_1^{k-1}} \, {\left(\gamma_2^{k-1} - \gamma_1^{k-1}\right)} = \frac{\alpha^{k-1}t_1}{q_1^{k-1}} \overset{\eqref{eq: ratio_1}}{=} \frac{(\eta/(1+\eps))^{k-1}}{(k-1)(\log d)^{k-2}},
\]
which implies
\[
\eta
\ge \frac{(\eta/(1+\eps))^{k-1}}{(k-1)(\log d)^{k-2}} = 1-\keep_1 = \Omega{\left((\log d)^{-k}\right)},
\]
giving condition~\ref{keep_i bound} for $i = 1$.
Since $\uncolor_1 = 1-\alpha\keep_1$ by~\eqref{def: uncolor_i, beta_i, gamma_i}, it follows that 
\[
1 - \alpha \le \uncolor_1 \le 1 - \alpha(1-\eta),
\]
giving~\ref{uncolor_i bound} for $i = 1$.

Now suppose for some $j < s - 1$ that conditions~\ref{gamma_i bound}--\ref{ratio bound} hold for all $i \in [j]$. 
We verify the conditions for $j + 1$. For~\ref{beta_i bound} at $i = j+1$, since $\beta_{j+1} = \prod_{i=1}^{j}\uncolor_i$ by~\eqref{def: uncolor_i, beta_i, gamma_i}, applying~\ref{uncolor_i bound} for each $i \in [j]$ yields
\begin{align}
(1-\alpha(1-\eta))^j 
\ge \beta_{j+1} 
&\ge (1-\alpha)^j 
\ge (1-\alpha)^{s-1} \\
&\ge \exp\left(-\frac{(s-1)\alpha}{1-\alpha}\right) \overset{\eqref{def: eta, alpha, delta, s}}{\ge}
\exp\left(-\frac{(1+\eta) \log\log d}{(1-\eta)^2}\right) \ge (\log d)^{-2}.\label{eq: beta_{j+1}}
\end{align}
For~\ref{gamma_i bound} at $i = j+1$, we use~\ref{beta_i bound} and the above for $i \in [j]$ to get
\begin{align}
1 = \gamma_2 \le \gamma_{j+2} 
\overset{\eqref{def: uncolor_i, beta_i, gamma_i}}{=} \sum_{i=1}^{j+1}\beta_i
&\le \sum_{i=1}^{j+1}(1-\alpha(1-\eta))^{i-1}  \\
&\le \frac{1}{\alpha(1-\eta)} \overset{\eqref{def: eta, alpha, delta, s}}{=} \frac{\log d}{\eta(1-\eta)}.\label{eq: gamma_{j+2}}
\end{align}
Since $x^{k-1}$ is convex for $k \ge 2$ and $x \ge 0$, it follows that
\begin{align}
\gamma_{j+2}^{k-1} - \gamma_{j+1}^{k-1} 
&\le (k-1)\gamma_{j+2}^{k-2}(\gamma_{j+2}-\gamma_{j+1}) 
\overset{\eqref{def: uncolor_i, beta_i, gamma_i}}{=} (k-1) \, \gamma_{j+2}^{k-2} \, \beta_{j+1}  \\ 
&\overset{\ref{beta_i bound}}{\le} (k-1)\gamma_{j+2}^{k-2} (1-\alpha(1-\eta))^j 
\overset{\eqref{eq: gamma_{j+2}}}{\le} (k-1)\left(\frac{\log d}{\eta(1-\eta)}\right)^{k-2}, \label{eq: difference in keep_{j+1} ub}
\end{align}
and that
\begin{align}
\gamma_{j+2}^{k-1} - \gamma_{j+1}^{k-1} 
&\ge (k-1) \gamma_{j+1}^{k-2} (\gamma_{j+2} - \gamma_{j+1})  \\
&\overset{\eqref{def: uncolor_i, beta_i, gamma_i}}{=} (k-1) \gamma_{j+1}^{k-2} \beta_{j+1} \overset{\eqref{eq: gamma_{j+2}}}{\ge} (k-1) \beta_{j+1} \overset{\eqref{eq: beta_{j+1}}}{\ge} (k-1)(\log d)^{-2},
\label{eq: difference in keep_{j+1} lb}
\end{align}
where we used that $\gamma_{j+1} \ge \gamma_2 = 1$.
On the other hand, by the recursive definitions of $q_i$ and $t_i$ given in~\eqref{def: t_i, q_i}, we have
\begin{align}
\frac{t_{j+1}}{q_{j+1}^{k-1}} \overset{\eqref{def: t_i, q_i}}{=} \left(\frac{1+\delta}{1-\delta}\right)^{(k-1)j} \frac{t_1}{q_1^{k-1}} \overset{\eqref{eq: ratio_1}}{=}  \left(\frac{1+\delta}{1-\delta}\right)^{(k-1)j} \frac{\log d}{(1+\eps)^{k-1}(k-1)} \ge \frac{\log d}{(1+\eps)^{k-1}(k-1)},
\end{align}
Together with~\eqref{eq: (1+detla)/(1-delta)}, it follows that
\begin{align}\label{eq: ratio_{i+1}}
\frac{\log d}{(1+\eps)^{k-1}(k-1)} \le \frac{t_{j+1}}{q_{j+1}^{k-1}}
\le \frac{\log d}{[(1-\eta/2)(1+\eps)]^{k-1}(k-1)} \le \frac{\log d}{(1+5\eta)^{k-1}(k-1)},
\end{align}
where we used that $(1+\eps)(1-\eta/2) = (1+6\eta)(1-\eta/2) = 1 + 5\eta + \eta/2 - 3\eta^2 > 1+5\eta$, completing the proof of~\ref{ratio bound}.
Since 
\[
1 - \keep_{j+1} \overset{\eqref{def: keep_i}}{=} \frac{\alpha^{k-1}t_{j+1}}{q_{j+1}^{k-1}}\,{\left(\gamma_{j+2}^{k-1} - \gamma_{j+1}^{k-1}\right)} = \frac{\eta^{k-1}}{(\log d)^{k-1}} \cdot \frac{t_{j+1}}{q_{j+1}^{k-1}} \cdot \left(\gamma_{j+2}^{k-1} - \gamma_{j+1}^{k-1}\right)
\]
it follows that 
\[
\frac{(\eta/(1+\eps))^{k-1}}{(k-1) (\log d)^{k-2}} \,{\left(\gamma_{j+2}^{k-1} - \gamma_{j+1}^{k-1}\right)} \le 1 - \keep_{j+1} \le \frac{(\eta/(1+5\eta))^{k-1}}{(k-1) (\log d)^{k-2}} \, {\left(\gamma_{j+2}^{k-1} - \gamma_{j+1}^{k-1}\right)}.
\]
Combining this with ~\eqref{eq: difference in keep_{j+1} ub} and~\eqref{eq: difference in keep_{j+1} lb}, we get
\[
\Omega{\left((\log d)^{-k}\right)} = \frac{(\eta/(1+\eps))^{k-1}}{(\log d)^k} \overset{\eqref{eq: difference in keep_{j+1} lb}}{\le} 1 - \keep_{j+1}
\overset{\eqref{eq: difference in keep_{j+1} ub}}{\le} \frac{\eta}{(1+5\eta)^{k-1}(1-\eta)^{k-2}} \le \eta,
\]
where in the last inequality we used $(1+5\eta)(1-\eta) \ge 1$, noting that $\eta = \eps/6 < 1/6$. This completes the verification of~\ref{keep_i bound} for $i = j+1$.
Since $\uncolor_{j+1} = 1 - \alpha\keep_{j+1}$ by~\eqref{def: uncolor_i, beta_i, gamma_i}, it follows that
\[
1 - \alpha \le \uncolor_{j+1} \le 1 - \alpha(1-\eta),
\]
giving~\ref{uncolor_i bound} for $i = j+1$, completing the induction. By induction, conditions~\ref{gamma_i bound}--\ref{ratio bound} hold for all $i \in [s - 1]$.
Applying the calculation in~\eqref{eq: beta_{j+1}} with $j = s - 1$, using~\ref{uncolor_i bound} for every $i \in [s - 1]$, also proves~\ref{beta_i bound} for $i = s$. On the other hand, the same calculation as in~\eqref{eq: ratio_{i+1}}, with $j = s - 1$, proves~\ref{ratio bound} for $i = s$.

It remains to verify~\ref{q_i bound}.
By using the fact that $\log(1-x) \ge \frac{-x}{1-x}$ whenever $|x| < 1$, we obtain that for all $i \in [s-1]$,
\begin{align}
\label{eq: log keep_i}
-\log \keep_i \le \frac{1-\keep_i}{\keep_i} \overset{\ref{keep_i bound}}{\le} \frac{1-\keep_i}{1-\eta}.
\end{align}
As a result,
\begin{align}
-\sum_{j=1}^{s-1} \log\keep_j 
&\ \le \ \frac{(\eta/(1+5\eta))^{k-1}}{(1-\eta)(k-1)(\log d)^{k-2}} \, {\left(\gamma_s^{k-1} - \gamma_1^{k-1}\right)} \\
&\overset{\ref{gamma_i bound}}{\le} \frac{(\eta/(1+5\eta))^{k-1}}{(1-\eta)(k-1)(\log d)^{k-2}} \, {\left[\frac{\log d}{\eta(1-\eta)}\right]^{k-1}} \\
&\ \le \ \frac{\log d}{(k-1)[(1+5\eta)(1-\eta)]^{k-1}(1-\eta)} \\
&\ \le \ \frac{\log d}{(k-1)(1+3\eta/2)},
\end{align}
where in the last inequality we used
\[
[(1+5\eta)(1-\eta)]^{k-1}(1-\eta) \ge [(1+5\eta)(1-\eta)^2]^{k-1} \ge 1 + 3\eta/2,
\]
given that $\eta < 1/6$. Thus,
\begin{align}\label{eq: product log keep_i}
\prod_{j=1}^{s-1} \keep_j \ge d^{-\frac{1}{(k-1)(1+3\eta/2)}}.
\end{align}
Finally, by the recursive definition of $q_i$ given in~\eqref{def: t_1, q_1} and~\eqref{def: t_i, q_i}, we have
\begin{align}
q_i 
&\overset{\eqref{def: t_i, q_i}}{=} (1-\delta)^{i-1}  \cdot q_1 \cdot \prod_{j=1}^{i-1} \keep_j \\
&\overset{\eqref{def: t_1, q_1}}{\ge} (1-\delta)^{s-1}  \cdot \left(\frac{d}{\log d}\right)^{\frac{1}{k-1}} \cdot \prod_{j=1}^{s-1} \keep_j \\
&\overset{\eqref{eq: product log keep_i}}{\ge} (1-\delta)^{s-1}  \left(\frac{d}{\log d}\right)^{\frac{1}{k-1}} d^{-\frac{1}{(k-1)(1+3\eta/2)}} \overset{\eqref{eq: (1-delta) to 2(s-1)}}{\gg} d^{\frac{\eta}{k-1}},
\end{align}
where in the last inequality we used that $(1-\delta)^{s-1} = \Omega(1)$ from~\eqref{eq: (1-delta) to 2(s-1)} and that 
\[
1 - \frac{1}{1+3\eta/2} = \frac{3\eta}{2+3\eta} > \eta.
\]
This concludes the proof of Lemma~\ref{lemma: properties of beta_i, gamma_i, keep_i, uncolor_i}.

\subsection{A single nibble step: proof of Lemma~\ref{lemma: nibble}}\label{subsection: nibble}

In this section, we prove Lemma~\ref{lemma: nibble} modulo a key technical lemma, which we defer to Section~\ref{section: deg}. 
For the reader's convenience, we restate the lemma here.

\begin{lemma*}[Restatement of Lemma~\ref{lemma: nibble}]
Let $L$ be a list assignment on some finite set $U$, and let $\cF$ be a family of restriction hypergraphs with respect to $L$. Suppose that $(L, \cF)$ satisfies $P(i)$ for some $i \in [s-1]$. There exists a partial color assignment $\psi$ for $U$, a list assignment $\tilde{L}$ on $\tilde{U} \coloneqq U \setminus \supp(\psi)$, and a family $\tilde{\cF}$ of restriction hypergraphs with respect to $\tilde{L}$ such that:
\begin{enumerate}[label=\textup{(R\arabic*)}]
\item $\psi$ is $(L, \cF)$-compatible,
\item $(\tilde{L}, \tilde{\cF})$ satisfies $P(i+1)$, and
\item for every $(\tilde{L}, \tilde{\cF})$-compatible partial color assignment $\tilde{\psi}$ for $\tilde{U}$, $\psi \cup \tilde{\psi}$ is $(L,\cF)$-compatible.
\end{enumerate}
\end{lemma*}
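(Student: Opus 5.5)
We will run one round of the wasteful coloring procedure (H1)--(H4), adapted to the restriction family $\cF$, and then apply the symmetric Lov\'asz Local Lemma (Theorem~\ref{theorem: sym LLL}) to fix an outcome in which all local quantities are concentrated.

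\textbf{The random experiment.} Each $v \in U$ is activated independently with probability $\alpha$ and selects a uniformly random color $\sigma(v) \in L(v)$. A color $c \in L(v)$ is \emph{discarded} at $v$ if some edge $e \in E(F_c, v)$ has every $u \in e - v$ activated with $\sigma(u) = c$. By a union bound, together with \ref{Q2}, \ref{Q3} and the identity computed in the overview, the probability of this is at most $1-\keep_i$. Applying independent equalizing coin flips, each color is then kept at $v$ with probability exactly $\keep_i$. Let $\psi$ assign $\sigma(v)$ to every activated $v$ whose selected color is kept. We set $\tilde U = U \setminus \supp(\psi)$ and let $\tilde L(v)$ be the kept colors at $v$, truncated to exactly $q_{i+1}$ colors.

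For each $c$, the hypergraph $\tilde F_c$ consists of the sets $e \setminus \supp(\psi)$, over all $e \in E(F_c)$ such that every vertex of $e \cap \supp(\psi)$ received $c$ and every vertex of $e \cap \tilde U$ retains $c$ in $\tilde L$. The resulting set must have size at least $2$. Size $1$ cannot occur: if $e - v$ were entirely colored $c$, then $c$ would have been discarded at $v$.

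\textbf{Routine properties.} With these definitions, \ref{cond: psi compatible} and \ref{cond: reduction} are deterministic checks. An edge of $F_c$ that is monochromatic in $\psi \cup \tilde\psi$ either lies inside $\supp(\psi)$, which is impossible since its last vertex would have discarded $c$, or leaves a trace in $\tilde F_c$ that is monochromatic under $\tilde\psi$. Property \ref{Q1} is inherited, because every edge of $\tilde F_c$ is a subset of an edge of $F_c$. Hence no new short cycles arise, and intersections between distinct colors stay of size at most $1$. The one exception is a pair of equal $k$-edges, which are both still full $k$-edges.

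\textbf{Concentration.} The core of the proof is to show that, for each $v$ and $c$, with probability $1 - d^{-\omega(1)}$ two bounds hold. First, $|\text{kept colors at } v| \ge (1-\delta) \keep_i q_i$. Second, for each $\ell$,
\[
\deg(\tilde F_c, v, \ell) \le \binom{k-1}{\ell-1}\Bigl(\frac{\alpha\gamma_{i+1}}{q_{i+1}}\Bigr)^{k-\ell}\beta_{i+1}^{\ell-1}t_{i+1}.
\]
The expectation computation goes as follows. An $\ell'$-edge at $(v,c)$ becomes an $\ell$-edge when $\ell'-\ell$ of its other vertices are colored $c$ (probability about $\alpha\keep_i/q_i$ each) and the remaining $\ell-1$ stay uncolored with $c$ kept (probability about $\uncolor_i\keep_i$ each). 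Summing over $\ell' \ge \ell$ against the bound \ref{Q3}, and using $\beta_{i+1} = \uncolor_i\beta_i$ and $\gamma_{i+1} = \gamma_i + \beta_i$, the binomial coefficients recombine to give exactly the target at $i+1$ up to a factor of $1+o(\delta)$. The slack factors $(1\pm\delta)$ in \eqref{def: t_i, q_i} are there to absorb the fluctuations.

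For list sizes, Talagrand's inequality (Theorem~\ref{theorem: Talagrand}) applies: the count is $O(1)$-Lipschitz in the trials by uncrowdedness and \ref{Q1}, and $O(k)$-certifiable. Since $q_i \gg d^{\eta/(k-1)}$ by \ref{q_i bound}, the error is $q_i^{1/2+o(1)} \ll \delta q_i$.

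\textbf{Main obstacle.} The degree variables are the hard part. They are not directly Lipschitz, since one vertex can lie on many edges through $v$. Following the paper's indication, we split $\deg(\tilde F_c, v, \ell)$, over each source size $\ell'$, into auxiliary variables. Each auxiliary variable counts edges in which a prescribed set of vertices is colored $c$ or left uncolored, and separately the loss of $c$ at the uncolored vertices. Uncrowdedness (girth at least $5$) ensures that distinct edges at $v$ interact only through $v$. This yields conditional independence given $v$'s outcome, so a Chernoff bound (Theorem~\ref{theorem: chernoff}) handles the "colored $c$'' counts and Talagrand's inequality handles the "kept $c$'' counts, with relative error polynomially small in $d$. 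The delicate point is that each auxiliary expectation must be tracked individually, so that the additive errors remain at most $\delta$ times the target. The target can be as small as $t_i(\alpha\gamma_i/q_i)^{k-2}$, which is still polynomial in $d$ by \ref{ratio bound} and \ref{q_i bound}; we expect this to be enough room.

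\textbf{Conclusion.} Each bad event depends only on trials within distance $2$ of $v$. There are $\mathrm{poly}(d)$ such events, using the color-degree bound and $q_i \le q_1$. The symmetric Local Lemma then gives an outcome in which all the bounds hold simultaneously, which establishes \ref{cond: P(i+1)}.
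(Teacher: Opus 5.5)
Your outline follows the paper's proof in its architecture. It uses the same nibble step with equalizing coin flips (Algorithm~\ref{algorithm: nibble step}) and the union bound $\pr(c\in D(v))\le 1-\keep_i$. It verifies \ref{cond: psi compatible}, \ref{cond: reduction} and \ref{Q1} deterministically, uses Talagrand for list sizes, and finishes with the symmetric Local Lemma over events at distance at most $4$.

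The real difference is how you neutralize the trials at $v$ in the color-degree step. The paper removes their influence by passing to $D^*,L^*,U^*$, which only count discards via edges avoiding $v$. This makes $\mathbf{Y}^{(1)}_{\ell,j}$ and $\mathbf{Z}^{(1)}_{\ell,j}$ independent of $v$, so Talagrand applies to them. The paper pays for this with the extra variable $\mathbf{Y}^{(2)}_{\ell,j}$ (all of $e-v$ activated with color $c_v$), which it concentrates by Chernoff after conditioning on $c_v$.

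Conditioning on the trials at $v$ is also sound, and it gives you more than you use. Take distinct $e,e'\in E(F_c,v)$. Girth at least $5$ of $\bigcup_{c''}F_{c''}$ forces $\bigcup_{x\in e-v}N[x]$ and $\bigcup_{x'\in e'-v}N[x']$ to meet only in $v$. So, given the trials at $v$, the contributions of the edges at $v$ to $\mathbf{X}_{\ell,j}$ are independent $\{0,1\}$-valued variables. The Chernoff bound alone then gives the needed $\exp(-\Omega(\delta^2\tau_j))$ tail, with no Talagrand step and no $\mathbf{Y}-\mathbf{Z}$ split.

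The cost is that you must bound $\E[\mathbf{X}_{\ell,j}\mid\text{trials at }v]$ uniformly over all outcomes at $v$, and this is where the paper's bookkeeping comes back:
\begin{itemize}
\item dominate $\I[c\in L'(x)]$ by $\I[c\in L^*(x)]$;
\item note that $u\in\tilde U\setminus U^*$ forces every vertex of $e-u$, including $v$, to be active with color $c_u$, which is only a relative $O(\alpha/q_i)$ correction;
\item control the correlation at $u$ between staying uncolored and keeping $c$, exactly as in the bound $\E[\cS(u)]\le(1+5/q_i)\keep_i\uncolor_i$.
\end{itemize}
Your phrase ``about $\uncolor_i\keep_i$'' hides this last point, and it is the most delicate estimate.

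Two small clarifications. First, Talagrand should be applied to the number of lost colors $|L(v)\setminus L'(v)|$, because the number of kept colors is not $O(k)$-certifiable. Second, the bad events should be stated for all $v\in U$ and $c\in L(v)$ using the untrimmed lists $L'$, because $\tilde L$ involves an arbitrary trimming. The bound then transfers to $\tilde F_c$ since $\deg(\tilde F_c,v,\ell)\le\deg(F'_c,v,\ell)$.
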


We construct the desired partial color assignment $\psi$ and family $\tilde\cF$ by employing a single \textit{nibble} step, outlined in Algorithm~\ref{algorithm: nibble step}.

\vspace{0.3cm}
\begin{breakablealgorithm}
\caption{A nibble step}\label{algorithm: nibble step}
\begin{flushleft}
\noindent\makebox[4.5em][l]{\textbf{Input}:}%
\parbox[t]{\dimexpr\linewidth-4.5em\relax}{%
A list assignment $L$ on a finite set $U$ and a family $\cF = \{F_c\}$ of restriction hypergraphs with respect to $L$ such that $(L, \cF)$ satisfies property $P(i)$.
}

\smallskip

\noindent\makebox[4.5em][l]{\textbf{Output}:}%
\parbox[t]{\dimexpr\linewidth-4.5em\relax}{%
A partial color assignment $\psi$ for $U$, a list assignment $\tilde{L}$ on $\tilde{U} \coloneqq U \setminus \supp(\psi)$, and a family $\tilde{\cF} = \{\tilde{F}_c\}$ of restriction hypergraphs with respect to $\tilde{L}$.
}
\end{flushleft}

\smallskip

\begin{enumerate}[itemsep=.2cm, label=(\arabic*)]
    \item\label{step: activate}
    Choose a random subset $A \subseteq U$ by including each vertex $v \in U$ independently with probability $\alpha$, where $\alpha$ is defined in~\eqref{def: eta, alpha, delta, s}. The vertices in $A$ are called \emphd{activated} vertices. 
    For every vertex $v \in U$, independently choose a color $c_v$ from $L(v)$ uniformly at random.
    \item\label{step: discard}
    For every vertex $v \in U$, a color $c \in L(v)$ is said to be \emphd{discarded} at $v$ if there exists an edge $e \in E(F_c, v)$ for which $u \in A$ and $c_u = c$ for all $u \in e-v$. When this occurs, we also say that the color $c$ is \emphd{discarded} at $v$ \emphd{via} the edge $e$ (note that $c$ can be discarded at $v$ via multiple edges). 
    We let $D(v)$ denote the set consisting of all discarded colors at $v$.
    \item\label{step: eq}
    Let $\{\eq(v,c)\,:\,v \in U,\, c \in L(v)\}$ be a family of independent Bernoulli random variables with
    \begin{align}\label{def: eq(v,c)}
        \pr(\eq(v,c) = 1) = \frac{\keep_i}{\pr(c \notin D(v))}.
    \end{align}
    Call $\eq(v,c)$ the \emphd{equalizing coin flip} for color $c$ at $v$. For every vertex $v \in U$, define
    \[
    L'(v) \coloneqq \{c \in L(v) \,:\, c \notin D(v), \ \eq(v,c) = 1\}.
    \]
    The colors in $L'(v)$ are called \emphd{kept} colors of $v$.
    
    \item\label{step: assignment}
    Define the partial color assignment $\psi$ for $U$ by setting $\psi(v) \coloneqq c_v$ for every $v \in A$ with $c_v \in L'(v)$. Set $\tilde{U} \coloneqq U \setminus \supp(\psi)$.
    
    \item\label{step: hat-hypergraph}
    For every color $c \in L'(\tilde{U})$, let $F'_c$ be the hypergraph with vertex set $\tilde{U}$ and, for each $2 \le \ell \le k$, 
    \begin{align}
        E(F'_c, \ell) \coloneqq \left\{e' \in \binom{\tilde{U}}{\ell} \,:\, e' \subseteq e \in \textstyle E(F_c), \, c \in \bigcap_{v \in e'} L'(v),~\text{and}~\psi(v) = c ~\text{for all}~v \in e \setminus e' \right\},
    \end{align}
    \item\label{step: trim} 
    For every vertex $v \in \tilde{U}$, define $\tilde{L}(v)$ by removing $\max\{|L'(v)| - q_{i+1},\, 0\}$ colors from $L'(v)$ arbitrarily.
    
    \item\label{step: final restriction} For every color $c \in \tilde{L}(\tilde{U})$, let $\tilde{F}_c$ be the hypergraph with vertex set $\tilde{U}$ and edge set 
    \[
    E(\tilde{F}_c) \coloneqq \left\{e' \in E(F'_c)\,:\,c \in \textstyle \bigcap_{v \in e'} \tilde{L}(v)\right\}.
    \]
\end{enumerate}
\end{breakablealgorithm}
\vspace{0.3cm}

The goal of this section is to show that the output of Algorithm~\ref{algorithm: nibble step} produces a partial color assignment $\psi(\cdot)$ satisfying the conditions of Lemma~\ref{lemma: nibble} with positive probability.
We also record the following containment that will be used repeatedly. For every $v \in \tilde{U}$, Steps~\ref{step: eq} and~\ref{step: trim} give
\begin{align}\label{eq: lists compare}
\tilde{L}(v) \subseteq L'(v) \subseteq L(v).
\end{align}
We begin with the following lemma, which gives an upper bound on the probability that a color is discarded. 
Notably, it implies that Step~\ref{step: eq} of Algorithm~\ref{algorithm: nibble step} is well-defined, that is, $\pr(\eq(v, c) = 1) \le 1$.

\begin{lemma}\label{lemma: eq well defined}
For every vertex $v \in U$ and color $c \in L(v)$, we have 
\[
\pr(c \in D(v)) \le \sum_{\ell=2}^k \deg(F_c, v, \ell) (\alpha/q_i)^{\ell-1} \le 1 - \keep_i.
\]
\end{lemma}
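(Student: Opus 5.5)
The first inequality is a union bound; the second comes from plugging in \ref{Q3} and recognising the binomial expansion that defines $\keep_i$.

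For the first inequality, fix $v \in U$ and $c \in L(v)$. By Step~\ref{step: discard}, $c \in D(v)$ exactly when there is some edge $e \in E(F_c, v)$ such that every $u \in e - v$ is activated and has $c_u = c$. Fix such an edge $e$ of size $\ell$. The activation and colour choices of distinct vertices are independent. Since $e \in E(F_c)$ and $\cF$ is a family of restriction hypergraphs, we have $c \in L(u)$ for every $u \in e$. By \ref{Q2}, $|L(u)| = q_i$, so each $u \in e-v$ is activated and picks $c$ with probability exactly $\alpha/q_i$. Hence the event that $c$ is discarded via $e$ has probability $(\alpha/q_i)^{\ell-1}$. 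Taking a union bound over all edges of $F_c$ containing $v$, grouped by size $\ell \in \{2,\dots,k\}$ (the union is rank-$k$ by \ref{Q1}), gives
\[
\pr(c \in D(v)) \le \sum_{\ell=2}^k \deg(F_c, v, \ell)\,(\alpha/q_i)^{\ell-1}.
\]

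For the second inequality, substitute the bound from \ref{Q3}:
\[
\sum_{\ell=2}^k \binom{k-1}{\ell-1}\Big(\frac{\alpha\gamma_i}{q_i}\Big)^{k-\ell}\beta_i^{\ell-1}t_i\Big(\frac{\alpha}{q_i}\Big)^{\ell-1}
= \frac{\alpha^{k-1}t_i}{q_i^{k-1}}\sum_{\ell=2}^k\binom{k-1}{\ell-1}\gamma_i^{k-\ell}\beta_i^{\ell-1}.
\]
The sum on the right is the binomial expansion of $(\gamma_i+\beta_i)^{k-1}$ with the $\ell=1$ term $\gamma_i^{k-1}$ removed. By \eqref{def: uncolor_i, beta_i, gamma_i}, $\gamma_i+\beta_i = \gamma_{i+1}$, so the whole expression equals
\[
\frac{\alpha^{k-1}t_i}{q_i^{k-1}}\bigl(\gamma_{i+1}^{k-1}-\gamma_i^{k-1}\bigr) = 1-\keep_i
\]
by \eqref{def: keep_i}.

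The only point needing care is the edge case $i=1$. There $\gamma_1=0$, and every term with $\ell<k$ vanishes (using the convention $0^0=1$), consistent with $F_{1,c}$ being $k$-uniform. No step is a genuine obstacle.
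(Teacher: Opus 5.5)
Your proposal is correct and matches the paper's proof. The paper also takes a union bound over the edges of $F_c$ containing $v$, using \ref{Q2} to get probability $(\alpha/q_i)^{\ell-1}$ per $\ell$-edge, then substitutes \ref{Q3} and uses the identity $\sum_{\ell=2}^k\binom{k-1}{\ell-1}\gamma_i^{k-\ell}\beta_i^{\ell-1}=\gamma_{i+1}^{k-1}-\gamma_i^{k-1}$ to reach $1-\keep_i$.
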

 
\begin{proof}
Fix a vertex $v \in U$ and a color $c \in L(v)$. Recall that $c$ is discarded at $v$ in Algorithm~\ref{algorithm: nibble step} if there is an edge $e \in E(F_c, v)$ for which $e-v \subseteq A$ and $c_x = c$ for $x \in e-v$. 
Therefore, by condition~\ref{Q2} of property $P(i)$ and a union bound, we have
\begin{align*}
\pr(c \in D(v)) 
&\,\,\le \sum_{\ell=2}^k \deg(F_c, v, \ell) (\alpha/q_i)^{\ell-1} \\
&\overset{\ref{Q3}}{\le} \frac{\alpha^{k-1} t_i}{q_i^{k-1}} \sum_{\ell=2}^k \binom{k-1}{\ell-1} \gamma_i^{k-\ell} \beta_i^{\ell-1} \\
&\,\,\overset{\eqref{def: uncolor_i, beta_i, gamma_i}}{=} \frac{\alpha^{k-1} t_i}{q_i^{k-1}} \, {\left(\gamma_{i+1}^{k-1} - \gamma_i^{k-1}\right)} \overset{\eqref{def: keep_i}}{=} 1-\keep_i,
\end{align*}
as desired.
\end{proof}

The next lemma shows that certain conditions outlined in Lemma~\ref{lemma: nibble} hold deterministically.

\begin{lemma}\label{lemma: deterministic conditions}
    Let $(\psi, \tilde L, \tilde\cF)$ be the output of Algorithm~\ref{algorithm: nibble step} with input $(L, \cF)$. We have
    \begin{enumerate}
        \item $\psi$ is $(L, \cF)$-compatible,
        \item for every $(\tilde{L}, \tilde{\cF})$-compatible partial color assignment $\tilde{\psi}$ for $\tilde{U}$, $\psi \cup \tilde{\psi}$ is $(L,\cF)$-compatible,
        \item the union $\bigcup_{c \in \tilde{L}(\tilde{U})} \tilde{F}_c$ is rank-$k$ and uncrowded, and
        \item for every $c, c' \in \tilde{L}(\tilde{U})$ with $c \neq c'$, every $f \in E(\tilde{F}_c)$, and every $f' \in E(\tilde{F}_{c'})$, either $f = f'$ with $|f| = |f'| = k$, or $|f \cap f'| \le 1$.
    \end{enumerate}
\end{lemma}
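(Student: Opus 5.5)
We verify the four items directly from the construction in Algorithm~\ref{algorithm: nibble step}; no probability is involved. The key observation for (1) and (2) is the following. Suppose a vertex $v$ is colored $c$ by $\psi$, or $c$ is kept at $v$. Then for every edge $e\in E(F_c,v)$, it is \emph{not} the case that every $u\in e-v$ is activated and selected $c$, since otherwise $c\in D(v)$. Moreover, $\psi(u)=c$ forces $u\in A$ and $c_u=c$.

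For (1), condition~\ref{cond: compatible 1} holds because $\psi(v)=c_v\in L'(v)\subseteq L(v)$. For condition~\ref{cond: compatible 2}, suppose $e\in E(F_c)$ has every vertex colored $c$ by $\psi$. Pick any $v\in e$. Every $u\in e-v$ lies in $A$ with $c_u=c$, so $c\in D(v)$, contradicting $\psi(v)=c\in L'(v)$.

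For (2), let $\tilde\psi$ be $(\tilde L,\tilde\cF)$-compatible and set $\Psi=\psi\cup\tilde\psi$. The supports are disjoint because $\supp(\tilde\psi)\subseteq\tilde U$. List membership follows from~\eqref{eq: lists compare}. Now suppose $e\in E(F_c)$ is $c$-monochromatic under $\Psi$, and let $e'=e\cap\tilde U$.
\begin{itemize}
\item If $e'=\varnothing$, this contradicts (1).
\item If $|e'|=1$, say $e'=\{v\}$, then every $u\in e-v$ satisfies $\psi(u)=c$, so $c\in D(v)$. But $\tilde\psi(v)=c\in\tilde L(v)\subseteq L'(v)$, so $c$ was kept at $v$, a contradiction.
\item If $|e'|\ge2$, every $v\in e'$ has $c\in\tilde L(v)\subseteq L'(v)$, and every vertex of $e\setminus e'$ is colored $c$ by $\psi$. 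Hence $e'\in E(F'_c)$ by Step~\ref{step: hat-hypergraph}, and so $e'\in E(\tilde F_c)$ by Step~\ref{step: final restriction}. Then $e'$ is $c$-monochromatic under $\tilde\psi$, contradicting compatibility.
\end{itemize}

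For (3) and (4), the essential point is that every edge $f\in E(\tilde F_c)$ satisfies $f\subseteq e_f$ for some $e_f\in E(F_c)$, with $|f|\ge 2$ and $|f|\le |e_f|\le k$. Rank-$k$ is then immediate.

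For uncrowdedness, suppose $f_0,\dots,f_{r-1}$ form an $r$-cycle in $\bigcup_c\tilde F_c$ with $2\le r\le4$, witnessed by distinct vertices $v_j\in f_j\cap f_{j+1}$. The parent edges $e_{f_j}$ contain the same vertices $v_j$. If the $e_{f_j}$ are pairwise distinct, they form an $r$-cycle in $\bigcup_c F_c$, contradicting~\ref{Q1} for $(L,\cF)$.

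The main obstacle is the case where some parent edges coincide. Two distinct $\tilde F$-edges can share a parent, either from different colors or from the same color via different subsets. I would handle this by first establishing (4), which rules out such collisions in the form needed.

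For (4), take $f\in E(\tilde F_c)$ and $f'\in E(\tilde F_{c'})$ with $c\ne c'$ and $|f\cap f'|\ge2$. By~\ref{Q1}, $|e_f\cap e_{f'}|\ge2$ forces $e_f=e_{f'}=:e$ with $|e|=k$. If some vertex of $e\setminus f$ exists, it is colored $c$ by $\psi$. That vertex is then not in $\tilde U$, so it cannot lie in $f'$, and hence it lies in $e\setminus f'$, where it would have to be colored $c'$. This is a contradiction. Therefore $f=e$, and symmetrically $f'=e$, so $f=f'$ has size $k$.

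For the same-color case, the coloring pattern again pins things down. Within one color $c$, the set $e_f\setminus f$ is exactly the set of $\psi$-colored vertices of $e_f$, so $f=e_f\cap\tilde U$ is determined by its parent. Hence distinct edges of $\bigcup_c\tilde F_c$ either have distinct parents or are equal size-$k$ edges; the latter are not distinct as sets. Consequently a cycle among distinct $f_j$ lifts to a cycle among distinct $e_{f_j}$, which completes (3).
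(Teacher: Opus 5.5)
Your proof is correct and follows the paper's argument essentially step for step: the same verification of (1), the same case split on $|e\cap\tilde{U}|\in\{0,1,\ge 2\}$ for (2), the same lifting of a short cycle to its parent edges for (3), and the same ``colored $c$ versus colored $c'$'' contradiction for (4). The only difference is organizational. The paper notes that $g=e\cap\tilde{U}$ holds for a parent $e$ of $g$ in \emph{any} color, so equal parents force equal edges and (3) is proved without invoking (4); in particular, the collisions you flag as the ``main obstacle'' (distinct edges sharing a parent) can never occur.
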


\begin{proof}
\stepcounter{ForClaims}
\renewcommand{\theForClaims}{\ref*{lemma: deterministic conditions}}

We prove these conditions through a sequence of claims.
Let us first show that $\psi$ is $(L, \cF)$-compatible, implying that \ref{cond: psi compatible} holds.

\begin{claim}\label{claim: psi compatible}
$\psi$ is $(L, \cF)$-compatible.
\end{claim}

\begin{claimproof}
Condition~\ref{cond: compatible 1} of Definition~\ref{def: compatible} holds since $\psi(v) = c_v \in L(v)$ for every $v \in \supp(\psi)$, by Step~\ref{step: activate}.
For condition~\ref{cond: compatible 2}, suppose toward a contradiction that there exist $c \in L(U)$ and an edge $e \in E(F_c)$ with $e \subseteq \supp(\psi)$ and $\psi(v) = c$ for every $v \in e$. Fix any $u \in e$. By Step~\ref{step: assignment}, $c = \psi(u) = c_u \in L'(u)$, and so $c \notin D(u)$ by the definition of $L'(u)$ in Step~\ref{step: eq}. On the other hand, since $\psi(v) = c$ for every $v \in e$, Step~\ref{step: assignment} also gives $v \in A$ and $c_v = c$ for every $v \in e-u$. By Step~\ref{step: discard}, this implies that $c \in D(u)$, a contradiction.
\end{claimproof}

Next, we verify~\ref{cond: reduction}.

\begin{claim}\label{claim: reduction}
For every $(\tilde{L}, \tilde{\cF})$-compatible partial color assignment $\tilde{\psi}$ for $\tilde{U}$, $\psi \cup \tilde{\psi}$ is $(L,\cF)$-compatible.
\end{claim}

\begin{claimproof}
Write $\phi \coloneqq \psi \cup \tilde{\psi}$. We verify the two conditions in Definition~\ref{def: compatible} for $\phi$.
 
To verify condition~\ref{cond: compatible 1}, fix $v \in \supp(\phi)$. If $v \in \supp(\psi)$, then $\phi(v) = \psi(v) \in L(v)$ by Claim~\ref{claim: psi compatible}. If instead $v \in \supp(\tilde{\psi})$, then $\phi(v) = \tilde{\psi}(v) \in \tilde{L}(v) \subseteq L(v)$ by~\eqref{eq: lists compare}.
 
To verify condition~\ref{cond: compatible 2}, suppose toward a contradiction that there exists $c \in L(U)$ and an edge $e \in E(F_c)$ such that $e \subseteq \supp(\phi)$ and $\phi(u) = c$ for every $u \in e$. Write $e_1 \coloneqq e \cap \supp(\psi)$ and $e_2 \coloneqq e \cap \supp(\tilde{\psi})$, so that $e = e_1 \cup e_2$ and $e_1 \cap e_2 = \varnothing$. We derive a contradiction in each of the three cases $|e_2| = 0$, $|e_2| = 1$, and $|e_2| \ge 2$.
 
Suppose first that $|e_2| = 0$. Then $e \subseteq \supp(\psi)$ and $\psi(u) = c$ for every $u \in e$, contradicting Claim~\ref{claim: psi compatible}, specifically condition~\ref{cond: compatible 2} in Definition~\ref{def: compatible} required for $\psi$ to be $(L, \cF)$-compatible.
 
Suppose next that $|e_2| = 1$, and let $\{w\} = e_2$, so that $e_1 = e - w$. Since $\psi(u) = c$ for every $u \in e_1$, Step~\ref{step: assignment} of Algorithm~\ref{algorithm: nibble step} gives $u \in A$ and $c_u = c$ for every $u \in e - w$. Since $e$ is an edge of $F_c$ containing $w$, the definition of $D(w)$ in Step~\ref{step: discard} gives $c \in D(w)$. It follows from the definition of $L'(w)$ in Step~\ref{step: eq} that $c \notin L'(w)$, and hence $c \notin \tilde{L}(w)$ by~\eqref{eq: lists compare}. This contradicts $\tilde{\psi}(w) = c \in \tilde{L}(w)$.
 
Finally, suppose that $|e_2| \ge 2$. We claim that $e_2 \in E(F'_c)$. Indeed, $e_2 \subseteq \tilde{U}$ by construction, and $e_2 \subseteq e \in E(F_c)$. For every $v \in e_2$, we have $\tilde{\psi}(v) = c \in \tilde{L}(v) \subseteq L'(v)$ by~\eqref{eq: lists compare}, so $c \in \bigcap_{v \in e_2}L'(v)$. Moreover, for every $v \in e \setminus e_2 = e_1$, we have $\psi(v) = c$ by assumption. Hence, $e_2 \in E(F'_c)$ by the definition in Step~\ref{step: hat-hypergraph}. Since $c \in \tilde{L}(v)$ for every $v \in e_2$, Step~\ref{step: final restriction} gives $e_2 \in E(\tilde{F}_c)$. Combined with $\tilde{\psi}(v) = c$ for every $v \in e_2$, this contradicts the $(\tilde{L}, \tilde{\cF})$-compatibility of $\tilde{\psi}$.
\end{claimproof}

Next, we argue that $\bigcup_{c \in \tilde{L}(\tilde{U})} \tilde{F}_c$ is rank-$k$ and uncrowded.
 
\begin{claim}\label{claim: tilde F uncrowded}
The union $\bigcup_{c \in \tilde{L}(\tilde{U})} \tilde{F}_c$ is rank-$k$ and uncrowded.
\end{claim}

\begin{claimproof}
Since $\tilde{F}_c \subseteq F'_c$ by Step~\ref{step: final restriction} for every $c \in \tilde{L}(\tilde{U})$, and edge deletion neither increases the size of any edge nor creates a cycle, it suffices to show that $\bigcup_{c \in \tilde{L}(\tilde{U})} F'_c$ is rank-$k$ and uncrowded.

Every edge of $F'_c$ is a subset of some edge of $F_c$ by Step~\ref{step: hat-hypergraph}, and $F_c \subseteq \bigcup_{c'' \in L(U)}F_{c''}$ is rank-$k$ by condition~\ref{Q1} of property $P(i)$; hence $\bigcup_{c \in \tilde{L}(\tilde{U})} F'_c$ is rank-$k$.

It remains to show that $\bigcup_{c \in \tilde{L}(\tilde{U})} F'_c$ is uncrowded. We first record an observation about the edges of the hypergraphs in $\cF'$. Fix $c \in \tilde{L}(\tilde{U})$ and $g \in E(F'_c)$. By Step~\ref{step: hat-hypergraph}, there exists an edge $e \in E(F_c)$ with $g \subseteq e$ and $\psi(x) = c$ for every $x \in e \setminus g$, which requires that $(e \setminus g) \cap \tilde{U} = \varnothing$, and so $g = e \cap \tilde{U}$.
Moreover, this $e$ is unique. Indeed, if $e, e' \in E(F_c)$ both satisfy the above with respect to the same $g$, then $g \subseteq e \cap e'$, and since $|g| \ge 2$, we have $|e \cap e'| \ge 2$, so $e = e'$ by the linearity of $F_c$, which follows from condition~\ref{Q1} of property $P(i)$ (the hypergraph $\bigcup_{c'' \in L(U)}F_{c''}$ is uncrowded, in particular linear, and $F_c$ is a subhypergraph of it). We write $\pi_c(g) \coloneqq e$ for this unique edge.

Now suppose, for contradiction, that $\bigcup_{c \in \tilde{L}(\tilde{U})}F'_c$ contains a cycle
\[
(v_0, g_0, v_1, g_1, \dots, v_{r-1}, g_{r-1}, v_0)
\]
of length $r$ with $2 \le r \le 4$, where $g_0, \dots, g_{r-1}$ are pairwise distinct edges in the union and $v_0, \dots, v_{r-1}$ are pairwise distinct vertices in $\tilde{U}$ with $v_j \in g_j \cap g_{j+1}$ for each $j$ (indices modulo $r$).

For each $j$, choose a color $c_j \in \tilde{L}(\tilde{U})$ such that $g_j \in E(F'_{c_j})$, and let $e_j \coloneqq \pi_{c_j}(g_j) \in E(F_{c_j}) \subseteq E(\bigcup_{c'' \in L(U)}F_{c''})$. By our earlier observation, $g_j = e_j \cap \tilde{U}$ for each $j$. Hence, if $e_j = e_k$ as subsets of $U$ for some $j \ne k$, then $g_j = e_j \cap \tilde{U} = e_k \cap \tilde{U} = g_k$, contradicting the fact that the $g_j$'s are distinct. It follows that the $e_j$'s are also pairwise distinct.

Moreover, $v_j \in g_j \cap g_{j+1} \subseteq e_j \cap e_{j+1}$ for each $j$. Hence, $(v_0, e_0, v_1, \dots, v_{r-1}, e_{r-1}, v_0)$ is a cycle of length $r$ in $\bigcup_{c'' \in L(U)}F_{c''}$, contradicting the fact that $\bigcup_{c'' \in L(U)}F_{c''}$ is uncrowded by condition~\ref{Q1} of property $P(i)$.
\end{claimproof}

Finally, we verify the latter half of condition~\ref{Q1} from Property $P(i+1)$.

\begin{claim}\label{claim: tilde F cross color}
For every $c, c' \in \tilde{L}(\tilde{U})$ with $c \neq c'$, every $f \in E(\tilde{F}_c)$, and every $f' \in E(\tilde{F}_{c'})$, either $f = f'$ with $|f| = |f'| = k$, or $|f \cap f'| \le 1$.
\end{claim}

\begin{claimproof}
Since $\tilde{F}_c \subseteq F'_c$ and $\tilde{F}_{c'} \subseteq F'_{c'}$ by Step~\ref{step: final restriction}, it suffices to prove the same statement for $f \in E(F'_c)$ and $f' \in E(F'_{c'})$. By Step~\ref{step: hat-hypergraph}, there exist $e \in E(F_c)$ with $f \subseteq e$ and $\psi(x) = c$ for every $x \in e \setminus f$, and $e' \in E(F_{c'})$ with $f' \subseteq e'$ and $\psi(x) = c'$ for every $x \in e' \setminus f'$. We consider two cases.

If $e \ne e'$, then by condition~\ref{Q1} of property $P(i)$ applied to $e$ and $e'$, we have $|e \cap e'| \le 1$, and hence $|f \cap f'| \le |e \cap e'| \le 1$.

If $e = e'$, then by condition~\ref{Q1} of property $P(i)$, we have $|e| = k$. Since $\psi$ is a function and $c \ne c'$, no vertex of $e$ can be assigned both $c$ and $c'$ by $\psi$; that is, $(e \setminus f) \cap (e \setminus f') = \varnothing$. Since $f \subseteq \tilde{U}$ and $e \setminus f' \subseteq \supp(\psi) = U \setminus \tilde{U}$, we have $f \cap (e \setminus f') = \varnothing$; combined with $f \subseteq e$, this gives $f \subseteq f'$. Symmetrically, $f' \subseteq f$, so $f = f'$. Finally, if $f = f' \ne e$, then $e \setminus f$ is a nonempty subset of $e$ on which $\psi$ takes the value $c$ (by definition of $e$) and also the value $c'$ (by definition of $e'$, using $e = e'$), contradicting $c \ne c'$. Hence, $f = f' = e$, and $|f| = |f'| = k$.
\end{claimproof}

This covers all conditions, completing the proof of Lemma~\ref{lemma: deterministic conditions}.
\end{proof}

At this point, we note that conditions~\ref{cond: psi compatible} and~\ref{cond: reduction} as well as condition~\ref{Q1} of property $P(i + 1)$ hold deterministically.
In particular, in light Lemma~\ref{lemma: deterministic conditions}, it suffices to show that $(\tilde{L}, \tilde{\cF})$ satisfies conditions~\ref{Q2} and~\ref{Q3} of property $P(i+1)$ with positive probability.
We will do so by applying the symmetric version of the \hyperref[theorem: sym LLL]{Lov\'asz Local Lemma} (Theorem~\ref{theorem: sym LLL}) to two families of bad events arising from the random choices in Steps~\ref{step: activate}--\ref{step: hat-hypergraph} of Algorithm~\ref{algorithm: nibble step}. We first record the deterministic passage from $(L', \cF')$ to $(\tilde{L}, \tilde{\cF})$.

\begin{obs}\label{obs: bridge}
Suppose that:
\begin{enumerate}[label=\textup{(Q\arabic*')}]
\setcounter{enumi}{1}
\item\label{Q2'} for every $v \in \tilde{U}$, $|L'(v)| \ge q_{i+1}$ ; and
\item\label{Q3'} for every $v \in \tilde{U}$, $c \in L'(v)$, and $2 \le \ell \le k$, $\deg(F'_c, v, \ell) \le \tbinom{k-1}{\ell-1}\left(\alpha\gamma_{i+1}/q_{i+1}\right)^{k-\ell}\beta_{i+1}^{\ell-1} t_{i+1}$.
\end{enumerate}
Then $(\tilde{L}, \tilde{\cF})$ satisfies conditions~\ref{Q2} and~\ref{Q3} of property $P(i+1)$.
\end{obs}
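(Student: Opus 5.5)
The observation is a deterministic consequence of Steps~\ref{step: trim} and~\ref{step: final restriction} of Algorithm~\ref{algorithm: nibble step}. I will verify conditions~\ref{Q2} and~\ref{Q3} of property $P(i+1)$ separately, using only the containments~\eqref{eq: lists compare} and the inclusion $E(\tilde F_c) \subseteq E(F'_c)$.

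\textbf{Condition~\ref{Q2}.} Fix $v \in \tilde U$. Step~\ref{step: trim} removes exactly $\max\{|L'(v)| - q_{i+1}, 0\}$ colors from $L'(v)$, so
\[
|\tilde L(v)| = \min\{|L'(v)|, q_{i+1}\}.
\]
By~\ref{Q2'} we have $|L'(v)| \ge q_{i+1}$, hence $|\tilde L(v)| = q_{i+1}$. Strictly, $q_{i+1}$ may not be an integer. Following the paper's convention on floors and ceilings, I would read it as $\lfloor q_{i+1}\rfloor$ (or assume integrality). This does not affect the argument.

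\textbf{Condition~\ref{Q3}.} Fix $v \in \tilde U$, a color $c \in \tilde L(v)$, and $2 \le \ell \le k$. Since $\tilde L(v) \subseteq L'(v)$ by~\eqref{eq: lists compare}, we have $c \in L'(v)$, so hypothesis~\ref{Q3'} applies to the pair $(v,c)$. Step~\ref{step: final restriction} gives $E(\tilde F_c) \subseteq E(F'_c)$, so the $\ell$-edges of $\tilde F_c$ containing $v$ form a subset of those of $F'_c$. Therefore
\[
\deg(\tilde F_c, v, \ell) \le \deg(F'_c, v, \ell) \le \binom{k-1}{\ell-1}\left(\frac{\alpha\gamma_{i+1}}{q_{i+1}}\right)^{k-\ell}\beta_{i+1}^{\ell-1}t_{i+1},
\]
which is exactly condition~\ref{Q3} at index $i+1$.

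It remains to check that $\tilde\cF$ is a family of restriction hypergraphs with respect to $\tilde L$, so that the statement makes sense. This is immediate from Step~\ref{step: final restriction}, which keeps only edges $e'$ satisfying $c \in \bigcap_{u\in e'}\tilde L(u)$.

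There is no real obstacle here. The only points needing care are that $c \in \tilde L(v)$ implies $c \in L'(v)$, so that~\ref{Q3'} applies, and the integrality convention for $q_{i+1}$.
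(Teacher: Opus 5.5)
Your proposal is correct and follows the paper's proof: \ref{Q2} comes from the trimming in Step~\ref{step: trim} removing exactly $|L'(v)| - q_{i+1}$ colors, and \ref{Q3} from $c \in \tilde{L}(v) \subseteq L'(v)$ together with $E(\tilde{F}_c) \subseteq E(F'_c)$. Your two extra remarks, on the integrality of $q_{i+1}$ and on $\tilde{\cF}$ being a restriction family, are harmless additions the paper leaves implicit.
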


\begin{proof}
For~\ref{Q2}, fix $v \in \tilde{U}$. Since $|L'(v)| \ge q_{i+1}$, the trimming in Step~\ref{step: trim} removes exactly $|L'(v)| - q_{i+1}$ colors, so $|\tilde{L}(v)| = q_{i+1}$. For~\ref{Q3}, fix $v \in \tilde{U}$, $c \in \tilde{L}(v)$, and $2 \le \ell \le k$. By~\eqref{eq: lists compare}, $c \in L'(v)$, and since $\tilde{F}_c$ is obtained from $F'_c$ by deleting edges (Step~\ref{step: final restriction}), $\deg(\tilde{F}_c, v, \ell) \le \deg(F'_c, v, \ell)$, which is bounded as required by the hypothesis.
\end{proof}

In light of Observation~\ref{obs: bridge}, it is enough to show that \ref{Q2'} and~\ref{Q3'} hold with positive probability.
Note, however, that the events in~\ref{Q2'} and~\ref{Q3'} are defined for $v \in \tilde{U}$ and $c \in L'(v)$.
As $\tilde{U}$ and $L'(v)$ are random sets themselves, working directly with the random variables $|L'(v)|$ and $\deg(F'_c, v, \ell)$ for $v \in \tilde{U}$ and $c \in L'(v)$ presents a challenge.
To overcome this, we will define analogous events for all $v \in U$ and $c \in L(v)$ that will recover the desired conditions in~\ref{Q2'} and~\ref{Q3'}.
This approach is standard in such arguments~\cite{AndersonBernshteynDhawan, bradshaw2025toward, Kim95, iliopoulos2021improved}.
First, for each $v \in U$, let $\cA_v$ denote the event that $|L'(v)| < q_{i+1}$.
We aim to describe a similar event for the color $\ell$-degrees.
Note that by the definition of $F'_c$ in Step~\ref{step: hat-hypergraph} of Algorithm~\ref{algorithm: nibble step}, each edge $e' \in E(F'_c, \ell)$ is of the form $e' = f$ for some pair $(e, f)$ with $e \in \bigcup_{j = \ell}^k E(F_c, j)$ and $f \in \binom{e}{\ell}$ satisfying:
\begin{itemize}
\item for each $u \in f$, $u \in \tilde{U}$ and $c \in L'(u)$; and
\item for each $w \in e \setminus f$, $\psi(w) = c$; 
\end{itemize}
requiring that $f \subseteq \tilde{U}$ while $e \setminus f \cap \tilde{U} = \varnothing$, i.e., $e \cap \tilde{U} = f$. Moreover, since $F_c$ is linear (as a result of condition~\ref{Q1} of property $P(i)$) and $|f| = \ell \ge 2$, 
such a pair $(e, f)$ is unique.
Motivated by this observation, for a vertex $v \in U$, a color $c \in L(v)$, and $2 \le \ell \le j \le k$ we introduce the following random variable:
\begin{align}\label{eq: X ell j}
\mathbf{X}_{\ell,j}(v, c) \coloneqq \sum_{e \in E(F_c, j, v)} \sum_{f \in \binom{e-v}{\ell-1}} \I{\left[{\left(\forall u\in f,\, u \in \tilde{U} ~\text{and}~ c \in L'(u)\right)} \wedge {\left(\forall w \in e \setminus (f \cup \{v\}), \, \psi(w) = c\right)}\right]},
\end{align}
and define 
\[
\mathbf{X}_\ell(v,c) \coloneqq \sum_{j = \ell}^k \mathbf{X}_{\ell, j}(v,c).
\]
We note that $\deg(F'_c, v, \ell) = \mathbf{X}_\ell(v,c)$ if $v \in \tilde{U}$ and $c \in L'(v)$.
For each $v \in U$, $c \in L(v)$, and $2 \le \ell \le k$, let $\cE_{v,c,\ell}$ denote the event that
\begin{align}\label{low ell degree}
\mathbf{X}_\ell(v,c) > \binom{k-1}{\ell-1} \left(\frac{\alpha \gamma_{i+1}}{q_{i+1}}\right)^{k-\ell} \beta_{i+1}^{\ell-1}\, t_{i+1}.
\end{align}
The following two lemmas bound the probabilities of the bad events $\cA_v$ and $\cE_{v,c,\ell}$, respectively.

\begin{lemma}\label{lemma: large L'(v)}
For every $v \in U$, $\pr(\cA_v) = \exp\left(-\Omega{\left(d^{\frac{\eta/6}{k-1}}\right)}\right)$.
\end{lemma}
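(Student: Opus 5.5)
We need to bound $\pr(|L'(v)| < q_{i+1})$, where $q_{i+1} = (1-\delta)\keep_i q_i$. By~\ref{Q2} of $P(i)$, $|L(v)| = q_i$. By~\eqref{def: eq(v,c)}, each $c \in L(v)$ lies in $L'(v)$ with probability exactly $\pr(c\notin D(v))\cdot \keep_i/\pr(c\notin D(v)) = \keep_i$, using independence of the coin flip $\eq(v,c)$ from $D(v)$. Hence $\E|L'(v)| = \keep_i q_i$. Since $\keep_i \ge 1-\eta$ by~\ref{keep_i bound}, the event $\cA_v$ is a lower deviation of size $\delta\keep_i q_i = \Theta(\delta q_i)$ from the mean. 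The plan is to prove concentration at this scale. It suffices to bound the number $|L(v)| - |L'(v)|$ of removed colors from above.

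Write $|L(v)\setminus L'(v)| = |D(v)| + Z$, where $Z$ counts colors $c \in L(v)\setminus D(v)$ with $\eq(v,c)=0$. I would handle $|D(v)|$ with Talagrand's Inequality (Theorem~\ref{theorem: Talagrand}), viewing it as a function of the independent trials $(\mathbf{1}[u\in A], c_u)$ for $u \in N(v)$.

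For certifiability, $c \in D(v)$ is witnessed by the trials of the at most $k-1$ vertices of one edge $e-v$, so $|D(v)|$ is $(k-1)$-certifiable.

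For the Lipschitz condition, changing the trial at a single vertex $u$ affects only colors that $u$ helps discard. Before the change, $u$ can help discard only $c_u$; after it, only the new $c_u$. So $|D(v)|$ changes by at most $1$ (or $2$). This step uses that different colors discarded at $v$ rely on different trials of $u$, which holds because $u$ has exactly one selected color.

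Talagrand with $t = \delta q_i/4$ then gives a deviation probability of
\[
\exp\bigl(-\Omega(t^2/(\E|D(v)|+t))\bigr) = \exp\bigl(-\Omega(\delta^2 q_i)\bigr),
\]
since $\E|D(v)| \le (1-\keep_i)q_i \le q_i$ by Lemma~\ref{lemma: eq well defined}. The additive correction terms $20\mu\sqrt{r\E} + 64\mu^2 r = O(\sqrt{q_i})$ are $\ll \delta q_i$, because $q_i \gg d^{\eta/(k-1)}$ by~\ref{q_i bound} while $\delta = d^{-(\eta/6)/(k-1)}$.

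For $Z$, condition on the first-stage outcome. The coin flips are then independent Bernoullis, and the Chernoff Bound (Theorem~\ref{theorem: chernoff}) controls $Z$ above its conditional mean with the same type of bound. The conditional mean is $\sum_{c\notin D(v)}(1-\keep_i/\pr(c\notin D(v)))$; together with the bound on $|D(v)|$, this yields an overall expectation of $(1-\keep_i)q_i$ up to an error $O(\delta q_i)$.

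A cleaner alternative, which I would try first, is to apply Talagrand directly to $|L(v)\setminus L'(v)|$ with all trials together (activations, color choices, and coin flips). Certifying that a color was removed takes either $k-1$ neighbor trials or one coin trial, so the quantity is $(k-1)$-certifiable. It stays $O(1)$-Lipschitz, since a coin trial affects one color and a vertex trial affects at most two colors.

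Combining the bounds, $|L'(v)| \ge \keep_i q_i - \delta q_i/2 \ge (1-\delta)\keep_i q_i$, with failure probability
\[
\exp\bigl(-\Omega(\delta^2 q_i)\bigr).
\]
By~\ref{q_i bound}, $\delta^2 q_i \ge d^{-(\eta/3)/(k-1)}\, d^{\eta/(k-1)} = d^{(2\eta/3)/(k-1)} \ge d^{(\eta/6)/(k-1)}$, giving the claim. The main obstacle is the Lipschitz verification: this is the step I expect to be hardest, since it must exclude a single vertex participating in discarding many colors. Uncrowdedness isn't even needed there, because each vertex selects only one color. So the real care goes into setting up the trials so that the coin flips and the discards fit into one Talagrand application.
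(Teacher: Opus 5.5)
Your direct route, which you say you would try first, is the paper's proof. It applies Talagrand's Inequality to $|L(v)\setminus L'(v)|$, taking the activations, color choices and equalizing coin flips as the trials. The count is $O(1)$-Lipschitz: changing the trial at a vertex $u$ can only remove the old $c_u$ from $D(v)$ and add the new one, and a coin flip affects one color. It is also $O(k)$-certifiable. So the Lipschitz step you single out as the main obstacle is really a one-line observation.

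There is one small difference, in your favor. You take the deviation to be $\delta q_i/4$ and use only $\E|L(v)\setminus L'(v)|=(1-\keep_i)q_i\le q_i$. This gives $\exp(-\Omega(\delta^2 q_i))$ and never needs the lower bound $1-\keep_i=\Omega((\log d)^{-k})$. The paper instead takes deviation $\delta\E/2$, which requires $\E\gg d^{\frac{\eta/2}{k-1}}$.

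You should drop the two-stage decomposition $|D(v)|+Z$, because as written it has a gap. Condition on the first stage and write $w_c=\keep_i/\pr(c\notin D(v))\in[\keep_i,1]$. Then $|D(v)|+\E[Z\mid\cdot\,]=\sum_c(1-w_c)+\sum_c w_c\,\mathbf{1}[c\in D(v)]$.
\begin{itemize}
\item What you must concentrate is therefore a weighted, non-integer sum, not $|D(v)|$.
\item Replacing the weighted sum by your bound on $|D(v)|$ costs $\sum_c(1-w_c)\,\pr(c\in D(v))$ in expectation.
\item This cost can be of order $(1-\keep_i)^2q_i$, for instance when many colors at $v$ have about half the maximal color degree.
\item In the iterations where $1-\keep_i=\Theta(\eta)$, that is $\Theta(\eta^2 q_i)$, far more than the $O(\delta q_i)$ error you can afford.
\end{itemize}
So the sentence ``together with the bound on $|D(v)|$, this yields an overall expectation of $(1-\keep_i)q_i$ up to an error $O(\delta q_i)$'' does not follow. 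The single Talagrand application avoids the issue entirely.
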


\begin{lemma}\label{lemma: each E v c ell}
For every $v \in U$, $c \in L(v)$, and $2 \le \ell \le k$, $\pr\big(\cE_{v,c,\ell}\big) = \exp\left(-\Omega{\left(d^{\frac{\eta/6}{k-1}}\right)}\right)$.
\end{lemma}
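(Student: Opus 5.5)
The plan is to compare $\mathbf{X}_\ell(v,c)$ with its expectation. I would show that the expectation sits a factor of roughly $(1+\delta)^{-(k-1)}$ below the threshold in~\eqref{low ell degree}, and then prove concentration with relative error $o(\delta)$ and failure probability $\exp(-\Omega(d^{\frac{\eta/6}{k-1}}))$. Since $\mathbf{X}_\ell=\sum_{j=\ell}^k\mathbf{X}_{\ell,j}$ and $k$ is fixed, it suffices to handle each $\mathbf{X}_{\ell,j}$ separately and then take a union bound.

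\textbf{Step 1 (expectation).} Fix $e\in E(F_c,j,v)$ and $f\in\binom{e-v}{\ell-1}$. Each $w\in e\setminus(f\cup\{v\})$ must satisfy $\psi(w)=c$, which has probability $\alpha\keep_i/q_i$. Each $u\in f$ must be uncolored and keep $c$. By the equalizing flips, $u$ keeps $c$ with probability exactly $\keep_i$, and $u$ is then uncolored with probability about $1-\alpha\keep_i=\uncolor_i$; so this event has probability about $\keep_i\uncolor_i$. Uncrowdedness and~\ref{Q1} make these events essentially independent across the vertices of $e$. The vertices of $e$ have pairwise disjoint ``other'' neighbourhoods, so the only shared randomness is the choices $c_x, \mathbf 1[x\in A]$ for $x\in e$ themselves. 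Conditioning on these changes each factor by only $1\pm O(1/q_i)$, which is $\ll\delta$ by~\ref{q_i bound}. Hence
\[
\E[\mathbf{X}_{\ell,j}(v,c)]\le(1+o(\delta))\deg(F_c,v,j)\binom{j-1}{\ell-1}(\keep_i\uncolor_i)^{\ell-1}\left(\frac{\alpha\keep_i}{q_i}\right)^{j-\ell}.
\]
Next I insert~\ref{Q3} for $\deg(F_c,v,j)$, use $\binom{k-1}{j-1}\binom{j-1}{\ell-1}=\binom{k-1}{\ell-1}\binom{k-\ell}{j-\ell}$, and sum over $j$ by the binomial theorem. This gives
\[
\E[\mathbf{X}_\ell]\lesssim\binom{k-1}{\ell-1}t_i\left(\frac{\alpha}{q_i}\right)^{k-\ell}(\beta_i\uncolor_i\keep_i)^{\ell-1}(\gamma_i+\beta_i\keep_i)^{k-\ell}.
\]
Finally I use $\gamma_i+\beta_i\keep_i\le\gamma_{i+1}$, $\beta_{i+1}=\beta_i\uncolor_i$, and the recursions~\eqref{def: t_i, q_i}. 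The powers of $\keep_i$ match exactly (exponent $\ell-1$ on both sides), so the expectation is at most $(1+o(\delta))(1-\delta)^{k-\ell}(1+\delta)^{-(k-1)}$ times the right-hand side of~\eqref{low ell degree}. This leaves a multiplicative slack of order $\delta$.

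\textbf{Step 2 (concentration).} The threshold is of order at least $q_i^{\ell-1}\cdot\mathrm{polylog}^{-1}\ge d^{\frac{\eta}{k-1}-o(1)}$ by~\ref{ratio bound},~\ref{beta_i bound} and~\ref{q_i bound}. A deviation of $\delta\E$ with $\delta=d^{-\frac{\eta/6}{k-1}}$ therefore has $\delta^2\E\ge d^{\frac{2\eta/3}{k-1}-o(1)}$. This is enough room for a Talagrand-type bound (Theorem~\ref{theorem: Talagrand}) to give failure probability $\exp(-\Omega(d^{\frac{\eta/6}{k-1}}))$, provided the Lipschitz constant is $O(1)$ or polylogarithmic.

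The difficulty is that $\mathbf{X}_{\ell,j}$ is not Lipschitz. A single activated vertex choosing $c$ can discard $c$ at many vertices of $f$'s across different edges, and equalizing flips and activation enter non-monotonically. Following the strategy described in the overview, I would write $\mathbf{X}_{\ell,j}\le \mathbf{Y}-\sum_m\mathbf{Z}_m$. Here $\mathbf{Y}$ counts pairs $(e,f)$ whose vertices in $e\setminus(f\cup\{v\})$ are activated with $c_w=c$, whose vertices in $f$ are unactivated or chose another color, and whose relevant equalizing coins succeed. Each $\mathbf{Z}_m$ counts those pairs in which some $u\in f$ nevertheless loses $c$, with $m$ indexing which vertex of the pair and which kind of discarding edge is responsible. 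Each of these auxiliary variables is $O(1)$-Lipschitz after restricting to the trials on $e$ and on edges through vertices of $e$; uncrowdedness (no cycles of length $\le 4$) ensures that one trial influences only $O(1)$ pairs. Each is also $O(k)$-certifiable. I would apply Talagrand to $\mathbf{Y}$ from above and to each $\mathbf{Z}_m$ from below, after computing their expectations separately to precision $1\pm o(\delta)$.

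\textbf{Main obstacle.} I expect the hardest part to be this decomposition. One has to choose the auxiliary variables so that simultaneously (a) each is Lipschitz and certifiable, which needs careful use of the cross-color condition in~\ref{Q1} to rule out a single trial affecting two pairs, and (b) $\E[\mathbf{Y}]-\sum_m\E[\mathbf{Z}_m]$ still reproduces the bound from Step~1 up to a factor $1+o(\delta)$. Condition (b) is delicate because the error terms here are polynomially small rather than the usual polylogarithmically small ones. If a Lipschitz bound fails on rare configurations, I would first try adding an exceptional-outcome truncation, exploiting that every color degree is at most $t_i\le d$, so that such configurations have probability $\exp(-d^{\Omega(1)})$.
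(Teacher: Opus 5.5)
\textbf{There is a genuine gap.} Your overall architecture matches the paper's: bound each $\mathbf{X}_{\ell,j}$ by concentrated auxiliary variables, insert condition~\ref{Q3}, use $\binom{k-1}{j-1}\binom{j-1}{\ell-1}=\binom{k-1}{\ell-1}\binom{k-\ell}{j-\ell}$, sum by the binomial theorem, and feed in the recursions. Your Step~1 arithmetic showing an $O(\delta)$ slack is correct. But the step you call the main obstacle is where the proof actually lives, and the decomposition you sketch does not work.

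\textbf{The sketched decomposition loses a fatal factor.} Read literally, your $\mathbf{Y}$ requires each $u\in f$ to be ``unactivated or to have chosen another colour,'' and your $\mathbf{Z}_m$ only subtract pairs in which $c$ is lost. That is not the event $u\in\tilde U$. An activated $u$ with $c_u\neq c$ and $c_u\in L'(u)$ gets coloured, yet it survives in $\mathbf{Y}-\sum_m\mathbf{Z}_m$. The per-vertex factor becomes roughly $\keep_i(1-\alpha/q_i)$ instead of $\keep_i\uncolor_i=\keep_i(1-\alpha\keep_i)$. That costs a factor $\approx 1+\eta(1-\eta)/\log d$ per vertex of $f$, far more than the $O(\delta)$ slack with $\delta=d^{-\frac{\eta/6}{k-1}}$. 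Since the threshold in~\eqref{low ell degree} contains exactly $\beta_{i+1}^{\ell-1}=(\beta_i\uncolor_i)^{\ell-1}$, your requirement (b) fails.

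\textbf{The charitable reading runs into the trial at $v$.} To recover the $\uncolor_i$ factor you must track survival of the random colour $c_u$ at $u$. That depends on discards via the edge $e$ itself, hence on the trial at $v$, which lies in every $e\in E(F_c,j,v)$. So ``restricting to trials on $e$ and on edges through vertices of $e$'' does not give an $O(1)$ Lipschitz constant. The same dependence on $v$ also breaks the cross-vertex independence you use in Step~1. The exceptional-outcome truncation is a hope, not an argument.

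\textbf{The missing idea is the paper's starred modification.}
\begin{itemize}
\item Let $D^*(u)$ be the colours discarded at $u$ via edges \emph{not} containing $v$, define $L^*(u)$ and $U^*$ accordingly, and build $\mathbf{Y}^{(1)}_{\ell,j}$, $\mathbf{Z}^{(1)}_{\ell,j}$ with $U^*$, $L^*$ in place of $\tilde U$, $L'$.
\item These are functions of the trials off $v$ only. They are $O(1)$-Lipschitz by uncrowdedness and $2k^2$-certifiable. Their per-vertex factors are genuinely independent, because the sets $\{x\}\cup\cN^*(x)$ for $x\in e-v$ are pairwise disjoint.
\item Since $U^*\subseteq\tilde U$ and $L'\subseteq L^*$, one has $\mathbf{Z}^{(1)}_{\ell,j}\le\mathbf{Z}_{\ell,j}$.
\item The only pairs counted by $\mathbf{Y}_{\ell,j}$ but not by $\mathbf{Y}^{(1)}_{\ell,j}$ are those where some $u\in f$ lost $c_u$ via $e$ itself. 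By condition~\ref{Q1}, this forces every $x\in e-v$ to be activated with $c_x=c_v$.
\item These pairs go into $\mathbf{Y}^{(2)}_{\ell,j}$. Its mean satisfies $\E[\mathbf{Y}^{(2)}_{\ell,j}]\le (\alpha/q_i)^{\ell-1}\tau_j/\delta^2\le\delta^4\tau_j$. Conditional on $c_v$, it is a constant multiple of a sum of independent Bernoullis (by linearity of $F_c$), so a Chernoff bound handles it.
\item Finally, proving $\E[\cS(u)]\le(1+5/q_i)\keep_i\uncolor_i$ needs a lower bound on $\pr(c_u\in L^*(u)\wedge c\in L^*(u))$. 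This uses the cross-colour clause of condition~\ref{Q1}: edges shared by $F_c$ and $F_{c'}$ are $k$-edges, and discards of $c$ and $c'$ through them are mutually exclusive. Lemma~\ref{lemma: eq well defined} then bounds how many such shared edges there are.
\end{itemize}
Without these ingredients, your Step~2 does not prove the lemma.
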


We defer the proof of Lemma~\ref{lemma: large L'(v)} to Section~\ref{subsection: large L'(v)}.
Most of our effort lies in the proof of Lemma~\ref{lemma: each E v c ell}, which we defer to Section~\ref{section: deg}.
With these results in hand, we are ready to prove Lemma~\ref{lemma: nibble}.

\begin{proof}[Proof of Lemma~\ref{lemma: nibble}]

Let $\mathcal{B}$ be the collection of bad events consisting of $\cA_v$ for each $v \in U$, together with $\cE_{v,c,\ell}$ for each $v \in U$, $c \in L(v)$, and $2 \le \ell \le k$. We will apply the symmetric version of the \hyperref[theorem: sym LLL]{Lov\'asz Local Lemma} (Theorem~\ref{theorem: sym LLL}) to $\mathcal{B}$.

By Lemmas~\ref{lemma: large L'(v)} and~\ref{lemma: each E v c ell}, every event in $\mathcal{B}$ occurs with probability at most $p \coloneqq \exp\left(-\Omega{\left(d^{\frac{\eta/6}{k-1}}\right)}\right)$.

Note that two events in $\mathcal{B}$ are independent if their underlying vertices are at distance at least $5$ from each other in $\bigcup_{c \in L(U)}F_c$.
Furthermore, we have the following for each vertex $u$ and color $c \in L(u)$ as a result of~\ref{Q3}:
\begin{align}
|N_{F_c}(u)| \,\overset{\ref{Q1}}{=}\, \sum_{\ell=2}^k (\ell-1) \deg(F_c, u, \ell) 
&\,\;\le\;\, (k-1) \sum_{\ell=2}^k \deg(F_c, u, \ell) \\
&\overset{\ref{Q3}}{\le} (k-1) \sum_{\ell=2}^k \binom{k-1}{\ell-1} {\left(\frac{\alpha\gamma_i}{q_i}\right)^{k-\ell}} \beta_i^{\ell-1} t_i \\
&\overset{\ref{beta_i bound},\,\ref{q_i bound}}{\le} (k-1) \cdot  t_i \cdot \sum_{\ell=2}^k \binom{k-1}{\ell-1} {\left(\frac{1}{1-\eta}\right)^{k-\ell}}\\
&\overset{\ref{Q2}}{\le} (k-1) \cdot t_i \cdot \left(3^{k-1} - 2^{k-1}\right)
\overset{\eqref{eq: compare t_i t_1, q_i q_1},\,\eqref{def: t_1, q_1}}{=} O(d).
\end{align}
It follows that for each vertex $u$, there are $O((q_id)^4) = O(d^8)$ vertices at distance at most $4$ from $u$ in $\bigcup_{c \in L(U)}F_c$.
In particular, each event in $\cB$ is mutually independent of all but at most $O(q_id^8) = O(d^9)$ other events in $\cB$. 
Setting $d_{\mathrm{LLL}} \coloneqq O(d^9)$, we have 
\[
4 p d_{\mathrm{LLL}} = O(d^9) \cdot \exp\left(-\Omega{\left(d^{\frac{\eta/6}{k-1}}\right)}\right) \ll 1
\]
for sufficiently large $d$.
By Theorem~\ref{theorem: sym LLL}, with positive probability no event in $\mathcal{B}$ occurs; that is, $|L'(v)| \ge q_{i+1}$ for every $v \in \tilde{U}$, and the degree bound in (Q3') holds for every $v \in \tilde{U}$, $c \in L'(v)$, and $2 \le \ell \le k$. By Observation~\ref{obs: bridge}, $(\tilde{L}, \tilde{\cF})$ then satisfies conditions~\ref{Q2} and~\ref{Q3} of property $P(i+1)$. Together with Lemma~\ref{lemma: deterministic conditions}, this verifies conditions~\ref{cond: psi compatible}--\ref{cond: reduction} and all conditions of property $P(i+1)$.
\end{proof}

\subsubsection{Concentration of list sizes: proof of Lemma~\ref{lemma: large L'(v)}}\label{subsection: large L'(v)}

Consider $\hat{L}(v) \coloneqq L(v) \setminus L'(v)$. Note that by Step~\ref{step: eq} of Algorithm~\ref{algorithm: nibble step}, a color $c \in L(v)$ belongs to $L'(v)$ if and only if $c \notin D(v)$ and $\eq(v,c) = 1$, and so
\begin{align}\label{eq: prob c in L'(v)}
\pr(c \in \hat{L}(v)) = 1 - \pr(c \in L'(v)) = 1 - \pr(c \notin D(v))\,\pr(\eq(v,c) = 1) \overset{\eqref{def: eq(v,c)}}{=} 1 - \keep_i.
\end{align}
Since $|L(v)| = q_i$ by condition~\ref{Q2} of property $P(i)$, linearity of expectation gives
\begin{align}\label{eq: E hat L(v)}
\E[|\hat{L}(v)|] = \sum_{c \in L(v)} \pr(c \in \hat{L}(v)) = q_i (1-\keep_i).
\end{align}
Since $1-\keep_i = \Omega{\left((\log d)^{-k}\right)}$ and $q_i \gg d^{\frac{\eta}{k-1}}$ by Lemma~\ref{lemma: properties of beta_i, gamma_i, keep_i, uncolor_i}~\ref{keep_i bound} and~\ref{q_i bound}, respectively, we note that
\begin{align}\label{eq: lb E hat L(v)}
\E[|\hat{L}(v)|] \gg d^{\frac{\eta/2}{k-1}}.
\end{align}

We use \hyperref[theorem: Talagrand]{Talagrand's Inequality} (Theorem~\ref{theorem: Talagrand}) to establish concentration for $|\hat{L}(v)|$, where the random trials are, for all vertices $v \in U$, the inclusions of vertices $v$ in $A$, the color choice $c_v$, and the equalizing coin flips for all colors $c \in L(v)$ at $v$ (all defined in Steps~\ref{step: activate} and~\ref{step: eq} of Algorithm~\ref{algorithm: nibble step}). Before we can apply Talagrand's Inequality, we must show that $|\hat{L}(v)|$ satisfies the Lipschitz and certificate conditions from Definition~\ref{def: lipshcitz and certifiable}.
We do this via a sequence of claims.

\stepcounter{ForClaims}
\renewcommand{\theForClaims}{\ref*{lemma: large L'(v)}}

\begin{claim}\label{claim: Lipschitz |hat L v|}
$|\hat{L}(v)|$ is $1$-Lipschitz.
\end{claim}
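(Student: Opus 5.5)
The plan is to check, trial by trial, how much the outcome of a single trial can move $|\hat{L}(v)|$. Recall that $c \in \hat{L}(v)$ if and only if $c \in D(v)$ or $\eq(v,c) = 0$. So, for each color $c \in L(v)$, membership of $c$ in $\hat{L}(v)$ is a monotone function of two things: the indicator $\I[c \in D(v)]$ and the coin flip $\eq(v,c)$. I will split the trials into three types: equalizing coin flips, activation indicators $\I[u \in A]$, and color choices $c_u$. For each type I will show that a change in that trial alters the membership status of at most one color. The one exception is a pair of colors whose status changes in opposite directions, which still gives a net change in $|\hat{L}(v)|$ of at most $1$.

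First, the coin flips. A flip $\eq(u,c)$ with $u \ne v$ does not appear in the definition of $\hat{L}(v)$ at all. A flip $\eq(v,c)$ affects only whether $c \in \hat{L}(v)$, since $D(v)$ does not depend on any equalizing flip. So changing one flip changes $|\hat{L}(v)|$ by at most $1$.

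Next, the trials attached to a vertex $u$. If $u = v$, neither $v$'s activation nor its choice $c_v$ enters the definition of $D(v)$. That is because, by Step~\ref{step: discard}, $c \in D(v)$ depends only on the vertices of $e - v$ for edges $e \in E(F_c, v)$. Hence these trials do not affect $\hat{L}(v)$. Now suppose $u \ne v$. Whether $u$ participates in discarding some color $c$ at $v$ requires $u \in A$ and $c_u = c$. So toggling the activation of $u$ can change $\I[c \in D(v)]$ only for the single color $c = c_u$. This gives a change of at most $1$.

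Finally, suppose the choice $c_u$ changes from $c$ to $c'$, with the activation of $u$ fixed. Colors other than $c$ and $c'$ are unaffected. The key observation is monotonicity. Every witnessing edge for discarding $c$ at $v$ through $u$ requires $c_u = c$, so after the change $c$ can only leave $D(v)$, never enter it. Symmetrically, $c'$ can only enter $D(v)$. Because the coin flips are held fixed, $c$ can only leave $\hat{L}(v)$ and $c'$ can only enter it. The net change in $|\hat{L}(v)|$ is therefore in $\{-1, 0, 1\}$.

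The only step needing care is this last case. A naive count would give a Lipschitz constant of $2$, and the opposite-direction monotonicity argument is what brings it down to $1$. Note that no linearity assumption is needed: $u$ may lie in several edges through $v$, but in every such edge it can only "vote" for its currently chosen color.
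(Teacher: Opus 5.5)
Your proof is correct and follows the same case analysis over trial types (equalizing flips at $v$ versus elsewhere, activations, color choices) as the paper's. In the color-choice case you are more careful than the paper, whose one-line remark that changing $c_u$ ``can only affect whether $c_u$ is included'' leaves implicit that both the old and the new value of $c_u$ are affected. Your observation that these two colors can only move in opposite directions is exactly what gives the constant $1$ instead of $2$.
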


\begin{claimproof}
Changing the outcome of an equalizing coin flip for a color $c$ at $v$ can change $|\hat{L}(v)|$ by at most $1$, whereas changing the outcome of an equalizing coin flip for a color $c$ at another vertex $u \in U$ has no effect on $|\hat{L}(v)|$. 
Changing either the inclusion of a vertex $u$ in $A$ or the color choice $c_u$ for some vertex $u \in U$ can only affect whether $c_u$ is included in $|\hat L(v)|$.
Hence, changing the outcome of a single trial can change $|\hat{L}(v)|$ by at most $1$, as claimed.
\end{claimproof}

\begin{claim}\label{claim: certifiable |hat L v|}
$|\hat{L}(v)|$ is $2k$-certifiable.
\end{claim}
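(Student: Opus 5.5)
To show that $|\hat{L}(v)|$ is $2k$-certifiable, we fix an integer $s' \ge 1$ with $|\hat{L}(v)| \ge s'$ and choose any $s'$ colors $c_1,\dots,c_{s'} \in \hat{L}(v)$. For each $c_j$ we exhibit a set of at most $2k$ trials whose outcomes alone force $c_j \notin L'(v)$. The union of these $s'$ witness sets then has size at most $2k s'$. It certifies $|\hat{L}(v)| \ge s'$, because the event ``$c_j \notin L'(v)$'' is monotone in the sense that no change to the remaining trials can undo it.

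The witness for each color comes from the definition of $L'(v)$ in Step~\ref{step: eq}: $c \in \hat{L}(v)$ holds exactly when $c \in D(v)$ or $\eq(v,c)=0$. We split into two cases.

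\emph{Case $\eq(v,c) = 0$.} The single trial $\eq(v,c)$ certifies $c \notin L'(v)$, since fixing its outcome to $0$ excludes $c$ regardless of every other trial. So one trial suffices.

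\emph{Case $c \in D(v)$.} By Step~\ref{step: discard}, there is an edge $e \in E(F_c, v)$ such that $u \in A$ and $c_u = c$ for every $u \in e - v$. We take as witness the activation trials and color-choice trials of the vertices in $e - v$, which is $2(|e|-1) \le 2(k-1) < 2k$ trials. Once these are fixed, $c$ is discarded at $v$ via $e$ no matter how the other trials turn out. The key point is that $F_c$, and hence $e$, is a fixed input to Algorithm~\ref{algorithm: nibble step}, so membership of $e$ in $E(F_c,v)$ is not affected by any trial.

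The argument is short and has no real obstacle. The one thing to check is that discarding is \emph{upward closed} in the witnessed trials: whether the other trials cause additional discards, or change the equalizing coin flip $\eq(v,c)$, they cannot put $c$ back into $L'(v)$, because membership in $L'(v)$ requires $c \notin D(v)$.

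Since distinct colors may share witness trials, the union of the witness sets can only be smaller than the sum, so it has at most $2k s'$ trials, as required by Definition~\ref{def: lipshcitz and certifiable}.
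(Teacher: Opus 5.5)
Your proposal is correct and matches the paper's argument. Each color $c \in \hat{L}(v)$ is certified either by its single equalizing coin flip at $v$, or by the at most $2(k-1)$ activation and color-choice trials on $e - v$ for an edge $e \in E(F_c, v)$ via which $c$ is discarded, and the union of these witnesses over any $s'$ such colors has at most $2ks'$ trials (your observation that $F_c$ is a fixed input, so no other trial can undo the witness, is a point the paper leaves implicit).
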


\begin{claimproof}
For a color $c \in L(v)$, the event $\{c \in \hat{L}(v)\}$, equivalently $\{c \notin L'(v)\}$, can be certified either by revealing the outcome of its equalizing coin flip at $v$ or by exhibiting an edge $e \in E(F_c, v)$ for which $e-v \subseteq A$ and $c_x = c$ for all $x \in e-v$, requiring at most $2k$ trials.
\end{claimproof}

With Claims~\ref{claim: Lipschitz |hat L v|} and~\ref{claim: certifiable |hat L v|} in hand, we may apply \hyperref[theorem: Talagrand]{Talagrand's Inequality} (Theorem~\ref{theorem: Talagrand}) with $\mu = 1$ and $r = 2k$ to obtain:
\begin{align*}
& \ \ \pr{\left(|\hat{L}(v)| \ge q_i(1-\keep_i) + \delta q_i/2 \right)} \\
&\overset{\eqref{eq: E hat L(v)}}{\le} \pr{\left(||\hat{L}(v)| - \E[|\hat{L}(v)|]| \ge \delta \E[|\hat{L}(v)|]/2\right)} \\
&\overset{\eqref{eq: lb E hat L(v)}}{\le} \pr{\left(||\hat{L}(v)| - \E[|\hat{L}(v)|]| \ge \delta \E[|\hat{L}(v)|]/4 + 20\sqrt{2k \E[|\hat{L}(v)|]} + 128k\right)} \\
&\; \le ~ 4\exp\left(-\frac{\delta^2 \E[|\hat{L}(v)|]}{256k(1+\delta/4)}\right)\\
&\overset{\eqref{eq: lb E hat L(v)}}{=} \exp\left(-\Omega{\left(d^{\frac{\eta/6}{k-1}}\right)}\right),
\end{align*}
where in the last inequality we used $\delta^2 = d^{-\frac{\eta/3}{k-1}}$ and~\eqref{eq: lb E hat L(v)}. As a result, by the recursive definition of $q_{i+1}$ in~\eqref{def: t_i, q_i}, the definition of $\hat{L}(v)$, and the fact that $|L(v)| = q_i$ by condition~\ref{Q2} of property $P(i)$, we obtain
\begin{align}
\pr{\left(|L'(v)| < q_{i+1} \right)} 
&\overset{\eqref{def: t_i, q_i}}{=} 
\pr{\left(|L'(v)| < (1-\delta) q_i \keep_i \right)} \\
&\ \le\  \pr{\left(q_i - |L'(v)| > q_i - (1-\delta) q_i \keep_i\right)} \\
&\overset{\ref{Q2}}{=} 
\pr{\left(|\hat{L}(v)| \ge q_i(1-\keep_i) + \delta q_i\keep_i \right)}\\
&\overset{\ref{keep_i bound}}{\le} 
\pr{\left(|\hat{L}(v)| \ge q_i(1-\keep_i) + \delta q_i/2 \right)} = \left(-\Omega{\left(d^{\frac{\eta/6}{k-1}}\right)}\right),
\end{align}
where the second-to-last inequality follows from the bound $\keep_i \ge 1-\eta > 1/2$ given by Lemma~\ref{lemma: properties of beta_i, gamma_i, keep_i, uncolor_i}~\ref{keep_i bound}.
This completes the proof of Lemma~\ref{lemma: large L'(v)}.

\section{Concentration of color-degrees: proof of Lemma~\ref{lemma: each E v c ell}}\label{section: deg}

In this section, we prove Lemma~\ref{lemma: each E v c ell}.
Fix $v \in U$, $c \in L(v)$, and $2 \le \ell \le k$.
For $\ell \le j \le k$, recall that
\begin{align*}
\mathbf{X}_{\ell,j}(v,c) = \sum_{e \in E(F_c, j, v)} \sum_{f \in \binom{e-v}{\ell-1}} \I{\left[{\left(\forall u\in f,\, u \in \tilde{U} ~\text{and}~ c \in L'(u)\right)} \wedge {\left(\forall w \in e \setminus (f \cup \{v\}), \, \psi(w) = c\right)}\right]},
\end{align*}
and that $\mathbf{X}_{\ell}(v,c) = \sum_{j = \ell}^k \mathbf{X}_{\ell, j}(v,c)$.
As $v$ and $c$ are fixed for the remainder of this section, we let 
\[
\mathbf{X}_{\ell, j} = \mathbf{X}_{\ell, j}(v,c) \qquad \text{and} \qquad \mathbf{X}_\ell = \mathbf{X}_\ell(v,c).
\]
The standard strategy to proving results such as Lemma~\ref{lemma: each E v c ell} is to first compute $\E[\mathbf{X}_\ell]$ and then show that $\mathbf{X}_\ell$ is concentrated about its mean.
Unfortunately, the random variable $\mathbf{X}_\ell$ is not concentrated (we elaborate further on this below).
Instead, we show that $\mathbf{X}_\ell$ is upper bounded by a different random variable, which is concentrated about its mean.
This technique is fairly standard in graph and hypergraph coloring problems (see, e.g., \cite{Kim95, AndersonBernshteynDhawan, bradshaw2025toward, iliopoulos2021improved}).

As mentioned in Section~\ref{section: proof overview}, a key distinction between our approach and other results that employ similar arguments is that we prove a much tighter concentration result, which in turn leads to significantly less cumbersome recursive computations (see Section~\ref{subsection: properties}).
To achieve this result, we do not compute $\E[\mathbf{X}_\ell]$ directly, but instead work solely with the auxiliary random variables we define.
We now discuss step-by-step how and why we define the desired auxiliary random variables.

Recall from Step~\ref{step: assignment} of Algorithm~\ref{algorithm: nibble step} that $\psi(w) = c$ if $w$ is activated, $c_w = c$, and $c \in L'(w)$. Therefore, 
\begin{align}\label{def: X ell j}
% \textstyle
\mathbf{X}_{\ell, j} = \sum_{e \in E(F_c, j ,v)}\sum_{f \in \binom{e-v}{\ell-1}} {\left(\prod_{u \in f}\I[u \in \tilde{U}]\right)} {\left(\prod_{w \in e \setminus (f \cup \{v\})} \I[w \in A, \, c_w=c]\right)} {\left(\prod_{x \in e-v} \I[c \in L'(x)] \right)}.
\end{align}
Our goal is to eventually apply \hyperref[theorem: Talagrand]{Talagrand's Inequality} (Theorem~\ref{theorem: Talagrand}).
We note that the random variable $\mathbf{X}_{\ell, j}$ requires a large certificate parameter $r$.
Indeed, certifying the event $c \in L'(x)$ requires revealing at least $\deg(F_c, v)$
many outcomes.
To remedy this, we express $\mathbf{X}_{\ell, j}$ as the difference of two random variables $\mathbf{Y}_{\ell, j}$ and $\mathbf{Z}_{\ell, j}$ defined as follows:
\begin{align}\label{def: Y ell j} 
\mathbf{Y}_{\ell, j} 
% \textstyle
\coloneqq \sum_{e \in E(F_c,j,v)} \sum_{f \in \binom{e-v}{\ell-1}}  {\left(\prod_{u \in f} \I[u \in \tilde{U}]\right)} {\left(\prod_{w \in e \setminus (f \cup \{v\})}\I[w \in A, \, c_w=c] \right)}, 
\end{align}
\begin{align}\label{def: Z ell j} 
\mathbf{Z}_{\ell, j} 
% \textstyle
\coloneqq \sum_{e \in E(F_c,j,v)} \sum_{f \in \binom{e-v}{\ell-1}}  {\left(\prod_{u \in f} \I[u \in \tilde{U}]\right)} {\left(\prod_{w \in e \setminus (f \cup \{v\})}\I[w \in A, \, c_w=c] \right)} {\left(1-\prod_{x \in e-v} \I[c \in L'(x)] \right)}.
\end{align}
In other words, we let $\mathbf{Y}_{\ell, j}$ count the number of pairs $(e, f)$ with $e \in E(F_c, j, v)$ and $f \in \binom{e-v}{\ell-1}$ satisfying:
\begin{itemize}
\item for each $u \in f$, $u \in \tilde{U}$; and
\item for each $w \in e \setminus (f \cup \{v\})$, $w \in A$ and $c_w = c$;
\end{itemize}
whereas $\mathbf{Z}_{\ell,j}$ counts the number of all such pairs counted by $\mathbf{Y}_{\ell, j}$ satisfying the extra condition that $c \notin L'(x)$ for some $x \in e-v$.
It turns out that both $\mathbf{Y}_{\ell, j}$ and $\mathbf{Z}_{\ell, j}$ are 
% $\Theta(k)$
$O(1)$-certifiable.
Unfortunately, these random variables have a large Lipschitz parameter.
In particular, the random choices at $v$ can cause large fluctuations in both $\mathbf{Y}_{\ell, j}$ and $\mathbf{Z}_{\ell, j}$.
To overcome this obstacle, we define new random variables which ``ignore'' the effects of edges containing $v$.

For each vertex $u \in U$, let $D^*(u)$ be the set of colors $c^* \in L(u)$ that were discarded at $u$ via an edge not containing $v$, i.e., there exists an edge $e' \in E(F_{c^*}, u) \setminus E(F_{c^*}, v)$ such that every vertex $x \in e'-u$ satisfies $x \in A$ and $c_x = c^*$.
For every vertex $u \in U$, define 
\begin{align}\label{def: L*}
L^*(u) \coloneqq \{c^* \in L(u) \,:\, c^* \notin D^*(u), \ \eq(u,c^*) = 1\}.
\end{align}
We also let 
\begin{align}\label{def: U*}
U^* \coloneqq \{u \in U \,:\, u \notin A ~\text{or}~ c_u \notin L^*(u)\}.
\end{align}
In other words, $U^*$ consists of the uncolored vertices $u$ such that either $u$ is not activated, the color choice $c_u$ loses its equalizing coin flip, or $c_u$ is discarded via an edge not containing $v$.
Observe that
\begin{align}\label{eq: relate D* L* U*}
D^*(u) \subseteq D(u), \qquad L'(u) \subseteq L^*(u), \qquad U^* \subseteq \tilde{U}.
\end{align}
With these definitions and observations in hand, we introduce three auxiliary random variables.
First, we define $\mathbf{Y}_{\ell, j}^{(1)}$ to be the number of pairs $(e, f)$ with $e \in E(F_c, j, v)$ and $f \in \binom{e-v}{\ell-1}$ satisfying:
\begin{itemize}
\item for each $u \in f$, $u \in U^*$ (i.e., $u \notin A$, $c_u \in D^*(u)$, or $\eq(u, c_u)=0$ by~\eqref{def: L*} and~\eqref{def: U*}); and
\item for each $w \in e \setminus (f \cup \{v\})$, $w \in A$ and $c_w = c$.
\end{itemize}
On the other hand, 
consider a pair $(e, f)$ with $e \in E(F_c, j, v)$ and $f \in \binom{e-v}{\ell-1}$ counted by $\mathbf{Y}_{\ell, j}$ but not by $\mathbf{Y}_{\ell, j}^{(1)}$. Then, for some $u \in f$, we have $u \in \tilde{U}$ and $u \notin U^*$, and so $u \in A$
and $c_u \in D(u) \setminus D^*(u)$.
As a result, there exists an edge $e' \in E(F_{c_u}, u) \cap E(F_{c_u}, v)$ such that every $x \in e'- u$ satisfies $x \in A$ and $c_x = c_u$; and moreover, since $u, v \in e \cap e'$, condition~\ref{Q1} of property $P(i)$ implies that $e' = e$.
In particular, it follows that every $x \in e - v$ satisfies $x \in A$ and $c_x = c_v$. Combining with the condition for each $w \in e \setminus (f \cup \{v\})$, such a pair $(e, f)$ counted by $\mathbf{Y}_{\ell, j}$ but not by $\mathbf{Y}_{\ell, j}^{(1)}$ must satisfy that:
\begin{itemize}
\item for each $x \in e-v$, $x \in A$ and $c_x = c_v$; and
\item for each $w \in e \setminus (f \cup \{v\})$, $c_w = c$.
\end{itemize}
Motivated by this, we set
\begin{align}\label{def: Y ell j 1}
\mathbf{Y}_{\ell, j}^{(1)} \coloneqq \sum_{e \in E(F_c, j ,v)}\sum_{f \in \binom{e-v}{\ell-1}} {\left(\prod_{u \in f}\I[u \in U^*]\right)} {\left(\prod_{w \in e \setminus (f \cup \{v\})} \I[w \in A, \, c_w=c]\right)},
\end{align}
\begin{align}\label{def: Y ell j 2}
\mathbf{Y}_{\ell, j}^{(2)} \coloneqq \sum_{e \in E(F_c, j ,v)}\sum_{f \in \binom{e-v}{\ell-1}} {\left(\prod_{x \in e-v} \I[x \in A, \, c_x = c_v] \right)} {\left(\prod_{w \in e \setminus (f \cup \{v\})} \I[c_w=c]\right)},
\end{align}
and note that  
\begin{align}\label{eq: compare Y ell j 1, Y ell j, and Y ell j 1 + Y ell j 2}
\mathbf{Y}_{\ell,j}^{(1)} \le \mathbf{Y}_{\ell,j} \le \mathbf{Y}_{\ell,j}^{(1)} + \mathbf{Y}_{\ell,j}^{(2)},
\end{align}
where the first inequality follows from the fact that $U^* \subseteq \tilde{U}$.
Finally, we define $\mathbf{Z}_{\ell,j}^{(1)}$ to be the number of all pairs counted by $\mathbf{Y}_{\ell, j}^{(1)}$ satisfying the extra condition that $c \notin L^*(x)$ for some $x \in e-v$, that is,
\begin{equation}\label{def: Z ell j 1}
\mathbf{Z}_{\ell, j}^{(1)} \coloneqq \sum_{e \in E(F_c, j ,v)}\sum_{f \in \binom{e-v}{\ell-1}} {\left(\prod_{u \in f}\I[u \in U^*]\right)} {\left(\prod_{w \in e \setminus (f \cup \{v\})} \I[w \in A, \, c_w=c]\right)}{\left(1-\prod_{x \in e-v} \I[c \in L^*(x)] \right)}.
\end{equation}
Then, we note that
\begin{align}\label{eq: compare Z ell j}
\mathbf{Z}_{\ell,j}^{(1)} \le \mathbf{Z}_{\ell,j} \qquad \text{and} \qquad \mathbf{Z}_{\ell,j}^{(1)} \le \mathbf{Y}_{\ell,j}^{(1)}.
\end{align}
Combining the above with~\eqref{eq: compare Y ell j 1, Y ell j, and Y ell j 1 + Y ell j 2}, we have
\begin{align}\label{eq: compare X ell j and Y ell j 1, Y ell j 2, Z ell j 1}
\mathbf{X}_{\ell, j} 
= \mathbf{Y}_{\ell, j} - \mathbf{Z}_{\ell, j} \le \mathbf{Y}^{(1)}_{\ell,j} + \mathbf{Y}^{(2)}_{\ell,j} - \mathbf{Z}^{(1)}_{\ell,j}.
\end{align}
We will show that the random variables $\mathbf{Y}^{(1)}_{\ell,j}$ and $\mathbf{Y}^{(2)}_{\ell,j}$ are concentrated from above, while $\mathbf{Z}^{(1)}_{\ell,j}$ is concentrated from below.
We begin by computing the expectations.

\begin{lemma}\label{lemma: Y 1 Y 2 Z 1 expectations}
The following hold:
\begin{enumerate}[label=\textup{(Exp\arabic*)}, wide]
    \item\label{item: Y 1 Z 1} $\E{\left[\mathbf{Z}^{(1)}_{\ell,j}\right]} \le \E{\left[\mathbf{Y}^{(1)}_{\ell,j}\right]} \le \dbinom{k-1}{\ell-1}\dbinom{k-\ell}{j-\ell}(\alpha/q_i)^{k-\ell}\uncolor_i^{\ell-1}\beta_i^{j-1}\gamma_i^{k-j}t_i$.

    \item\label{item: Y 2} $\E{\left[\mathbf{Y}^{(2)}_{\ell,j}\right]} \le  \dbinom{k-1}{\ell-1} \dbinom{k-\ell}{j-\ell} (\alpha/q_i)^{k-1}\gamma_i^{k-j}\beta_i^{j-1}t_i$.

    \item\label{item: Y 1 - Z 1} $\E{\left[\mathbf{Y}^{(1)}_{\ell, j} - \mathbf{Z}^{(1)}_{\ell, j}\right]} \le \dbinom{k-1}{\ell-1} \dbinom{k-\ell}{j-\ell} (\alpha/q_i)^{k-\ell} \uncolor_i^{\ell-1}[(1+5/q_i) \, \beta_i \, \keep_i]^{j-1} \gamma_i^{k-j} t_i$.
\end{enumerate}
\end{lemma}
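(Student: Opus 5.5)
The plan is to compute each expectation by linearity over pairs $(e,f)$ with $e \in E(F_c,j,v)$ and $f \in \binom{e-v}{\ell-1}$. For each pair we factor the probability, using that the events attached to distinct vertices of $e-v$ become (conditionally) independent once we discard the effect of edges through $v$. Then we sum using~\ref{Q3} for $\deg(F_c,v,j)$, which contributes $\binom{k-1}{j-1}(\alpha\gamma_i/q_i)^{k-j}\beta_i^{j-1}t_i$. The combinatorial identity $\binom{k-1}{j-1}\binom{j-1}{\ell-1}=\binom{k-1}{\ell-1}\binom{k-\ell}{j-\ell}$ turns the count of pairs into the binomial prefactor in the statement. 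The remaining factor $(\alpha/q_i)^{k-\ell}$ arises as $(\alpha/q_i)^{k-j}$ from~\ref{Q3} times $(\alpha/q_i)^{j-\ell}$ from the $j-\ell$ vertices $w\in e\setminus(f\cup\{v\})$, each of which must satisfy $w\in A$ and $c_w=c$ with probability exactly $\alpha/q_i$ by~\ref{Q2}.

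For~\ref{item: Y 1 Z 1}, the inequality $\E[\mathbf{Z}^{(1)}]\le\E[\mathbf{Y}^{(1)}]$ is immediate from~\eqref{eq: compare Z ell j}. For $\mathbf{Y}^{(1)}$ we need $\pr(u\in U^*)\le \uncolor_i$ for each $u\in f$. We will show that the events $\{u \in U^*\}$ for $u\in f$ and $\{w\in A,c_w=c\}$ for $w\in e\setminus(f\cup\{v\})$ are independent, or at least satisfy a product upper bound. The event $u\in U^*$ depends on the trials at $u$ and on the trials of vertices in edges through $u$ avoiding $v$; by uncrowdedness (no $2$-,$3$-cycles) these edges avoid the other vertices of $e$ and are disjoint from those of other $u'\in f$. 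Then $\pr(u\notin U^*)=\alpha\pr(c_u\in L^*(u))\ge \alpha\keep_i$, since $L'(u)\subseteq L^*(u)$ and $\pr(c'\in L'(u))=\keep_i$ for every $c'$. Hence $\pr(u\in U^*)\le 1-\alpha\keep_i=\uncolor_i$. Combining these factors gives the bound, since $\beta_i^{j-1}$ and $\gamma_i^{k-j}$ come from~\ref{Q3}.

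For~\ref{item: Y 2}, the pair requires all $j-1$ vertices of $e-v$ to be activated with a common color $c_v$, and the $j-\ell$ vertices $w$ to additionally have $c_w=c$. If $j>\ell$ this forces $c_v=c$, so the probability is at most $(\alpha/q_i)^{j-1}$ (conditioning on $c_v$). If $j=\ell$ we get $\alpha^{j-1}q_i^{-(j-1)}$ as well, summed over $c_v$ weighted $1/q_i$. Multiplying by the~\ref{Q3} count gives $(\alpha/q_i)^{k-1}$.

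The main obstacle is~\ref{item: Y 1 - Z 1}, which requires a lower bound on $\E[\mathbf{Z}^{(1)}]$, i.e.\ an upper bound on $\E[\mathbf{Y}^{(1)}-\mathbf{Z}^{(1)}]$. This difference counts pairs where additionally $c\in L^*(x)$ for all $x\in e-v$. For $w$ with $c_w=c$ we need $c\in L^*(w)$ jointly, and for $u\in f$ we need $u\in U^*$ and $c\in L^*(u)$. The plan is to bound $\pr(c\in L^*(x)\mid\cdot)$ by $\keep_i/\pr(c\notin D(x))\cdot\pr(c\notin D^*(x))$. Since $D^*$ differs from $D$ only by edges through $v$ (at most one, namely $e$, by linearity), $\pr(c\notin D^*(x))\le \pr(c\notin D(x))+O((\alpha/q_i))$. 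Using Lemma~\ref{lemma: eq well defined} and $\keep_i\ge1-\eta$, this is at most $(1+5/q_i)\keep_i$. For $u\in f$ we then bound $\pr(u\in U^*, c\in L^*(u))\le \uncolor_i(1+5/q_i)\keep_i$ up to a correction absorbed similarly; this correlation (the events $u\in U^*$ and $c\in L^*(u)$ are positively entangled only through $c_u=c$, costing a $1/q_i$ term) is the delicate point. I would handle it by conditioning on $c_u$ and using the FKG-free direct computation that the discard events for distinct colors at $u$ come from disjoint edge sets. Independence across vertices of $e-v$ again follows from uncrowdedness, since the relevant edges around distinct $x\in e-v$ avoiding $v$ are pairwise disjoint (a shared vertex would create a $3$-cycle with $e$). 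Multiplying $j-1$ such factors of $(1+5/q_i)\beta_i\keep_i$, with $\beta_i$ from~\ref{Q3} and $\uncolor_i^{\ell-1}$ from $f$, yields the stated bound.
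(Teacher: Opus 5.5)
Your arguments for (Exp1) and (Exp2) are correct and are essentially the paper's. They rest on the disjointness of the sets $\{x\}\cup\mathcal{N}^*(x)$ for $x\in e-v$, on $\pr(u\in U^*)\le 1-\alpha\keep_i$ via $D^*(u)\subseteq D(u)$, on conditioning on $c_v$, and on the identity $\binom{k-1}{j-1}\binom{j-1}{\ell-1}=\binom{k-1}{\ell-1}\binom{k-\ell}{j-\ell}$. Your skeleton for (Exp3) also matches: you factor over $u\in f$ and $w\in g$, use $\pr(c\in L^*(x))=\keep_i\,\pr(c\notin D^*(x))/\pr(c\notin D(x))$ (only the edge $e$ separates $D^*(x)$ from $D(x)$), and condition on $c_u$.

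The gap is exactly at the step you call delicate. Write $\pr(u\in U^*,\,c\in L^*(u))=\pr(c\in L^*(u))-\alpha\,\pr(c_u\in L^*(u)\wedge c\in L^*(u))$. To bound this you need a \emph{lower} bound on $\pr(c'\notin D^*(u)\wedge c\notin D^*(u))$ for $c'\neq c$. You propose to get it from the claim that discard events for distinct colors at $u$ come from disjoint edge sets, and that claim is false. Condition (Q1) explicitly allows a $k$-edge $h$ to lie in both $F_c$ and $F_{c'}$. For such $h$ with $u\in h$ and $v\notin h$, the events $\mathcal{W}_{h,c}$ and $\mathcal{W}_{h,c'}$ (all of $h-u$ activated with color $c$, resp.\ $c'$) are determined by the same trials and are mutually exclusive. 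Complements of disjoint events satisfy $1-p-p'\le(1-p)(1-p')$, so $\{c\notin D^*(u)\}$ and $\{c'\notin D^*(u)\}$ are \emph{negatively} correlated. That is precisely the direction that breaks the product lower bound you want to assert.

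What is missing is quantitative control of this dependence. Set $\mathcal{E}=E(F_c,u)\setminus E(F_c,v)$ and $\mathcal{E}'=E(F_{c'},u)\setminus E(F_{c'},v)$. Distinct edges through $u$ meet only at $u$, so all dependence comes from $h\in\mathcal{E}\cap\mathcal{E}'$. For each such $h$, with $p=(\alpha/q_i)^{k-1}$, one has $\pr(\overline{\mathcal{W}_{h,c}}\wedge\overline{\mathcal{W}_{h,c'}})/\bigl(\pr(\overline{\mathcal{W}_{h,c}})\pr(\overline{\mathcal{W}_{h,c'}})\bigr)=1-p^2/(1-p)^2\ge 1-4(\alpha/q_i)^{k}$. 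You then need to bound the number of shared edges. They are all $k$-edges of $F_c$ at $u$, so Lemma~\ref{lemma: eq well defined} gives $|\mathcal{E}\cap\mathcal{E}'|\,(\alpha/q_i)^{k-1}\le\deg(F_c,k,u)(\alpha/q_i)^{k-1}\le 1-\keep_i\le\eta$. Hence the joint probability is at least $(1-4\alpha\eta/q_i)\,\pr(c'\notin D^*(u))\,\pr(c\notin D^*(u))$, and this loss fits inside the $5/q_i$ slack.

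You have also placed the error term in the wrong case. The case $c_u=c$ costs nothing: there the conditional probability equals $\pr(c\in L^*(u))\ge\keep_i\,\pr(c\in L^*(u))$, so it only helps the upper bound on $\pr(u\in U^*,\,c\in L^*(u))$. The genuine $O(1/q_i)$ losses in the $u$-factor come from the factor $1/(1-(\alpha/q_i)^{j-1})$ and from the $k$-edges shared across colors.
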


We defer the proof of Lemma~\ref{lemma: Y 1 Y 2 Z 1 expectations} to Section~\ref{subsection: expectation}.
Recall from~\eqref{def: eta, alpha, delta, s} that $\delta = d^{-\frac{\eta/6}{k-1}}$, and set
\begin{align}\label{def: tau}
\tau_j \coloneqq \delta^2 \binom{k-1}{\ell-1}\binom{k-\ell}{j-\ell}(\alpha/q_i)^{k-\ell}\beta_i^{j-1}\gamma_i^{k-j}t_i.
\end{align}
Next, we show that the random variables are concentrated:

\begin{lemma}\label{lemma: Y 1 Y 2 Z 1 concentration}
Suppose that $\deg(F_c, j, v) > 0$. Then the following hold:
\begin{enumerate}[label=\textup{(Conc\arabic*)}, wide]
    \item\label{item: Y 1 conc} $\pr{\left(\mathbf{Y}_{\ell, j}^{(1)} - \E{\left[\mathbf{Y}^{(1)}_{\ell,j}\right]} \ge \tau_j/2\right)} \le \exp\left(-\Omega{\left(d^{\frac{\eta/6}{k-1}}\right)}\right)$.

    \item\label{item: Y 2 conc} $\pr{\left(\mathbf{Y}_{\ell, j}^{(2)} \ge \tau_j/2\right)} \le \exp\left(-\Omega{\left(d^{\frac{\eta/2}{k-1}}\right)}\right)$.

    \item\label{item: Z 1 conc} $\pr{\left(\mathbf{Z}_{\ell, j}^{(1)} - \E{\left[\mathbf{Z}^{(1)}_{\ell,j}\right]} \le -\tau_j\right)} \le \exp\left(-\Omega{\left(d^{\frac{\eta/6}{k-1}}\right)}\right)$.
\end{enumerate}
\end{lemma}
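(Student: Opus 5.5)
For \ref{item: Y 1 conc} and \ref{item: Z 1 conc}, I plan to apply Talagrand's Inequality (Theorem~\ref{theorem: Talagrand}). For \ref{item: Y 2 conc}, I plan a simpler counting/Chernoff argument. The trials are the same as in Lemma~\ref{lemma: large L'(v)}: the activations, the color choices $c_u$, and the equalizing flips.

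The role of the starred objects is to remove the dependence on the trials at $v$. By construction, $D^*(u)$, $L^*(u)$ and $U^*$ ignore every edge through $v$, so the trials at $v$ do not affect $\mathbf{Y}^{(1)}_{\ell,j}$ or $\mathbf{Z}^{(1)}_{\ell,j}$.

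\textbf{Lipschitz bound.} Consider a trial at a vertex $y\neq v$. If $y\in e-v$ for an edge $e\in E(F_c,j,v)$, then linearity (condition~\ref{Q1}) makes $e$ unique, so changing this trial alters only the $\binom{j-1}{\ell-1}=O(1)$ pairs $(e,f)$ with that $e$. If instead $y$ lies at distance two from $v$, the trial can change membership in $U^*$ or in $L^*$ for vertices $u\in N(y)\cap N(v)$. Here I would use the uncrowdedness of $\bigcup_c F_c$, meaning no $2$-, $3$- or $4$-cycles. This should imply that the edge $e'\ni y$ meets at most one edge $e$ through $v$, and in at most one vertex. Indeed, two such intersections would create a short cycle through $v$, and this is where girth $5$ is needed rather than girth $4$. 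Moreover, only the color $c_y$, and the pair $c$, $c_u$, are relevant. So each trial again changes $O(1)$ pairs, and the variables are $\mu=O(1)$-Lipschitz.

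\textbf{Certifiability.} A pair $(e,f)$ counted by $\mathbf{Y}^{(1)}$ is certified by the activations and colors on $e\setminus(f\cup\{v\})$, together with, for each $u\in f$, one of the following: $u\notin A$; the losing flip $\eq(u,c_u)$; or an edge $e'\not\ni v$ witnessing $c_u\in D^*(u)$. This uses at most $O(k^2)$ trials. For $\mathbf{Z}^{(1)}$ we add one witness that $c\notin L^*(x)$, which costs another $O(k)$ trials. Hence $r=O(k^2)$.

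\textbf{Applying Talagrand.} The variables are integer-valued, as required. When the expectation is small compared to $\tau_j$, I would stochastically dominate by a variable with mean of order $\tau_j$, by padding with dummy Bernoulli trials. Using Lemma~\ref{lemma: Y 1 Y 2 Z 1 expectations}, I then need $\tau_j\gg d^{\frac{\eta/2}{k-1}}$ so that the $\sqrt{r\E}$ error term is negligible. With $t=\tau_j/2$ this would give tail $\exp(-\Omega(\tau_j^2/\max\{\E,\tau_j\}))=\exp(-\Omega(\delta^2\cdot\text{mean scale}))$.

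The main obstacle I expect is the scale of $\tau_j$. Lemma~\ref{lemma: properties of beta_i, gamma_i, keep_i, uncolor_i} gives $\beta_i\ge(\log d)^{-2}$ and $\gamma_i\ge1$ for $i\ge2$. Together with $t_i/q_i^{k-1}=\Theta(\log d)$ this gives $\tau_j/\delta^2\gtrsim q_i^{j-1}\cdot\mathrm{polylog}^{-1}\ge q_i\,\mathrm{polylog}^{-1}\gg d^{\frac{\eta}{k-1}}$. The factor $\delta^2=d^{-\frac{\eta/3}{k-1}}$ then leaves the claimed exponent $\frac{\eta/6}{k-1}$ after squaring. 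I have to handle $i=1$ separately, where $\gamma_1=0$ and only $j=k$ is nonzero; there the bound is $\tau_k\approx\delta^2\binom{k-1}{\ell-1}(\alpha/q_1)^{k-\ell}t_1$, which is still polynomial in $d$. In general I would rather bound the mean by the actual degree $\deg(F_c,j,v)$ (which is positive by hypothesis) than rely only on the $\tau_j$ formula.

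\textbf{The bound \ref{item: Y 2 conc}.} $\mathbf{Y}^{(2)}$ is nonzero only if some edge $e\ni v$ has all of $e-v$ activated with color $c_v$. I would condition on $c_v$. The edges through $v$ intersect pairwise only at $v$, so the events for distinct edges are independent. The number of such edges is binomial with mean $\sum_e(\alpha/q_i)^{j-1}\le1-\keep_i\ll1$. Each such edge contributes at most $\binom{j-1}{\ell-1}$ to $\mathbf{Y}^{(2)}$, so reaching $\tau_j/2$ requires $\Omega(\tau_j)$ such edges. Since $\tau_j\gg d^{\frac{\eta/2}{k-1}}$, Chernoff (Theorem~\ref{theorem: chernoff}) bounds this probability by $\exp(-\Omega(\tau_j))$, which is the stated bound.
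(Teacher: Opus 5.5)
Your proposal is correct and follows the paper's proof essentially step for step. For $\mathbf{Y}^{(1)}_{\ell,j}$ and $\mathbf{Z}^{(1)}_{\ell,j}$ you apply Talagrand's Inequality with $O(1)$ Lipschitz constants, which come from uniqueness of the common neighbor of $v$ and a distance-two vertex in the uncrowded union, and with $O(k^2)$-certifiability. You feed in $\E \le \tau_j/\delta^2$ from Lemma~\ref{lemma: Y 1 Y 2 Z 1 expectations} together with $\tau_j=\Omega\bigl(d^{\frac{\eta/2}{k-1}}\bigr)$. For $\mathbf{Y}^{(2)}_{\ell,j}$ you condition on $c_v$ and apply a Chernoff bound to a sum of independent Bernoullis.

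Two small remarks. First, the padding with dummy trials is unnecessary: since $\tau_j\gg\delta^{-2}$, Talagrand's additive term $20\mu\sqrt{r\E}+64\mu^2r$ is already negligible compared with $\tau_j$. Second, in your lower bound for $\tau_j/\delta^2$ the power should be $q_i^{\ell-1}$ rather than $q_i^{j-1}$; this is harmless because $\ell\ge 2$.
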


We defer the proof of Lemma~\ref{lemma: Y 1 Y 2 Z 1 concentration} to Section~\ref{subsection: concentration}.
With Lemmas~\ref{lemma: Y 1 Y 2 Z 1 expectations} and~\ref{lemma: Y 1 Y 2 Z 1 concentration} in hand, we are ready to prove Lemma~\ref{lemma: each E v c ell}.

\begin{proof}[Proof of Lemma~\ref{lemma: each E v c ell}]
If $\deg(F_c, j, v) = 0$, then $\mathbf{X}_{\ell, j} = \mathbf{Y}_{\ell, j}^{(1)} = \mathbf{Y}_{\ell, j}^{(2)} = \mathbf{Z}_{\ell, j}^{(1)} = 0$, and so all the bounds below hold trivially for this value of $j$. 
By Lemma~\ref{lemma: Y 1 Y 2 Z 1 concentration} and a union bound over the remaining values of $j$, with probability at least 
\[
1 - 3k\exp\left(-\Omega{\left(d^{\frac{\eta/6}{k-1}}\right)}\right) = 1 - \exp\left(-\Omega{\left(d^{\frac{\eta/6}{k-1}}\right)}\right),
\] 
the following hold simultaneously for all $j$ with $\ell \le j \le k$:
\begin{align}\label{eq: bad event bound}
\mathbf{Y}^{(1)}_{\ell,j} \le \E{\left[\mathbf{Y}^{(1)}_{\ell,j}\right]} + \tau_j/2, \qquad \mathbf{Y}^{(2)}_{\ell,j} \le \tau_j/2, \qquad \mathbf{Z}^{(1)}_{\ell,j} \ge \E{\left[\mathbf{Z}^{(1)}_{\ell,j}\right]} -\tau_j. 
\end{align}
Together with~\eqref{eq: compare X ell j and Y ell j 1, Y ell j 2, Z ell j 1}, this implies that
\begin{align}
\mathbf{X}_{\ell, j} 
\le \mathbf{Y}^{(1)}_{\ell,j} + \mathbf{Y}^{(2)}_{\ell,j} - \mathbf{Z}^{(1)}_{\ell,j} 
\le \left(\E{\left[\mathbf{Y}^{(1)}_{\ell,j}\right]} + \tau_j/2\right) + \tau_j/2 - \left(\E{\left[\mathbf{Z}^{(1)}_{\ell,j}\right]} -\tau_j\right) 
= \E[\mathbf{Y}_{\ell, j}^{(1)} - \mathbf{Z}_{\ell, j}^{(1)}] + 2\tau_j,
\end{align}
and so by Lemma~\ref{lemma: Y 1 Y 2 Z 1 expectations}~\ref{item: Y 1 - Z 1}, we have 
\begin{align}
\mathbf{X}_{\ell, j} 
\le \binom{k-1}{\ell-1}(\alpha/q_i)^{k-\ell}t_i \binom{k-\ell}{j-\ell} \gamma_i^{k-j} \beta_i^{j-1} {\left(\uncolor_i^{\ell-1} [(1+5/q_i)\,\keep_i]^{j-1} + 2\delta^2\right)},
\end{align}
for all $\ell \le j \le k$ with probability at least 
$1 - \exp\left(-\Omega{\left(d^{\frac{\eta/6}{k-1}}\right)}\right)$. 
As a result,
\begin{align}
\mathbf{X}_\ell = \sum_{j = \ell}^k \mathbf{X}_{\ell, j} 
&\le \binom{k-1}{\ell-1}(\alpha/q_i)^{k-\ell}t_i{\left(\uncolor_i^{\ell-1} S_1 + 2 \delta^2 S_2\right)}, \label{eq: X ell composite}
\end{align}
with probability at least 
$1 - k \exp\left(-\Omega{\left(d^{\frac{\eta/6}{k-1}}\right)}\right)$, where
\begin{align*}
S_1 \coloneqq \sum_{j = \ell}^k\binom{k-\ell}{j-\ell}\gamma_i^{k-j}[(1+5/q_i)\,\beta_i\,\keep_i]^{j-1},  \qquad 
S_2 \coloneqq \sum_{j = \ell}^k\binom{k-\ell}{j-\ell}\gamma_i^{k-j}\beta_i^{j-1}.
\end{align*}
We note that
\begin{align}
S_2 = \beta_i^{\ell-1}(\gamma_i + \beta_i)^{k-\ell} 
\overset{\eqref{def: uncolor_i, beta_i, gamma_i}}{=} \beta_i^{\ell-1}\gamma_{i+1}^{k-\ell},
\end{align}
and, similarly,
\begin{align*}
S_1 &= \sum_{j = \ell}^k\binom{k-\ell}{j-\ell}\gamma_i^{k-j}[(1+5/q_i)\,\beta_i\,\keep_i]^{j-1} \\
&\le (1+5/q_i)^{k - 1}(\beta_i\,\keep_i)^{\ell - 1}\sum_{j = \ell}^k\binom{k-\ell}{j-\ell}\gamma_i^{k-j}\beta_i^{j-\ell} 
\overset{\eqref{def: uncolor_i, beta_i, gamma_i}}{=} (1+5/q_i)^{k - 1}(\beta_i\,\keep_i)^{\ell - 1}\gamma_{i+1}^{k - \ell}.
\end{align*}
By substituting into~\eqref{eq: X ell composite}, we obtain that 
\begin{align*}
\mathbf{X}_\ell &\le \binom{k-1}{\ell-1}(\alpha\gamma_{i+1}/q_i)^{k-\ell}\beta_i^{\ell-1}t_i \left[\uncolor_i^{\ell-1}\keep_i^{\ell-1} (1+5/q_i)^{k-1} + 2\delta^2 \right] \\
&= \binom{k-1}{\ell-1}(\alpha\gamma_{i+1}/(q_i \, \keep_i))^{k-\ell} (\beta_i \, \uncolor_i)^{\ell-1} \, t_i \, \keep_i^{k-1} {\left[(1+5/q_i)^{k-1} + \frac{2\delta^2}{(\uncolor_i\,\keep_i)^{\ell-1}} \right]},
\end{align*}
and so by the definitions of $q_{i+1}$ and  $t_{i+1}$ given by~\eqref{def: t_i, q_i} and the definition of $\beta_{i+1}$ from~\eqref{def: uncolor_i, beta_i, gamma_i}, we get
\begin{align*}
\mathbf{X}_\ell &\le \binom{k-1}{\ell-1}(\alpha\gamma_{i+1}/q_{i+1})^{k-\ell}\beta_{i+1}^{\ell-1} \, t_{i+1} \, \frac{(1-\delta)^{k-\ell}}{(1 + \delta)^{k-1}} {\left[(1+5/q_i)^{k-1} + \frac{2\delta^2}{(\uncolor_i\,\keep_i)^{\ell-1}} \right]}\\
&\le \binom{k-1}{\ell-1}(\alpha\gamma_{i+1}/q_{i+1})^{k-\ell}\beta_{i+1}^{\ell-1} \, t_{i+1} \, \frac{1}{1 + \delta} {\left[(1+5/q_i)^{k-1} + \frac{2\delta^2}{(\uncolor_i\,\keep_i)^{\ell-1}} \right]},
\end{align*}
with probability at least 
$1 - \exp\left(-\Omega{\left(d^{\frac{\eta/6}{k-1}}\right)}\right)$.
It now suffices to show that
\[
(1+5/q_i)^{k-1} + \frac{2\delta^2}{(\uncolor_i\,\keep_i)^{\ell-1}} \le 1 + \delta.
\]
To this end, note that, by Lemma~\ref{lemma: properties of beta_i, gamma_i, keep_i, uncolor_i}~\ref{keep_i bound} and~\ref{uncolor_i bound}, we have  $\uncolor_i\,\keep_i \ge (1-\eta)^2 \ge 1/2$, and thus, the left-hand side above is at most
\[
(1+5/q_i)^{k-1} + 2^k\delta^2.
\]
Additionally, since $1/q_i \ll d^{-\frac{\eta}{k-1}} = \delta^6$ by Lemma~\ref{lemma: properties of beta_i, gamma_i, keep_i, uncolor_i}~\ref{q_i bound} and the definition of $\delta$ in~\eqref{def: eta, alpha, delta, s},
it follows that
\[
(1+5/q_i)^{k-1} + 2^k\delta^2 \le (1+\delta^6)^{k-1} + 2^k\delta^2 < 1 + 2^{k-1} \delta^6 + 2^k\delta^2 \le 1 + \delta,
\]
for $d$ sufficiently large, completing the proof.
\end{proof}

\subsection{Expectation computations: proof of Lemma~\ref{lemma: Y 1 Y 2 Z 1 expectations}}\label{subsection: expectation}
\stepcounter{ForClaims}
\renewcommand{\theForClaims}{\ref*{lemma: Y 1 Y 2 Z 1 expectations}}

We will further split this section into three subsections to handle the proofs of each item in the statement of Lemma~\ref{lemma: Y 1 Y 2 Z 1 expectations}.
Before we do so, we make a key definition and prove a useful auxiliary result.
For each vertex $u$ and color $c' \in L(u)$, let 
\[N^*_{c'}(u) \coloneqq 
\bigcup\{e'-u : e' \in E(F_{c'},u) \setminus E(F_{c'},v)\}
\qquad \text{and} \qquad \cN^*(u) \coloneqq \bigcup_{c' \in L(u)}N_{c'}^*(u).\]

\begin{obs}\label{obs: disjoint N*}
    Let $e \in E(F_c, v)$ and let $x, y \in e-v$ be distinct.
    For any $c' \in L(x)$ and $c'' \in L(y)$, we have $y \notin N^*_{c'}(x)$ and $N^*_{c'}(x) \cap N^*_{c''}(y) = \varnothing$.
\end{obs}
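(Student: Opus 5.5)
Both assertions follow from the uncrowdedness part of condition~\ref{Q1} of property $P(i)$: the union $\cG \coloneqq \bigcup_{c''' \in L(U)} F_{c'''}$ is rank-$k$ and has no $r$-cycles for $2 \le r \le 4$. The plan is to turn each violation into a short cycle in $\cG$ passing through $e$ and, when needed, through $v$. First we fix the setup. We have $e \in E(F_c,v)$ and distinct $x,y \in e - v$. Any $e' \in E(F_{c'},x)\setminus E(F_{c'},v)$ is an edge of $\cG$ containing $x$ but not $v$. In particular $e' \neq e$, since $v \in e$.

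\emph{First claim: $y \notin N^*_{c'}(x)$.} Suppose instead that $y \in e'-x$ for such an edge $e'$. Then $x,y \in e \cap e'$ with $e \neq e'$. So $(x, e, y, e', x)$ is a $2$-cycle in $\cG$, since $x \ne y$ and the edges are distinct. This contradicts linearity, which is part of uncrowdedness.

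\emph{Second claim: $N^*_{c'}(x) \cap N^*_{c''}(y) = \varnothing$.} Suppose $z$ lies in the intersection. Then there are edges $e_1 \ni x,z$ and $e_2 \ni y,z$ of $\cG$, with $v \notin e_1 \cup e_2$, $z \neq x$ and $z \neq y$. We split into cases.

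If $e_1 = e_2$, then this edge contains both $x$ and $y$ but not $v$, so it differs from $e$. It meets $e$ in at least two vertices, which gives a $2$-cycle and a contradiction.

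If $e_1 \neq e_2$, note first that $e_1 \neq e$ and $e_2 \neq e$, because $v \notin e_1 \cup e_2$. Next, $z \notin e$: otherwise $x,z \in e \cap e_1$ would give a $2$-cycle. The vertices $x,y,z$ are therefore pairwise distinct, and $(x, e, y, e_2, z, e_1, x)$ is a $3$-cycle. Its edges $e,e_2,e_1$ are pairwise distinct, and $x \in e_1\cap e$, $y \in e \cap e_2$, $z \in e_2 \cap e_1$. This contradicts uncrowdedness.

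The only delicate point is checking that the edges are pairwise distinct and the vertices pairwise distinct, as the cycle definition requires. This is handled by the facts $v \in e$, $v \notin e_1 \cup e_2$ and $z \notin e$, so no real obstacle is expected. Notably, the argument uses only that $\cG$ has girth at least $4$, not the full girth-$5$ hypothesis.
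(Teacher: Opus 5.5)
Your proof is correct and follows the paper's argument. A $2$-cycle with $e$ rules out $y \in N^*_{c'}(x)$ and the case $e_1 = e_2$, and for $e_1 \neq e_2$ the edges $e, e_1, e_2$ through $x, y, z$ form a forbidden $3$-cycle. You read off $z \neq x, y$ directly from the definition of $N^*$ (the paper invokes the first part instead), and your extra check $z \notin e$ is harmless but not needed.
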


\begin{proof}
    We first check that $y \notin N^*_{c'}(x)$. Suppose otherwise; then there exists an edge $e_1 \in E(F_{c'}, x)$ for which $v \notin e_1$ and $y \in e_1 - x$. Since $v \in e \in E(F_c)$, this implies that $e \neq e_1$ and $x, y \in e_1 \cap e$, contradicting condition~\ref{Q1} of property $P(i)$.
    
    We next check that $N^*_{c'}(x) \cap N^*_{c''}(y) = \varnothing$. Suppose that there is some vertex $z$ in $N^*_{c'}(x) \cap N^*_{c''}(y)$. Then there exist $e_1 \in E(F_{c'}, x)$ and $e_2 \in E(F_{c''}, y)$ for which $v \notin e_1 \cup e_2$ and $z \in (e_1 - x) \cap (e_2 - y)$. 
    If $e_1 = e_2$, then 
    since $v \in e \in E(F_c)$, this implies that $e \neq e_1$ and $x, y \in e_1 \cap e$, contradicting condition~\ref{Q1} of property $P(i)$. 
    If $e_1 \ne e_2$, then 
    $e$, $e_1$, and $e_2$ are three distinct edges.
    Moreover, applying the first part of the observation to $(x, y)$ shows that $z \ne y$, while applying it to $(y, x)$ shows that $z \ne x$.  
    Since $x \in e \cap e_1$, $y \in e \cap e_2$, and $z \in e_1 \cap e_2$, the three edges $e, e_1, e_2$ together with the three distinct vertices $x, y, z$ form a cycle of length $3$ in the union of the hypergraphs in $\cF$, 
    contradicting condition~\ref{Q1} of property $P(i)$.
\end{proof}

\subsubsection{Upper bound for $\E[\mathbf{Y}^{(1)}_{\ell,j}]$}

By linearity of expectation and~\eqref{def: Y ell j 1}, we have
\begin{align}\label{eq: linear expectation Y ell j 1}
\E{\left[\mathbf{Y}^{(1)}_{\ell, j}\right]} = \sum_{e \in E(F_c, j, v)}\sum_{f \in \binom{e - v}{\ell - 1}}  \E{\left[{\left(\prod_{u \in f}\I[u \in U^*]\right)} {\left(\prod_{w \in e \setminus (f \cup \{v\})}\I[w \in A,\, c_w = c]\right)}\right]}.
\end{align}
Note that it suffices to show that the following holds for each $e \in E(F_c, j, v)$ and $f \in \tbinom{e-v}{\ell-1}$:
\begin{align}
\label{eq: prob Y ell j 1}
\E{\left[{\left(\prod_{u \in f}\I[u \in U^*]\right)} {\left(\prod_{w \in e \setminus (f \cup \{v\})}\I[w \in A,\, c_w = c]\right)}\right]}
\le \uncolor_i^{\ell-1}(\alpha/q_i)^{j-\ell}.
\end{align}
Indeed, by substituting~\eqref{eq: prob Y ell j 1} into~\eqref{eq: linear expectation Y ell j 1} and using~\eqref{eq: compare Z ell j} and~\ref{Q3}, we obtain
\begin{align*}
\E{\left[\mathbf{Z}^{(1)}_{\ell, j}\right]} \overset{\eqref{eq: compare Z ell j}}{\le} \E{\left[\mathbf{Y}^{(1)}_{\ell, j}\right]}
&\le \uncolor_i^{\ell-1}(\alpha/q_i)^{j-\ell} \binom{j-1}{\ell-1} 
\deg(F_c, j, v) \\
&\overset{\ref{Q3}}{\le} \uncolor_i^{\ell-1}(\alpha/q_i)^{j-\ell}\binom{j-1}{\ell-1}\binom{k-1}{j-1}(\alpha\gamma_i/q_i)^{k-j}\beta_i^{j-1}t_i\cdot\\
&= \binom{k-1}{\ell-1}\binom{k-\ell}{j-\ell}(\alpha/q_i)^{k-\ell}\uncolor_i^{\ell-1}\beta_i^{j-1}\gamma_i^{k-j}t_i,
\end{align*}
as desired.

Our goal now is to prove~\eqref{eq: prob Y ell j 1} for a fixed edge $e \in E(F_c, j, v)$ and a fixed subset $f \in \binom{e-v}{\ell-1}$. Set $g \coloneqq e \setminus (f \cup \{v\})$ (see Fig.~\ref{fig: edge shrinking}). Our strategy is first to establish that $\{\I[u \in U^*]\,:\,u \in f\} \cup \{\I[w \in A,\, c_w = c]\,:\,w \in g\}$ is a family of mutually independent random variables, and then to compute the expectation of each indicator. We do this via a sequence of claims.

\begin{figure}[htb!]
    \centering

    \begin{tikzpicture}[scale=1.65, every node/.style={scale=1.75}]

    %==========
    % Hyperedge
    %==========
    \draw (-2.25,0) ellipse (2.25 and 0.65);
    \node[above] at (-2.25, 0.65) {\scalebox{0.6}{$e$}};

    \node[below right]    at (-2.25, -0.25) {\scalebox{0.5}{$f{\cup}\{v\}$}};
    \fill[blue!20, opacity=0.4]
         (-1.25,0) ellipse (1.25 and 0.3);
    \draw[thick, dashed, black, rounded corners=0pt]
         (-1.25,0) ellipse (1.25 and 0.3);
    
    % \draw (0.8,1.5) ellipse (2.1 and 0.225);
    \node[right]    at (0, 0) {\scalebox{0.6}{$v$}};
    \node[circle,line width=0.2pt,fill=yellow,draw,
          inner sep=0pt,minimum size=2.5pt] at (0,0) {};

    % \draw[thick, dashed, black, rounded corners=0pt]
    %      (0.65,-0.25) rectangle (2.35,0.25);
    \node[font=\footnotesize] at (-1, 0) {$\cdots$};
    \node[circle,line width=0.2pt,fill=white,draw,
          inner sep=0pt,minimum size=2.5pt] at (-1.5,0) {};
    \node[circle,line width=0.2pt,fill=white,draw,
          inner sep=0pt,minimum size=2.5pt] at (-0.5,0) {};
    \node[circle,line width=0.2pt,fill=white,draw,
          inner sep=0pt,minimum size=2.5pt] at (-2,0) {};

    \fill[green!20, rounded corners=10pt, opacity=0.8]
         (-2.65, -0.2) rectangle (-4.35,0.2);
    \node[below right]    at (-3.75, -0.15) {\scalebox{0.5}{$g$}};
    % \draw[thick, dashed, green!55!black, rounded corners=10pt]
    %      (-2.65,-0.2) rectangle (-4.35,0.2);
    \node[circle,line width=0.2pt,fill=green!60!black,draw,
          inner sep=0pt,minimum size=2.5pt] at (-3,0) {};
    \node[font=\footnotesize]    at (-3.5, 0) {$\cdots$};
    \node[circle,line width=0.2pt,fill=green!60!black,draw,
          inner sep=0pt,minimum size=2.5pt] at (-4,0) {};

\end{tikzpicture}
    \caption{An edge $e \in E(F_c, j, v)$, a subset $f \in \binom{e - v}{\ell - 1}$, and the remaining set $g \coloneqq e \setminus (f \cup \{v\})$.} 
    \label{fig: edge shrinking}
\end{figure}

\begin{claim}\label{claim: mutual independence Y ell j 1}
    $\{\I[u \in U^*] : u \in f\} \cup \{\I[w \in A,\, c_w = c] : w \in g\}$ is a family of mutually independent random variables.
\end{claim}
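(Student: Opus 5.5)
The plan is to identify, for each indicator in the family, the set of independent trials it depends on, and then show these sets are pairwise disjoint. Independence of functions of disjoint sets of independent trials then gives mutual independence. The trials are the activation indicators $\I[u\in A]$, the color choices $c_u$, and the equalizing coin flips $\eq(u,c')$.

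\textbf{The indicators for $w \in g$.} Each $\I[w \in A,\, c_w = c]$ depends only on the two trials at $w$, namely $\I[w\in A]$ and $c_w$.

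\textbf{The indicators for $u \in f$.} By~\eqref{def: L*} and~\eqref{def: U*}, $u \in U^*$ holds exactly when one of the following occurs:
\begin{itemize}
\item $u \notin A$;
\item $\eq(u, c_u) = 0$;
\item $c_u \in D^*(u)$.
\end{itemize}
The event $c_u \in D^*(u)$ is determined by the activation and color trials at vertices of $N^*_{c_u}(u)$, since it asks for an edge $e' \in E(F_{c_u},u)\setminus E(F_{c_u},v)$ with $e'-u$ fully activated and choosing $c_u$. Hence $\I[u\in U^*]$ is a function of three groups of trials:
\begin{itemize}
\item the trials at $u$ itself (activation, color, and the coin flips $\eq(u,\cdot)$);
\item the activation and color trials at vertices of $\cN^*(u)$;
\item nothing else.
\end{itemize}
To be safe, I will take the union over all $c' \in L(u)$, i.e.\ use $\cN^*(u)$ rather than $N^*_{c_u}(u)$, so that the dependency set does not itself depend on the random choice $c_u$.

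\textbf{Disjointness.} Now consider distinct $x, y \in e - v$, with any assignment of roles in $f$ or $g$. The dependency sets are $\{x\}\cup\cN^*(x)$ when $x\in f$ and $\{x\}$ when $x\in g$ (and similarly for $y$). It suffices to check three facts:
\begin{itemize}
\item $x \ne y$, which is trivial;
\item $y \notin \cN^*(x)$ and $x \notin \cN^*(y)$;
\item $\cN^*(x)\cap\cN^*(y)=\varnothing$.
\end{itemize}
The last two follow from Observation~\ref{obs: disjoint N*}, applied with every $c' \in L(x)$ and $c'' \in L(y)$ and then taking unions. I also need $v$ to be irrelevant. None of the indicators involves trials at $v$: edges through $v$ are excluded from $N^*$, and $v \notin f\cup g$. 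So no issue arises there.

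The dependency sets are therefore pairwise disjoint, and the family is mutually independent.

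\textbf{Main subtlety.} The only delicate step is making the dependency set of $\I[u \in U^*]$ deterministic, because the relevant color $c_u$ is itself random. Taking the union $\cN^*(u)$ over all colors handles this. Observation~\ref{obs: disjoint N*} is stated for arbitrary $c',c''$, so it covers the union.
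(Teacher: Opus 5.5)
Your proposal is correct and matches the paper's proof essentially step for step: you take $\{u\}\cup\cN^*(u)$ for $u\in f$ and $\{w\}$ for $w\in g$ as the dependency sets of trials, and you derive their pairwise disjointness from Observation~\ref{obs: disjoint N*}. Your remark about using the union over all $c'\in L(u)$, so that the dependency set does not depend on the random $c_u$, is the same point the paper makes when it says the observation applies to every choice of colors in the unions defining $\cN^*(u)$.
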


\begin{claimproof}
    We identify precisely which underlying random trials determine each of the random variables. The random trials in Algorithm~\ref{algorithm: nibble step} at a vertex $y \in U$ consist of the inclusion of $y$ in $A$, the color choice $c_y$, and the collection of equalizing coin flips $\{\eq(y, c')\,:\,c' \in L(y)\}$. All trials at distinct vertices are independent, and the equalizing coin flips are mutually independent of each other and of all other trials.
    
    Each $\I[u \in U^*]$ is a function of the trials at the vertices in $\{u\} \cup \cN^*(u)$, while each $\I[w \in A,\, c_w = c]$ is a function of the trials at $w$. 
    Since the equalizing coin flips are independent of each other and of all activations and color choices, it suffices to check that the vertex sets $\{u\} \cup \cN^*(u)$ for $u \in f$ and $\{w\}$ for $w \in g$ are pairwise disjoint.
    This follows from Observation~\ref{obs: disjoint N*}, which applies to every choice of colors in the unions defining $\cN^*(u)$.

    Combining these observations with the mutual independence of the equalizing coin flips, we conclude that $\{\I[u \in U^*]\,:\,u \in f\} \cup \{\I[w \in A,\, c_w = c]\,:\,w \in g\}$ is a family of mutually independent random variables.
\end{claimproof}

By Claim~\ref{claim: mutual independence Y ell j 1}, we have
\begin{equation}\label{eq: factoring each term E Y ell j 1}
\E{\left[{\left(\prod_{u \in f}\I[u \in U^*]\right)} {\left(\prod_{w \in e \setminus (f \cup \{v\})}\I[w \in A,\, c_w = c]\right)}\right]} = \prod_{u \in f}\pr(u \in U^*)\prod_{w \in g}(\pr(w \in A)\,\pr(c_w = c)).
\end{equation}
For each $u \in f$, since $D^*(u) \subseteq D(u)$, we have $\pr(c' \notin D^*(u)) \ge \pr(c' \notin D(u))$ for every color $c'$. Using~\eqref{def: eq(v,c)}, we get
\[
\pr(u \in U^*) = 1 - \alpha\sum_{c' \in L(u)}\pr(c_u = c',\,c' \in L^*(u)) \le 1 - \alpha\keep_i = \uncolor_i.
\]
On the other hand, for each $w \in g$, since $\pr(w \in A) = \alpha$ by Step~\ref{step: activate} in Algorithm~\ref{algorithm: nibble step} and $c_w$ is chosen uniformly at random from $L(w)$, we have $\pr(w \in A)\,\pr(c_w = c) = \alpha/q_i$, given that $|L(w)| = q_i$ by condition~\ref{Q2} of property $P(i)$.
Substituting these into~\eqref{eq: factoring each term E Y ell j 1} with $|f| = \ell - 1$ and $|g| = j-\ell$ gives~\eqref{eq: prob Y ell j 1}, completing the proof of Lemma~\ref{lemma: Y 1 Y 2 Z 1 expectations}~\ref{item: Y 1 Z 1}.

\subsubsection{Upper bound for $\E[\mathbf{Y}^{(2)}_{\ell,j}]$}

By linearity of expectation and~\eqref{def: Y ell j 2}, we have
\begin{align}\label{eq: linear expectation Y ell j 2}
\E{\left[\mathbf{Y}^{(2)}_{\ell, j}\right]} = \sum_{e \in E(F_c,j,v)}\sum_{f \in \binom{e-v}{\ell-1}} \E{\left[{\left(\prod_{x \in e-v}\I[x \in A,\, c_x = c_v]\right)} {\left(\prod_{w \in e \setminus (f \cup \{v\})}\I[c_w = c]\right)}\right]}.
\end{align}
It suffices to show that the following holds for each $e \in E(F_c, j, v)$ and $f \in \tbinom{e-v}{\ell-1}$:
\begin{align}
\label{eq: prob Y ell j 2}
\E{\left[{\left(\prod_{x \in e-v}\I[x \in A,\, c_x = c_v]\right)} {\left(\prod_{w \in e \setminus (f \cup \{v\})}\I[c_w = c]\right)}\right]}
\le (\alpha/q_i)^{j-1}.
\end{align}
Indeed, by substituting~\eqref{eq: prob Y ell j 2} into~\eqref{eq: linear expectation Y ell j 2} and applying~\ref{Q3}, we obtain
\begin{align*}
\E{\left[\mathbf{Y}^{(2)}_{\ell, j}\right]}
&\le (\alpha/q_i)^{j-1} \binom{j-1}{\ell-1} 
\deg(F_c, j, v) \\
&\overset{\ref{Q3}}{\le} (\alpha/q_i)^{j-1} \binom{j-1}{\ell-1}\binom{k-1}{j-1}(\alpha\gamma_i/q_i)^{k-j}\beta_i^{j-1}t_i\cdot\\
&= \binom{k-1}{\ell-1}\binom{k-\ell}{j-\ell}(\alpha/q_i)^{k-1} \gamma_i^{k-j} \beta_i^{j-1}t_i,
\end{align*}
as desired.

Our goal now is to prove~\eqref{eq: prob Y ell j 2} for a fixed edge $e \in E(F_c, j, v)$ and a fixed subset $f \in \binom{e-v}{\ell-1}$. Set $g \coloneqq e \setminus (f \cup \{v\})$ (see Fig.~\ref{fig: edge shrinking}).
The trial at $v$ (specifically, $c_v$) is independent of the trials at the vertices in $e-v$, which are pairwise independent. 

Consider first the case $j > \ell$, so $g \ne \varnothing$. Fix any $w \in g$; then $w \in e-v$, so the first product requires $c_w = c_v$ while the second product requires $c_w = c$, and these two conditions hold together only when $c_v = c$. Since the trials at the vertices in $e - v$ are independent of $c_v$ and pairwise independent of each other, conditional on $c_v = c$, we have
\[
\E{\left[{\left(\prod_{x \in e-v}\I[x \in A,\, c_x = c_v]\right)} {\left(\prod_{w \in g}\I[c_w = c]\right)} \,\bigg|\, c_v = c\right]}
= \prod_{x \in e-v} \pr(x \in A) \, \pr(c_x = c) = (\alpha/q_i)^{j-1}.
\]
Averaging over $c_v$ uniform in $L(v)$ and noting that the product of indicators vanishes when $c_v \neq c$,
\begin{align}
\E{\left[{\left(\prod_{x \in e-v}\I[x \in A,\, c_x = c_v]\right)} {\left(\prod_{w \in g}\I[c_w = c]\right)}\right]} 
= \pr(c_v = c) \cdot(\alpha/q_i)^{j-1} \le (\alpha/q_i)^{j-1}.
\end{align}

When $j = \ell$, we have $g = \varnothing$ and the second product is empty. Since the trials at the vertices in $e - v$ are independent of $c_v$ and pairwise independent of each other, averaging over $c_v$ uniform in $L(v)$, we have
\begin{align}
\E{\left[\prod_{x \in e-v}\I[x \in A,\, c_x = c_v]\right]} 
&= \sum_{c'' \in L(v)} \pr(c_v=c'') \, \prod_{x \in e-v} \pr(x \in A) \, \pr(c_x = c'') \\
&\le \frac{1}{|L(v)|} \sum_{c'' \in L(v)} (\alpha/q_i)^{j-1} 
= (\alpha/q_i)^{j-1}.
\end{align}
As a result, we get~\eqref{eq: prob Y ell j 2} for all cases, concluding the proof of Lemma~\ref{lemma: Y 1 Y 2 Z 1 expectations}~\ref{item: Y 2}.

\subsubsection{Upper bound for $\E[\mathbf{Y}^{(1)}_{\ell,j} - \mathbf{Z}^{(1)}_{\ell, j}]$}

For each $u, w \in U$, both distinct from $v$, define
\[
\cS(u) \coloneqq \I[u \in U^*] \cdot \I[c \in L^*(u)], \qquad \cR(w) \coloneqq \I[w \in A, \, c_w=c] \cdot \I[c \in L^*(w)].
\]
By linearity of expectation and~\eqref{def: Y ell j 1} and~\eqref{def: Z ell j 1}, we have
\begin{align}\label{eq: linear expectation Y ell j 1 - Z ell j 1}
\E{\left[\mathbf{Y}^{(1)}_{\ell, j} - \mathbf{Z}^{(1)}_{\ell, j}\right]} 
= \sum_{e \in E(F_c,j,v)}\sum_{f \in \binom{e-v}{\ell-1}}
\E{\left[{\left(\prod_{u \in f}\cS(u)\right)} {\left(\prod_{w \in e\setminus (f\cup \{v\})}\cR(w)\right)}\right]}.
\end{align}
Note that it suffices to show that the following holds for each $e \in E(F_c, j, v)$ and $f \in \tbinom{e-v}{\ell-1}$:
\begin{align}\label{eq: prob Y ell j 1 - Z ell j 1}
\E{\left[{\left(\prod_{u \in f}\cS(u)\right)} {\left(\prod_{w \in e\setminus (f\cup \{v\})}\cR(w)\right)}\right]} \le \uncolor_i^{\ell-1} [(1+5/q_i)\,\keep_i]^{j-1}(\alpha/q_i)^{j-\ell}.
\end{align}
Indeed, by substituting~\eqref{eq: prob Y ell j 1 - Z ell j 1} into~\eqref{eq: linear expectation Y ell j 1 - Z ell j 1} and applying~\ref{Q3}, we obtain
\begin{align*}
\E{\left[\mathbf{Y}^{(1)}_{\ell, j} - \mathbf{Z}^{(1)}_{\ell, j}\right]}
&\le \uncolor_i^{\ell-1} [(1+5/q_i)\,\keep_i]^{j-1} (\alpha/q_i)^{j-\ell} \binom{j-1}{\ell-1} \deg(F_c,j,v) \\
&\overset{\ref{Q3}}{\le} \uncolor_i^{\ell-1} [(1+5/q_i)\,\keep_i]^{j-1} (\alpha/q_i)^{j-\ell} \binom{j-1}{\ell-1} \binom{k-1}{j-1}(\alpha\gamma_i/q_i)^{k-j}\beta_i^{j-1}t_i \\
&= \dbinom{k-1}{\ell-1} \dbinom{k-\ell}{j-\ell} (\alpha/q_i)^{k-\ell} \uncolor_i^{\ell-1}[(1+5/q_i) \, \beta_i \, \keep_i]^{j-1} \gamma_i^{k-j} t_i,
\end{align*}
as desired.

Our goal now is to prove~\eqref{eq: prob Y ell j 1 - Z ell j 1} for a fixed edge $e \in E(F_c, j, v)$ and a fixed subset $f \in \binom{e-v}{\ell-1}$. Set $g \coloneqq e \setminus (f \cup \{v\})$  (see Fig.~\ref{fig: edge shrinking}).
Our strategy is first to establish that $\{\cS(u)\,:\,u \in f\} \cup \{\cR(w)\,:\,w \in g\}$
is a family of mutually independent random variables, and then to compute $\E[\cS(u)]$ and $\E[\cR(w)]$ for each $u \in f$ and $w \in g$. We do this via a sequence of claims.

\begin{claim}\label{claim: mutual independence}
    $\{\cS(u)\,:\,u \in f\} \cup \{\cR(w)\,:\,w \in g\}$
    is a family of mutually independent random variables.
\end{claim}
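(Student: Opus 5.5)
We follow the template of Claim~\ref{claim: mutual independence Y ell j 1}. First we identify the underlying random trials that determine each variable, and then we show that the corresponding trial sets are pairwise disjoint. The trials at a vertex $y \in U$ are the activation indicator $\I[y \in A]$, the color choice $c_y$, and the equalizing coin flips $\{\eq(y,c')\,:\,c' \in L(y)\}$. All of these are mutually independent across vertices, and the coin flips are independent of everything else. So it suffices to exhibit, for each $u \in f$ and $w \in g$, a set of trials determining $\cS(u)$ (resp.\ $\cR(w)$), and to check that these sets are pairwise disjoint.

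For $u \in f$, we note that $\I[u \in U^*]$ depends only on $\I[u\in A]$, $c_u$, the coin flip $\eq(u,c_u)$, and whether $c_u \in D^*(u)$. Similarly, $\I[c \in L^*(u)]$ depends on $\eq(u,c)$ and on whether $c \in D^*(u)$. By definition of $D^*(u)$, membership of any color $c^*$ in $D^*(u)$ is determined by the activations and color choices at vertices of $N^*_{c^*}(u) \subseteq \cN^*(u)$. Hence $\cS(u)$ is a function of the trials at $u$, the coin flips at $u$, and the activation/color trials at $\cN^*(u)$. Likewise, $\cR(w)$ for $w \in g$ is a function of $\I[w\in A]$, $c_w$, $\eq(w,c)$, and the activation/color trials at $N^*_c(w) \subseteq \cN^*(w)$. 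Here $v$ never appears among these vertices, since by construction $N^*_{c'}(x)$ only uses edges avoiding $v$; this is exactly why the starred quantities were introduced.

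The key step is disjointness. The sets $\{x\} \cup \cN^*(x)$ for $x \in e - v$ (with $f \cup g = e - v$) must be pairwise disjoint. This is precisely Observation~\ref{obs: disjoint N*}. For distinct $x,y \in e-v$ and any colors $c' \in L(x)$, $c'' \in L(y)$, it gives $y \notin N^*_{c'}(x)$ and $N^*_{c'}(x)\cap N^*_{c''}(y)=\varnothing$. Taking unions over colors yields $y \notin \cN^*(x)$, $x \notin \cN^*(y)$, and $\cN^*(x)\cap\cN^*(y)=\varnothing$. The equalizing coin flips involved are indexed by distinct vertices $x$, hence also disjoint.

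Therefore the family $\{\cS(u)\}_{u\in f}\cup\{\cR(w)\}_{w\in g}$ consists of functions of pairwise disjoint collections of independent trials, and so it is mutually independent. The only delicate point, and the one we would check carefully, is that $\cR(w)$, unlike $\I[w\in A,\,c_w=c]$ in the earlier claim, now also depends on $\cN^*(w)$ through $L^*(w)$. This is handled by the symmetric application of Observation~\ref{obs: disjoint N*}, which already covers all pairs of vertices of $e-v$ regardless of whether they lie in $f$ or $g$.
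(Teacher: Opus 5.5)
Your proposal is correct and follows essentially the same argument as the paper. Both proofs show that $\cS(u)$ is determined by trials at $\{u\}\cup\cN^*(u)$ and $\cR(w)$ by trials at $\{w\}\cup N^*_c(w)\subseteq\{w\}\cup\cN^*(w)$, and then apply Observation~\ref{obs: disjoint N*}, taking unions over colors, to get that these vertex sets are pairwise disjoint for $x\in f\cup g=e-v$. Your note that each variable uses only the equalizing coin flips at its own vertex is a slightly finer accounting than the paper gives, but the argument is unchanged.
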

 
\begin{claimproof}
    We identify precisely which underlying random trials determine each of the random variables. The random trials in Algorithm~\ref{algorithm: nibble step} at a vertex $y \in U$ consist of the inclusion of $y$ in $A$, the color choice $c_y$, and the collection of equalizing coin flips $\{\eq(y, c')\,:\,c' \in L(y)\}$. All trials at distinct vertices are independent, and the equalizing coin flips are mutually independent of each other and of all other trials.

    Each $\cS(u)$ is a function of the trials at the vertices in $\{u\} \cup \cN^*(u)$,
    while each $\cR(w)$ is a function of the trials at the vertices in $\{w\} \cup N_c^*(w) \subseteq \{w\} \cup \cN^*(w)$.
    Since the equalizing coin flips are independent of each other and of all activations and color choices, it suffices to check that 
    \[
    \{\{x\} \cup \cN^*(x) \,:\, x \in f\cup g\}
    \]
    is a collection of pairwise disjoint sets.
    However, this is an immediate consequence of Observation~\ref{obs: disjoint N*}, applied to every pair of colors appearing in the unions defining $\cN^*(x)$.

    Combining these observations with the mutual independence of the equalizing coin flips, we conclude that $\{\cS(u)\,:\,u \in f\} \cup \{\cR(w)\,:\,w \in g\}$ is a family of mutually independent random variables.
\end{claimproof}

We set $p_e \coloneqq (\alpha/q_i)^{j-1}$. Since $q_i \gg d^{\frac{\eta}{k-1}}$ by Lemma~\ref{lemma: properties of beta_i, gamma_i, keep_i, uncolor_i}~\ref{q_i bound} and $d$ is sufficiently large, we have 
\begin{align}\label{eq: alpha/q_i bound}
\frac{\alpha}{q_i} \le \frac{1}{q_i} \ll d^{-\frac{\eta}{k-1}} \ll 1,
\end{align}
and so
\begin{align}\label{eq: p_e bound}
\frac{1}{1 - p_e} = 1 + \frac{p_e}{1 - p_e} \le 1 + \frac{(\alpha/q_i)^{j-1}}{1-1/2} \le 1+\frac{2}{q_i}.
\end{align}
With this in hand, we compute upper bounds for $\E[\cR(w)]$ for $w \in g$ and $\E[\cS(u)]$ for $u \in f$. 

\begin{claim}\label{claim: E R w}
Fix $w \in g$. Then, $\E[\cR(w)] \le (1 + 5/q_i) (\alpha \keep_i /q_i)$.
\end{claim}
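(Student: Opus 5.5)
Since $\cR(w) = \I[w \in A]\,\I[c_w = c]\,\I[c \in L^*(w)]$, and the event $\{c \in L^*(w)\}$ depends only on $\eq(w,c)$ and on the trials at vertices of $N^*_c(w)$ (which excludes $w$), the three factors are independent. This gives
\[
\E[\cR(w)] = \frac{\alpha}{q_i}\,\pr(c \in L^*(w)) = \frac{\alpha}{q_i}\,\pr(c\notin D^*(w))\cdot\frac{\keep_i}{\pr(c\notin D(w))},
\]
using~\eqref{def: eq(v,c)} and the independence of the equalizing coin flip. It therefore suffices to show that
\[
\frac{\pr(c\notin D^*(w))}{\pr(c\notin D(w))}\le 1+\frac{5}{q_i}.
\]

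To compare $D^*(w)$ with $D(w)$, I would split the edges of $F_c$ at $w$ into those avoiding $v$ and those containing $v$. Since $w \in e-v$ and $e \in E(F_c)$, linearity (condition~\ref{Q1}) implies that $e$ is the only edge of $F_c$ containing both $w$ and $v$. Hence $c \in D(w)$ if and only if $c \in D^*(w)$ or the event $B_e \coloneqq \{x\in A,\ c_x = c \text{ for all } x\in e-w\}$ occurs. The event $B_e$ depends on the trials at $e - w$, which includes $v$. The event $\{c\notin D^*(w)\}$ depends on trials at $N^*_c(w)$, and this set is disjoint from $e-w$: it avoids $v$ by definition, and it avoids the other vertices of $e$ by Observation~\ref{obs: disjoint N*}. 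So the two events are independent, and
\[
\pr(c\notin D(w)) = \pr(c\notin D^*(w))\,(1-\pr(B_e)).
\]
Here $\pr(B_e) = (\alpha/q_i)^{j-1} = p_e$, because each vertex of $e - w$ (including $v$) is activated with probability $\alpha$ and picks $c$ with probability $1/q_i$, using~\ref{Q2}. The ratio is thus exactly $1/(1-p_e)$, which by~\eqref{eq: p_e bound} is at most $1+2/q_i\le 1+5/q_i$.

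The main point needing care is the independence claim. The event $\{c\notin D^*(w)\}$ involves trials at vertices $x\in N^*_c(w)$, namely their activation and color choice, but not their equalizing flips; I must confirm that none of these vertices lies in $e$. This is exactly what Observation~\ref{obs: disjoint N*} provides, together with $v\notin N^*_c(w)$, which holds by construction. I also need that $\eq(w,c)$ is independent of everything else, which is immediate. The slack between the constant $2$ and the stated $5$ suggests the authors use a cruder bound or reuse the same estimate for $\cS(u)$, but the factor $1/(1-p_e)$ already suffices.
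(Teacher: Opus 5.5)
Your proposal is correct and follows essentially the same route as the paper: factor $\E[\cR(w)]$ by independence, write $\pr(c\notin D(w)) = \pr(c\notin D^*(w))\,(1-p_e)$ using that $e$ is the unique edge of $F_c$ through $w$ and $v$, and conclude with $1/(1-p_e)\le 1+2/q_i\le 1+5/q_i$. The only cosmetic difference is how independence is justified. You use Observation~\ref{obs: disjoint N*} together with $v\notin N^*_c(w)$, whereas the paper uses the linearity of $F_c$ directly (the sets $e'-w$ for $e'\in E(F_c,w)$ are pairwise disjoint). Your reading of the slack from $2$ to $5$, which matches the bound for $\cS(u)$, is exactly right.
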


\begin{claimproof}
Since $\I[w \in A]$, $c_w$, and $\eq(w, c)$ are independent, and $\I[c \notin D^*(w)]$ depends only on the inclusion in $A$ and the color choice at vertices other than $w$,
\[
\E[\cR(w)] = \pr(w \in A)\,\pr(c_w = c)\,\pr(c \notin D^*(w))\,\pr(\eq(w, c) = 1).
\]
For each edge $e' \in E(F_c, w)$, let $\cW_{e'}$ denote the event that $c$ is discarded at $w$ via $e'$; that is, every $x \in e'-w$ satisfies that $x \in A$ and $c_x = c$ (see Step~\ref{step: discard} of Algorithm~\ref{algorithm: nibble step}). 
Since $F_c$ is linear, $\{e' - w\,:\,e' \in E(F_c, w)\}$ is a collection of pairwise disjoint sets, and so the family $\{\cW_{e'}\,:\,e' \in E(F_c, w)\}$ is mutually independent. 
Recall that $c$ is included in $D^*(w)$ if it is discarded at $w$ via an edge $e' \in E(F_c, w) \setminus E(F_c, v)$, and $e$ is the unique edge in $E(F_c, v)$ containing $w$ by the linearity of $F_c$. Together with the mutual independence of $\{\cW_{e'}\,:\,e' \in E(F_c, w)\}$, we get
\begin{align}
\pr(c \notin D(w)) = \pr(c \notin D^*(w))\, \pr(\overline{\cW_e}) = \pr(c \notin D^*(w))\, (1 - p_e), 
\end{align}
and so
\begin{align}\label{eq: c not in D star w}
\pr(c \notin D^*(w))\,\pr(\eq(w, c) = 1) = \frac{\pr(c \notin D^*(w))\,\keep_i}{\pr(c \notin D(w))} \overset{\eqref{def: eq(v,c)}}{=} \frac{\keep_i}{1 - p_e}.
\end{align}
Consequently, by~\eqref{eq: p_e bound}, we have
\begin{align}\label{eq: E R w}
\E[\cR(w)] = \frac{\alpha \keep_i}{q_i(1 - p_e)} \le \left(1 + \frac{2}{q_i}\right)\frac{\alpha \keep_i}{q_i} \le \left(1 + \frac{5}{q_i}\right)\frac{\alpha \keep_i}{q_i},
\end{align}
as claimed.
\end{claimproof}

\begin{claim}\label{claim: E T u}
Fix $u \in f$. Then, $\E[\cS(u)] \le (1 + 5/q_i) (\keep_i \uncolor_i)$.    
\end{claim}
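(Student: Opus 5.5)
We need to bound $\E[\cS(u)]$, where $\cS(u) = \I[u \in U^*]\cdot\I[c \in L^*(u)]$ and $u \in f \subseteq e-v$. We split according to whether $u$ is activated and whether its chosen color equals $c$.

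\emph{Decomposition.} Since $u \in U^*$ means $u \notin A$ or $c_u \notin L^*(u)$, we write
\[
\E[\cS(u)] = \pr(c \in L^*(u)) - \pr\bigl(u\in A,\ c_u \in L^*(u),\ c \in L^*(u)\bigr).
\]
The subtracted term is a sum over $c' \in L(u)$ of $\alpha\,\pr(c_u=c')\,\pr(c' \in L^*(u),\, c \in L^*(u))$. This uses that activation and the color choice at $u$ are independent of the trials determining $L^*(u)$: whether $c^* \in D^*(u)$ depends only on trials at vertices other than $u$, and the coin flips are independent. The goal is therefore to show that $\pr(c'\in L^*(u))$ is at least roughly $\keep_i$ for each $c'$, that $\pr(c\in L^*(u))$ is at most roughly $\keep_i$, and that the pairs $\{c',c\}$ are nearly positively correlated.

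\emph{Single-color estimates.} Exactly as in Claim~\ref{claim: E R w}, the discard events via distinct edges at $u$ are independent by linearity. Hence for every $c'\in L(u)$ we have $\pr(c'\notin D(u)) = \pr(c' \notin D^*(u))\,(1-p')$, where $p'$ is the probability that $c'$ is discarded via the unique edge of $F_{c'}$ containing both $u$ and $v$ (or $p'=0$ if no such edge exists). Such an edge, if it exists, must be $e$ itself by \ref{Q1}, so $p' \le (\alpha/q_i)^{|e|-1}\le 1/q_i$ in all cases. Combining with \eqref{def: eq(v,c)} gives
\[
\keep_i \le \pr(c' \in L^*(u)) = \frac{\keep_i}{1-p'} \le (1+2/q_i)\keep_i,
\]
and in particular $\pr(c\in L^*(u)) \le (1+2/q_i)\keep_i$.

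\emph{Joint estimate.} For $c'\ne c$, the events $\{c \notin D^*(u)\}$ and $\{c'\notin D^*(u)\}$ are both decreasing in the activation/color-choice configuration. Indeed, each discard event is an intersection of events of the form "$x\in A$ and $c_x=\cdot$". I expect a Harris/FKG-type argument to give positive correlation, though color choices are not monotone coordinates. To avoid that, I would argue directly: by \ref{Q1}, edges of $F_c$ and $F_{c'}$ through $u$ meet only in $u$ (or coincide as a $k$-edge, which needs different colors at the same vertex and so is impossible for both). For each vertex $x$ the event "$x\in A, c_x=c$" and the event "$x\in A,c_x=c'$" are disjoint, and distinct vertices are independent. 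The joint non-discard probability is therefore at least the product of the marginals. This reduces to showing, for independent per-vertex variables where each vertex's "bad for $c$" and "bad for $c'$" events are disjoint, that the probability of no $c$-edge all bad and no $c'$-edge all bad is at least the product. The inequality can be checked by conditioning: given the $c$-side outcome, the probability of a vertex being $c'$-bad only increases, but the relevant $c'$-edges avoid most vertices. If only an approximate bound emerges, a loss factor $(1-O(1/q_i))$ suffices. This is the main obstacle. With the coin flips independent, we get $\pr(c',c\in L^*(u)) \ge \pr(c'\in L^*(u))\,\pr(c\in L^*(u))(1-O(1/q_i))\ge \keep_i^2(1-O(1/q_i))$. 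The term $c'=c$ contributes $\pr(c\in L^*(u))\ge\keep_i$, which is only larger.

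\emph{Conclusion.} Summing, the subtracted term is at least $\alpha\keep_i^2(1-O(1/q_i))$. Therefore
\[
\E[\cS(u)] \le (1+2/q_i)\keep_i - \alpha\keep_i^2(1-O(1/q_i)) \le (1+5/q_i)\keep_i(1-\alpha\keep_i) = (1+5/q_i)\keep_i\uncolor_i.
\]
The last step uses $\uncolor_i\ge 1/2$, which absorbs the error terms into the constant $5$.
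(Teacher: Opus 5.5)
Your decomposition, the conditioning on $c_u$, and the single-color estimates $\keep_i \le \pr(c'\in L^*(u)) \le (1+2/q_i)\keep_i$ are all correct and follow the paper's structure. The gap is the joint estimate.

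The claim that the joint non-discard probability is at least the product of the marginals is false. The failure comes from exactly the case you set aside. By \ref{Q1}, a $k$-edge $h \ni u$ with $v \notin h$ may lie in both $F_c$ and $F_{c'}$. For such $h$, the discard events $\cW_{h,c}$ and $\cW_{h,c'}$ are mutually exclusive, so their complements are negatively correlated. With $p \coloneqq (\alpha/q_i)^{k-1}$,
\[
\pr\bigl(\overline{\cW_{h,c}} \wedge \overline{\cW_{h,c'}}\bigr) = 1-2p < (1-p)^2 = \pr\bigl(\overline{\cW_{h,c}}\bigr)\,\pr\bigl(\overline{\cW_{h,c'}}\bigr).
\]
Your reduced inequality therefore already fails for a single shared edge, and the Harris/FKG intuition points the wrong way. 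Your own remark that conditioning on the $c$-side makes $c'$-badness more likely is exactly this negative correlation.

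Your fallback, a loss factor $1-O(1/q_i)$, is what is actually needed, but you leave it unproved. It also cannot follow from the local structure alone. Write $\cE \coloneqq E(F_c,u)\setminus E(F_c,v)$ and $\cE' \coloneqq E(F_{c'},u)\setminus E(F_{c'},v)$. The number of shared edges $|\cE\cap\cE'|$ can be polynomially large in $d$, and each costs a factor $1-p^2/(1-p)^2$. The mechanism is these shared edges, not "$c'$-edges avoiding most vertices," and the missing ingredient is a degree bound.

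Here is how the argument goes. Distinct edges through $u$ meet only at $u$, so the joint probability factors exactly over edges. The only deviation from the product of marginals is a factor
\[
\frac{1-2p}{(1-p)^2} \ge 1-4p^2
\]
per edge in $\cE\cap\cE'$. The shared edges are $k$-edges of $F_c$ at $u$. Hence Lemma~\ref{lemma: eq well defined} together with \ref{keep_i bound} gives
\[
|\cE\cap\cE'|\,p \le \deg(F_c,u,k)\,(\alpha/q_i)^{k-1} \le 1-\keep_i \le \eta.
\]
The total loss factor is therefore at least $1-4\eta p \ge 1-4\alpha\eta/q_i$. With this in place, the rest of your computation goes through.
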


\begin{claimproof}
Since $\I[u \in A]$ is independent of $c_u$, of $\eq(u, c_u)$, of $\eq(u, c)$, and of $\I[c \notin D^*(u)]$,
\begin{align}
\E[\cS(u)] &= \E[\I[c \in L^*(u)](1 - \I[u \notin U^*])] \\
&= \pr(c \in L^*(u)) - \pr(u \in A)\,\pr((c_u \in L^*(u)) \wedge (c \in L^*(u))) \\
&\overset{\eqref{eq: c not in D star w}}{=} \frac{\keep_i}{1 - p_e} - \alpha\,\pr((c_u \in L^*(u)) \wedge (c \in L^*(u))), \label{eq: E T u expansion}
\end{align}
where~\eqref{eq: c not in D star w} is applied at $u$ in place of $w$.
We bound $\pr((c_u \in L^*(u))\wedge(c \in L^*(u)))$ by conditioning on $c_u$. 

When $c_u = c$, the two events coincide, and 
\begin{align}\label{eq: c u equals c contribution}
\pr((c_u \in L^*(u))\wedge(c \in L^*(u)) \mid c_u = c) = \pr(c \in L^*(u)) = \frac{\keep_i}{1 - p_e}.
\end{align}

When $c_u = c'$ for some $c' \ne c$, the equalizing coin flips $\eq(u, c')$ and $\eq(u, c)$ are independent of each other and of $\{(c' \notin D^*(u)) \wedge (c \notin D^*(u))\}$, and so
\begin{align*}
\pr((c_u \in L^*(u))\wedge(c \in L^*(u)) \mid c_u = c') 
&= \pr((c' \notin D^*(u)) \wedge (c \notin D^*(u))) \, \pr(\eq(u, c') = 1) \, \pr(\eq(u, c) = 1).
\end{align*}
By applying~\eqref{def: eq(v,c)} to $\eq(u, c')$ and $\eq(u, c)$, this gives \begin{align}\label{eq: c u equals c prime contribution}
\pr((c_u \in L^*(u))\wedge(c \in L^*(u)) \mid c_u = c') = \keep_i^2 \cdot \frac{\pr((c' \notin D^*(u)) \wedge (c \notin D^*(u)))}{\pr(c' \notin D(u)) \, \pr(c \notin D(u))}.
\end{align}
To estimate the joint probability $\pr((c' \notin D^*(u)) \wedge (c \notin D^*(u)))$, we need to establish several observations.
For each color $c'' \in L(u)$ and each edge $e' \in E(F_{c''}, u)$, let $\cW_{e', c''}$ denote the event that $c''$ is discarded at $u$ via $e'$; that is, every $x \in e'-u$ satisfies that $x \in A$ and $c_x = c''$ (see Step~\ref{step: discard} of Algorithm~\ref{algorithm: nibble step})
Set $\cE' \coloneqq E(F_{c'}, u) \setminus E(F_{c'}, v)$ and $\cE \coloneqq E(F_c, u) \setminus E(F_c, v)$. By the definition of $D^*(u)$, we note that
\[
\{c \notin D^*(u)\} = \bigwedge_{e' \in \cE} \overline{\cW_{e',c}}, \qquad \{c' \notin D^*(u)\} = \bigwedge_{e' \in \cE'} \overline{\cW_{e',c'}}.
\]
Since $F_{c'}$ and $F_c$ are linear, $\{\cW_{e', c'}\,:\,e' \in \cE'\}$ and $\{\cW_{e', c}\,:\,e' \in \cE\}$ are families of mutually independent events. Now, let $h' \in \cE'$ and $h \in \cE$. If $h' \neq h$, then $|h' \cap h| \le 1$ by condition~\ref{Q1} of property $P(i)$. This implies that $h' - u$ and $h - u$ are disjoint, and so $\cW_{h', c'}$ and $\cW_{h, c}$ are independent. If $h' = h$, then $h \in \cE' \cap \cE$, and thus by condition~\ref{Q1}, $|h| = k$ and $v \notin h$. Since $\cW_{h, c'}$ and $\cW_{h, c}$ require $c_x = c'$ and $c_x = c$ for every $x \in h - u$, respectively, the two events are mutually exclusive. This implies that
\[
\pr(\overline{\cW_{h, c'}} \wedge \overline{\cW_{h, c}}) = 1 - \pr(\cW_{h, c'}) - \pr(\cW_{h, c}) = 1 - 2(\alpha/q_i)^{k-1},
\]
and so
\[
\frac{\pr(\overline{\cW_{h, c'}} \wedge \overline{\cW_{h, c}})}{\pr(\overline{\cW_{h, c'}}) \, \pr(\overline{\cW_{h, c}})} = \frac{1 - 2(\alpha/q_i)^{k-1}}{(1 - (\alpha/q_i)^{k-1})^2} = 1 - \frac{(\alpha/q_i)^{2(k-1)}}{(1 - (\alpha/q_i)^{k-1})^2}  
\ge 1 - 4(\alpha/q_i)^k,
\]
where in the last inequality we used  $(\alpha/q_i)^{k-1} \le \alpha/q_i \ll 1$, by~\eqref{eq: alpha/q_i bound} and $k \ge 2$.

Combining these observations, we compute the joint probability:
\begin{align*}
\pr((c' \notin D^*(u))\wedge (c \notin D^*(u))) 
&= \pr(c' \notin D^*(u))\,\pr(c \notin D^*(u))\prod_{h \in \cE' \cap \cE} \frac{\pr(\overline{\cW_{h, c'}} \wedge \overline{\cW_{h, c}})}{\pr(\overline{\cW_{h, c'}}) \, \pr(\overline{\cW_{h, c}})}\\
&\ge \pr(c' \notin D^*(u))\,\pr(c \notin D^*(u)) \left(1 - 4 \cdot |\cE' \cap \cE| \cdot (\alpha/q_i)^k\right),
\end{align*}
Since $\cE' \cap \cE \subseteq E(F_c, u)$ and every edge $h \in \cE' \cap \cE$ satisfies $|h| = k$, we have
\[
|\cE' \cap \cE| \cdot (\alpha/q_i)^{k-1} \le \deg(F_c,k,u) (\alpha/q_i)^{k-1} \le 1 - \keep_i \overset{\ref{keep_i bound}}{\le} \eta,
\]
where the second inequality follows from Lemma~\ref{lemma: eq well defined}, and the last inequality follows from Lemma~\ref{lemma: properties of beta_i, gamma_i, keep_i, uncolor_i}~\ref{keep_i bound}. Consequently,
\begin{align}\label{eq: joint witness LB}
\pr((c' \notin D^*(u))\wedge (c \notin D^*(u))) \ge (1 - 4\alpha\eta/q_i)\,\pr(c' \notin D^*(u))\,\pr(c \notin D^*(u)).
\end{align}
Combining~\eqref{eq: c u equals c prime contribution} and~\eqref{eq: joint witness LB}, we obtain
\begin{align*}
\pr((c_u \in L^*(u))\wedge(c \in L^*(u)) \mid c_u = c') 
&\ge (1 - 4\alpha\eta/q_i)\, \keep_i^2 \cdot \frac{\pr(c' \notin D^*(u))}{\pr(c' \notin D(u))} \cdot \frac{\pr(c \notin D^*(u))}{\pr(c \notin D(u))} \\
&\ge (1 - 4\alpha\eta/q_i)\, \frac{\keep_i^2}{1 - p_e},
\end{align*}
noting that $\pr(c' \notin D^*(u)) \ge \pr(c' \notin D(u))$ and that $\pr(c \notin D(u)) = (1 - p_e) \, \pr(c \notin D^*(u))$ by~\eqref{eq: c not in D star w} applied at $u$ in place of $w$.
Averaging~\eqref{eq: c u equals c contribution} and the above over $c_u$ uniform in $L(u)$ gives
\begin{align*}
\pr((c_u \in L^*(u)) \wedge (c \in L^*(u))) 
&\ge \frac{1}{q_i}\cdot\frac{\keep_i}{1 - p_e} + \frac{q_i - 1}{q_i}\left(1 - \frac{4\alpha\eta}{q_i}\right)\frac{\keep_i^2}{1 - p_e} \\
&\ge \frac{\keep_i^2}{1 - p_e}\left(\frac{1}{q_i} + \frac{q_i - 1}{q_i}\left(1 - \frac{4\alpha\eta}{q_i}\right)\right) \\
&= \frac{\keep_i^2}{1 - p_e}\left(1 - \frac{q_i - 1}{q_i} \cdot \frac{4\alpha\eta}{q_i}\right) \ge \frac{\keep_i^2}{1 - p_e}\left(1 - \frac{4\alpha\eta}{q_i}\right),
\end{align*}
where the second inequality uses $\keep_i \le 1$ by Lemma~\ref{lemma: properties of beta_i, gamma_i, keep_i, uncolor_i}~\ref{keep_i bound}. Substituting into~\eqref{eq: E T u expansion} and applying~\eqref{eq: p_e bound},
\begin{align*}
\E[\cS(u)] 
&\le \frac{\keep_i}{1 - p_e} - \alpha\,\frac{\keep_i^2}{1 - p_e}\left(1 - \frac{4\alpha\eta}{q_i} - \frac{1}{q_i}\right) \\
&= \frac{\keep_i(1 - \alpha \keep_i)}{1 - p_e} + \frac{\alpha \keep_i^2}{1 - p_e}\left(\frac{4\alpha\eta}{q_i} + \frac{1}{q_i}\right) \\
&\overset{\eqref{def: uncolor_i, beta_i, gamma_i}}{=} \left(1 + \frac{\alpha\keep_i(4\alpha\eta + 1)}{\uncolor_i \, q_i}\right) \frac{\keep_i\uncolor_i}{1 - p_e} \\
&\le \left(1 + \frac{2}{q_i}\right) \frac{\keep_i\uncolor_i}{1 - p_e},
\end{align*}
where in the last inequality we used $\alpha \le 1$, $\keep_i \le 1$, $\eta < 1/6$ from~\eqref{def: eta, alpha, delta, s}, and $\uncolor_i \ge 1 - \alpha \ge 1 - \eta$ by Lemma~\ref{lemma: properties of beta_i, gamma_i, keep_i, uncolor_i}~\ref{uncolor_i bound}.
Using~\eqref{eq: p_e bound}, we get 
\[
\E[\cS(u)] \le
\left(1 + \frac{2}{q_i}\right)^2 \keep_i \uncolor_i \le 
\left(1 + \frac{5}{q_i}\right) \keep_i \uncolor_i,
\]
where in the last inequality we used that $1/q_i \ll 1/4$ by~\eqref{eq: alpha/q_i bound}.
This completes the proof of the claim.
\end{claimproof}

By the mutual independence given by Claim~\ref{claim: mutual independence},
\begin{align}\label{eq: factorized expectation}
\E\left[\prod_{u \in f}\cS(u)\prod_{w \in g}\cR(w)\right] = \prod_{u \in f}\E[\cS(u)]\prod_{w \in g}\E[\cR(w)].
\end{align}
Combining~\eqref{eq: factorized expectation} with Claims~\ref{claim: E R w} and~\ref{claim: E T u}, and noting that $|f| = \ell - 1$ and $|g| = j-\ell$,
\begin{align}
\E{\left[\prod_{u \in f}\cS(u)\prod_{w \in g}\cR(w)\right]} 
&\le (1+5/q_i)^{j-1} (\keep_i \uncolor_i)^{\ell-1} (\alpha \keep_i /q_i)^{j-\ell} \\
&= \uncolor_i^{\ell-1} [(1+5/q_i)\,\keep_i]^{j-1}(\alpha/q_i)^{j-\ell},
\end{align}
proving~\eqref{eq: prob Y ell j 1 - Z ell j 1}, which in turn completes the proof of Lemma~\ref{lemma: Y 1 Y 2 Z 1 expectations}~\ref{item: Y 1 - Z 1}.

\subsection{Concentration arguments: proof of Lemma~\ref{lemma: Y 1 Y 2 Z 1 concentration}}\label{subsection: concentration}
\stepcounter{ForClaims}
\renewcommand{\theForClaims}{\ref*{lemma: Y 1 Y 2 Z 1 concentration}}

Once again, we will split this section into three subsections devoted to the proofs of each item in the statement of Lemma~\ref{lemma: Y 1 Y 2 Z 1 concentration}.

\subsubsection{Concentration of $\mathbf{Y}^{(1)}_{\ell, j}$}

Fix $j$ with $\ell \le j \le k$ and $\deg(F_c, j, v) > 0$, and recall the definition of $\tau_j$ from~\eqref{def: tau}.
By~\ref{Q3}, either $j = k$ or $\gamma_i > 0$; in the latter case, $i \ge 2$ and hence $\gamma_i \ge 1$ by Lemma~\ref{lemma: properties of beta_i, gamma_i, keep_i, uncolor_i}~\ref{gamma_i bound}.
Thus, $\gamma_i^{k - j} \ge 1$ in all cases.
Then, by Lemma~\ref{lemma: Y 1 Y 2 Z 1 expectations}~\ref{item: Y 1 Z 1}, whenever the denominators are positive, we have
\begin{align}\label{eq: lb 1 tau}
\frac{\tau_j}{\E{\left[\mathbf{Z}^{(1)}_{\ell,j}\right]}} \ge \frac{\tau_j}{\E{\left[\mathbf{Y}^{(1)}_{\ell,j}\right]}} \ge \frac{\delta^2}{\uncolor_i^{\ell-1}} \overset{\eqref{def: uncolor_i, beta_i, gamma_i}}{\ge} \delta^2.
\end{align}
Moreover, by using Lemma~\ref{lemma: properties of beta_i, gamma_i, keep_i, uncolor_i}~\ref{beta_i bound},~\ref{q_i bound}, and~\ref{ratio bound} together with $2 \le \ell \le j \le k$, we have
\begin{align}
\tau_j &\overset{\ref{gamma_i bound}}{\ge} \delta^2 \cdot (\alpha/q_i)^{k-2} \beta_i^{k-1} t_i \overset{\ref{beta_i bound}}{\ge} \delta^2 \cdot \alpha^{k-2} \cdot (\log d)^{-2(k-1)} \cdot q_i \cdot \frac{t_i}{q_i^{k-1}}\\
&\overset{\eqref{def: eta, alpha, delta, s}}{\ge} d^{-\frac{\eta/3}{k-1}} \cdot \frac{\eta^{k-2}}{(\log d)^{3k-4}} \cdot q_i \cdot \frac{t_i}{q_i^{k-1}} \overset{\ref{q_i bound}}{\ge} d^{\frac{2\eta/3}{k-1}} \cdot \frac{\eta^{k-2}}{(\log d)^{3k-4}} \cdot \frac{t_i}{q_i^{k-1}} \overset{\ref{ratio bound}}{=}  \Omega{\left(d^{\frac{\eta/2}{k-1}}\right)}.\label{eq: tau lower bound}
\end{align}
Together with~\eqref{eq: lb 1 tau} this implies, whenever the denominators are positive, that
\begin{align}\label{eq: lb 2 tau}
\frac{\tau_j^2}{\E{\left[\mathbf{Z}_{\ell, j}^{(1)}\right]}} \ge \frac{\tau_j^2}{\E{\left[\mathbf{Y}_{\ell, j}^{(1)}\right]}} \ge \delta^2\tau_j = \Omega{\left(d^{\frac{\eta/6}{k-1}}\right)}.
\end{align}
With the above inequalities in hand, let us proceed with the concentration argument.

We use \hyperref[theorem: Talagrand]{Talagrand's Inequality} (Theorem~\ref{theorem: Talagrand}) to establish concentration for $\mathbf{Y}_{\ell, j}^{(1)}$, where the random trials are, for all vertices $u \in U \setminus \{v\}$, the inclusion of $u$ in $A$, the color choice $c_u$, and the equalizing coin flips for all colors $c \in L(u)$ at $u$. To apply Talagrand's Inequality, we must first show that $\mathbf{Y}_{\ell, j}^{(1)}$ satisfies the Lipschitz and certificate conditions outlined in Definition~\ref{def: lipshcitz and certifiable}.
We do this via a sequence of claims as follows.

\begin{claim}\label{claim: Lipschitz Y ell j 1}
$\mathbf{Y}_{\ell, j}^{(1)}$ is $\left[\binom{k-1}{\ell-1} + \binom{k-2}{\ell-2}\right]$-Lipschitz.
\end{claim}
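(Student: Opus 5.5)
The plan is to fix a single trial at some vertex $y \in U \setminus \{v\}$, that is, its activation, its color choice $c_y$, or one of its equalizing coin flips. We then determine which summands $(e,f)$ of~\eqref{def: Y ell j 1} can change when that trial is altered, and bound their number by a case analysis on the position of $y$ relative to the edges of $F_c$ through $v$.

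First we record which trials each indicator depends on. The indicator $\I[w \in A,\, c_w = c]$ depends only on the activation and color trials at $w$. The indicator $\I[u \in U^*]$ depends only on trials at $u$ (activation, $c_u$, and the flips $\eq(u,\cdot)$) and on the activation and color trials at vertices of $\cN^*(u)$; this is exactly the dependence already identified in the proof of Claim~\ref{claim: mutual independence Y ell j 1}. Hence changing a trial at $y$ can only affect summands $(e,f)$ for which $y \in e - v$ or $y \in \cN^*(u)$ for some $u \in f$. We then split into two cases.

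\emph{Case 1: $y \in e_0 - v$ for some $e_0 \in E(F_c, j, v)$.} By linearity of $F_c$ (condition~\ref{Q1}), $e_0$ is the unique edge of $F_c$ containing $\{v,y\}$. We claim that $y \notin \cN^*(u)$ for every $u \in e - v$ with $e \in E(F_c,v)$. If $e = e_0$, this is the first part of Observation~\ref{obs: disjoint N*}. If $e \neq e_0$, then $y \in \cN^*(u)$ gives an edge $h \in E(F_{c'},u)$ with $v \notin h$ and $y \in h$. The edges $e_0, h, e$ and the vertices $y, u, v$ then form a $3$-cycle in $\bigcup_{c''} F_{c''}$, contradicting~\ref{Q1}. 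So only summands with $e = e_0$ are affected, and there are at most $\binom{j-1}{\ell-1} \le \binom{k-1}{\ell-1}$ of them. (If only an equalizing flip at $y$ changes, only summands with $y \in f$ are affected, which is an even smaller number.)

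\emph{Case 2: $y$ lies in no edge of $F_c$ through $v$.} Then $y$ can only affect summands through $\I[u \in U^*]$ for vertices $u$ with $y \in \cN^*(u)$. We show there is at most one such $u$ among the vertices $\bigcup_{e \in E(F_c,v)} (e-v)$. Within a single edge $e$, this follows from the disjointness part of Observation~\ref{obs: disjoint N*}. Now suppose $u_1 \in e_1 - v$ and $u_2 \in e_2 - v$ with $e_1 \ne e_2$, and suppose $y \in h_1 \cap h_2$ where $h_1 \ni u_1$, $h_2 \ni u_2$, and $v \notin h_1 \cup h_2$. If $h_1 = h_2$, then $e_1, e_2, h_1$ with the vertices $v, u_1, u_2$ form a $3$-cycle. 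Otherwise, $e_1, h_1, h_2, e_2$ with the vertices $u_1, y, u_2, v$ form a $4$-cycle. In both cases we need to check that the edges and vertices are distinct, which follows from $v \notin h_i$, $y \notin e_i$, and linearity. Either cycle contradicts uncrowdedness in~\ref{Q1}. Hence at most one $u$ is affected, and only summands with $u \in f$ change; there are at most $\binom{j-2}{\ell-2} \le \binom{k-2}{\ell-2}$ of them.

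Each affected summand changes by at most $1$. In every case the change is therefore at most $\max\bigl\{\binom{k-1}{\ell-1}, \binom{k-2}{\ell-2}\bigr\} \le \binom{k-1}{\ell-1} + \binom{k-2}{\ell-2}$, which proves the claim. The main obstacle is the cycle bookkeeping in Case 2, namely verifying distinctness of the edges and vertices so that a genuine $3$- or $4$-cycle arises. We handle this by using linearity of the union together with the fact that $v$ lies in none of the edges $h_i$ defining $\cN^*$.
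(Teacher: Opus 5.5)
Your argument is correct and is essentially the paper's: linearity of $F_c$ handles a vertex $y$ lying in an edge with $v$, and uncrowdedness forces a unique intermediate vertex $u$ otherwise. Your extra $3$-cycle check, showing that the two mechanisms never act for the same $y$, gives the sharper bound $\max\bigl\{\binom{k-1}{\ell-1},\binom{k-2}{\ell-2}\bigr\}$ instead of the paper's sum. Your explicit $3$- and $4$-cycle constructions also spell out what the paper compresses into ``$z$ and $v$ have at most one such common neighbor.''

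One small slip: your cases are not exhaustive as written. A $y$ lying in an edge of $E(F_c,j',v)$ with $j'\neq j$ falls in neither case. Your Case~1 argument is stated for every $e\in E(F_c,v)$ and shows that such a $y$ affects no summand, so it suffices to phrase Case~1 as ``$y$ lies in some edge of $F_c$ through $v$.''
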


\begin{claimproof}
Fix a vertex $z \in U \setminus \{v\}$. Changing the outcome of the random trial at $z$ (that is, the inclusion of $z$ in $A$, the color choice $c_z$, or one of the equalizing coin flips $\eq(z, c')$ for $c' \in L(z)$) affects $\mathbf{Y}_{\ell, j}^{(1)}$ only through terms indexed by pairs $(e, f)$ such that either of the following holds:
\begin{enumerate}[label=\textup{(\alph*)}]
    \item \label{case: Y direct} $z \in e - v$;
    \item \label{case: Y witness} $z \notin e - v$ but $z \in e' - u$ for some $u \in f$ and some $e' \in E(F_{c_u}, u) \setminus E(F_{c_u}, v)$.
\end{enumerate}

If $z \in e - v$, then since $F_c$ is linear (as a result of condition~\ref{Q1} of property $P(i)$), there is at most one edge $e \in E(F_c, j, v)$ containing $z$.
Therefore, the number of affected pairs $(e, f)$ is at most $\binom{j - 1}{\ell - 1} \le \binom{k - 1}{\ell - 1}$.

Next, suppose that $z \in e' - u$ for some $u \in f$ and some $e' \in E(F_{c_u}, u) \setminus E(F_{c_u}, v)$. Then, we have $z, u \in e'$ and $u, v \in e$, so $u$ is a common neighbor of $z$ and $v$ in $\bigcup_{c'' \in L(U)}F_{c''}$.
Since $\bigcup_{c'' \in L(U)}F_{c''}$ is uncrowded by condition~\ref{Q1} of property $P(i)$, the vertices $z$ and $v$ have at most one such common neighbor $u$. 
Moreover, since $F_c$ is linear, the edge $e \in E(F_c, j, v)$ containing $u$ and $v$ is unique. As a result,
% combined with the choice of $f \ni u$, 
there are at most $\binom{j - 2}{\ell - 2} \le \binom{k - 2}{\ell - 2}$ affected pairs $(e, f)$.

Combining the two cases, changing the trial at $z$ can affect at most $\binom{k-1}{\ell-1} + \binom{k-2}{\ell-2}$ pairs $(e, f)$, and so can change $\mathbf{Y}_{\ell, j}^{(1)}$ by at most $\binom{k-1}{\ell-1} + \binom{k-2}{\ell-2}$, as claimed.
\end{claimproof}

\begin{claim}\label{claim: certifiable Y ell j 1}
$\mathbf{Y}_{\ell, j}^{(1)}$ is $2k^2$-certifiable.
\end{claim}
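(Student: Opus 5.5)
To prove that $\mathbf{Y}_{\ell,j}^{(1)}$ is $2k^2$-certifiable, I would exhibit, for each pair $(e,f)$ counted by $\mathbf{Y}_{\ell,j}^{(1)}$, a set of at most $2k^2$ trials whose outcomes force that pair to remain counted. Then, whenever $\mathbf{Y}_{\ell,j}^{(1)}\ge s$, I would take the union of these sets over any $s$ counted pairs. The union has at most $2k^2 s$ trials. Since every term in \eqref{def: Y ell j 1} is a product of indicators, each of which stays equal to $1$ once its certificate is fixed, changing any other trial cannot drop the count below $s$.

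Fix a counted pair $(e,f)$, with $e\in E(F_c,j,v)$ and $f\in\binom{e-v}{\ell-1}$, and set $g\coloneqq e\setminus(f\cup\{v\})$. I would certify its two groups of indicators separately.

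\begin{itemize}
\item For each $w\in g$, the event $\{w\in A,\ c_w=c\}$ is certified by revealing the activation trial and the color choice at $w$, which is $2$ trials.
\item For each $u\in f$, recall from \eqref{def: L*} and \eqref{def: U*} that $u\in U^*$ exactly when one of three alternatives holds. In each case I would reveal a small certificate:
  \begin{itemize}
  \item if $u\notin A$, reveal the activation trial at $u$ (one trial);
  \item if $u\in A$ and $\eq(u,c_u)=0$, reveal the color choice $c_u$ and the equalizing flip $\eq(u,c_u)$;
  \item if $c_u\in D^*(u)$, reveal $c_u$ together with a witnessing edge $e'\in E(F_{c_u},u)\setminus E(F_{c_u},v)$. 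This means revealing the activation and color choice at every $x\in e'-u$, which is at most $2(k-1)$ trials, plus the trial $c_u$.
  \end{itemize}
\end{itemize}

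In every case the membership $u\in U^*$ is determined by the revealed trials alone. Indeed, $D^*(u)$ is monotone in the witnessing edges, so it cannot lose $c_u$ once a witness is fixed, and the other two alternatives are read off directly. So each $u\in f$ needs at most $2k$ trials and each $w\in g$ needs $2$. In total a single pair $(e,f)$ needs at most $(\ell-1)\cdot 2k+2(j-\ell)\le 2k^2$ trials.

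The one point needing care is that all of these trials lie in $U\setminus\{v\}$, which is the trial space used for Talagrand's Inequality (Theorem~\ref{theorem: Talagrand}). This holds because the witness edges $e'$ avoid $v$ by the definition of $D^*(u)$, and $f,g\subseteq e-v$. Beyond that I expect no real obstacle: this is the routine half of the Talagrand setup, and the actual work lies in the Lipschitz bound (Claim~\ref{claim: Lipschitz Y ell j 1}) and in the expectation estimates.
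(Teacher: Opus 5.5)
Your proposal is correct and matches the paper's proof: certify each counted pair $(e,f)$ by revealing two trials per $w\in e\setminus(f\cup\{v\})$, and by one of three certificates for each $u\in f$ (non-activation, a witness edge for $D^*(u)$, or $c_u$ together with a failed equalizing flip). This gives at most $2k^2$ trials per pair, and the union over $s$ counted pairs certifies $\mathbf{Y}^{(1)}_{\ell,j}\ge s$. Your extra check that every certifying trial lies in $U\setminus\{v\}$ is left implicit in the paper but is consistent with how it sets up Talagrand's Inequality.
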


\begin{claimproof}
Each pair $(e, f)$ with $e\in E(F_c,j,v)$ and $f\in \tbinom{e-v}{\ell-1}$ contributing to $\mathbf{Y}_{\ell, j}^{(1)}$ can be certified by certifying that for each $u \in f$ and each $w \in e \setminus (f \cup \{v\})$,
\begin{enumerate}[(a)]
\item \label{cond: certi Y1 1} $u \in U^*$, and
\item \label{cond: certi Y1 2} $w \in A$ and $c_w = c$.
\end{enumerate}
Condition~\ref{cond: certi Y1 1} can be certified by revealing one of the following:
\begin{itemize}
\item $u$ is not included in $A$ (requiring $1$ trial);
\item $c_u \in D^*(u)$, which is certified by revealing $c_u$ and, for all vertices $x\in e' - u$ in some edge $e' \in E(F_{c_u}, u) \setminus E(F_{c_u}, v)$, revealing that $x\in A$ and $c_x = c_u$ (requiring at most $2|e'| - 1 \le 2k -1$ trials); or
\item $\eq(u, c_u) = 0$, which can be certified by revealing $c_u$ and the corresponding equalizing coin flip (requiring $2$ trials).
\end{itemize}
Condition~\ref{cond: certi Y1 2} can be certified by revealing the trials $\I[w\in A]$ and $c_w$.
Thus, we need at most $(\ell-1) \cdot (2k-1) + (j-\ell) \cdot 2 \le 2k^2$ trials to certify each pair $(e, f)$ contributing to $\mathbf{Y}_{\ell,j}^{(1)}$.
\end{claimproof}
 
If $\E[\mathbf{Y}_{\ell, j}^{(1)}] = 0$, then $\mathbf{Y}_{\ell, j}^{(1)} = 0$ and the desired bound is immediate. 
Otherwise, with Claims~\ref{claim: Lipschitz Y ell j 1} and~\ref{claim: certifiable Y ell j 1} in hand, and writing $\mu_Y \coloneqq \binom{k-1}{\ell-1} + \binom{k-2}{\ell-2}$ for brevity, we apply \hyperref[theorem: Talagrand]{Talagrand's Inequality} with $\mu = \mu_Y$ and $r = 2k^2$ to obtain:
\begin{align*}
\pr{\left(\mathbf{Y}_{\ell, j}^{(1)} - \E{\left[\mathbf{Y}^{(1)}_{\ell,j}\right]} \ge \tau_j/2\right)}
&\overset{\eqref{eq: lb 2 tau}}{\le} \pr{\left(\left|\mathbf{Y}_{\ell, j}^{(1)} - \E{\left[\mathbf{Y}^{(1)}_{\ell,j}\right]}\right| \ge \tau_j/4 + 20 \mu_Y \sqrt{2k^2 \E{\left[\mathbf{Y}^{(1)}_{\ell,j}\right]}} + 128 \mu_Y^2 k^2\right)} \\
&\ \le \ 4\exp\left(-\frac{\tau_j^2}{256 \mu_Y^2 k^2 \left(\E{\left[\mathbf{Y}^{(1)}_{\ell,j}\right]} + \tau_j/4\right)}\right) \\
&\overset{\eqref{eq: lb 1 tau}}{\le} 4\exp\left(-\frac{\delta^2 \tau_j}{256 \mu_Y^2 k^2 \left(1+ \delta^2/4\right)}\right) \\
&\overset{\eqref{eq: lb 2 tau}}{=} \exp\left(-\Omega{\left(d^{\frac{\eta/6}{k-1}}\right)}\right),
\end{align*}
completing the proof of Lemma~\ref{lemma: Y 1 Y 2 Z 1 concentration}~\ref{item: Y 1 conc}.

\subsubsection{Concentration of $\mathbf{Z}^{(1)}_{\ell, j}$}

We apply \hyperref[theorem: Talagrand]{Talagrand's Inequality} (Theorem~\ref{theorem: Talagrand}) to $\mathbf{Z}_{\ell, j}^{(1)}$, where the random trials are, for all vertices $u \in U \setminus \{v\}$, the inclusion of $u$ in $A$, the color choice $c_u$, and the equalizing coin flips for all colors $c' \in L(u)$. To apply Talagrand's Inequality, we must first show that $\mathbf{Z}_{\ell, j}^{(1)}$ satisfies the Lipschitz and certificate conditions outlined in Definition~\ref{def: lipshcitz and certifiable}.
We do so through a sequence of claims.

\begin{claim}\label{claim: Lipschitz Z ell j 1}
$\mathbf{Z}_{\ell, j}^{(1)}$ is $\left[2\binom{k-1}{\ell-1} + \binom{k-2}{\ell-2}\right]$-Lipschitz.
\end{claim}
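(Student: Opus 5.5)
The plan mirrors the proof of Claim~\ref{claim: Lipschitz Y ell j 1}. We fix a vertex $z \in U \setminus \{v\}$ and one trial at $z$: its inclusion in $A$, its color choice $c_z$, or one of its equalizing coin flips. We then bound the number of pairs $(e,f)$ with $e \in E(F_c,j,v)$ and $f \in \binom{e-v}{\ell-1}$ whose summand in~\eqref{def: Z ell j 1} can change. Each summand is $\{0,1\}$-valued, so the Lipschitz constant is at most the number of affected pairs.

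A summand indexed by $(e,f)$ is the product of three kinds of factors. The first is $\I[u\in U^*]$ for $u\in f$, which depends on the trials at $u$ and at $N^*_{c_u}(u)$. The second is $\I[w\in A,\,c_w=c]$ for $w\in g\coloneqq e\setminus(f\cup\{v\})$, which depends only on the trials at $w$. The third is $1-\prod_{x\in e-v}\I[c\in L^*(x)]$, which depends on the trials at each $x\in e-v$ (through $\eq(x,c)$) and on the vertices of $N^*_c(x)$ (through $D^*(x)$). So a trial at $z$ can affect the $(e,f)$-summand only in three situations:
\begin{enumerate}[label=\textup{(\alph*)}]
\item $z\in e-v$;
\item $z\notin e-v$, but $z\in e'-u$ for some $u\in f$ and some $e'\in E(F_{c_u},u)\setminus E(F_{c_u},v)$;
\item $z\notin e-v$, but $z\in e'-x$ for some $x\in e-v$ and some $e'\in E(F_c,x)\setminus E(F_c,v)$.
\end{enumerate}

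\textbf{Case (a).} Since $F_c$ is linear, at most one $e\in E(F_c,j,v)$ contains $z$. This gives at most $\binom{j-1}{\ell-1}\le\binom{k-1}{\ell-1}$ pairs.

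\textbf{Case (b).} As in Claim~\ref{claim: Lipschitz Y ell j 1}, the vertex $u$ is a common neighbor of $z$ and $v$ in $\bigcup_{c''}F_{c''}$. Uncrowdedness, via condition~\ref{Q1}, makes $u$ unique. Linearity then fixes $e$, and requiring $u\in f$ leaves at most $\binom{k-2}{\ell-2}$ pairs.

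\textbf{Case (c).} The same argument shows that $x$ is the unique common neighbor of $z$ and $v$, and that $e$ is unique. Here $f$ is arbitrary, because the third factor involves every $x\in e-v$. This gives a further $\binom{k-1}{\ell-1}$ pairs.

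Cases (b) and (c) can occur simultaneously. When they do, the unique common neighbor and the unique edge $e$ are the same in both, so the affected pairs all lie inside one edge $e$. Summing the three cases and collapsing any overlap still gives a total of at most $2\binom{k-1}{\ell-1}+\binom{k-2}{\ell-2}$.

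The main obstacle is making sure no dependency is missed. In particular, $D^*(x)$ must be seen to depend only on vertices of edges avoiding $v$, since the edges through $v$ are exactly the ones excluded in its definition. A change at $z$ through such an edge is detected only because $z$ and $v$ share a neighbor, and the girth-$5$ condition (no $4$-cycles) is what caps this at a single common neighbor. We also need the trial at $v$ itself to be excluded, which holds because the trial set in this application ranges over $U\setminus\{v\}$.
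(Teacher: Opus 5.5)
Your proposal is correct and follows essentially the same route as the paper. The paper uses the same three-case decomposition: $z\in e-v$; $z$ lies in a $v$-avoiding witness edge at some $u\in f$; or $z$ lies in a $v$-avoiding $F_c$-edge at some $x\in e-v$ through the factor $1-\prod_{x}\I[c\in L^*(x)]$. It obtains the same counts $\binom{k-1}{\ell-1}$, $\binom{k-2}{\ell-2}$ and $\binom{k-1}{\ell-1}$ from linearity of $F_c$ and the unique common neighbour of $z$ and $v$ given by uncrowdedness. Your extra remark about overlap between cases (b) and (c) is unnecessary, since the paper simply adds the three bounds.
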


\begin{claimproof}
Recall from~\eqref{def: Z ell j 1} that $\mathbf{Z}_{\ell, j}^{(1)}$ is obtained from $\mathbf{Y}_{\ell, j}^{(1)}$ in~\eqref{def: Y ell j 1} by multiplying each summand indexed by $(e, f)$ by the additional factor $\left(1 - \prod_{x \in e - v}\I[c \in L^*(x)]\right)$. Fix a vertex $z \in U \setminus \{v\}$. Changing the outcome of the random trial at $z$ (that is, the inclusion of $z$ in $A$, the color choice $c_z$, or one of the equalizing coin flips $\eq(z, c')$ for $c' \in L(z)$) affects $\mathbf{Z}_{\ell, j}^{(1)}$ only through terms indexed by pairs $(e, f)$ such that at least one of the following holds:
\begin{enumerate}[label=\textup{(\alph*)}]
\item\label{case: Z direct} $z \in e - v$; 
\item\label{case: Z u witness} $z \notin e - v$ but $z \in e' - u$ for some $u \in f$ and some $e' \in E(F_{c_u}, u) \setminus E(F_{c_u}, v)$;
\item\label{case: Z x witness} $z \notin e - v$ but $z \in e_1 - x$ for some $x \in e - v$ and some $e_1 \in E(F_c, x) \setminus E(F_c, v)$.
\end{enumerate}
We bound the number of affected pairs $(e, f)$ in each case.

An identical argument to case~\ref{case: Y direct}
in the proof of Claim~\ref{claim: Lipschitz Y ell j 1} 
shows that the number of affected pairs in case~\ref{case: Z direct} is at most $\binom{j - 1}{\ell - 1} \le \binom{k - 1}{\ell - 1}$.

Similarly, the same argument in case~\ref{case: Y witness}
in the proof of Claim~\ref{claim: Lipschitz Y ell j 1} shows that the number of affected pairs in case~\ref{case: Z u witness} is at most $\binom{j - 2}{\ell - 2} \le \binom{k - 2}{\ell - 2}$.

Finally, for case~\ref{case: Z x witness},
let us suppose that $z \notin e - v$ but $z \in e_1 - x$ for some $x \in e - v$ and some $e_1 \in E(F_c, x) \setminus E(F_c, v)$. Then, we have $z, x \in e_1$ and $x, v \in e$, so $x$ is a common neighbor of $z$ and $v$ in $\bigcup_{c'' \in L(U)}F_{c''}$.
Since $\bigcup_{c'' \in L(U)}F_{c''}$ is uncrowded by condition~\ref{Q1} of property $P(i)$, the vertices $z$ and $v$ have at most one such common neighbor $x$. 
Moreover, since $F_c$ is linear, the edge $e \in E(F_c, j, v)$ containing $x$ and $v$ is unique. 
As a result, there are at most $\binom{j - 1}{\ell - 1}$ affected pairs $(e, f)$.

Combining the three cases, changing the trial at $z$ can affect at most $2\binom{k-1}{\ell-1} + \binom{k-2}{\ell-2}$ pairs $(e, f)$, and so can change $\mathbf{Z}_{\ell, j}^{(1)}$ by at most $2\binom{k-1}{\ell-1} + \binom{k-2}{\ell-2}$ as claimed.
\end{claimproof}

\begin{claim}\label{claim: certifiable Z ell j 1}
$\mathbf{Z}_{\ell, j}^{(1)}$ is $2k^2$-certifiable.
\end{claim}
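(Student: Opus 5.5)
We certify $\mathbf{Z}_{\ell,j}^{(1)} \ge s$ pair by pair, exactly as in the proof of Claim~\ref{claim: certifiable Y ell j 1}. The only new ingredient is a certificate for the extra factor $1-\prod_{x \in e-v}\I[c \in L^*(x)]$. If $\mathbf{Z}_{\ell,j}^{(1)} \ge s$, we pick $s$ contributing pairs $(e,f)$ and, for each, reveal at most $2k^2$ trials. These trials should force the summand indexed by $(e,f)$ to equal $1$ regardless of all other trials. Since every summand is a product of indicators with nonnegative coefficient, fixing $s$ summands at value $1$ certifies $\mathbf{Z}_{\ell,j}^{(1)} \ge s$, using at most $2k^2 s$ trials in total.

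For a fixed contributing pair $(e,f)$, we reuse the certificate from Claim~\ref{claim: certifiable Y ell j 1} for the first two factors. For each $u \in f$, we certify $u \in U^*$ in one of three ways: by $u \notin A$ (one trial), by $\eq(u,c_u)=0$ (two trials), or by exhibiting an edge $e' \in E(F_{c_u},u)\setminus E(F_{c_u},v)$ witnessing $c_u \in D^*(u)$ (at most $2k-1$ trials). For each $w \in g = e \setminus (f \cup \{v\})$, we reveal $\I[w\in A]$ and $c_w$ (two trials).

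For the extra factor, since the summand is $1$, there is some $x \in e-v$ with $c \notin L^*(x)$. By~\eqref{def: L*}, either $\eq(x,c)=0$ or $c \in D^*(x)$. We certify $\eq(x,c)=0$ with one trial. We certify $c \in D^*(x)$ by exhibiting $e_1 \in E(F_c,x)\setminus E(F_c,v)$ with $y \in A$ and $c_y=c$ for all $y \in e_1 - x$, which takes at most $2(k-1)$ trials.

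Each of these events is monotone under the revealed trials: once the listed outcomes are fixed, no change to any other trial can undo $u\in U^*$, $w\in A$ with $c_w=c$, or $c\notin L^*(x)$. This is because $D^*(\cdot)$ is defined existentially by edges, and $\eq$ and $A$-membership are single trials. The total count per pair is at most
\[
(\ell-1)(2k-1) + 2(j-\ell) + 2(k-1).
\]
The only step requiring care is checking that this is at most $2k^2$ for all $2\le \ell\le j\le k$. Bounding $\ell-1 \le k-1$ and $j-\ell \le k-\ell$, the total is at most $(k-1)(2k-1) + 2(k-2) + 2(k-1) = 2k^2 + k - 5$. This is at most $2k^2$ for $k \le 5$ but not in general, so the crude bound is insufficient.

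We sharpen it by noting that $\ell-1$ and $j-\ell$ trade off. For $\ell = k$ we have $g = \varnothing$, giving $(k-1)(2k-1)+2(k-1) = 2k^2-k-1$. For $\ell = 2$ we get $(2k-1)+2(k-2)+2(k-1) = 6k-7$. The expression is linear in $\ell$, so its maximum is $\max\{2k^2-k-1,\,6k-7\} \le 2k^2$ for all $k \ge 2$.

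This confirms that the budget of $2k^2$ trials per pair suffices, and hence that $\mathbf{Z}_{\ell,j}^{(1)}$ is $2k^2$-certifiable.
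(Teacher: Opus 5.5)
Your proof is correct and follows the paper's argument. You certify each contributing pair $(e,f)$ by showing $u\in U^*$ for $u\in f$, $w\in A$ and $c_w=c$ for $w\in e\setminus(f\cup\{v\})$, and $c\notin L^*(x)$ for some $x\in e-v$, which gives the same per-pair count $(\ell-1)(2k-1)+2(j-\ell)+2(k-1)$. Your explicit check that this count is at most $2k^2-k-1\le 2k^2$ (set $j=k$ and use that the expression is linear and increasing in $\ell$) supplies the inequality the paper states without detail.
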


\begin{claimproof}
Each pair $(e, f)$ with $e\in E(F_c,j,v)$ and $f\in \tbinom{e-v}{\ell-1}$ contributing to $\mathbf{Z}_{\ell, j}^{(1)}$ can be certified by certifying that for each $u \in f$ and each $w \in e \setminus (f \cup \{v\})$,
\begin{enumerate}[(a)]
\item \label{cond: certi Z1 1} $u \in U^*$,
\item \label{cond: certi Z1 2} $w \in A$ and $c_w = c$, and
\item \label{cond: certi Z1 3} $c \not \in L^*(x)$ for some $x \in e-v$.
\end{enumerate}
As in the proof of Claim~\ref{claim: certifiable Y ell j 1}, condition~\ref{cond: certi Z1 1} can be certified by revealing one of the following:
\begin{itemize}
\item $u$ is not included in $A$ (requiring $1$ trial);
\item $c_u \in D^*(u)$, which is certified by revealing $c_u$ and, for all vertices $x\in e' - u$ in some edge $e' \in E(F_{c_u}, u) \setminus E(F_{c_u}, v)$, revealing that $x\in A$ and $c_x = c_u$ (requiring at most $2|e'| - 1 \le 2k -1$ trials); or
\item $\eq(u, c_u) = 0$, which can be certified by revealing $c_u$ and the corresponding equalizing coin flip (requiring $2$ trials).
\end{itemize}
Similarly, condition~\ref{cond: certi Z1 2} can be certified by revealing the trials $\I[w\in A]$ and $c_w$.
Finally, condition~\ref{cond: certi Z1 3} can be certified by one of the following:
\begin{itemize}
\item $c \in D^*(x)$ (requiring at most $2(k-1)$ trials); and
\item $\eq(x,c) = 0$ (requiring $1$ trial).
\end{itemize}
Thus, we need at most $(\ell-1) \cdot (2k-1) + (j-\ell) \cdot 2 + 2(k-1) 
% = 2\ell(k-2) + 2j \le 2j(k-1) 
\le 2k^2$ trials to certify each pair $(e, f)$ contributing to $\mathbf{Z}_{\ell,j}^{(1)}$.
\end{claimproof}
 
If $\E[\mathbf{Z}_{\ell, j}^{(1)}] = 0$, then $\mathbf{Z}_{\ell, j}^{(1)} = 0$ and the desired bound is immediate. 
Otherwise, with Claims~\ref{claim: Lipschitz Z ell j 1} and~\ref{claim: certifiable Z ell j 1} in hand, and writing $\mu_Z \coloneqq 2\binom{k-1}{\ell-1} + \binom{k-2}{\ell-2}$ for brevity, we apply \hyperref[theorem: Talagrand]{Talagrand's Inequality} with $\mu = \mu_Z$ and $r = 2k^2$ to obtain:
\begin{align*}
\pr{\left(\mathbf{Z}_{\ell, j}^{(1)} - \E{\left[\mathbf{Z}^{(1)}_{\ell,j}\right]} \le -\tau_j\right)}
&\overset{\eqref{eq: lb 2 tau}}{\le} \pr{\left(\left|\mathbf{Z}_{\ell, j}^{(1)} - \E{\left[\mathbf{Z}^{(1)}_{\ell,j}\right]}\right| \ge \tau_j/2 + 20 \mu_Z \sqrt{2k^2 \E{\left[\mathbf{Z}^{(1)}_{\ell,j}\right]}} + 128 \mu_Z^2 k^2\right)} \\
&\ \le \ 4\exp\left(-\frac{\tau_j^2}{64 \mu_Z^2 k^2 \left(\E{\left[\mathbf{Z}^{(1)}_{\ell,j}\right]} + \tau_j/2\right)}\right) \\
&\overset{\eqref{eq: lb 1 tau}}{\le} 4\exp\left(-\frac{\delta^2\tau_j}{64 \mu_Z^2 k^2 \left(1 + \delta^2/2\right)}\right) \\
&\overset{\eqref{eq: lb 2 tau}}{=} \exp\left(-\Omega{\left(d^{\frac{\eta/6}{k-1}}\right)}\right),
\end{align*}
completing the proof of Lemma~\ref{lemma: Y 1 Y 2 Z 1 concentration}~\ref{item: Z 1 conc}.

\subsubsection{Concentration of $\mathbf{Y}^{(2)}_{\ell, j}$}

We claim that, conditional on any fixed value $c' \in L(v)$ of $c_v$, $\mathbf{Y}^{(2)}_{\ell, j}$ is a constant multiple of a sum of independent Bernoulli random variables. Indeed, when $j > \ell$, for any $f \in \binom{e-v}{\ell-1}$ and any $w \in e \setminus (f \cup \{v\})$, the summand in~\eqref{def: Y ell j 2} requires both $c_w = c_v$ and $c_w = c$, and hence vanishes unless $c_v = c$; consequently,
\[
\left(\mathbf{Y}^{(2)}_{\ell, j} \,\big|\, c_v = c' \right) 
= \I[c' = c] \binom{j-1}{\ell-1}\sum_{e \in E(F_c, j, v)}\prod_{x \in e-v}\I[x \in A,\, c_x = c].
\]
When $j = \ell$, the second product in~\eqref{def: Y ell j 2} is empty, and 
\[
\left(\mathbf{Y}^{(2)}_{\ell, \ell} \,\big|\, c_v = c' \right)
= \sum_{e \in E(F_c, \ell, v)} \prod_{x \in e-v}\I[x \in A,\, c_x = c'].
\]
Define
\[
\mathbf{B}_{c'} \coloneqq 
\begin{cases}
\displaystyle \I[c' = c] \sum_{e \in E(F_c, j, v)}\prod_{x \in e-v}\I[x \in A,\, c_x = c] & \text{if}~ j > \ell; \\
\displaystyle \sum_{e \in E(F_c, j, v)} \prod_{x \in e-v}\I[x \in A,\, c_x = c'] & \text{if}~ j = \ell.
\end{cases}
\]
Then, 
\begin{align}
\label{eq: Y j ell 2 and B c'}
\left(\mathbf{Y}^{(2)}_{\ell, j} \,\big|\, c_v = c' \right)  = \binom{j-1}{\ell-1} \mathbf{B}_{c'},
\end{align}
where the binomial coefficient equals $1$ when $j = \ell$. Since $F_c$ is linear by condition~\ref{Q1} of property $P(i)$, the sets in $\{e - v : e \in E(F_c, j, v)\}$ are pairwise disjoint. Hence, $\mathbf{B}_{c'}$ is a sum of independent Bernoulli random variables (and may be identically zero).

For every $c' \in L(v)$, condition~\ref{Q3} gives
\begin{align*}
\E[\mathbf{B}_{c'}] &\le (\alpha/q_i)^{j-1} 
\deg(F_c, j, v) \\
&\overset{\ref{Q3}}{\le} (\alpha/q_i)^{j-1} \binom{k-1}{j-1}(\alpha\gamma_i/q_i)^{k-j}\beta_i^{j-1}t_i\cdot\\
&=\frac{1}{\binom{j-1}{\ell-1}} \cdot \binom{k-1}{\ell-1}\binom{k-\ell}{j-\ell} (\alpha/q_i)^{k-1} \gamma_i^{k-j} \beta_i^{j-1}t_i \\
&\overset{\eqref{def: tau}}{=} \frac{1}{\binom{j-1}{\ell-1}} \cdot \frac{(\alpha/q_i)^{\ell - 1}}{\delta^2}\cdot\tau_j 
% \le \frac{1}{\binom{j-1}{\ell-1}} \cdot \frac{\alpha/q_i}{\delta^2}\cdot\tau_j 
\le \frac{\delta^4 \tau_j}{\binom{j-1}{\ell-1}},
\end{align*}
where in the last inequality we used the bound $(\alpha/q_i)^{\ell-1} \le \alpha/q_i \ll d^{-\frac{\eta}{k-1}} = \delta^6$, which follows from~\eqref{eq: alpha/q_i bound} and the definition of $\delta$ in~\eqref{def: eta, alpha, delta, s}.
Setting
\[
\tau_{j, c'} \coloneqq \frac{\tau_j}{2\binom{j-1}{\ell-1}} - \E[\mathbf{B}_{c'}].
\]
it follows that, for sufficiently large $d$,
\begin{align}\label{eq: lb t}
\tau_{j, c'} \ge \frac{\tau_j/2 - \delta^4\tau_j}{\binom{j-1}{\ell-1}} \ge \frac{\tau_j/4}{\binom{j-1}{\ell-1}} \overset{\eqref{eq: tau lower bound}}{=} \Omega{\left(d^{\frac{\eta/2}{k-1}}\right)},
\end{align}
and so
\begin{align}\label{eq: compare E Y ell j 2 and t}
\E[\mathbf{B}_{c'}] \le \frac{\delta^4 \tau_j}{\binom{j-1}{\ell-1}} \le \frac{4\delta^4\tau_{j, c'}}{\binom{j-1}{\ell-1}} \ll \tau_{j, c'}.
\end{align}
If $\mathbf{B}_{c'}$ is identically zero, the desired conditional probability is zero. Otherwise, applying the \hyperref[theorem: chernoff]{Chernoff Bound} (Theorem~\ref{theorem: chernoff}) to $\mathbf{B}_{c'}$ gives
\begin{align*}
\pr{\left(\mathbf{Y}_{\ell, j}^{(2)} \ge \tau_j/2 \mid c_v = c'\right)}
&\overset{\eqref{eq: Y j ell 2 and B c'}}{=} \pr{\left(\mathbf{B}_{c'} - \E[\mathbf{B}_{c'}] \ge \tau_{j, c'} \right)} \\
&\ \le \ \exp\left(-\frac{\tau_{j, c'}^2}{2\E[\mathbf{B}_{c'}] + 2\tau_{j, c'}/3}\right) \\
&\overset{\eqref{eq: compare E Y ell j 2 and t}}{\le} \exp\left(-\frac{\tau_{j, c'}}{2 + 2/3}\right) \overset{\eqref{eq: lb t}}{\le} \exp\left(-\Omega{\left(d^{\frac{\eta/2}{k-1}}\right)}\right).
\end{align*}
Averaging over $c_v$ uniform in $L(v)$ finally gives
\[
\pr{\left(\mathbf{Y}^{(2)}_{\ell, j} \ge \tau_j/2\right)} = \sum_{c' \in L(v)}\pr(c_v = c')\,\pr{\left(\mathbf{Y}^{(2)}_{\ell, j} \ge \tau_j/2 \mid c_v = c'\right)}
\le \exp{\left(-\Omega{\left(d^{\frac{\eta/2}{k-1}}\right)}\right)},
\]
completing the proof of Lemma~\ref{lemma: Y 1 Y 2 Z 1 concentration}~\ref{item: Y 2 conc}.

\section{Coloring linear hypergraphs: proof of Corollary~\ref{corollary: linear}}\label{section: linear reduction}

In this section, we will prove Corollary~\ref{corollary: linear} which bounds the chromatic number of linear hypergraphs.
For the reader's convenience, we restate it here.

\begin{corollary*}[Restatement of Corollary~\ref{corollary: linear}]
Let $k \geq 3$ be an integer, and let $\eps > 0$ be arbitrary. There exists $\Delta_0\in\N$ such that every $k$-uniform linear hypergraph $\cH$ of maximum degree at most $\Delta \geq \Delta_0$ satisfies
\[
  \chi(\cH) \le (1+\eps){
  \left(\frac{(4k-5)(k-1)}{k-2}\,\frac{\Delta}{\log\Delta}\right)^{\frac{1}{k-1}}}.
\]
\end{corollary*}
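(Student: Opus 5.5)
The plan is to reduce to Theorem~\ref{theorem: list chromatic}. We partition $V(\cH)$ into $M \coloneqq m_1m_2$ parts, each of which induces an uncrowded hypergraph of maximum degree at most $D \coloneqq (1+o(1))\Delta/M^{k-1}$. We then color each part with its own disjoint palette of $(1+\eps/2)\left((k-1)D/\log D\right)^{1/(k-1)}$ colors. This uses
\[
M\left((k-1)\tfrac{D}{\log D}\right)^{\frac{1}{k-1}} \approx \left((k-1)\tfrac{\Delta}{\log D}\right)^{\frac{1}{k-1}}
\]
colors in total. We take $m_1 = \Delta^{\frac{3}{4k-5}+o(1)}$ and $m_2 = (\log\Delta)^5$. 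Then
\[
\log D = \left(1-\tfrac{3(k-1)}{4k-5}-o(1)\right)\log\Delta = \left(\tfrac{k-2}{4k-5}-o(1)\right)\log\Delta,
\]
which gives exactly the constant $\frac{(4k-5)(k-1)}{k-2}$. So the whole task is to construct the partition.

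\emph{First stage.} Assign each vertex independently and uniformly to one of $m_1$ parts. For a fixed vertex $v$, the degree of $v$ inside its part has mean $\Delta/m_1^{k-1}$. This quantity is a polynomial power of $\Delta$, since $3(k-1)<4k-5$. By linearity, the edges at $v$ are disjoint outside $v$, so the Chernoff Bound (Theorem~\ref{theorem: chernoff}) gives concentration with exponentially small failure probability.

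Next we count short cycles through $v$, using linearity. There are $O(\Delta^2)$ $3$-cycles through $v$, each spanning at least $3k-4$ vertices other than $v$. There are $O(\Delta^3)$ $4$-cycles through $v$, each spanning at least $4k-5$ other vertices. Hence the expected numbers of $3$-cycles and $4$-cycles through $v$ surviving inside its part are
\[
\Delta^{2}m_1^{-(3k-4)} = \Delta^{\frac{2-k}{4k-5}-o(1)} \qquad\text{and}\qquad \Delta^3 m_1^{-(4k-5)} = \Delta^{-o(1)},
\]
respectively; in particular, both are at most $1$. The $o(1)$ in the exponent of $m_1$ is chosen so that these means are at most $1$ while keeping the $\log D$ computation intact.

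We write each cycle count as a degree-$O(k)$ multilinear polynomial in the indicator variables ``$x$ lies in the same part as $v$'' and apply the Kim--Vu inequality (Theorem~\ref{theorem: Kim Vu concentration}). The partial-derivative expectations $E_i$ are controlled by linearity and uncrowded-type counting: fixing some vertices of a cycle leaves few completions. This yields at most $\mathrm{polylog}(\Delta)$ short cycles per vertex with failure probability $\Delta^{-\omega(1)}$. (Cycles of length $2$ are absent by linearity.) Every bad event depends only on assignments within bounded distance of $v$, so dependencies are $\mathrm{poly}(\Delta)$, and the symmetric Local Lemma (Theorem~\ref{theorem: sym LLL}) yields a good first-stage partition.

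\emph{Second stage.} Split each part independently into $m_2=(\log\Delta)^5$ subparts. Degrees remain concentrated, since $\Delta/M^{k-1}$ is still a power of $\Delta$. A given surviving short cycle remains inside one subpart with probability at most $m_2^{-(3k-4)} \le (\log\Delta)^{-10}$, which is far smaller than the inverse of the $\mathrm{polylog}$ number of cycles through any vertex.

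We use one bad event per surviving cycle and one degree event per vertex. A cycle event shares variables with only $\mathrm{polylog}(\Delta)$ other cycle events, but with $\mathrm{poly}(\Delta)$ degree events. This mismatch is why we need the general Local Lemma (Theorem~\ref{theorem: general LLL}). We take $x=2\pr(\cdot)$ for the cycle events. For the degree events, whose probabilities are $e^{-\Delta^{\Omega(1)}}$, we take $x=\Delta^{-C}$.

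The main obstacle is precisely the parameter balance in the first stage. The Kim--Vu bounds must produce only $\mathrm{polylog}$ many $3$- and $4$-cycles per vertex with superpolynomially small failure probability, while $m_1$ stays as large as $\Delta^{3/(4k-5)}$. This requires carefully bounding $E'$, in particular the derivative terms in which one or two vertices of a $4$-cycle are fixed.

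Once the partition exists, every part is uncrowded with maximum degree at most $(1+o(1))\Delta/M^{k-1}$. Applying Theorem~\ref{theorem: list chromatic} to each part with disjoint palettes, and absorbing the lower-order factors into $\eps$, completes the proof.
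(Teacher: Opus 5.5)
Your proposal is correct and is essentially the paper's proof. It uses the same two-stage random partition with $m_1=\Delta^{3/(4k-5)+o(1)}$ and $m_2=(\log\Delta)^5$, then Chernoff, Kim--Vu and the symmetric Local Lemma in the first stage, the general Local Lemma in the second, and disjoint palettes via Theorem~\ref{theorem: list chromatic}; your per-cycle (rather than the paper's per-vertex) second-stage cycle events are an inessential variation.

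One small correction: a non-loose $4$-cycle in a linear hypergraph spans fewer than $4k-5$ vertices besides $v$, so your vertex count fails for those. As the paper does, restrict the counting to loose $4$-cycles and observe that every non-loose $4$-cycle contains a $3$-cycle (all $3$-cycles are loose here), which your partition destroys anyway.
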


We follow the approach of Frieze and Mubayi~\cite{frieze2013coloring} with several essential modifications and improvements to their argument; see Section~\ref{subsection: linear overview} for an overview of our approach. 
We begin with some terminology. Let $H$ be a $k$-uniform hypergraph, and let $r \ge 2$ be an integer.
A \emph{loose cycle} of length $r$ (or \emph{loose $r$-cycle}) in $H$ is a $k$-uniform subhypergraph of $H$ with $r(k-1)$ vertices and distinct edges $e_1, \dots, e_r$ that can be ordered cyclically so that consecutive edges meet at exactly one vertex and all other pairs of edges are disjoint. 
Thus, if $e_i \cap e_{i+1} = \{v_i\}$, with indices taken modulo $r$, then the vertices $v_1,\dots,v_r$ are distinct.
A loose $r$-cycle in $H$ contains no shorter cycle as a subhypergraph. In particular, every $3$-cycle in a linear hypergraph is loose. 
For a subset $S \subseteq V(H)$, we say that a cycle $F$ contains $S$ if $S \subseteq V(F)$. In particular, $F$ contains a vertex $v \in V(H)$ if $v \in V(F)$.

The next elementary counting lemma, as a consequence of linearity, is used both here and in Section~\ref{subsection: streaming linear}.

\begin{lemma}\label{lemma: count loose cycles}
Let $k \ge 3$ be an integer. For every integer $r \ge 3$, there exists a constant $M_r > 0$ depending only on $k$ and $r$ such that the following holds. 
Let $\cH$ be a $k$-uniform linear hypergraph of maximum degree at most $\Delta$. For every nonempty subset $S \subseteq V(\cH)$, the number of loose $r$-cycles in $\cH$ containing $S$ is at most $M_r \Delta^{b_r(|S|)}$, where
\[
b_r(s)\coloneqq\max\left\{0,\,r-1-\left\lceil\frac{s-1}{k-1}\right\rceil\right\}.
\]
\end{lemma}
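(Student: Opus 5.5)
We count loose $r$-cycles containing $S$ by building them edge by edge along the cyclic order, using linearity to control branching. The target bound $M_r\Delta^{b_r(s)}$ with $s=|S|$ says that a loose $r$-cycle has $r$ edges, and each edge after the first costs at most a factor $\Delta$. Covering the $s$ prescribed vertices forces at least $\lceil (s-1)/(k-1)\rceil$ of these later edges to be essentially determined (up to $O(1)$ choices) rather than free.

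First we fix the combinatorial type. A loose $r$-cycle has $r(k-1)$ vertices, so if $s>r(k-1)$ there are none and the bound is trivial. Otherwise we encode each cycle by a labelled template: the edges $e_1,\dots,e_r$ in cyclic order, the connector vertices $v_i\in e_i\cap e_{i+1}$, and an injection recording which template position each vertex of $S$ occupies. The number of templates is bounded in terms of $k$ and $r$ alone, and this number will be absorbed into $M_r$. Fixing one template, let $T\subseteq[r]$ be the set of indices of edges that receive at least one vertex of $S$. Since each edge has $k$ vertices, and consecutive edges share a vertex, the vertices of $S$ cannot be packed into too few edges. The precise count we aim for is that $S$ meets at least $1+\lceil (s-1)/(k-1)\rceil$ edges, provided $s\le r(k-1)$. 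To see this, we fix one edge containing some $x\in S$; each further edge contributes at most $k-1$ new vertices, since it shares a vertex with a previously counted edge or is a new block of size at most $k$. We may need a slightly cruder form in the second step, and we will adjust the argument if so.

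Next comes the sequential embedding. We pick $x_0\in S$ and an edge of the template containing it; there are at most $\Delta$ choices for that edge, but we reorganise so that the free factor is spent only on edges without prescribed vertices. Concretely, we traverse the cycle starting from an edge $e_{i_0}$ that contains $x_0$, in an order where each edge is reached through an already-embedded connector vertex. Whenever the next edge $e$ contains a vertex $y\in S$ at a prescribed position, $e$ must contain both the already-embedded connector and $y$. If these two vertices are distinct, linearity forces at most one such edge. If they coincide, then $y$ itself is the connector and we only pay $\Delta$; this case is charged to the template. Every edge without a prescribed vertex costs at most $\Delta$. The closing edge $e_r$ must contain two already-embedded vertices, $v_{r-1}$ and $v_r$, so by linearity it costs $O(1)$. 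The first edge costs at most $\Delta$, but it contains $x_0$, so we handle it by the same rule.

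Finally we count. The exponent equals the number of non-closing edges that are not pinned by a second prescribed vertex, which is at most $r-1-(\text{number of pinned edges})$. Adjusting which edge is treated as the closing one, we obtain at least $\lceil (s-1)/(k-1)\rceil$ pinned edges among the non-closing ones, which gives the bound $M_r\Delta^{\max\{0,\,r-1-\lceil (s-1)/(k-1)\rceil\}}$.

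The main obstacle is the bookkeeping of pinned edges. We must ensure that prescribed vertices lying exactly at connectors are not double-counted, and that the closing edge's saving is not lost when it coincides with a pinned edge. We expect to handle this by choosing the traversal to start at an $S$-vertex and to end at an edge meeting $S$ whenever possible, and by verifying the inequality $\#\text{pinned}\ge\lceil (s-1)/(k-1)\rceil$ through a short induction on $r$.
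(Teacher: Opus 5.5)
\textbf{Genuine gap: the counting step.} Your embedding mechanism is correct and is the same as the paper's. You grow the cycle from an edge through an anchor $z\in S$. Each later non-closing edge costs a factor $\Delta$ unless it contains a prescribed vertex of $S$ other than its entry connector, in which case linearity leaves one choice. The closing edge is fixed by its two already-embedded connectors.

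What is missing is the counting step, which is the whole content of the lemma. You need a start edge, direction and anchor for which at least $\min\{r-1,\lceil (s-1)/(k-1)\rceil\}$ non-closing edges are pinned. You only assert this (``adjusting which edge is treated as the closing one'', ``a short induction on $r$'').

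The auxiliary claim you offer is false.
\begin{itemize}
\item $S=V(F)$ satisfies your proviso $s\le r(k-1)$, yet it meets only $r$ edges, not $r+1$.
\item For $k\ge 4$, two non-connector vertices of a single edge give $s=2$ but only one edge met.
\item Your justification yields at most $\lceil (s-1)/(k-1)\rceil$ edges, not $1+\lceil (s-1)/(k-1)\rceil$. Also, an edge that is ``a new block of size at most $k$'' contributes $k$ new vertices, not $k-1$.
\end{itemize}
More fundamentally, counting edges that meet $S$ is the wrong quantity. Such an edge is not pinned if it is the start edge with a single $S$-vertex, if its only $S$-vertex is its entry connector, or if it is the closing edge. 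Your proposed remedy of ending the traversal at an edge meeting $S$ points the wrong way. The closing edge is determined regardless of $S$, so any $S$-vertices in it are wasted.

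\textbf{The missing idea: the block partition.} Set $f_i=e_i-v_{i-1}$ (indices mod $r$). The sets $f_1,\dots,f_r$ partition $V(F)$ into $r$ blocks of size $k-1$.
\begin{itemize}
\item For $2\le i\le r-1$, any vertex of $S$ in $f_i$ differs from the entry connector $v_{i-1}$, so it pins $e_i$.
\item A vertex of $S\setminus\{z\}$ in $f_1$ pins $e_1$.
\item $S\setminus\{z\}$ meets at least $\lceil (s-1)/(k-1)\rceil$ blocks.
\end{itemize}
So it suffices to choose the labelling so that $f_r\cap S\subseteq\{z\}$.

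This is possible whenever some edge has all its non-connector vertices outside $S$. Make it the closing edge and start at a neighbour. If one of its connectors lies in $S$, start at the neighbour sharing that connector and use it as the anchor. If it misses $S$ entirely, first replace it by an $S$-free edge adjacent to an edge meeting $S$, and start there.

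Otherwise every block meets $S$ under every labelling, so $e_2,\dots,e_{r-1}$ are all pinned. If $r-1$ pinned edges are needed, then $s\ge (r-2)(k-1)+2>r$; this is where $k\ge 3$ enters. Hence some block contains two vertices of $S$, and you start there with one of them as anchor.

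This is essentially the paper's argument (constrained indices plus the two-case choice of labelling). Without it, your proposal does not establish the exponent $b_r(s)$.
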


\begin{proof}
Fix a nonempty subset $S \subseteq V(\cH)$ and set $s \coloneqq |S|$.
Since each loose $r$-cycle contains $r(k-1)$ vertices,
if $s > r(k-1)$ then no loose $r$-cycle can contain $S$,
so we may assume $s \le r(k-1)$.
Let
\[
q \coloneqq \min\left\{r-1,\, \left\lceil\frac{s-1}{k-1}\right\rceil\right\},
\]
so that $b_r(s) = r - 1 - q$.
 
Consider a loose $r$-cycle $F$ containing $S$.
List its edges in cyclic order as $e_1,\dots,e_r$
(indices modulo $r$),
write $e_i\cap e_{i+1} = \{v_i\}$ for each~$i$, and set $f_i\coloneqq e_i - v_{i-1}$.
Then $f_1 \cup \cdots \cup f_r$ is a partition of $V(F)$ into pairwise disjoint blocks, each of size $k-1$.
We say that an index $i\in[r-1]$ is \emph{constrained} if either $i\ge 2$ and $f_i\cap S\ne\varnothing$, or $i=1$ and $|f_1\cap S|\ge 2$.

We claim that there is a cyclic ordering of the edges of $F$
and a vertex $z \in S$ so that at least $q$ indices are constrained.

If every block meets $S$, then $f_i \cap S \ne \varnothing$ for all $2 \le i \le r-1$, giving $r - 2$ constrained indices.
If $q \le r - 2$, choosing any $z \in f_1 \cap S$ suffices. 
If $q = r - 1$, then $s \ge (r-2)(k-1) + 2 = (r-2)(k-2) + r > r$, provided that $k, r \ge 3$, and so there must exist a block containing at least two vertices of $S$; labeling it $f_1$ and choosing $z \in f_1 \cap S$ yields the $(r-1)$-th constrained index.

If there exists at least one block disjoint from $S$, then since $S$ is nonempty, not every block is disjoint from $S$, and so there exist consecutive indices $j$ and $j+1$ (modulo $r$) with $f_j \cap S \ne \varnothing$ and $f_{j+1} \cap S = \varnothing$.
Choose $z \in f_j \cap S$ and relabel so that $e_j = e_1$ and $e_{j+1} = e_r$.
Then $S \setminus \{z\}$ lies entirely in $f_1 \cup \cdots \cup f_{r-1}$, occupying at least $\lceil(s-1)/(k-1)\rceil \ge q$ of these blocks. We note that each occupied block yields a constrained index as desired: a block $f_i$ with $i \ge 2$ contributes directly, while $f_1$ being occupied means $|f_1 \cap S| \ge 2$ since $z \in f_1$.
 
Now, for each cycle $F$ containing $S$, fix such an ordering of the edges of $F$, a vertex $z\in f_1\cap S$, a set of $q$ constrained indices, and for each constrained index $i$ a witness $y_i\in f_i\cap S$ (with $y_1\ne z$ when $i=1$).
Since $s\le r(k-1)$, the total number of such preliminary choices is bounded by a constant depending only on $k$ and $r$.

Fix all these choices and build the cycle edge by edge as follows.
\begin{itemize}
\item Edge $e_1$: if index $1$ is constrained, then $e_1$ contains the distinct vertices $z$ and $y_1$, so linearity gives at most one choice; otherwise, $e_1$ is any edge containing $z$, giving at most $\Delta$ choices.
\item Edge $e_i$ for $2\le i\le r-1$: choose $v_{i-1}\in e_{i-1}$ (at most $k$ choices); if $i$ is constrained, then $e_i$ contains distinct vertices
$v_{i-1}$ and $y_i$ (since $y_i\in f_i$ and $v_{i-1}\notin f_i$), so linearity gives at most one choice; otherwise, at most $\Delta$ choices.
\item Edge $e_r$: choose $v_{r-1}\in e_{r-1}$ and $v_0\in e_1$ (at most $k^2$ pairs); since $v_{r-1}$ and $v_0$ are distinct, linearity gives at most one edge containing both.
\end{itemize}

Each of $e_1,\dots,e_{r-1}$ contributes at most a factor of $\Delta$, and each constrained index replaces one such factor by a constant.
With at least $q$ constrained indices, the count per fixed set of preliminary choices is at most $k^r\,\Delta^{r-1-q}$.
Multiplying by the constant number of preliminary choices gives the bound $M_r \Delta^{b_r(s)}$, completing the proof of the lemma.
\end{proof}

With the above lemma in hand, let us show that there exists a partition of the vertex set $V(\cH)$ of a linear hypergraph $\cH$ such that each part induces a subhypergraph satisfying certain local sparsity properties.

\begin{lemma}\label{lemma: partition 1}
Let $k \ge 3$ be an integer, and let $\eps, \eta > 0$ be arbitrary. 
There exist a constant $C > 0$ depending only on $k$ and $\Delta_0 \in \N$ such that every $k$-uniform linear hypergraph $\cH$ of maximum degree at most $\Delta \ge \Delta_0$ admits a partition $U_1 \cup \cdots \cup U_{m_1}$ of $V(\cH)$ with $m_1 \le \Delta^{\frac{3}{4k-5}\left(1 + \frac{\eps}{2k - 3}\right)}$ 
satisfying the following properties for each $i \in [m_1]$: 
\begin{enumerate}
\item $\Delta(\cH[U_i]) \le (1+\eta)\,\frac{\Delta}{m_1^{k-1}}$, and
\item every vertex is contained in at most $2C (\log\Delta)^{4k-5}$ loose $3$- and $4$-cycles in $\cH[U_i]$.
\end{enumerate}
\end{lemma}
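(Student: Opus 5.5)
Set $m_1 \coloneqq \left\lfloor\Delta^{\frac{3}{4k-5}\left(1+\frac{\eps}{2k-3}\right)}\right\rfloor$ and assign each vertex of $\cH$ independently and uniformly to one of $m_1$ parts. Write $\chi_u$ for the part of $u$, so $\pr(\chi_u=\chi_w)=1/m_1$ for $u\ne w$. For each vertex $v$ we introduce two bad events. The first, $\cB_v$, is that the number of edges $e\ni v$ with $\chi_u=\chi_v$ for all $u\in e$ exceeds $(1+\eta)\Delta/m_1^{k-1}$. The second, $\cC_v$, is that more than $2C(\log\Delta)^{4k-5}$ loose $3$- and $4$-cycles through $v$ are monochromatic, i.e.\ have all vertices in $v$'s part.

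\textbf{Degree events.} Condition on $\chi_v$. Since $\cH$ is linear, the sets $e-v$ for $e\ni v$ are pairwise disjoint. Hence the degree count is a sum of at most $\Delta$ independent indicators, each with probability $m_1^{-(k-1)}$. Its mean is at most $\Delta m_1^{-(k-1)}$. This is $\Delta^{\Omega(1)}$ because $\frac{3(k-1)}{4k-5}\bigl(1+\frac{\eps}{2k-3}\bigr)<1$ for small $\eps$; we may shrink $\eps$ if needed, and the exponent is well below $1$ for $k\ge3$. The Chernoff Bound (Theorem~\ref{theorem: chernoff}) then gives $\pr(\cB_v)\le\exp(-\Delta^{\Omega(1)})$.

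\textbf{Cycle events.} For $r\in\{3,4\}$, let $Y_{v,r}$ be the number of loose $r$-cycles through $v$ that are monochromatic in $v$'s part. Conditioned on $\chi_v$, this is a multilinear polynomial of degree $r(k-1)-1$ in the indicators $x_u=\I[\chi_u=\chi_v]$, $u\ne v$. We apply Kim--Vu (Theorem~\ref{theorem: Kim Vu concentration}), bounding the partial derivatives by Lemma~\ref{lemma: count loose cycles}. For a set $A$ of $a$ variables,
\[
\E[Y_A]\le M_r\,\Delta^{b_r(a+1)}\,m_1^{-(r(k-1)-1-a)},
\]
since $S=A\cup\{v\}$ has size $a+1$. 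We need every $E_a$ to be at most $\mathrm{polylog}(\Delta)$, in fact $O(1)$.

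Consider $r=4$, which is the binding case. For $a=0$ we need $\Delta^{3}m_1^{-(4k-5)}\le1$, which is exactly where the exponent $\frac{3}{4k-5}$ comes from; the extra factor $1+\frac{\eps}{2k-3}$ makes it $\Delta^{-\Omega(1)}$. For $1\le a$, the exponent of $\Delta$ drops by $\lceil a/(k-1)\rceil$ while the power of $m_1$ drops by $a$. We check that $b_4(a+1)-\frac{3(4(k-1)-1-a)}{4k-5}\le0$ for all $a$. This reduces to $\lceil a/(k-1)\rceil\ge \frac{3a}{4k-5}$, which holds since $\frac{3}{4k-5}\le\frac1{k-1}$ for $k\ge 2$. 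The case $r=3$ is easier, since $b_3\le 2$ and the degree is $3k-4$. So $E,E'=O(1)$. Taking $\lambda=\Theta(\log\Delta)$ in Kim--Vu yields deviation $O((\log\Delta)^{4k-5})$, matching the constant $C$, with failure probability $\Delta^{-\omega(1)}$. We will write the failure bound as $e^{-\lambda+2}$ times the number of relevant variables, and restrict to vertices within distance $4$ of $v$. This makes the effective $n$ polynomial in $\Delta$, namely $O(\Delta^{4}k^4)$. A large constant in $\lambda=K\log\Delta$ then makes $\pr(\cC_v)\le\Delta^{-20}$, and adding $\E[Y_v]=O(1)$ gives the threshold $2C(\log\Delta)^{4k-5}$.

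\textbf{Local Lemma.} Both $\cB_v$ and $\cC_v$ are determined by the colors of vertices within distance $4$ of $v$. Each event is therefore mutually independent of all events at vertices at distance more than $8$, leaving at most $\mathrm{poly}(\Delta)$ dependencies. With probabilities at most $\Delta^{-20}$, the symmetric Local Lemma (Theorem~\ref{theorem: sym LLL}) applies and yields the partition.

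The main obstacle is the Kim--Vu bookkeeping: verifying, for every derivative order $a$ and both $r=3,4$, that Lemma~\ref{lemma: count loose cycles} together with the choice of $m_1$ keeps all $E_a$ bounded. It also requires handling the conditioning on $\chi_v$ and the localization of the "$n^{k-1}$" factor cleanly.
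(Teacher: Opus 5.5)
Your proposal is correct and follows essentially the same route as the paper: a uniform random partition, Chernoff (via linearity) for the degree events, and Kim--Vu for the loose $3$- and $4$-cycle counts with every partial-derivative expectation bounded by $O(1)$ through Lemma~\ref{lemma: count loose cycles} and $\lambda=\Theta(\log\Delta)$, followed by the symmetric Local Lemma with $\mathrm{poly}(\Delta)$ dependencies. The differences are cosmetic. The paper takes $m_1=\lceil\Delta^{3/(4k-5)+\gamma}\rceil$ with half the extra exponent, uses separate thresholds $K$ for $3$- and $4$-cycles, and localizes to distance $3$ rather than $4$. Also note that $\lambda=\Theta(\log\Delta)$ gives failure probability only $\Delta^{-\Theta(1)}$, not $\Delta^{-\omega(1)}$, which your choice $\lambda=K\log\Delta$ already accounts for.
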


\begin{proof}
    
\stepcounter{ForClaims}
\renewcommand{\theForClaims}{\ref*{lemma: partition 1}}
Suppose that $\eps<1$ and $\Delta_0$ is sufficiently large in terms of $k$ and $\eps$.
Fix 
\begin{align}\label{def: m_1 partition}
\gamma \coloneqq \frac{3 \eps}{(4k-5)(4k-6)}, \qquad m_1 \coloneqq \big\lceil\Delta^{\frac{3}{4k-5} + \gamma}\big\rceil,
\end{align}
and
\begin{align}\label{def: Delta_1, C}
\Delta_1 \coloneqq (1+\eta)\,\frac{\Delta}{m_1^{k-1}}, \qquad C \coloneqq 2M \cdot \sqrt{(4k-5)!} \cdot [8(12k-6)]^{4k-5}, \qquad K \coloneqq C \cdot (\log\Delta)^{4k-5},
\end{align}
where $M$ is the maximum of constants $M_3$ and $M_4$ given by Lemma~\ref{lemma: count loose cycles} for $r = 3$ and $r = 4$, respectively.
Given $k\ge 3$, we note that
\[
1 - \left(\frac{3}{4k-5} + \gamma\right)(k-1) 
= \frac{k-3}{4k-6} + \frac{3(k-1)(1-\eps)}{(4k-5)(4k-6)} > 0,
\]
and so, for sufficiently large $\Delta$,
\begin{align}\label{eq: Delta_1 lb}
\frac{\Delta_1}{1+\eta} = \frac{\Delta}{m_1^{k-1}} \overset{\eqref{def: m_1 partition}}{\ge} 
\Delta^{1 - \left(\frac{3}{4k-5}+2\gamma\right)(k-1)} 
\gg \Delta^{\Omega(1)}.
\end{align}

Construct a partition $U_1 \cup \cdots \cup U_{m_1}$ of $V(\cH)$ by independently  coloring each vertex $v \in V(\cH)$ by a color $c_v$ chosen uniformly at random from $[m_1]$. We define the following bad events for each vertex $v \in V(\cH)$.
\begin{enumerate}
\item $\cA_v$: there are more than $\Delta_1$ edges containing $v$ such that all vertices in each edge are colored $c_v$.
\item $\cB^{(3)}_v$: there are more than $K$ loose $3$-cycles containing $v$ such that all vertices in each cycle are colored $c_v$.
\item $\cB^{(4)}_v$: there are more than $K$ loose $4$-cycles containing $v$ such that all vertices in each cycle are colored $c_v$.
\end{enumerate}
We will use the symmetric version of the \hyperref[theorem: sym LLL]{Lov\'asz Local Lemma} (Theorem~\ref{theorem: sym LLL}) to show that there is an outcome where none of the above bad events occur. Let us begin by computing the probabilities of each of these events for each $v \in V(\cH)$. We will do so through a sequence of claims.

\smallskip

Fix a vertex $v \in V(\cH)$.

\begin{claim}\label{claim: prob A_v}
$\pr(\cA_v) < \Delta^{-10}$.
\end{claim}
\begin{claimproof}
Let $\mathbf{X}$ be the number of edges $e$ containing $v$ such that all vertices in $e$ are colored $c_v$. As $\cH$ is linear, $\mathbf{X}$ is binomially distributed with mean 
\[
\E[\mathbf{X}] = \frac{\deg(\cH,v)}{m_1^{k-1}} \le \frac{\Delta}{m_1^{k-1}}.
\]
By the \hyperref[theorem: chernoff]{Chernoff Bound} (Theorem~\ref{theorem: chernoff}), we have
\begin{align}
\pr(\cA_v) = \pr(\mathbf{X} > \Delta_1) 
\overset{\eqref{def: Delta_1, C}}{=} \pr{\left(\mathbf{X} > (1+\eta)\, \frac{\Delta}{m_1^{k-1}}\right)} 
&\le \pr{\left(\mathrm{Bin}{\left(\Delta, \frac{1}{m_1^{k-1}}\right)} - \frac{\Delta}{m_1^{k-1}} \ge \frac{\eta\Delta}{m_1^{k-1}}\right)} \\
&\le \exp\left(-\frac{\eta^2\Delta}{3m_1^{k-1}}\right) \overset{\eqref{eq: Delta_1 lb}}{=} \exp\left(-\Delta^{\Omega(1)}\right) \ll \Delta^{-10},
\end{align}
as desired.
\end{claimproof}

\begin{claim}\label{claim: prob B_v 3}
$\pr(\cB^{(3)}_v) < \Delta^{-10}$.
\end{claim}
\begin{claimproof}
Let $\cC_3(\cH, v)$ denote the collection of all loose $3$-cycles in $\cH$ containing $v$; note that $V(F) - v \subseteq N_\cH^2(v)$ for every $F \in \cC_3(\cH, v)$. Consider the auxiliary weighted hypergraph $\cK_3$ on $N_\cH^2(v)$ with edge set $E(\cK_3) \coloneqq \{V(F) - v : F \in \cC_3(\cH, v)\}$ and weight $w_f \coloneqq |\{F \in \cC_3(\cH,v) : V(F) - v = f\}|$ for each $f \in E(\cK_3)$; note that by Lemma~\ref{lemma: count loose cycles}, $w_f \le M \Delta^{b_3(3(k-1))} = O(1)$ for all $f \in E(\cK_3)$. Since $|V(F) - v| = 3k-4$ for every $F \in \cC_3(\cH,v)$, the hypergraph $\cK_3$ is $(3k-4)$-uniform, and $|V(\cK_3)| = |N_\cH^2(v)| = O(\Delta^2)$.
If $\cC_3(\cH,v)=\varnothing$, then the claim is immediate; assume henceforth that it is nonempty. 
Define
\[
\mathbf{Y} \coloneqq \sum_{f \in E(\cK_3)} w_f \prod_{u \in f} \I[c_u = c_v],
\]
noting that $\cB^{(3)}_v = \{\mathbf{Y} > K\}$. Since the colors are chosen independently, the family $\{\I[c_u = c_v] : u \in V(\cH) - v\}$ consists of independent $\{0,1\}$-valued random variables; indeed, conditional on $c_v$, these indicators are independent and their joint distribution does not depend on the value of $c_v$.

For $A\subseteq N_\cH^2(v)$ with $0 \le |A| \le 3k-4$, let
\[
\mathbf{Y}_A \coloneqq \sum_{\substack{f \in E(\cK_3)\\f \supseteq A}} w_f \prod_{u \in f \setminus A} \I[c_u = c_v],
\]
noting that $\mathbf{Y}_\varnothing = \mathbf{Y}$. Then, by Lemma~\ref{lemma: count loose cycles}, we have
\[
\E[\mathbf{Y}_A]=\sum_{\substack{f\in E(\cK_3)\\ f\supseteq A}}w_f\,m_1^{-(3k-4-|A|)}
=\frac{|\{F\in\cC_3(\cH,v):A\subseteq V(F)\}|}{m_1^{3k-4-|A|}} \le \frac{M \Delta^{b_3(|A|+1)}}{m_1^{3k-4-|A|}}.
\]
In particular, when $A = \varnothing$, since $b_3(1) = 2$, we have
\begin{align}\label{eq:E0-3}
E_0 \coloneqq \E[\mathbf{Y}] = \E[\mathbf{Y}_\varnothing] 
\le \frac{M \Delta^2}{m_1^{3k-4}}
\overset{\eqref{def: m_1 partition}}{=}  O{\left(\Delta^{-\frac{k-2}{4k-5}-(3k-4)\gamma}\right)} = o(1).
\end{align}
On the other hand, for a non-empty subset $A \subseteq N_\cH^2(v)$ with $|A|=i$, by Lemma~\ref{lemma: count loose cycles}, we have
\[
|\{F\in\cC_3(\cH,v):A\subseteq V(F)\}| \le \begin{cases}
M \Delta & \text{if}~ 1 \le i \le k-1,\\
M &\text{if}~ k \le i \le 3k-4,
\end{cases}
\]
and therefore, given~\eqref{def: m_1 partition},
\begin{align}\label{eq: Y_A ub}
\E[\mathbf{Y}_A] \leq \max\left\{\frac{M\Delta}{m_1^{2k-3}},\, M\right\} = M.
\end{align}
As a result, setting
\begin{align}
E_i \coloneqq \max_{|A|=i}\E[\mathbf{Y}_A], \qquad E' \coloneqq \max_{1 \le i \le 3k-4} E_i, \qquad \text{and}\qquad E \coloneqq \max_{0\le i\le 3k-4}E_i,
\end{align}
it follows from~\eqref{eq:E0-3} and~\eqref{eq: Y_A ub} that $E = \max\{E_0, E'\} \leq M$, and so
\begin{align}\label{eq:lambda-3}
\lambda_3 \coloneqq \frac{1}{8} \left(\frac{K}{2\sqrt{(3k-4)!\, EE'}}\right)^{\frac{1}{3k-4}} \ge \frac{1}{8} \left(\frac{K}{2M \sqrt{(3k-4)!}}\right)^{\frac{1}{3k-4}} 
\overset{\eqref{def: Delta_1, C}}{>} (12k-6) \log\Delta \gg 1.
\end{align}
By applying the \hyperref[theorem: Kim Vu concentration]{Kim--Vu Polynomial Concentration Inequality} (Theorem~\ref{theorem: Kim Vu concentration}) to $\mathbf{Y}$ with $\lambda_3$ in place of $\lambda$,
we obtain:
\begin{align*}
\pr(\cB^{(3)}_v) \le \pr(\mathbf{Y} > K) 
&\overset{\eqref{eq:E0-3}}{\le} \pr{\left(\mathbf{Y} - \E[\mathbf{Y}] > K/2\right)} \\
&\overset{\eqref{eq:lambda-3}}{\le}\pr{\left(|\mathbf{Y}-\E[\mathbf{Y}]|>(8\lambda_3)^{3k-4}\sqrt{(3k-4)!\,EE'}\right)}\\
& \le 2e^{-\lambda_3+2}\,|N_\cH^2(v)|^{3k-5} = O{\left(e^{-\lambda_3}\Delta^{2(3k-5)}\right)} \ll \Delta^{-10},
\end{align*}
as claimed.
\end{claimproof}

\begin{claim}\label{claim: prob B_v 4}
    $\pr(\cB^{(4)}_v)<\Delta^{-10}$.
\end{claim}

\begin{claimproof}
Let $\cC_4(\cH, v)$ denote the collection of all loose $4$-cycles in $\cH$ containing $v$; note that $V(F) - v \subseteq N_\cH^3(v)$ for every $F \in \cC_4(\cH, v)$. Consider the auxiliary weighted hypergraph $\cK_4$ on $N_\cH^3(v)$ with edge set $E(\cK_4) \coloneqq \{V(F) - v : F \in \cC_4(\cH, v)\}$ and weight $w_f \coloneqq |\{F \in \cC_4(\cH,v) : V(F) - v = f\}|$ for each $f \in E(\cK_4)$; note that by Lemma~\ref{lemma: count loose cycles}, $w_f \le M \Delta^{b_4(4(k-1))} = O(1)$ for all $f \in E(\cK_4)$. Since $|V(F) - v| = 4k-5$ for every $F \in \cC_4(\cH,v)$, the hypergraph $\cK_4$ is $(4k-5)$-uniform, and $|V(\cK_4)| = |N_\cH^3(v)| = O(\Delta^3)$.
If $\cC_4(\cH,v)=\varnothing$, then the claim is immediate; assume henceforth that it is nonempty. 
Define
\[
\mathbf{Z} \coloneqq \sum_{f \in E(\cK_4)} w_f \prod_{u \in f} \I[c_u = c_v],
\]
noting that $\cB^{(4)}_v = \{\mathbf{Z} > K\}$. Since the colors are chosen independently, the family $\{\I[c_u = c_v] : u \in V(\cH) - v\}$ consists of independent $\{0,1\}$-valued random variables; indeed, conditional on $c_v$, these indicators are independent and their joint distribution does not depend on the value of $c_v$.

For $A\subseteq N_\cH^3(v)$ with $0 \le |A| \le 4k-5$, let
\[
\mathbf{Z}_A \coloneqq \sum_{\substack{f \in E(\cK_4)\\f \supseteq A}} w_f \prod_{u \in f \setminus A} \I[c_u = c_v],
\]
noting that $\mathbf{Z}_\varnothing = \mathbf{Z}$. Then, by Lemma~\ref{lemma: count loose cycles}, we have
\[
\E[\mathbf{Z}_A]=\sum_{\substack{f\in E(\cK_4)\\ f\supseteq A}}w_f\,m_1^{-(4k-5-|A|)}
=\frac{|\{F\in\cC_4(\cH,v):A\subseteq V(F)\}|}{m_1^{4k-5-|A|}} \le \frac{M \Delta^{b_4(|A|+1)}}{m_1^{4k-5-|A|}}.
\]
In particular, when $A = \varnothing$, since $b_4(1) = 3$, we have
\begin{align}\label{eq:E0}
E_0 \coloneqq \E[\mathbf{Z}] = \E[\mathbf{Z}_\varnothing] 
% =\frac{|\cC_4(\cH,v)|}{m_1^{4k-5}}
\le \frac{M \Delta^3}{m_1^{4k-5}}
\overset{\eqref{def: m_1 partition}}{=} O{\left(\Delta^{-(4k-5)\gamma}\right)} = o(1).
\end{align}
On the other hand, for a non-empty subset $A \subseteq N_\cH^3(v)$ with $|A|=i$, by Lemma~\ref{lemma: count loose cycles}, we have
\[
|\{F\in\cC_4(\cH,v):A\subseteq V(F)\}| \le \begin{cases}
M \Delta^2 & \text{if}~ 1 \le i \le k-1,\\
M \Delta & \text{if}~ k \le i \le 2k-2,\\
M &\text{if}~ 2k-1 \le i \le 4k-5,
\end{cases}
\]
and therefore, given~\eqref{def: m_1 partition},
\begin{align}\label{eq: Z_A ub}
\E[\mathbf{Z}_A] \leq \max\left\{\frac{M\Delta^2}{m_1^{3k-4}},\, \frac{M\Delta}{m_1^{2k-3}},\, M\right\} = M.
\end{align}
As a result, setting
\begin{align}
E_i \coloneqq \max_{|A|=i}\E[\mathbf{Z}_A], \qquad E' \coloneqq \max_{1 \le i \le 4k-5} E_i, \qquad \text{and}\qquad E \coloneqq \max_{0\le i\le 4k-5}E_i,
\end{align}
it follows from~\eqref{eq:E0} and~\eqref{eq: Z_A ub} that $E = \max\{E_0, E'\} \leq M$, and so
\begin{align}\label{eq:lambda}
\lambda_4 \coloneqq \frac{1}{8} \left(\frac{K}{2\sqrt{(4k-5)!\, EE'}}\right)^{\frac{1}{4k-5}} \ge \frac{1}{8} \left(\frac{K}{2M \sqrt{(4k-5)!}}\right)^{\frac{1}{4k-5}} 
\overset{\eqref{def: Delta_1, C}}{\ge} (12k-6) \log\Delta \gg 1.
\end{align}
By applying the \hyperref[theorem: Kim Vu concentration]{Kim--Vu Polynomial Concentration Inequality} (Theorem~\ref{theorem: Kim Vu concentration}) to $\mathbf{Z}$ with $\lambda_4$ in place of $\lambda$, we obtain:
\begin{align*}
\pr(\cB^{(4)}_v) \le \pr(\mathbf{Z} > K) 
&\overset{\eqref{eq:E0}}{\le} \pr{\left(\mathbf{Z} - \E[\mathbf{Z}] > K/2 \right)} \\
&\overset{\eqref{eq:lambda}}{\le}\pr{\left(|\mathbf{Z}-\E[\mathbf{Z}]|>(8\lambda_4)^{4k-5}\sqrt{(4k-5)!\,EE'}\right)}\\
& \le 2e^{-\lambda_4+2}\,|N_\cH^3(v)|^{4k-6} = O{\left(e^{-\lambda_4}\Delta^{3(4k-6)}\right)} \ll \Delta^{-10},
\end{align*}
as claimed.
\end{claimproof}

By Claims~\ref{claim: prob A_v},~\ref{claim: prob B_v 3}, and~\ref{claim: prob B_v 4}, each of the events $\cA_v$, $\cB_v^{(3)}$, and $\cB_v^{(4)}$, for any $v \in V(\cH)$, occurs with probability at most $p \coloneqq \Delta^{-10}$.
Each event is determined by colors of vertices at distance at most $3$ from its underlying vertex.
Hence, two events are independent whenever their underlying vertices have distance greater than $6$.
Since $|N(u)| \leq (k-1)\Delta$ for each vertex $u$, each event is dependent on at most $O(\Delta^6)$ other events.
Setting $d_{\mathrm{LLL}} \coloneqq \Theta(\Delta^6)$, we have
\[
4pd_{\mathrm{LLL}} \leq \Theta(\Delta^6) \cdot \Delta^{-10} \ll 1.
\]
By Theorem~\ref{theorem: sym LLL}, with positive probability none of the events $\cA_v$, $\cB_v^{(3)}$, and $\cB_v^{(4)}$ occur for all $v \in V(\cH)$.
\end{proof}

The main result of this section is the following lemma, which shows that we can partition $V(\cH)$ into ``few'' parts such that each part induces an uncrowded hypergraph with ``small'' maximum degree.

\begin{lemma}\label{lemma: partition for coloring}
Let $k \ge 3$ be an integer, and let $\eps, \eta > 0$ be arbitrary. 
There exists $\Delta_0 \in \N$ such that, for every $\Delta \ge \Delta_0$, every $k$-uniform linear hypergraph $\cH$ of maximum degree at most $\Delta$ admits a partition $V_1 \cup \cdots \cup V_m$ of $V(\cH)$ with $m \le \Delta^{\frac{3}{4k-5} + \eps}$ such that each induced subhypergraph $\cH[V_i]$ is uncrowded and has maximum degree at most $(1+\eta)\, \frac{\Delta}{m^{k-1}}$.
\end{lemma}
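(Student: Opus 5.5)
We first apply Lemma~\ref{lemma: partition 1}, with $\eps$ replaced by a suitably small $\eps'$ and $\eta$ by $\eta/3$, to obtain a partition $U_1\cup\cdots\cup U_{m_1}$ with $m_1 \le \Delta^{\frac{3}{4k-5}(1+\frac{\eps'}{2k-3})}$. In each part $\cH[U_i]$ the maximum degree is at most $\Delta_1 := (1+\eta/3)\Delta/m_1^{k-1}$, and every vertex lies in at most $2C(\log\Delta)^{4k-5}$ loose $3$- and $4$-cycles. Since $\cH$ is linear, every cycle of length $3$ or $4$ in an induced subhypergraph is either loose or contains a shorter loose cycle. Indeed, a $4$-cycle whose non-consecutive edges intersect yields a $3$-cycle. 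Hence it suffices to destroy all loose $3$- and $4$-cycles.

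In the second stage we refine each $U_i$ independently by giving every vertex a uniformly random label in $[m_2]$ with $m_2 := \lceil(\log\Delta)^5\rceil$, and set $m := m_1 m_2$. This gives $m \le \Delta^{\frac{3}{4k-5}+\eps}$ once $\eps'$ is small and $\Delta$ is large. For each vertex $v$ we define two kinds of bad event.

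\textbf{Degree events.} The event $\cA_v$ says that $v$ lies in more than $(1+\eta/3)\Delta_1/m_2^{k-1}$ edges of $\cH[U_i]$ that are monochromatic in the labeling. Note that $\Delta_1/m_2^{k-1} \ge \Delta^{\Omega(1)}$, because $\Delta/m_1^{k-1}$ is a positive power of $\Delta$ by \eqref{eq: Delta_1 lb}. By linearity the relevant indicators are independent, so the Chernoff bound gives $\pr(\cA_v) \le \exp(-\Delta^{\Omega(1)})$.

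\textbf{Cycle events.} For each loose $3$- or $4$-cycle $F$ in $\cH[U_i]$, the event $\cB_F$ says that $F$ is monochromatic. Then $\pr(\cB_F) \le m_2^{-(|V(F)|-1)} \le m_2^{-(3k-4)} \le (\log\Delta)^{-5(3k-4)}$.

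Every remaining edge of a part survives with the intersection of the labels, so the final degree bound is $(1+\eta/3)^2\Delta/(m_1m_2)^{k-1} \le (1+\eta)\Delta/m^{k-1}$, as required.

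We then apply the general Lov\'asz Local Lemma (Theorem~\ref{theorem: general LLL}), setting $x_{\cA_v} := \Delta^{-20}$ and $x_{\cB_F} := 2\pr(\cB_F)$, say. Each event depends only on labels of vertices within distance $4$ of its vertex set inside its part. There are two kinds of dependency to control.

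\textbf{Cycle-on-cycle dependence.} An event $\cB_F$ shares a vertex with at most $|V(F)|\cdot 2C(\log\Delta)^{4k-5} = O((\log\Delta)^{4k-5})$ other cycle events, using the second property of Lemma~\ref{lemma: partition 1}. The product of the factors $(1-x_{\cB_{F'}})$ is therefore at least $\exp(-O((\log\Delta)^{4k-5-5(3k-4)})) = 1-o(1)$. This estimate is exactly why $m_2$ must be a high enough power of $\log\Delta$: it makes the per-cycle probability beat the polylogarithmic cycle count.

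\textbf{Dependence on degree events.} Each event is adjacent to at most $\mathrm{poly}(\Delta)$ degree events, each with $x = \Delta^{-20}$, so these contribute only a factor $1-o(1)$. Conversely, a degree event $\cA_v$ depends on $\mathrm{poly}(\Delta)$ cycle events, each with $x = O((\log\Delta)^{-5(3k-4)})$. Their product is at least $\exp(-\mathrm{poly}(\Delta)\cdot(\log\Delta)^{-\Omega(1)})$, which could be tiny, but it is still far larger than $\pr(\cA_v) \le \exp(-\Delta^{\Omega(1)})$ only if the polynomial is controlled. This calibration is the main obstacle. To handle it, we restrict the dependency set of $\cA_v$ to cycles containing a vertex at distance at most $1$ from $v$. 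By Lemma~\ref{lemma: partition 1} there are $O(\Delta_1(\log\Delta)^{4k-5})$ such cycles. The resulting factor is $\exp(-O(\Delta_1(\log\Delta)^{-10k}))$, and this beats $\exp(-c\,\eta^2\Delta_1/m_2^{k-1})$ provided the Chernoff exponent is compared with care; failing that, we instead choose $m_2$ with exponent between $4k-5$ and the threshold the comparison permits.

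Granting these verifications, with positive probability no bad event occurs. Taking $V_1,\dots,V_m$ to be the classes $\{u\in U_i : \text{label } j\}$ for $i\in[m_1]$ and $j\in[m_2]$, each $\cH[V_i]$ contains no loose $3$- or $4$-cycle and so, by the reduction in the first paragraph, is uncrowded, and by the degree events it has the required maximum degree.
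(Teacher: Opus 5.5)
Your proposal follows the paper's proof essentially step for step: apply Lemma~\ref{lemma: partition 1}, then give each part an independent $\lceil(\log\Delta)^5\rceil$-labeling handled by the general Local Lemma, where the crux is that a degree event is adjacent only to the $O(\Delta_1(\log\Delta)^{4k-5})$ cycle events meeting $N[v]$ (your per-cycle events and $x_{\cA_v}=\Delta^{-20}$ are harmless variants of the paper's per-vertex events $\cB'_v$ and $x_{\cA'_v}=e^{-(\log\Delta)^2}$). Your closing hedge is unnecessary, but correct the arithmetic: the cycle factor is $\exp\bigl(-O(\Delta_1(\log\Delta)^{-(11k-15)})\bigr)$, not $(\log\Delta)^{-10k}$, and since $11k-15>5(k-1)$ it is dominated by the Chernoff bound $\exp\bigl(-\Omega(\eta^2\Delta_1/(\log\Delta)^{5(k-1)})\bigr)$ with $m_2$ unchanged; also replace $\eta/3$ by $\min\{\eta,1\}/3$ so that the squared factor is at most $1+\eta$ for every $\eta$.
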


\begin{proof}
\stepcounter{ForClaims}
\renewcommand{\theForClaims}{\ref*{lemma: partition for coloring}}
It suffices to assume that $\eps<1$.
Fix $\eta' \coloneqq \min\{\eta,\,1\}/3$ so that $(1 + \eta')^2 \leq 1 + \eta$.
By applying Lemma~\ref{lemma: partition 1} to $\cH$ with $\eta'$ in place of $\eta$, we obtain a partition $U_1 \cup \dots \cup U_{m_1}$ of $V(\cH)$ with $m_1 \le \Delta^{\frac{3}{4k-5}\left(1+\frac{\eps}{2k - 3}\right)}$, where, for each $i \in [m_1]$, the induced subhypergraph $\cH[U_i]$ satisfies
$\Delta(\cH[U_i]) \leq \Delta_1$ with
\begin{align}\label{def: Delta_1 in partition lemma 2}
\Delta_1 \coloneqq (1+\eta')\, \frac{\Delta}{m_1^{k-1}} \ge (1+\eta') \Delta^{1-\frac{3(k-1)}{4k-5}\left(1+\frac{\eps}{4k-6}\right)} \gg \Delta^{\Omega(1)},
\end{align}
and every vertex is contained in at most $2K \coloneqq 2C(\log\Delta)^{4k-5}$ loose $3$- and~$4$-cycles (in total) in $\cH[U_i]$ for some constant $C > 0$ depending only on $k$.

We now turn our attention to each $U_i$ and further partition these sets to obtain the claimed partition.
Fix $\cH' \coloneqq \cH[U_i]$, and let
\begin{align}\label{def: Delta_2, m_2}
m_2 \coloneqq \left\lceil (\log\Delta)^5 \right\rceil, \qquad \Delta_2 \coloneqq (1+\eta) \, \frac{\Delta}{(m_1m_2)^{k-1}}.
\end{align}
By the choice of $\eta'$ and the definition of $\Delta_1$ in~\eqref{def: Delta_1 in partition lemma 2}, we have
\begin{align}\label{eq: relate Delta_2, Delta}
\Delta_2
% =(1+\eta) \, \frac{\Delta}{(m_1m_2)^{k-1}}
\ge (1+\eta')^2 \, \frac{\Delta}{(m_1m_2)^{k-1}} 
=(1+\eta') \, \frac{\Delta_1}{m_2^{k-1}}.
\end{align}
Construct a partition $W_1 \cup \cdots \cup W_{m_2}$ of $V(\cH')$ by independently  coloring each vertex $v \in V(\cH')$ by a color $c_v'$ chosen uniformly at random from $[m_2]$. We define the following bad events for each vertex $v \in V(\cH')$.
\begin{enumerate}
\item $\cA_v'$: there are more than $\Delta_2$ edges containing $v$ such that all vertices in each edge are colored $c_v'$.
\item $\cB_v'$: there exists a loose $3$- or $4$-cycle containing $v$ in which all vertices are colored $c_v'$.
\end{enumerate}
We will use the general version of the \hyperref[theorem: general LLL]{Lov\'asz Local Lemma} (Theorem~\ref{theorem: general LLL}) to show that there is an outcome where none of the above bad events occur. Let us begin by computing the probabilities of each of these events for each $v \in V(\cH')$. We will do so through a sequence of claims.

\smallskip

Fix a vertex $v \in V(\cH')$.

\begin{claim}\label{claim: prob A_v'}
$\pr(\cA_v') = \exp\left(-\Omega{\left(\dfrac{\Delta_1}{(\log\Delta)^{5k-5}}\right)}\right)$.
\end{claim}
\begin{claimproof}
Let $\mathbf{X}'$ be the number of edges containing $v$ in which all vertices are colored $c_v'$. As $\cH'$ is linear, $\mathbf{X}'$ is binomially distributed with mean 
\begin{align}\label{eq: mean X'}
\E[\mathbf{X}'] = \frac{\deg(\cH',v)}{m_2^{k-1}} \le \frac{\Delta_1}{m_2^{k-1}}.
\end{align}
By the \hyperref[theorem: chernoff]{Chernoff Bound} (Theorem~\ref{theorem: chernoff}), we have
\begin{align}
\pr(\cA_v') = \pr(\mathbf{X}' > \Delta_2) 
\overset{\eqref{eq: relate Delta_2, Delta}}{\le} \pr{\left(\mathbf{X}' > (1+\eta')\, \frac{\Delta_1}{m_2^{k-1}}\right)}
&\le \pr{\left(\mathrm{Bin}{\left(\Delta_1, \frac{1}{m_2^{k-1}}\right)} - \frac{\Delta_1}{m_2^{k-1}} \ge \frac{\eta'\Delta_1}{m_2^{k-1}}\right)} \\
&\le \exp\left(-\frac{(\eta')^2\Delta_1}{3m_2^{k-1}}\right)
\overset{\eqref{def: Delta_2, m_2}}{=} \exp\left(-\Omega{\left(\frac{\Delta_1}{(\log\Delta)^{5k-5}}\right)}\right),
\end{align}
as desired.
\end{claimproof}

\begin{claim}\label{claim: prob B_v'}
$\pr(\cB_v') \le 2C(\log\Delta)^{-(11k-15)}$.
\end{claim}
\begin{claimproof}
Since $v$ is contained in at most $2K$ loose $3$- and~$4$-cycles in $\cH'$, and the probability that all vertices of such a cycle are colored $c_v'$ is either $m_2^{-(3k-4)}$ or $m_2^{-(4k-5)}$, by a union bound, we obtain that
\begin{align}
\pr(\cB_v')
&\le 2K \cdot m_2^{-(3k-4)}
\overset{\eqref{def: Delta_2, m_2}}{=} 2C(\log\Delta)^{(4k-5) - 5(3k-4)} = 2C(\log\Delta)^{-(11k-15)},
\end{align}
as desired.
\end{claimproof}

For each $v\in V(\cH')$, set
\[
x_{\cA_v'}\coloneqq e^{-(\log\Delta)^2},
\qquad
x_{\cB_v'}\coloneqq4C(\log\Delta)^{-(11k-15)}.
\]
We verify that these assignments satisfy the conditions of Theorem~\ref{theorem: general LLL}.
To this end, fix $v \in V(\cH')$.
We note that each event $\cA_{v'}'$ is determined by the colors of $v'$ and its neighbors in $\cH'$, while each event $\cB_{v'}'$ is determined only by the colors of vertices belonging to the at most $2K$ loose $3$- and $4$-cycles in $\cH'$ containing $v'$. 
It follows that, for some constants $c_1, c_2, c_3 > 0$ depending only on $k$,
\begin{align*}
    |\{v' : \cA_{v'}' \sim \cA_v'\}| &\le c_1 \Delta_1^2, &
    |\{v' : \cB_{v'}' \sim \cA_v'\}| &\le c_2 K\Delta_1, \\
    |\{v' : \cA_{v'}' \sim \cB_v'\}| &\le c_2 K\Delta_1, &
    |\{v' : \cB_{v'}' \sim \cB_v'\}| &\le c_3 K^2.
\end{align*}
Using these bounds together with Claims~\ref{claim: prob A_v'}
and~\ref{claim: prob B_v'}, one can verify that
\begin{align*}
x_{\cA_v'} &\prod_{v' : \cA_{v'}' \sim \cA_v'}
(1 - x_{\cA_{v'}'})
\prod_{v' : \cB_{v'}' \sim \cA_v'}
(1 - x_{\cB_{v'}'}) \\
&\geq e^{-(\log\Delta)^2}
\left(1 - e^{-(\log\Delta)^2}\right)^{c_1\Delta_1^2}
\left(1 - 4C(\log\Delta)^{-(11k-15)}\right)^{c_2 K\Delta_1} \\
&\geq \exp\left(
-(\log\Delta)^2
- 2c_1\Delta_1^2 e^{-(\log\Delta)^2}
- \frac{8c_2 K\Delta_1}
 {C(\log\Delta)^{11k-15}}
\right) \\
&= \exp\left(
-o\left(\frac{\Delta_1}{(\log\Delta)^{5k-5}}\right)
\right)
\geq \pr(\cA_v'),
\end{align*}
and that
\begin{align*}
    x_{\cB_v'} &\prod_{v' : \cA_{v'}' \sim \cB_v'}
        (1 - x_{\cA_{v'}'})
      \prod_{v' : \cB_{v'}' \sim \cB_v'}
        (1 - x_{\cB_{v'}'}) \\
    &\geq 4C(\log\Delta)^{-(11k-15)}
      \left(1 - e^{-(\log\Delta)^2}\right)^{c_2 K\Delta_1}
      \left(1 - 4C(\log\Delta)^{-(11k-15)}\right)^{c_3 K^2} \\
    &\geq 4C(\log\Delta)^{-(11k-15)}
      \left(1 - c_2 K\Delta_1 e^{-(\log\Delta)^2}\right)
      \left(1 - 4c_3 CK^2(\log\Delta)^{-(11k-15)}\right) \\
    &\geq 2C(\log\Delta)^{-(11k-15)}
    \geq \pr(\cB_v').
\end{align*}
Therefore, the conditions of Theorem~\ref{theorem: general LLL} are satisfied, and so there exists an outcome of the random coloring under which none of the events $\cA_v'$, $\cB_v'$ occur.
For each $i \in [m_1]$, fix such an outcome, and let the final partition consist of the nonempty sets $V_{i,j} \coloneqq U_i \cap W_j$.
Each $\cH[V_{i,j}]$ has maximum degree at most $\Delta_2$ and contains no loose $3$- or $4$-cycle.
Since linearity excludes $2$-cycles, every $3$-cycle in $\cH$ is loose, and every $4$-cycle that is not loose contains a $3$-cycle as a subhypergraph, the absence of loose $3$- and $4$-cycles in $\cH[V_{i,j}]$ implies that $\cH[V_{i,j}]$ is uncrowded.

Let $m$ be the total number of nonempty parts. 
Since $m \le m_1 m_2$, it follows that
\[
m \le \Delta^{\frac{3}{4k-5}\left(1+\frac{\eps}{2k-3}\right)} \left\lceil (\log\Delta)^5 \right\rceil  \le \Delta^{\frac{3}{4k-5}+\eps}
\]
for sufficiently large~$\Delta$, and that the maximum degree of each $\cH[V_{i,j}]$ is at most
\[
\Delta_2 \overset{\eqref{def: Delta_2, m_2}}{=} (1+\eta) \, \frac{\Delta}{(m_1m_2)^{k-1}} \le (1+\eta)\,\frac{\Delta}{m^{k-1}},
\]
completing the proof.
\end{proof}

We are ready to deduce Corollary~\ref{corollary: linear} from Theorem~\ref{theorem: list chromatic} by using Lemma~\ref{lemma: partition for coloring}. It suffices to prove the result for $0 < \eps < 1$.
Let $\cH$ be a $k$-uniform linear hypergraph of maximum degree at most $\Delta \ge \Delta_0$, where $\Delta_0$ is sufficiently large in terms of $k$ and $\eps$. We fix 
\begin{align}\label{def: eps', eta for linear}
\eps' \coloneqq \frac{\eps}{7} \qquad \text{and} \qquad \eta \coloneqq \frac{(k-2)\eps'}{(4k-5)(k-1)}.
\end{align}
By Lemma~\ref{lemma: partition for coloring}, we can find a partition $V_1 \cup \dots \cup V_m$ of the vertex set of $\cH$, with $m \le \Delta^{\frac{3}{4k-5} + \eta}$, such that each induced subhypergraph $\cH[V_i]$ is uncrowded and has maximum degree $\Delta_i$ at most 
\begin{align}\label{def: linear D}
D \coloneqq (1+\eps')\, \frac{\Delta}{m^{k-1}} \ge (1+\eps')\, \Delta^{1 - \left(\frac{3}{4k-5} + \eta\right)(k-1)} \ge  (1+\eps')\, \Delta^{\frac{k-2}{4k-5} - (k-1)\eta}.
\end{align}
Moreover, given~\eqref{def: eps', eta for linear} and $k \ge 3$, it follows that
\begin{align}\label{eq: D lb}
D \ge (1+\eps')\, \Delta^{\frac{k-2}{4k-5} (1-\eps')} = \Delta^{\Omega(1)}.
\end{align}
As a result, we may apply Theorem~\ref{theorem: list chromatic} to each $\cH[V_i]$ for $i \in [m]$ to get
\begin{align}
\chi(\cH[V_i]) \le (1+\eps') {\left((k-1)\,\frac{D}{\log D}\right)^{\frac{1}{k-1}}} 
&\le \frac{1+\eps'}{m} {\left((1+\eps') \cdot (k-1) \cdot \frac{\Delta}{\log \left((1+\eps')\Delta/m^{k-1}\right)}\right)^{\frac{1}{k-1}}} \\
&\overset{\eqref{def: linear D}}{\le} \frac{1+\eps'}{m} {\left(\frac{1+\eps'}{1-\eps'}\cdot \frac{(k-1)(4k-5)}{k-2}\cdot\frac{\Delta}{\log \Delta}\right)^{\frac{1}{k-1}}} \\
&\le \frac{1+\eps}{m}  {\left(\frac{(k-1)(4k-5)}{k-2}\,\frac{\Delta}{\log \Delta}\right)^{\frac{1}{k-1}}},
\end{align}
where in the last inequality we used that, given~\eqref{def: eps', eta for linear} and $k\ge 3$,
\[
(1+\eps') \left(\frac{1+\eps'}{1-\eps'}\right)^{\frac{1}{k-1}}
< (1+\eps')^2 (1+2\eps')
< (1+3\eps') (1+2\eps') \le 1+\eps.
\]
By using a disjoint set of colors for each set $V_i$, it follows that
\begin{align}
\chi(\cH) \le \sum_{i=1}^m \chi(\cH[V_i]) 
\le (1+\eps) \left(\frac{(k-1)(4k-5)}{k-2}\,\frac{\Delta}{\log \Delta}\right)^{\frac{1}{k-1}},
\end{align}
completing the proof of Corollary~\ref{corollary: linear}.

\section{Algorithmic implications}\label{section: algorithms}

This section presents the proofs for our algorithmic applications outlined in Section~\ref{subsection: palette}.
We further split this section into three subsections: Section~\ref{subsection: palette uncrowded} contains the proof of our palette sparsification theorem for uncrowded hypergraphs; Section~\ref{subsection: streaming uncrowded} details the construction and analysis of the resulting streaming algorithm; and Section~\ref{subsection: streaming linear} contains the streaming algorithm for linear hypergraphs.

\subsection{Palette sparsification for uncrowded hypergraphs}\label{subsection: palette uncrowded}

In this section, we derive Theorem~\ref{theorem: palette} from Theorem~\ref{theorem: main}.
For the reader's convenience, we restate the theorem below.

\begin{theorem*}[Restatement of Theorem~\ref{theorem: palette}]
Let $k \ge 2$ be an integer, and let $\eps > 0$ be arbitrary.
There exists a constant $C > 0$ such that, for every $\gamma \in (0, 1)$, there exists $\Delta_0 \in \N$ with the following property.
Let $\Delta\ge \Delta_0$, $n \in \N$, and let $q, \ell \in \N$ with $\ell \le q$ be such that
\[
q \ge (1+\eps) {\left(\frac{k-1}{\gamma}\,\frac{\Delta}{\log \Delta}\right)^{\frac{1}{k-1}}} \qquad \text{and} \qquad \ell \ge \Delta^{\frac{\gamma}{k-1}} + C(\log n)^{\frac{1}{k}}.
\]
Let $\cH$ be an $n$-vertex $k$-uniform uncrowded hypergraph of maximum degree at most $\Delta$.
Suppose that, for each vertex $v \in V(\cH)$, we independently sample a set $L(v) \subseteq [q]$ of size $\ell$ uniformly at random.
Then, with probability at least $1 - 1/n$, $\cH$ admits an $L$-coloring.
\end{theorem*}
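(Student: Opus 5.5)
We derive Theorem~\ref{theorem: palette} from Theorem~\ref{theorem: main} applied to the random lists $L$ with a suitable color-degree parameter $d$. For $v$ and $c\in L(v)$, define the color degree
\[
d_L(v,c) = \left|\left\{e \in E(\cH,v) : c \in \textstyle\bigcap_{u\in e} L(u)\right\}\right|.
\]
Conditional on $c \in L(v)$, each $u \ne v$ contains $c$ independently with probability $p \coloneqq \ell/q$. Each edge through $v$ therefore survives with probability $p^{k-1}$. By linearity, the sets $e-v$ for $e\in E(\cH,v)$ are pairwise disjoint, so these survival events are independent. Hence $d_L(v,c)$ is stochastically dominated by $\mathrm{Bin}(\Delta, p^{k-1})$, with mean at most $\mu \coloneqq \Delta(\ell/q)^{k-1}$.

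We take $d \coloneqq \max\{(1+\xi)\mu,\, C_0\log n\}$ for a small $\xi = \xi(\eps)$ and a suitable constant $C_0$. The Chernoff bound (Theorem~\ref{theorem: chernoff}) then gives $\pr(d_L(v,c) > d) \le n^{-3}$. A union bound over the at most $n\ell \le nq$ pairs $(v,c)$ shows that condition~\ref{item: color degree in main thm} holds with probability at least $1-1/n$. For this we may assume $q \le n$, since otherwise the coloring is trivial and we can give distinct colors. Condition~\ref{item: list size in main thm} is deterministic, because $|L(v)| = \ell$. So the whole proof reduces to checking
\[
\ell \ge (1+\eps')\left((k-1)\frac{d}{\log d}\right)^{\frac{1}{k-1}}
\]
with $\eps'$ slightly smaller than $\eps$ (apply Theorem~\ref{theorem: main} with $\eps'$), together with $d \ge d_0$. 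The latter can be arranged by enlarging $d$, since increasing $d$ only weakens condition (C2) but strengthens (C1), and (C1) is what we must verify.

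The key computation is the first case, $d = (1+\xi)\mu$. Here
\[
(k-1)\frac{d}{\log d} = \frac{(1+\xi)(k-1)\Delta\,\ell^{k-1}}{q^{k-1}\log d}.
\]
Substituting $q^{k-1} \ge (1+\eps)^{k-1}(k-1)\Delta/(\gamma\log\Delta)$ shows this is at most $\ell^{k-1}(1+\xi)\gamma\log\Delta/\left((1+\eps)^{k-1}\log d\right)$. We then need $\log d \ge (1-o(1))\gamma\log\Delta$. Since $\ell \ge \Delta^{\gamma/(k-1)}$ and $q = O\!\left((\Delta/\log\Delta)^{1/(k-1)}\right)$ (we may assume $q$ is near its lower bound, for larger $q$ replace $\mu$ by its upper bound appropriately), we get $\mu \gtrsim \Delta^{\gamma}\log\Delta\cdot\gamma/(k-1)$. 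Hence $\log d \ge \gamma\log\Delta$, and the required inequality holds with room $(1+\eps)/(1+\xi)^{1/(k-1)}\ge 1+\eps'$.

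The main obstacle is the regime where $q$ is much larger than the threshold, so that $\mu$ is small, or where $C_0\log n$ dominates. There the ratio $\log d$ versus $\log\Delta$ can break, and we must use monotonicity carefully. If $d = C_0\log n$, we need $\ell^{k-1}\gtrsim (k-1)C_0\log n/\log\log n$. This is implied by $\ell \ge C(\log n)^{1/k}$ only if the dependence is handled through the $\Delta^{\gamma/(k-1)}$ term as well. I would first try to show that in this case $\ell^{k-1} \ge \Delta^{\gamma}\cdot C^{k-1}(\log n)^{(k-1)/k}$ suffices via a sharper Chernoff tail: $\pr(\mathrm{Bin} > t) \le (e\mu/t)^t$ with $t$ of order $\log n/\log(\log n/\mu)$. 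This yields a small threshold $d$ when $\mu$ is tiny, and I would choose $C$ to absorb the constants.
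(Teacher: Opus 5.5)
You apply Theorem~\ref{theorem: main} directly to the sampled lists $L$. Then \textup{(C2)} forces $d$ to be at least the \emph{maximum} color degree over all $n\ell$ pairs $(v,c)$. This is where the argument breaks, and a sharper tail bound cannot rescue it, because that maximum genuinely is too large.

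To see this, take $\cH$ $\Delta$-regular with $\Delta=(\log n)^2$, and $\gamma$ small enough that $\Delta^{\gamma/(k-1)}\le(\log n)^{1/k}$. Put both $\ell=\Theta((\log n)^{1/k})$ and $q$ at their lower bounds. Then $\mu=\Delta(\ell/q)^{k-1}=\Theta(\ell^{k-1}\log\Delta)=\Theta\big((\log n)^{(k-1)/k}\log\log n\big)=o(\log n)$.

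For $t=c_0\log n/\log\log n$, a single pair satisfies $\pr(d_L(v,c)\ge t)\ge(\mu/t)^t e^{-2\mu}\ge n^{-c_0/k-o(1)}$. There are $n^{1-o(1)}$ vertices with pairwise disjoint closed neighborhoods, so with high probability some pair has color degree at least $t$. Condition \textup{(C1)} would then require $\ell^{k-1}=\Omega\big(\log n/(\log\log n)^2\big)$, which fails for $\ell^{k-1}=\Theta((\log n)^{(k-1)/k})$.

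So neither $d=C_0\log n$ nor the sharper threshold $\log n/\log(\log n/\mu)$ is compatible with \textup{(C1)}. Controlling every pair individually needs roughly $\ell^{k-1}\gtrsim\log n$, up to $\log\log n$ factors. The hypothesis only gives $\ell^{k}\gtrsim\log n$.

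The missing idea is to \emph{prune} before invoking Theorem~\ref{theorem: main}, which is what the paper does. Set $d\coloneqq(1+\eta)\ell^{k-1}(\Delta/q^{k-1}+1)$, and let $L'(v)$ consist of those $c\in L(v)$ whose color degree with respect to the original lists is at most $d$. Shrinking lists only lowers color degrees, so $L'$ satisfies \textup{(C2)} deterministically. It remains to show $|L'(v)|\ge(1-\eta)\ell$ for all $v$.

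If this fails at $v$, some $\lceil\eta\ell\rceil$-subset $T\subseteq L(v)$ has total color degree exceeding $|T|d$. Conditional on $L(v)$, this total is a sum of negatively correlated indicators, one for each pair $(e,c)$ with $e\ni v$ and $c\in T$. Its mean is at most $|T|\Delta(\ell/q)^{k-1}$. The negative-correlation Chernoff bound makes the excess probability $\exp(-\Omega(\eta^2\ell^k))$. This survives a union bound over the at most $2^\ell$ choices of $T$ and all $n$ vertices once $\ell^k\ge C^k\log n$.

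Aggregating $\eta\ell$ colors is exactly what raises the effective mean from order $\ell^{k-1}$ to order $\ell^{k}$, and it explains the exponent $1/k$ in the hypothesis. The additive $+1$ in $d$ also gives $d\ge\ell^{k-1}\ge\Delta^{\gamma}$, hence $\log d\ge\gamma\log\Delta$ for every admissible $q$. That settles the large-$q$ regime you left open.
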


Fix $k \ge 2$ and $\eps > 0$. We may assume $\eps < 1$, as larger values of $\eps$ only strengthen the hypothesis in Theorem~\ref{theorem: palette}. Let
\begin{align}\label{eq: eta and C}
\eta \coloneqq \frac{\eps}{10} \qquad \text{and} \qquad C \coloneqq \left(\frac{6}{\eta^2}\right)^{\frac{1}{k}}.
\end{align}
Fix any $\gamma \in (0, 1)$. Let $\Delta_0 \in \N$ be sufficiently large in terms of $k$, $\eps$, and $\gamma$, and let $\Delta \ge \Delta_0$. Let $n \in \N$, and let $q, \ell \in \N$ with $\ell \le q$ satisfy the bounds in Theorem~\ref{theorem: palette}. Let $\cH$ be an $n$-vertex $k$-uniform uncrowded hypergraph of maximum degree at most $\Delta$.

For each vertex $v \in V(\cH)$, independently choose a subset $L(v) \subseteq [q]$ of size $\ell$ uniformly at random.
For each color $c \in [q]$, let $F_c$ be the hypergraph with vertex set $V(\cH)$ and edge set 
\[
\left\{e \in E(\cH)\,:\,c \in \textstyle\bigcap_{v \in e} L(v)\right\}.
\]
For each $v \in V(\cH)$, let
\begin{align}\label{eq: pruned lists}
L'(v) \coloneqq \left\{c \in L(v) \,:\, \deg(F_c, v) \le d\right\},
\end{align}
where
\begin{align}\label{def: d for palette sparsification}
d \coloneqq (1+\eta)\,\ell^{k-1} \left(\frac{\Delta}{q^{k-1}} + 1\right).
\end{align}
We first show that $L'(v)$ is large for all $v \in V(\cH)$ with high probability.
\begin{lemma}\label{lemma:Lprime}
For every $v \in V(\cH)$, we have $\pr\left(|L'(v)| < (1 - \eta)\ell\right) \le 1/n^2$.
\end{lemma}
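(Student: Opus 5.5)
Fix $v \in V(\cH)$ and condition on the random set $L(v)$; since $L(v)$ is independent of all other lists, it suffices to show that, for any fixed $\ell$-set $L(v)$, the number of colors $c \in L(v)$ with $\deg(F_c, v) > d$ is at most $\eta\ell$ with probability at least $1 - 1/n^2$. For a fixed $c \in L(v)$, we have
\[
\deg(F_c, v) = \sum_{e \in E(\cH, v)} \prod_{u \in e - v} \I[c \in L(u)].
\]
By linearity the sets $e - v$ are pairwise disjoint, so $\deg(F_c, v)$ is a sum of at most $\Delta$ independent Bernoulli variables, each with mean $(\ell/q)^{k-1}$. Its mean is therefore at most $\Delta(\ell/q)^{k-1} \le d/(1+\eta)$.

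\textbf{Step 1 (one color).} Apply the Chernoff Bound (Theorem~\ref{theorem: chernoff}) with deviation $t = d - \E[\deg(F_c,v)] \ge \eta d/(1+\eta)$. The additive term $\ell^{k-1}$ in $d$ guarantees $t = \Omega(\eta\ell^{k-1})$ even when $\Delta(\ell/q)^{k-1}$ is small. This gives
\[
p \coloneqq \pr(\deg(F_c, v) > d) \le \exp\bigl(-\Omega(\eta^2 \ell^{k-1})\bigr).
\]
Since $\ell \ge \Delta^{\gamma/(k-1)}$, we have $\ell^{k-1} \ge \Delta^{\gamma}$, which is huge for $\Delta \ge \Delta_0$. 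Thus $p \le e^{-c\eta^2\ell^{k-1}}$.

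\textbf{Step 2 (many colors).} We need $\pr(\text{more than } \eta\ell \text{ bad colors}) \le 1/n^2$. The events ``$c$ is bad'' for different $c \in L(v)$ are not independent, since they share the random lists $L(u)$ of neighbors. However, each such event is increasing in the indicators $\I[c \in L(u)]$, and for a fixed $u$ these indicators over different colors are negatively correlated, because $L(u)$ is a uniform $\ell$-subset. I would therefore avoid proving negative correlation of the bad events directly and instead use a union bound over sets $S \subseteq L(v)$ with $|S| = \eta\ell$, bounding $\pr(\text{all } c \in S \text{ bad})$ as follows. Condition on the edge structure and expose, for each $u$, only the set $L(u) \cap S$. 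The vector $\bigl(\deg(F_c,v)\bigr)_{c\in S}$ is then a function of independent per-vertex random subsets whose coordinate indicators are negatively associated (uniform subsets are negatively associated). Monotone increasing functions of disjoint coordinates inherit negative association, so
\[
\pr(\text{all } c \in S \text{ bad}) \le p^{|S|}.
\]
Hence
\[
\pr\bigl(|L(v)\setminus L'(v)| \ge \eta\ell\bigr) \le \binom{\ell}{\eta\ell} p^{\eta\ell} \le \bigl(e p^{\eta}/\eta\bigr)^{\ell} \le \exp\bigl(-\Omega(\eta^3 \ell^{k})\bigr).
\]

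\textbf{Step 3 (choice of $C$).} Since $\ell \ge C(\log n)^{1/k}$, we have $\ell^{k} \ge C^k \log n = (6/\eta^2)\log n$. With constants tracked (possibly after adjusting the exponent of $\eta$ in $C$, or using $\ell^{k-1}\ge\Delta^\gamma$ to absorb a power of $\eta$), this gives the bound $\le n^{-2}$.

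\textbf{Main obstacle.} The main obstacle is Step~2: rigorously justifying the product bound across colors. The cleanest route is negative association of the uniform $\ell$-subset indicators, together with closure of negative association under monotone functions of disjoint coordinate sets. Note that the degree for color $c$ uses only the coordinates $(u, c)$, so for different colors these coordinate sets are disjoint, as required. If that route fails, I would fall back on the negative-correlation Chernoff bound (Theorem~\ref{lemma:NegativeChernoff}) applied to the bad-color indicators.
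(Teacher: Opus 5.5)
Your route is correct but handles the many-colors step differently from the paper. The paper never bounds the probability that a single color is bad. Instead it calls a set $T\subseteq L(v)$ of $\lceil\eta\ell\rceil$ colors bad if $\sum_{c\in T}\deg(F_c,v)>|T|d$, and notes that any $\lceil\eta\ell\rceil$ colors of $L(v)\setminus L'(v)$ form a bad set. It then applies the negative-correlation Chernoff bound (Theorem~\ref{lemma:NegativeChernoff}) once, to the aggregated sum of the indicators $\mathbf{X}_{e,c}$, whose negative correlation it checks by a short conditional-probability computation; a union bound over at most $2^\ell$ sets $T$ finishes.

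You instead apply Theorem~\ref{theorem: chernoff} color by color, where linearity gives genuine independence. You then multiply the per-color tails using negative association of uniform $\ell$-subsets (Joag-Dev--Proschan) and its closure under nondecreasing functions of disjoint coordinate sets. This is valid because $\I[\deg(F_c,v)>d]$ is nondecreasing in the coordinates $\{(u,c) : u\in N(v)\}$ only. The paper aggregates over $T$ because mere negative correlation of the base indicators does not give a product bound on the bad-color events; it does give a tail bound on their sum, and the check is self-contained. Your version imports heavier machinery but keeps each concentration step elementary. It also sidesteps the fact that Theorem~\ref{lemma:NegativeChernoff}, as stated, requires the deviation to be at most the mean. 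That is not automatic in the paper's application, for example at vertices of small degree.

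One quantitative point needs tightening. Bounding the deviation only by $\eta d/(1+\eta)$ costs a factor of $\eta$. You get $p\le\exp(-\Omega(\eta^2\ell^{k-1}))$ and hence only $\exp(-\Omega(\eta^3\ell^k))$. With the paper's $C=(6/\eta^2)^{1/k}$ this is of order $n^{-\Theta(\eta)}$ rather than $n^{-2}$ when $\ell\approx C(\log n)^{1/k}$.

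Enlarging $C$ to order $\eta^{-3/k}$ is legitimate for Theorem~\ref{theorem: palette}. Your other suggestion, absorbing $\eta$ via $\ell^{k-1}\ge\Delta^\gamma$, fails when $\log n\gg\Delta^{\gamma k/(k-1)}$, since then $\ell^k$ is essentially $C^k\log n$.

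The better fix is the observation behind the paper's AM--GM step~\eqref{eq: sq(td-EX)/EX}. With $\mu_0\coloneqq\Delta(\ell/q)^{k-1}$, the deviation is $d-\mu_0=\eta\mu_0+(1+\eta)\ell^{k-1}$, so it carries a full $\ell^{k-1}$ rather than $\eta\ell^{k-1}$. The Chernoff exponent is decreasing in the mean, so it suffices to evaluate it at $\mu_0$. Expanding and comparing coefficients gives
\[
\frac{(d-\mu_0)^2}{2\mu_0+2(d-\mu_0)/3}\;\ge\;\eta\,\ell^{k-1}.
\]
Hence $p\le e^{-\eta\ell^{k-1}}$, and your union bound becomes
\[
2^\ell e^{-\eta^2\ell^k}\le e^{-\eta^2\ell^k/2}\le e^{-\eta^2C^k\log n/2}=n^{-3}.
\]
The first inequality uses $\eta^2\ell^{k-1}\ge 2\ln 2$, which holds because $\ell^{k-1}\ge\Delta^\gamma$. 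So the lemma holds with the paper's $C$.
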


\begin{proof}
Fix $v \in V(\cH)$. For the sake of analysis, consider $L(v)$ fixed. Let $t \coloneqq \lceil \eta \ell \rceil$. We say that a subset $T(v) \subseteq L(v)$ of size $t$ is \textit{bad} if
\[
\sum_{c \in T(v)} \deg(F_c, v) > td.
\]
Observe that if $|L'(v)| < (1-\eta)\ell$, then $|L(v) \setminus L'(v)| \ge t$ and every $t$-element subset of $L(v) \setminus L'(v)$ is bad.
Therefore, it suffices to argue that
\[
\pr\left(\text{there is a bad set $T(v) \subseteq L(v)$ of size $t$}\right) \le 1/n^2.
\]
To this end, consider an arbitrary set $T(v) \subseteq L(v)$ of size $t$. 
For each color $c \in T(v)$ and edge $e \in E(\cH, v)$, let $\mathbf{X}_{e,c}$ be the indicator for the event that $c \in L(u)$ for every $u \in e - v$. Define
\[
\mathbf{X} \,\coloneqq\, \sum_{c \in T(v)}\sum_{e \in E(\cH, v)} \mathbf{X}_{e,c},
\]
noting that $\mathbf{X} = \sum_{c \in T(v)} \deg(F_c, v)$, so $T(v)$ is bad if and only if $\mathbf{X} > td$.
We have
\begin{align*}
\E[\mathbf{X}]
= \sum_{c \in T(v)}\sum_{e \in E(\cH, v)} \left(\frac{\ell}{q}\right)^{k-1}
\le \, t\Delta\left(\frac{\ell}{q}\right)^{k-1}.
\end{align*}
Since $\mathbf{X}$ is a nonnegative random variable, we note that if $\E[\mathbf{X}]=0$, then $\mathbf{X}=0$ almost surely and the desired bound is immediate. Assume henceforth that $\E[\mathbf{X}]>0$.
It follows that
\begin{align}\label{eq: deviation bound}
td - \E[\mathbf{X}]
&\overset{\eqref{def: d for palette sparsification}}{\ge} t\ell^{k-1} \left[(1+\eta) \left(\frac{\Delta}{q^{k-1}} + 1\right)
- \frac{\Delta}{q^{k-1}}\right]
= t\ell^{k-1} \left(1+ \frac{\eta\Delta}{q^{k-1}}+ \eta\right)
\ge t\ell^{k-1} \left(1+ \frac{\eta\Delta}{q^{k-1}}\right)
\end{align}
and that
\begin{align}\label{eq: sq(td-EX)/EX}
\frac{(td - \E[\mathbf{X}])^2}{\E[\mathbf{X}]}
\ge \frac{t(\ell q)^{k-1}}{\Delta}
\left(1+ \frac{\eta\Delta}{q^{k-1}}\right)^{2}
\ge t\ell^{k-1} \left(\frac{q^{k-1}}{\Delta}
+ \frac{\eta^2\Delta}{q^{k-1}}\right)
\ge 2\eta t\ell^{k-1}
\ge 2\eta^2\ell^k,
\end{align}
by the AM-GM inequality. Next, we claim that the random variables 
\[\set{\mathbf{X}_{e,c} \,:\, c \in T(v),\, e \in E(\cH, v)}\]
are negatively correlated. 
Indeed, take any
\[I \subseteq E(\cH, v) \times T(v) \qquad \text{and} \qquad (e', c') \in (E(\cH, v) \times T(v)) \setminus I.\]
If the event we condition on below has probability $0$, the inequality is immediate. Otherwise, since $\cH$ is uncrowded and hence linear, we have
\[
\pr{\left(\mathbf{X}_{e', c'} = 1 \,\middle\vert\, \prod_{(e,c) \in I} \mathbf{X}_{e,c} = 1\right)} \,=\,\left( \frac{\ell- |\set{(e, c) \in I\,:\, e = e'}|}{q - |\set{(e, c) \in I\,:\, e = e'}|}\right)^{k-1} \le \left(\frac{\ell}{q}\right)^{k-1}.
\]
Inductive applications of this inequality show that for any $I \subseteq E(\cH, v) \times T(v)$, we have
\[
\E{\left[\prod_{(e, c) \in I} \mathbf{X}_{e,c}\right]} \le \left(\frac{\ell}{q}\right)^{(k-1)|I|} \,=\, \prod_{(e, c) \in I} \E[\mathbf{X}_{e,c}],
\]
as desired. By the \hyperref[lemma:NegativeChernoff]{Chernoff Bound} for negatively correlated random variables (Theorem~\ref{lemma:NegativeChernoff}), we have
\begin{align*}
\pr(\mathbf{X} > td) \le \exp\left(-\frac{(td - \E[\mathbf{X}])^2}{3\E[\mathbf{X}]}\right)
&\overset{\eqref{eq: sq(td-EX)/EX}}{\le} \exp\left(-\frac{2\eta^2\ell^k}{3}\right).
\end{align*}
Taking a union bound over the at most $2^\ell$ choices of $T(v)$, we have
\begin{align*}
\pr\left(\text{there is a bad set $T(v) \subseteq L(v)$ of size $t$}\right) < 2^\ell \, \exp\left(-\frac{2\eta^2\ell^k}{3}\right) \le \exp\left(-\frac{\eta^2\ell^k}{3}\right),
\end{align*}
given that $\ell$ is sufficiently large (as $\Delta \ge \Delta_0$).
Since $\ell^k \ge C^k\log n$, 
we then have
\[
\exp\left(-\frac{\eta^2\ell^k}{3}\right) \le \exp\left(-\frac{\eta^2C^k}{3}\log n\right) \overset{\eqref{eq: eta and C}}{\le} \exp\left(-2\log n\right) \le 1/n^2,
\]
completing the proof.
\end{proof}

We verify that the pruned lists satisfy condition~\ref{item: list size in main thm} of Theorem~\ref{theorem: main}. Since $d \ge \ell^{k-1} \ge \Delta^\gamma$, we have $\log d \ge \gamma\log\Delta$. Combined with the lower bound on $q$, this gives, for sufficiently large $\Delta$,
\begin{align}
(1+\eta) {\left((k-1)\,\frac{d}{\log d}\right)^{\frac{1}{k-1}}}
&\le (1+\eta)^{\frac{k}{k-1}}\ell
{\left((k-1) \, \frac{1 + \Delta/q^{k-1}}{\gamma\log\Delta}\right)^{\frac{1}{k-1}}}
\nonumber \\
&\le (1+\eta)^{\frac{k}{k-1}}\ell
{\left(\frac{k-1}{\gamma\log \Delta}
+ \frac{1}{(1+\eps)^{k-1}}\right)^{\frac{1}{k-1}}}
< (1-\eta)\ell,\label{eq: lower bound on L'(v)}
\end{align}
where in the last inequality we use $\eta = \eps/10$ as given in~\eqref{eq: eta and C}.

By Lemma~\ref{lemma:Lprime} and a union bound, with probability at least
$1 - 1/n$ the following hold simultaneously:
\begin{enumerate}
\item\label{item: size of L'} $|L'(v)| \ge (1-\eta)\ell$ for all
$v \in V(\cH)$, and
\item\label{item: color degree of L'} for every $v \in V(\cH)$ and
$c \in L'(v)$,
$|\{e \in E(\cH, v)\,:\,c \in \bigcap_{u \in e} L'(u)\}| \le d$.
\end{enumerate}
By~\eqref{eq: lower bound on L'(v)}, condition~\eqref{item: size of L'} gives $|L'(v)| \ge (1+\eta)((k-1)\,(d/\log d))^{\frac{1}{k-1}}$, so $L'$ satisfies the hypotheses of Theorem~\ref{theorem: main}. Therefore, $\cH$ admits a proper $L'$-coloring, completing the proof of Theorem~\ref{theorem: palette}.

\subsection{Streaming algorithms from palette sparsification}\label{subsection: streaming uncrowded}

We now prove Corollary~\ref{corl:main streaming}. 
The important point is that the static pruning used above can be implemented during a single pass; the final online lists need not equal the static pruned lists, but they contain them, which is the direction needed for the argument.

\begin{corollary*}[Restatement of Corollary~\ref{corl:main streaming}]
Let $k\ge2$ be an integer and let $\eps>0$. There is a constant $C' > 0$ such that, for every $\gamma \in (0, 1)$, there exists $\Delta_0\in\N$ with the following property. Let $\Delta\ge\Delta_0$, $n \in \N$, and set 
\[
q\coloneqq\max\left\{(1+\eps)\left(\frac{k-1}{\gamma}\,\frac{\Delta}{\log\Delta}\right)^{\frac{1}{k-1}},\ C'(\log n)^{\frac{1}{k}}\right\}.
\]
Let $\cH$ be an $n$-vertex $k$-uniform uncrowded hypergraph of maximum degree at most $\Delta$.   
There is a randomized single-pass streaming algorithm that computes a proper $q$-coloring of $\cH$ with probability at least $1-1/n$ using
$\tilde{O}{\left(n\Delta^{\frac{\gamma k}{k-1}}\right)}$ space.
\end{corollary*}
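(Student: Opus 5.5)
The plan is to implement the palette sparsification of Theorem~\ref{theorem: palette} in one pass and then color the stored sparse instance offline using Theorem~\ref{theorem: main}. Set $\ell \coloneqq \min\{q, \lceil\Delta^{\frac{\gamma}{k-1}} + C(\log n)^{\frac1k}\rceil\}$, where $C$ is the constant of Theorem~\ref{theorem: palette}, and choose $C' \ge C+1$ with $\Delta_0$ large enough that $\ell$ satisfies the hypotheses whenever $\ell<q$. If $\ell = q$, every vertex receives the whole palette $[q]$, and we simply rely on the argument below with $L(v)=[q]$, so that no separate case is needed. Before the stream begins, each vertex independently samples $L(v)\subseteq[q]$ of size $\ell$; storing these lists costs $O(n\ell\log q)$ bits. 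The algorithm also keeps counters $\deg_c(v)$ for $v\in V(\cH)$ and $c\in L(v)$. As each edge $e$ arrives, it computes $I_e \coloneqq \bigcap_{u\in e}L(u)$. If $I_e\ne\varnothing$, it increments $\deg_c(u)$ for every $c\in I_e$ and $u\in e$. It stores $e$ together with $I_e$ only when some $c\in I_e$ still has $\deg_c(u)\le d$ at every $u\in e$, where $d$ is as in~\eqref{def: d for palette sparsification}. Here an online list $L^{\mathrm{on}}(v)\coloneqq\{c\in L(v): \deg_c(v)\le d \text{ at the end}\}$ is monotonically shrinking, and a color is never stored more than $d$ times per vertex.

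The correctness argument has two steps. First, since final counters equal $\deg(F_c,v)$, the online lists at the end of the stream coincide with the static pruned lists $L'(v)$ from~\eqref{eq: pruned lists}. Any edge $e$ with $c\in\bigcap_{u\in e}L'(u)$ had every counter $\le d$ at the moment of its arrival, since counters only increase, and so $e$ was stored. Consequently, the stored edges contain every edge that could become monochromatic under an $L'$-coloring. Second, by Lemma~\ref{lemma:Lprime} and the computation~\eqref{eq: lower bound on L'(v)}, with probability at least $1-1/n$ the lists $L'$ satisfy conditions~\ref{item: list size in main thm} and~\ref{item: color degree in main thm} of Theorem~\ref{theorem: main}. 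Hence an $L'$-coloring of the stored hypergraph exists. Because every omitted edge has $\bigcap_{u\in e}L'(u)=\varnothing$, any such coloring is a proper $q$-coloring of $\cH$. After the pass, we find this coloring offline on the stored instance (for instance, by exhaustive search or via the Moser--Tardos version of the nibble). Only the space bound matters here, not the running time.

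For the space bound, observe that each stored edge is charged to some pair $(u,c)$ with $c\in L(u)$ and counter at most $d$. Each such pair is charged at most $d$ times, so at most $n\ell d$ edges are stored, each using $O(k\log n)$ bits. Using $d=(1+\eta)\ell^{k-1}(\Delta/q^{k-1}+1)$ and $\Delta/q^{k-1}=O(\log\Delta)$, we get $n\ell d=\tilde O(n\ell^k)$. The counters contribute $O(n\ell\log\Delta)$ bits. Since $\ell=O(\Delta^{\gamma/(k-1)}+(\log n)^{1/k})$, we have $\ell^k=\tilde O(\Delta^{\gamma k/(k-1)})$ up to polylogarithmic factors in $n$. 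This yields the total $\tilde O(n\Delta^{\gamma k/(k-1)})$.

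The main obstacle is the first correctness step: making sure the online truncation never drops an edge that the static argument needs. The monotonicity of the counters handles this, but it must be checked carefully, because a color may have a counter below $d$ at some vertices and above $d$ at others. A secondary technical point is the regime $\ell=q$, where $q$ is dominated by the $C'(\log n)^{1/k}$ term. There one needs to verify that the bound on $d$ still satisfies condition~\ref{item: list size in main thm}, and I would check the computation~\eqref{eq: lower bound on L'(v)} directly in that case.
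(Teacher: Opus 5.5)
Your proposal is correct and follows essentially the paper's argument: one-pass counter-based pruning of the sampled lists, storing only edges whose pruned lists can still share a color, then Lemma~\ref{lemma:Lprime} plus Theorem~\ref{theorem: main} on the stored instance, and a charging bound of $O(n\ell d)$ stored edges. The one real difference is what the counters track. Yours count the full color degrees $\deg(F_c,v)$ against the original sampled lists, so your final lists are exactly the static $L'(v)$. The paper increments counters only for colors in the intersection of the current online lists, obtaining final lists $L_\infty(v)\supseteq L'(v)$ and checking the color-degree condition directly from the counters.

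Two small remarks.
\begin{itemize}
\item The $\ell=q$ regime you flag is vacuous. Since $C'\ge C+1$ and $\Delta\ge\Delta_0$, you already have $q\ge \Delta^{\frac{\gamma}{k-1}}+C(\log n)^{\frac1k}$; split according to whether $\Delta^{\frac{\gamma}{k-1}}\le(\log n)^{\frac1k}$. This is why the paper simply asserts $\ell\le q$.
\item You should not store $I_e$ with each edge. It can be recomputed from the stored lists, and counted naively it would cost an extra factor of up to $\ell$ in space.
\end{itemize}
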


\begin{proof}
Let $\eta$ and $C$ be as in the proof of Theorem~\ref{theorem: palette}.
Set $C' = 2C$ and define
\[
\ell\coloneqq\Delta^{\frac{\gamma}{k-1}}+C(\log n)^{\frac{1}{k}}
\qquad
\text{and}
\qquad
d \coloneqq (1+\eta)\,\ell^{k-1} \left(\frac{\Delta}{q^{k-1}} + 1\right).
\]
For sufficiently large $\Delta$, we have $\ell\le q$.

Before the stream begins, independently sample a uniformly random $\ell$-element list $L(v)\subseteq [q]$ for each vertex.
The algorithm maintains a current list $L_{\rm on}(v)$, initially equal to $L(v)$, and a counter $N(v,c)$, initially zero, for every $c\in L(v)$.

When an edge $e$ arrives, compute
\[
C_e\coloneqq\bigcap_{u\in e}L_{\rm on}(u)
\]
using the lists as they stand immediately before $e$ is processed. 
If $C_e\ne\varnothing$, store the edge $e$. 
For every $c\in C_e$ and every $v\in e$, increment $N(v,c)$; whenever $N(v,c)>d$, permanently delete $c$ from $L_{\rm on}(v)$. 

Let $L'(v)$ be the static pruned list from~\eqref{eq: pruned lists}, formed from the originally sampled lists.
If $c\in L'(v)$, then at most $d$ original edges containing $v$ have $c$ in every sampled endpoint list.
The online counter $N(v,c)$ counts only a subset of those edges, so it never exceeds $d$.
Consequently, $L'(v)\subseteq L_{\rm on}(v)$ at the end of the stream.
By Lemma~\ref{lemma:Lprime}, with probability at least $1-1/n$, all final online lists have size at least $(1-\eta)\ell$.

Let $\cH_{\rm st}$ be the hypergraph consisting of the stored edges, and let $L_\infty$ denote the final online lists.
If $c\in L_\infty(v)$, then every stored edge $e$ containing $v$ whose vertices all contain $c$ in their final lists contributed one to $N(v,c)$: lists only shrink, so $c \in \bigcap_{u \in e}L_{\rm on}(u)$ when $e$ arrived.
Hence
\[
\left|\left\{e\in E(\cH_{\rm st},v) \,:\, c\in\textstyle\bigcap_{u\in e}L_\infty(u)\right\}\right|
\le N(v,c)\le d.
\]
Therefore, we may apply Theorem~\ref{theorem: main}, combined with algorithmic variants of the Local Lemma, to find an $L_\infty$-coloring of $\cH_{\rm st}$ in polynomial time.

Since every unstored edge $e$ satisfies $\bigcap_{u \in e} L_\infty(u) = \varnothing$ (as lists only shrink and $C_e = \varnothing$ at arrival), such an edge cannot be monochromatic under any $L_\infty$-coloring; hence the $L_\infty$-coloring of $\cH_{\rm st}$ is a proper $q$-coloring of $\cH$.

It remains to bound the memory.
The total number of edges stored is
\[
|E(\cH_{\rm st})| \le \frac{n\ell(\lfloor d\rfloor+1)}{k} = O(n\ell d).
\]
Treating $k, \eps, \gamma$ as constants, the lower bound on $q$ gives
$\Delta(\ell/q)^{k-1}=O(\ell^{k-1}\log\Delta)$, and hence
\[
O(n\ell d)=\tilde O(n\ell^k)
=\tilde O\left(n\Delta^{\frac{\gamma k}{k-1}}\right),
\]
as desired.
\end{proof}

\subsection{Streaming algorithms from hypergraph partitioning}\label{subsection: streaming linear}

As noted in Section~\ref{subsection: palette}, the reduction to the uncrowded regime in our proof of Corollary~\ref{corollary: linear} relies on repeated applications of the Lov\'asz Local Lemma and
therefore does not directly give a sublinear-memory algorithm. 
In the denser range, however, a single random partition succeeds with sufficiently high probability for a union bound. We use this observation to prove Theorem~\ref{corl:linear streaming}.

\begin{theorem*}[Restatement of Theorem~\ref{corl:linear streaming}]
Let $k\ge 3$ be an integer.
There exists a constant $C > 0$ such that, for every $\gamma \in (0, 1)$, there exists $\Delta_0 \in \N$ with the following property.
Let $\Delta\ge \Delta_0$, $n \in \N$, and set
\[q \coloneqq \frac{C}{\gamma^{4k}}{\left(\frac{\Delta}{\log\Delta}\right)^{\frac{1}{k-1}}}.\]
Let $\cH$ be an $n$-vertex $k$-uniform linear hypergraph of maximum degree at most $\Delta$.
There exists a randomized single-pass streaming algorithm that computes a proper $q$-coloring of $\cH$ with probability at least $1 - 1/n$ using $O(n^{1+\gamma})$ space.
\end{theorem*}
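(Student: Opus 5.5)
We split into a sparse regime and a dense regime according to how $\Delta$ compares with $n^{\gamma}$.

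\emph{Sparse regime ($\Delta \le n^{\gamma}$, up to a constant).} The whole edge stream has at most $n\Delta/k = O(n^{1+\gamma})$ edges. The algorithm stores every edge. After the pass it runs, offline, the partition argument of Lemma~\ref{lemma: partition for coloring} followed by Theorem~\ref{theorem: list chromatic}, in its constructive Moser--Tardos form. This gives a proper coloring with $O\big((\Delta/\log\Delta)^{1/(k-1)}\big)$ colors, which is at most $q$ once $C \ge 8$, say.

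\emph{Dense regime ($\Delta \ge n^{\gamma}$).} Here $\log n \le \gamma^{-1}\log\Delta$, so a failure probability of $\Delta^{-\Omega(1/\gamma)}$ at each vertex is already small enough to union bound over all $n$ vertices. The Local Lemma steps are therefore no longer needed.

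Before the stream starts, the algorithm chooses a uniformly random partition of $V(\cH)$ into $m$ parts, where $m = \Delta^{a}$ with $a$ slightly below $\frac{1}{k-1}$. Concretely, $a$ is chosen so that $\Delta/m^{k-1} = \Delta^{\gamma'}$ with $\gamma' \asymp \gamma^{c}$ small. During the stream it keeps exactly the edges whose vertices all lie in one part.

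We then prove three properties, each holding for every vertex with probability $1-n^{-3}$. Since $\Delta^{\gamma'}\gg \log n$ (when $\gamma' \ge 2\gamma$, say), a Chernoff bound shows that each vertex has degree at most $(1+o(1))\Delta/m^{k-1}$ in its part. For short cycles, Lemma~\ref{lemma: count loose cycles} bounds the expected number of loose $3$- and $4$-cycles through $v$ lying inside one part by $\Delta^{2}m^{-(3k-4)}$ and $\Delta^{3}m^{-(4k-5)}$. With $m$ close to $\Delta^{1/(k-1)}$ and $k\ge3$ both quantities are $\Delta^{-\Omega(1)}$, so Kim--Vu (Theorem~\ref{theorem: Kim Vu concentration}) with $\lambda \asymp \log n$ gives the third property: every vertex lies in at most $K = O((\log n)^{4k})$ such cycles.

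Within each part we then remove the short cycles deterministically. The part hypergraph of bad cycles has bounded degree $K$, so a greedy split (or deleting one vertex per cycle and recoloring the deleted vertices with fresh colors) leaves uncrowded pieces. Each piece is colored by Theorem~\ref{theorem: list chromatic} with disjoint palettes.

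\emph{Space and color accounting in the dense regime.} The stored edges number about $n\Delta/m^{k-1} = n\Delta^{\gamma'} \le n^{1+\gamma}$, because $\Delta \le n$ by linearity. Counting colors, $m$ parts times $(\Delta^{\gamma'}/\log\Delta^{\gamma'})^{1/(k-1)}$ colors per part gives $\gamma'^{-1/(k-1)}(\Delta/\log\Delta)^{1/(k-1)}$, up to the constant factor. The cleanup step multiplies this by a factor depending on $K$, which is too large. To avoid that, we use a two-level random partition, exactly as in Lemma~\ref{lemma: partition for coloring}. The second level uses $m_2 = \mathrm{polylog}$ parts, and the resulting failure probabilities are polynomial in $\log$. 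To make those events union-boundable, we instead take $m_2 = \Delta^{\gamma''}$ so that the per-vertex failure probability is $\Delta^{-\Omega(1)}$, and then a power of $\gamma$ controls the loss. This is where the $\gamma^{-4k}$ factor comes from.

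\emph{Main obstacle.} The hardest step is the dense regime: tuning $m_1$, $m_2$ and the exponents so that a single union bound, rather than the Local Lemma, kills every short cycle while the stored edge count stays at most $n^{1+\gamma}$. Only polynomial dependence on $1/\gamma$ in the constant is allowed. My first attempt would be $m_1 m_2 = \Delta^{1/(k-1)-\gamma/(4k)}$, with the cycle counts handled by Kim--Vu at $\lambda \asymp \gamma^{-1}\log\Delta$.
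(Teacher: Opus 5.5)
\textbf{Verdict: there is a genuine gap, in the dense regime.} Your sparse regime is fine and is essentially what the paper does: store the whole stream when $\Delta$ is at most a small power of $n$, then apply Corollary~\ref{corollary: linear} offline.

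\textbf{Killing every short cycle by a union bound is impossible.} Since parts receive disjoint palettes, the number of parts $m$ is at most about $q=O_{k,\gamma}\bigl((\Delta/\log\Delta)^{1/(k-1)}\bigr)$. Your own choice is $m_1m_2=\Delta^{1/(k-1)-\gamma/(4k)}$. So a fixed loose $3$-cycle lies inside one part with probability $m^{-(3k-4)}\ge\Delta^{-3}$ for large $\Delta$. At $\Delta\approx n^{\gamma}$ this is $n^{-O(\gamma)}$, not $n^{-1-\Omega(1)}$. For instance, if $\cH$ contains $\Theta(n)$ vertex-disjoint loose $3$-cycles, their survival events are independent, and for $\gamma<1/3$ at least order $n^{1-3\gamma}$ of them survive with high probability.

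The second level does not change this. A two-level partition sampled before the stream is just a uniform partition into $m_1m_2$ parts; in Lemma~\ref{lemma: partition for coloring} the two levels help only because the Local Lemma is applied conditionally on the first level. Concretely, with $m_2=\Delta^{\gamma''}$ and $\Delta=n^{\gamma}$, a per-vertex failure probability of $o(1/n)$ needs $\gamma''>1/((3k-4)\gamma)$. This exceeds $1/(k-1)$ once $\gamma\le 1/3$.

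\textbf{Your fallback does not rescue it.} Kim--Vu with $\lambda\asymp\log n$ only gives $K=\mathrm{polylog}(n)$ cycles per vertex. The inequality cannot do better here: $E'\ge\max_f w_f\ge 1$ forces a deviation of at least $(8\lambda)^{3k-4}$. As you observe, a cleanup costing a factor $K+1$ is then fatal.

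\textbf{The missing idea} is to show that, with failure probability at most $n^{-2}$ per vertex, each vertex lies in only $O_{k,\gamma}(1)$ surviving loose $3$- and $4$-cycles, with a bound independent of $n$ and $\Delta$. Then the cleanup of Lemma~\ref{lemma: greedy} costs only a constant factor. That cleanup builds an auxiliary graph of maximum degree $K$, greedily $(K+1)$-colors it so each class induces an uncrowded hypergraph, and applies Theorem~\ref{theorem: list chromatic} with disjoint palettes.

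The paper does this as follows.
\begin{enumerate}
\item Take $\beta=\min\{\gamma,1/(16k)\}$ and a single uniform partition into $m=\lceil\Delta^{(1-\beta)/(k-1)}\rceil$ parts. Chernoff gives part degrees at most $2\Delta^{\beta}$, and the stored edges number $O(n\Delta^{\beta})=O(n^{1+\gamma})$.
\item Bound the short cycles with the Erd\H{o}s--Rado sunflower lemma. More than $M_rB_r$ monochromatic loose $r$-cycles through $v$ force $t=\lceil 64(k-1)/\beta\rceil$ of them whose vertex sets minus $v$ form a sunflower with core $A$.
\item Count such configurations via Lemma~\ref{lemma: count loose cycles}, using that every cycle after the first must contain $A\cup\{v\}$. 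Weighing this against the probability $m^{-[a+t(s_r-a)]}$ that all are monochromatic gives $\Delta^{-t/(8(k-1))}\le n^{-8}$, using $\Delta\ge n^{\beta}$.
\end{enumerate}
This is Lemma~\ref{lemma: single step reduction}, with $K=R\beta^{-4(k-1)}$. Combined with the factor $\beta^{-1/(k-1)}$ from $\log(2\Delta^{\beta})$ versus $\log\Delta$, it produces the $\gamma^{-4k}$ in $q$.

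\textbf{A minor inconsistency.} Your requirement $\gamma'\ge 2\gamma$ contradicts the storage bound $n\Delta^{\gamma'}\le n^{1+\gamma}$. You need $\gamma'\le\gamma$, and any fixed $\gamma'>0$ already gives $\Delta^{\gamma'}\gg\log n$.
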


We begin with our one-step partition lemma.

\begin{lemma}\label{lemma: single step reduction}
Let $k \ge 3$ be an integer. There exist a constant $R > 0$ depending only on $k$ and $\beta_0 > 0$ such that the following holds for every $0 < \beta \le \beta_0$. Let $\cH$ be an $n$-vertex $k$-uniform linear hypergraph of maximum degree at most $\Delta\ge n^\beta$, and set $m \coloneqq \lceil\Delta^{\frac{1-\beta}{k-1}}\rceil$.
Assign every vertex independently and uniformly to one of $m$ parts $V_1,\dots,V_m$. Then, with probability at least $1-1/n$, simultaneously for every $i \in [m]$:
\begin{enumerate}
\item $\Delta(\cH[V_i]) \le 2\Delta^\beta$, and
\item every vertex is contained in at most $R\beta^{-4(k-1)}$ loose $3$- and $4$-cycles in $\cH[V_i]$.
\end{enumerate}
\end{lemma}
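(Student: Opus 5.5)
The proof is a union bound over all vertices, with no Local Lemma. The choice $\Delta \ge n^\beta$ means $\log n \le \beta^{-1}\log\Delta$, so it suffices to show that each bad event has probability at most $n^{-3}$. Fix a vertex $v$. Write $\mathbf{X}$ for the number of edges containing $v$ all of whose vertices land in the part of $v$. By linearity of $\cH$, $\mathbf{X}$ is binomial with mean at most $\Delta/m^{k-1} \le \Delta^\beta$. The Chernoff Bound (Theorem~\ref{theorem: chernoff}) then gives
\[
\pr(\mathbf{X} > 2\Delta^\beta) \le \exp(-\Delta^\beta/3).
\]
We need this to be at most $n^{-3}$, i.e.\ $\Delta^\beta \ge 9\beta^{-1}\log\Delta$. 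This holds once $\Delta$ is large. If $\Delta$ is bounded in terms of $\beta$, then $n \le \Delta^{1/\beta}$ is bounded as well; in that regime I would use that $\Delta(\cH[V_i]) \le \Delta \le 2\Delta^\beta$ fails only when $\Delta$ is large, so there is a genuine case split to handle. Choosing $\beta_0$ small, or absorbing constants into $R$, should deal with the degenerate range.

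For the cycles I would not use Kim--Vu. Instead I bound the probability of having many monochromatic loose cycles directly, using the sparse-moment (disjoint-family) method. Let $\mathbf{Y}$ be the number of loose $3$- and $4$-cycles through $v$ that are monochromatic with $v$. If $\mathbf{Y} > T$ with $T = R\beta^{-4(k-1)}$, then there is a structure of at least $T^{1/2}$ cycles that are pairwise "spread". Concretely, either some vertex set $A$ lies in at least $\sqrt{T}$ of the counted cycles, or one can greedily extract $s$ counted cycles whose vertex sets pairwise meet only at $v$. This dichotomy comes from a sunflower/greedy argument, using the codegree bounds of Lemma~\ref{lemma: count loose cycles}.

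The probability that $s$ cycles meeting pairwise only at $v$ are all monochromatic is at most
\[
\left(M\Delta^{3}m^{-(4k-5)}\right)^s + \left(M\Delta^2 m^{-(3k-4)}\right)^s .
\]
With $m^{k-1} \ge \Delta^{1-\beta}$, we get $\Delta^3 m^{-(4k-5)} \le \Delta^{3-(1-\beta)(4k-5)/(k-1)}$. For $k \ge 3$ and $\beta$ small this is $\Delta^{-c}$ with $c \ge 1/2$ (and the $3$-cycle term is similar). Taking $s = \lceil 6/(c\beta)\rceil$ gives probability at most $\Delta^{-3/\beta} \le n^{-3}$. The "heavy $A$" alternative is handled recursively. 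Fix $A$ with $|A| = a \ge 1$ beyond $v$. The cycles through $A \cup \{v\}$ number at most $M\Delta^{b_r(a+1)}$ by Lemma~\ref{lemma: count loose cycles}, and each remaining free vertex costs $m^{-1}$. The same disjoint-extraction argument then applies at each level of the sunflower with a further $O(1/\beta)$ multiplicity.

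Iterating over at most $4k-5$ levels yields a threshold of the form $(O(1/\beta))^{4(k-1)}$, which is where $R\beta^{-4(k-1)}$ comes from. The main obstacle is this iterated sunflower/moment bookkeeping: checking at every codegree level that the exponent $b_r(|A|+1) - (\text{free vertices})(1-\beta)/(k-1)$ is at most $-c$. That check uses $m \approx \Delta^{(1-\beta)/(k-1)}$, and it is exactly where the condition $k \ge 3$ and small $\beta_0$ are needed. A final union bound over the $n$ vertices and the two event types gives failure probability at most $2n \cdot n^{-3} \le 1/n$.
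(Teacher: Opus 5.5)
Your argument is essentially the paper's proof. The degree condition is handled by Chernoff plus a union bound via $\log n\le\beta^{-1}\log\Delta$. The cycle condition is handled by a sunflower extraction followed by a union bound over sunflower configurations counted with Lemma~\ref{lemma: count loose cycles}, using exactly the per-core exponent check $b_r(a+1)-(s_r-a)\tfrac{1-\beta}{k-1}<0$ that you single out (where $s_r=r(k-1)-1$). The only difference is that the paper cites the Erd\H{o}s--Rado Sunflower Lemma for the sets $V(F)-v$ with $t=\lceil 64(k-1)/\beta\rceil$ petals, rather than re-deriving it greedily.

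One correction to your greedy step: a maximal family of fewer than $s$ cycles pairwise meeting only in the current core produces a vertex lying in at least $T/(s\,s_r)$ of the cycles, not $\sqrt{T}$. It is this loss of a factor $O(s)$ per level that yields the threshold $(O(s))^{s_r}$; repeated square roots would give a much worse threshold.

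You should drop your patch for bounded $\Delta$. Neither $\beta_0$ nor $R$ enters condition (1), and the trivial bound $\Delta\le 2\Delta^\beta$ covers only $\Delta^{1-\beta}\le 2$. In fact the statement fails in the intermediate range. Take $\beta$ small and $1\le\Delta^\beta<3/2$, and let $\cH$ be a disjoint union of many stars of $\Delta$ edges; this is permitted, since $n$ may be as large as $\Delta^{1/\beta}$. Each centre independently lies in at least three edges contained in its own part with probability bounded below by an absolute constant, while $2\Delta^\beta<3$. So (1) fails with probability close to $1$.

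Hence one must assume $\Delta\ge\Delta_0(k,\beta)$. The paper does this tacitly: its step $\exp(-\Omega(\Delta^\beta))\le n^{-10}$ needs $\Delta^\beta\gtrsim\beta^{-1}\log\Delta$. This assumption is available in the application in Theorem~\ref{corl:linear streaming}, where $\Delta\ge\Delta_0$ with $\Delta_0$ depending on $\gamma$.
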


\begin{proof}
Fix a vertex $v$. Since $\cH$ is linear, the edges containing $v$ are pairwise disjoint outside $v$, and so the number $\mathbf{X}_v$ of edges containing $v$ that lie in the same part as $v$ is binomially distributed with mean
\[
\mu_v = \frac{\deg(\cH, v)}{m^{k-1}} \le \Delta^\beta.
\]
By the \hyperref[theorem: chernoff]{Chernoff Bound} (Theorem~\ref{theorem: chernoff}), we have
\[
\pr(\mathbf{X}_v>2\Delta^\beta)
\le\exp(-\Omega(\Delta^\beta))
\le n^{-10},
\]
where in the last inequality we used that $\Delta \ge n^\beta$.

We next bound the number of loose $3$- and $4$-cycles. Recall that, for $r \ge 2$, each loose $r$-cycle contains exactly $r(k-1)$ vertices. Fix $r \in\{3,4\}$ and set $s_r \coloneqq r(k-1)-1$. Let
\[
t\coloneqq\left\lceil\frac{64(k-1)}{\beta}\right\rceil
\qquad
\text{and}
\qquad
B_r\coloneqq s_r!(t-1)^{s_r}.
\]
By Lemma~\ref{lemma: count loose cycles}, at most $M_r$ loose $r$-cycles share the same vertex set, where $M_r > 0$ is the constant from the lemma. Hence, more than $M_rB_r$ monochromatic cycles $F$ containing $v$ yield more than $B_r$ distinct sets of the form $V(F) - v$, each of size $s_r$. By the Erd\H{o}s--Rado Sunflower Lemma, $t$ of these sets
form a sunflower with common core $A$ and pairwise disjoint nonempty petals $P_i$. Setting $a \coloneqq |A|$, their union has size $a+t(s_r-a)$, where $0\le a\le s_r-1$.

We now bound the number of such sunflower configurations. By Lemma~\ref{lemma: count loose cycles} applied with $S = \{v\}$, there are at most $M_r\Delta^{r-1}$ choices for the first cycle. Having chosen the first cycle and a subset of $V(F) - v$ of size $a$ to serve as the core, each subsequent cycle must contain $A \cup \{v\}$; applying Lemma~\ref{lemma: count loose cycles} with $S = A \cup \{v\}$ (of size $a + 1$) gives at most $M_r\Delta^{b_r(a+1)}$ choices. The number of ordered sunflower systems with core size $a$ is therefore at most
\[
\binom{s_r}{a}M_r^t\Delta^{r-1+(t-1)b_r(a+1)} \leq 2^{s_r}M_r^t\Delta^{r-1+(t-1)b_r(a+1)}.
\]
A fixed sunflower is monochromatic with probability $m^{-[a+t(s_r-a)]}$. Set $p\coloneqq(1-\beta)/(k-1)$ and
\[
F_r(a)\coloneqq r-1+(t-1)b_r(a+1)-p\,[a+t(s_r-a)].
\]
Summing over the at most $s_r$ possible values of $a$, the probability that $v$ is contained in more than $M_rB_r$ monochromatic loose $r$-cycles is at most
\[
s_r\, 2^{s_r}\, M_r^t\max_{0 \le a \le s_r - 1}\Delta^{F_r(a)}.
\]
If $a\le(r-1)(k-1)$, then $b_r(a+1)\le r-1-a/(k-1)$, and the coefficient of $a$ in the resulting upper bound for $F_r(a)$ is $-\beta(t-1)/(k-1)<0$. Hence, the maximum in this range occurs at $a=0$, where
\[
F_r(0) =t\left[-\frac{k-2}{k-1} +\beta\left(r-\frac1{k-1}\right)\right] \le-\frac t4
\]
provided that $\beta_0\le 1/(16k)$, since $r \le 4$ and $k \ge 3$.
If $a>(r-1)(k-1)$, then $b_r(a+1)=0$; because the petals are nonempty, $a\le s_r-1$, and so
\[
F_r(a)
\le r-1-p(s_r+t-1)
\le-\frac{t}{4(k-1)}
\]
for the same choice of $\beta_0$ and sufficiently large $t$.
Therefore, this probability is at most
\[
s_r\, 2^{s_r}\, M_r^t\,\Delta^{-\frac{t}{4(k-1)}}
\le \Delta^{-\frac{t}{8(k-1)}}
\le n^{-8},
\]
where the last two inequalities use $\Delta \ge n^\beta$ and $t \ge 64(k-1)/\beta$.
Since $M_rB_r = O(\beta^{-s_r}) = O(\beta^{-4(k-1)})$, a union bound over $r\in\{3,4\}$ and all vertices proves the lemma for sufficiently large $R_k$.
\end{proof}

The following lemma establishes that a $k$-uniform linear hypergraph in which every vertex belongs to few short cycles has a small chromatic number, and that a proper coloring can be found efficiently.

\begin{lemma}\label{lemma: greedy}
Let $\cH$ be a $k$-uniform linear hypergraph of maximum degree at most $D$, and suppose every vertex is contained in at most $K$ loose $3$- and $4$-cycles.
For sufficiently large $D$,
\[
\chi(\cH) = O\left((K+1)
\left(\frac{D}{\log D}\right)^{\frac{1}{k-1}}\right),
\]
where the implicit constant depends only on $k$.
Moreover, a proper coloring can be found in expected polynomial time.
\end{lemma}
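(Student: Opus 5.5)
The plan is to follow step~\ref{FM3} of Frieze and Mubayi: greedily split $V(\cH)$ into $O(K+1)$ parts so that each part induces an uncrowded hypergraph. We then color each part with its own disjoint palette using Theorem~\ref{theorem: list chromatic}, or simply Theorem~\ref{theorem: AKPSS}-type bounds; only an $O(\cdot)$ constant is needed here.

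\emph{Step 1 (conflict graph).} Build an auxiliary graph $G$ on $V(\cH)$. Join $u$ and $w$ whenever they are distinct vertices lying together in some loose $3$- or $4$-cycle of $\cH$. Each vertex lies in at most $K$ such cycles, and each cycle has at most $4(k-1)$ vertices. Hence $\Delta(G)\le 4(k-1)K$, and greedily coloring $G$ gives a proper coloring with $m\le 4(k-1)K+1=O(K+1)$ colors, in linear time. Let $W_1,\dots,W_m$ be its color classes. No part $W_j$ contains two vertices of a common loose $3$- or $4$-cycle, so $\cH[W_j]$ contains no loose $3$- or $4$-cycle. $\cH$ is linear, so there are no $2$-cycles. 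As noted at the end of the proof of Lemma~\ref{lemma: partition for coloring}, every $3$-cycle in a linear hypergraph is loose, and every non-loose $4$-cycle contains a $3$-cycle. Therefore each $\cH[W_j]$ is uncrowded, and its maximum degree is at most $D$.

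\emph{Step 2 (color each part).} Apply Theorem~\ref{theorem: list chromatic} with $\eps=1$ to each $\cH[W_j]$. Each part gets $\chi(\cH[W_j])\le 2\left((k-1)D/\log D\right)^{\frac1{k-1}}$ for $D$ large. If some part has maximum degree much smaller than $D$, apply the theorem with the upper bound $D$ anyway; this is allowed since the hypothesis is ``maximum degree at most $\Delta$''. Using pairwise disjoint palettes across the $m$ parts gives
\[
\chi(\cH)\le m\cdot 2\left((k-1)\frac{D}{\log D}\right)^{\frac{1}{k-1}} = O\left((K+1)\left(\frac{D}{\log D}\right)^{\frac1{k-1}}\right).
\]

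\emph{Step 3 (algorithm).} Computing $G$ requires enumerating loose $3$- and $4$-cycles through each vertex. By Lemma~\ref{lemma: count loose cycles} there are $O(D^3)$ candidates per vertex, so this takes polynomial time. The greedy coloring is linear. For each part, the proof of Theorem~\ref{theorem: main} consists of $s=O(\log D\log\log D)$ nibble rounds plus a finishing step, all driven by the symmetric and asymmetric Local Lemma. As remarked in Section~\ref{subsection: palette}, the Moser--Tardos algorithm makes each round run in expected polynomial time. The bad events depend on bounded-radius neighborhoods and have polynomially many dependencies, so the standard Moser--Tardos criteria hold, at least with the slack present here.

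The main obstacle is this algorithmic claim. The nibble rounds use Talagrand-type events whose probabilities are bounded only nonconstructively, and Moser--Tardos needs events determined by a fixed set of variables. Here each event is determined by trials within distance $4$, and the LLL slack is enormous ($p d_{\mathrm{LLL}}=\exp(-d^{\Omega(1)})$), so the standard resampling framework should apply directly. If it proves awkward, I would fall back on the weaker coloring of Frieze and Mubayi, \eqref{eq:FM}, for each uncrowded part, since that also yields an $O(\cdot)$ bound.
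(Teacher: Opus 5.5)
Your proposal is correct and is essentially the paper's proof: you build a conflict graph from the loose $3$- and $4$-cycles, greedily color it into $O(K+1)$ classes that each induce an uncrowded subhypergraph, and then apply Theorem~\ref{theorem: list chromatic} to each class with disjoint palettes, with algorithmic versions of the Local Lemma supplying efficiency. The only difference is cosmetic: the paper adds a single graph edge per cycle, giving $\Delta(G)\le K$ and $K+1$ classes, whereas your clique on each cycle gives $\Delta(G)\le(4k-5)K$, which is equally fine for an $O(\cdot)$ bound whose constant may depend on $k$.
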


\begin{proof}
For every loose $3$- or $4$-cycle $F$ in $\cH$, choose two distinct vertices of $F$ and add the corresponding edge to an auxiliary graph $G$ on $V(\cH)$. Every edge of $G$ incident to a vertex $v$ is charged to a loose $3$- or $4$-cycle containing $v$, so $\Delta(G)\le K$. Greedily color $G$ with at most $K+1$ colors.

No color class of this graph coloring contains all vertices of a loose $3$- or $4$-cycle, because it omits at least one endpoint of the selected graph edge for that cycle.
Since $\cH$ is linear, every $3$-cycle in $\cH$ is loose, and every $4$-cycle that is not loose contains a $3$-cycle as a subhypergraph; thus the absence of loose $3$- and $4$-cycles in each color class implies that it induces an uncrowded subhypergraph of $\cH$.
Apply Theorem~\ref{theorem: list chromatic} to each class, using disjoint palettes.
Algorithmic variants of the local lemma yield the desired efficient algorithm.
\end{proof}

We are now ready to prove Theorem~\ref{corl:linear streaming}.

\begin{proof}[Proof of Theorem~\ref{corl:linear streaming}]
Fix
\[
\beta_0\coloneqq\frac1{16k}
\qquad \text{and} \qquad
\beta\coloneqq\min\{\gamma,\beta_0\}.
\]
We choose $C$ sufficiently large at the end. If $\Delta\le n^\beta$, store the entire stream. A $k$-uniform linear hypergraph has at most $n\Delta/k$ edges, so this uses $O(n^{1+\beta})\le O(n^{1+\gamma})$ space; Corollary~\ref{corollary: linear} then supplies the required coloring.

Assume henceforth that $\Delta>n^\beta$, and sample the random partition from Lemma~\ref{lemma: single step reduction}. During the single pass, store exactly those edges whose vertices all belong to a common part.
By Lemma~\ref{lemma: single step reduction}, with probability at least $1 - 1/n$, each induced subhypergraph $\cH[V_i]$ has maximum degree at most $D \coloneqq 2\Delta^\beta$, and every vertex is contained in at most $K \coloneqq R\beta^{-4(k-1)}$ loose $3$- and $4$-cycles in $\cH[V_i]$.
Hence, Lemma~\ref{lemma: greedy} gives
\begin{align}\label{eq: bound on chi for each part}
\chi(\cH[V_i]) = O\left(\beta^{-4(k-1)}\left(\frac{\Delta^\beta}{\beta\log\Delta}\right)^{\frac{1}{k-1}}\right)= O\left(\beta^{-4(k-1)-\frac{1}{k-1}} \frac{\Delta^{\frac{\beta}{k-1}}}{(\log\Delta)^{\frac{1}{k-1}}}\right).
\end{align}
Partition the global palette $[q]$ into $m = \lceil\Delta^{\frac{1-\beta}{k-1}}\rceil$ disjoint blocks of nearly equal size and color each $\cH[V_i]$ from its own block. Since
\[
4(k-1)+\frac1{k-1}\le4k
\quad\text{and}\quad
\beta^{-4k}\le\beta_0^{-4k}\gamma^{-4k},
\]
a sufficiently large choice of $C$ guarantees that every block has at least the number of colors in~\eqref{eq: bound on chi for each part}. 
The resulting coloring is proper, as every edge whose vertices lie in different parts is automatically nonmonochromatic because the palettes are disjoint.

Finally, on the same good event,
\[
\sum_{i=1}^m|E(\cH[V_i])|
\le\frac1k\sum_{i=1}^m\sum_{v\in V_i}\deg(\cH[V_i], v)
\le\frac{2}{k}n\Delta^\beta.
\]
Linearity implies $\Delta\le(n-1)/(k-1)$, because the edges containing a fixed vertex are disjoint outside that vertex. Thus, $n\Delta^\beta=O(n^{1+\beta})=O(n^{1+\gamma})$, completing the proof.
\end{proof}

\subsection*{Acknowledgments}

We are grateful to Oliver Janzer for helpful discussions during the initial stages of this project. The authors used ChatGPT 5.6 Sol to proofread this manuscript. It also assisted with the proof of Theorem~\ref{corl:linear streaming} by drawing the authors' attention to the Sunflower Lemma. Apart from these uses, all ideas, arguments, and writing are entirely due to the authors.

\printbibliography

\end{document}